
\documentclass{amsart}
\pdfoutput=1

\allowdisplaybreaks
\usepackage{graphicx}

\usepackage{amsmath,amssymb}

\usepackage{MnSymbol}

\usepackage{enumerate}

\usepackage{array}
\newcolumntype{L}[1]{>{\raggedright\let\newline\\\arraybackslash\hspace{0pt}}m{#1}}
\newcolumntype{C}[1]{>{\centering\let\newline\\\arraybackslash\hspace{0pt}}m{#1}}
\newcolumntype{R}[1]{>{\raggedleft\let\newline\\\arraybackslash\hspace{0pt}}m{#1}}

\usepackage{mathtools}

\usepackage{pdflscape}

\usepackage{multirow}

\usepackage[refpage,noprefix]{nomencl}
\usepackage{multicol}

\renewcommand*{\nompreamble}{\begin{multicols}{4}}
\renewcommand*{\nompostamble}{\end{multicols}}
\setlength{\columnseprule}{1pt}

\makenomenclature
\makeatletter
\def\@@@nomenclature[#1]#2#3{%
 \def\@tempa{#2}\def\@tempb{#3}%
 \protected@write\@nomenclaturefile{}%
 {\string\nomenclatureentry{#1\nom@verb\@tempa @[{\nom@verb\@tempa}]%
 |nompageref{\begingroup\protect\nomeqref{\theequation}}}%
 {\thepage}}%
 \endgroup
 \@esphack}
\makeatother

\usepackage{tikz}
\usetikzlibrary{decorations.markings}
\usetikzlibrary{arrows}
\usepackage{tkz-berge}
\usepackage{tkz-euclide}
\usetkzobj{all} 

\usepackage{color}
\definecolor{darkgreen}{rgb}{0,0.7,0}

\usepackage{hyperref}
\hypersetup{colorlinks=true,citecolor=blue,linkcolor=blue}
\usepackage[capitalize]{cleveref}
\crefformat{equation}{(#2#1#3)}

\numberwithin{equation}{section}
\newtheorem{theorem}{Theorem}[section]
\newtheorem{proposition}[theorem]{Proposition}
\newtheorem{lemma}[theorem]{Lemma}
\newtheorem{corollary}[theorem]{Corollary}
\newtheorem{conjecture}[theorem]{Conjecture}

\numberwithin{figure}{section}
\numberwithin{table}{section}

\theoremstyle{definition}
\newtheorem{definition}[theorem]{Definition}
\newtheorem{remark}[theorem]{Remark}
\newtheorem{example}[theorem]{Example}

\DeclareMathOperator{\Span}{Span}

\DeclareMathOperator{\Fan}{Fan}
\DeclareMathOperator{\Cone}{Cone}

\newcommand{\set}[1]{{\lbrace #1 \rbrace}}

\newcommand{\br}[1]{{\left\langle #1 \right\rangle}}
\newcommand{\ck}{^{\vee}}
\newcommand{\integers}{\mathbb{Z}}

\newcommand{\cm}[3]{(#1\Vert#2)_{#3}}

\newcommand{\cmrarrow}{{\rightarrow}}
\newcommand{\cmlarrow}{{\leftarrow}}
\newcommand{\cmcircarrow}{{\circlearrowright}}
\newcommand{\cmr}[3]{(#1\cmrarrow#2)_{#3}}

\newcommand{\cml}[3]{(#1\cmlarrow#2)_{#3}}

\newcommand{\cmcirc}[3]{(#1\,\cmcircarrow\,#2)_{#3}}

\newcommand{\dist}{\operatorname{dist}}

\renewcommand{\d}{{\mathbf d}}

\newcommand{\g}{{\mathbf g}}
\newcommand{\aff}{\mathrm{aff}}
\newcommand{\fin}{\mathrm{fin}}
\newcommand{\re}{\mathrm{re}}
\newcommand{\reals}{\mathbb{R}}

\renewcommand{\th}{^\text{th}}

\newcommand{\A}{{\mathcal A}}
\newcommand{\DF}{{\mathcal {DF}}}
\newcommand{\cl}{\operatorname{cl}}
\newcommand{\F}{{\mathcal F}}
\newcommand{\adj}[2]{\operatorname{adj}_{#1}(#2)}

\newcommand{\eigenspace}[1]{U^{#1}}
\newcommand{\RSChar}{\Phi}
\newcommand{\RS}{\RSChar}
\newcommand{\RSre}{\RS^\re}
\newcommand{\RSpos}{\RS^+}
\newcommand{\RSneg}{\RS^-}
\newcommand{\RSfin}{\RS_\fin}
\newcommand{\RSfinpos}{\RSfin^+}

\newcommand{\SimplesChar}{\Pi}
\newcommand{\Simples}{\SimplesChar}

\newcommand{\RSTChar}{\Upsilon}
\newcommand{\RST}[1]{\RSTChar^{#1}}
\newcommand{\RSTfin}[1]{\RST{#1}_\fin}
\newcommand{\SimplesTChar}{\Xi}
\newcommand{\SimplesT}[1]{\SimplesTChar^{#1}}
\newcommand{\simpleT}{\beta}
\newcommand{\Supp}{\operatorname{Supp}_\SimplesChar}
\newcommand{\SuppT}{\operatorname{Supp}_\SimplesTChar}
\newcommand{\TravInfChar}{\Psi}
\newcommand{\TravInf}[1]{\TravInfChar^{#1}}
\newcommand{\proj}{\to}
\newcommand{\TravProj}[1]{\overrightarrow{\TravInfChar}^{#1}}
\newcommand{\inj}{\leftarrow}
\newcommand{\TravInj}[1]{\overleftarrow{\TravInfChar}^{#1}}
\newcommand{\TravRegChar}{\Omega}
\newcommand{\TravReg}[1]{\TravRegChar^{#1}}
\newcommand{\AP}[1]{\RS_{#1}}
\newcommand{\APre}[1]{\AP{#1}^\re}
\newcommand{\APTChar}{\Lambda}
\newcommand{\APT}[1]{\APTChar_{#1}}      
\newcommand{\APTre}[1]{\APT{#1}^\re}     

\newcommand{\dynkinradius}{.06cm}
\newcommand{\dynkinstep}{.5cm}
\newcommand{\dynkinlinesep}{.08cm}
\newcommand{\dynkinaffinedot}[4]{\fill[fill=red] (\dynkinstep*#1,\dynkinstep*#2) circle (\dynkinradius); \node at (\dynkinstep*#1,\dynkinstep*#2) [font=\tiny,#4] {$#3$};}
\newcommand{\dynkindot}[4]{\fill (\dynkinstep*#1,\dynkinstep*#2) circle (\dynkinradius); \node at (\dynkinstep*#1,\dynkinstep*#2) [font=\tiny,#4] {$#3$};}
\tikzset{
  line/.style={thin},
  dotline/.style={dotted},
  doubleline/.style={thin,double distance=\dynkinlinesep,postaction={decorate,decoration={markings,mark=at position 0.7 with {\arrow[line width=0.06cm]{angle 60}}}}},
  tripleline/.style={thin,double distance=\dynkinlinesep*1.5,postaction={decorate,decoration={markings,mark=at position 0.7 with {\arrow[line width=0.06cm]{angle 60}}}},postaction={draw}}
}

\newcommand{\dynkinline}[4]{\draw[line] (\dynkinstep*#1,\dynkinstep*#2) -- (\dynkinstep*#3,\dynkinstep*#4);}
\newcommand{\dynkindotline}[4]{\draw[dotline] (\dynkinstep*#1,\dynkinstep*#2) -- (\dynkinstep*#3,\dynkinstep*#4);}
\newcommand{\dynkindoubleline}[4]{\draw[doubleline] (\dynkinstep*#1,\dynkinstep*#2) -- (\dynkinstep*#3,\dynkinstep*#4);}
\newcommand{\dynkintripleline}[4]{\draw[tripleline] (\dynkinstep*#1,\dynkinstep*#2) -- (\dynkinstep*#3,\dynkinstep*#4);}
\newenvironment{dynkin}{\begin{tikzpicture}[baseline]}{\end{tikzpicture}}

\newcommand{\newword}[1]{\textbf{\emph{#1}}}

%

\author{Nathan Reading}
\author{Salvatore Stella}
\title{An affine almost positive roots model}
\address[N. Reading]{Department of Mathematics, North Carolina State University, Raleigh, NC, USA}
\address[S. Stella]{Department of Mathematics, University of Leicester, University Road, Leicester, LE1 7RH, UK}
\thanks{Nathan Reading was supported in part by NSF grants DMS-1101568 and DMS-1500949.\\ \indent Salvatore Stella was partially supported by NCSU and INdAM and ISF grant 1144/16.}

\begin{document}

\begin{abstract}
We generalize the almost positive roots model for cluster algebras from finite type to a uniform finite/affine type model.
We define a subset $\AP{c}$ of the root system and a compatibility degree on $\AP{c}$, given by a formula that is new even in finite type.
The clusters (maximal pairwise compatible sets of roots) define a complete fan $\Fan_c(\RS)$.
Equivalently, every vector has a unique cluster expansion.
We give a piecewise linear isomorphism from the subfan of $\Fan_c(\RS)$ induced by real roots to the $\g$-vector fan of the associated cluster algebra.
We show that $\AP{c}$ is the set of denominator vectors of the associated acyclic cluster algebra and conjecture that the compatibility degree also describes denominator vectors for non-acyclic initial seeds.
We extend results on exchangeability of roots to the affine case.
\end{abstract}

\maketitle

\setcounter{tocdepth}{1}
\tableofcontents

\vspace{-30pt}

\section{Introduction}\label{intro}  
One of the first major achievements in the structural theory of cluster algebras was the classification, by Fomin and Zelevinsky \cite{FoZe03a,FoZe03}, of cluster algebras of finite type (cluster algebras with finitely many seeds).  
The classification parallels the Cartan-Killing classification, and an acyclic cluster algebra of finite type is specified by a Cartan matrix of finite type, an orientation of its Dynkin diagram (which together specify an exchange matrix) and a choice of ``coefficients.''
The cluster variables in the cluster algebra are in bijection with the almost positive roots (roots that either are positive or are the negatives of simples) in the corresponding root system.
The bijection sends a cluster variable to its denominator vector (or \newword{$\d$-vector}), leading to a purely combinatorial model based on almost positive roots, generalizing the associahedron or Stasheff polytope.
The almost positive roots also model \cite{Ste13} the $\g$-vectors of cluster variables.

After Cartan matrices of finite type, the next simplest Cartan matrices are those of affine type.
An affine Cartan matrix together with an \emph{acyclic} orientation of its Dynkin diagram and a choice of coefficients specifies a cluster algebra of \newword{affine type}.
Except in rank $2$, these are precisely the cluster algebras whose ``growth rate'' is linear, in the sense of \cite{FeShTu12}.  
(Every $2\times2$ Cartan matrix gives rise to a cluster algebra that is finite or has linear growth.)
 Another intrinsic characterization of affine type is a consequence of \cite[Theorem~3.5]{Seven}:  
An $n\times n$ exchange matrix with $n\ge 3$ is of finite or affine type if and only if it is of finite mutation type and is mutation equivalent to an acyclic exchange matrix.

In this paper, we extend the almost positive roots model to cluster algebras of affine type.
In particular, we extend many of the main results in \cite{associahedra,FoZe03a,FoZe03} to affine type.
We define an \newword{affine generalized associahedron fan}, extending the normal fan of generalized associahedra from finite type to a uniform finite/affine type construction. 
We do not, at this time, have an affine analog of the generalized associahedra as polytopes.

From a certain point of view, one might say that the almost positive roots model has already been extended to affine type and beyond in the representation theory literature.
In that setting, the denominator vectors are dimension vectors of indecomposable rigid representation and the compatibility degree is the dimension of $\mathrm{Ext}^1$ between the representations \cite{Buan06,Buan05c,Caldero05}.
There is also an approach to denominator vectors in the surfaces model \cite[Section~6]{Fomin08}.
However, neither of these successful models is an almost positive roots model.
We provide such a model by first defining explicitly a set of roots $\AP{c}$ in terms of the action of a Coxeter element on the root system, and then defining a compatibility degree in terms of multiplication and addition of simple-root (or coroot) coordinates or elementary counting with supports.
In the representation theoretic and surface models, by contrast, the definitions are in terms of modules and $\textrm{Ext}^1$ or in terms of tagged arcs and intersection numbers.
Our model agrees with the surfaces model but only agrees with the representation theory model up to the notion of compatibility.
Details on the relationship between our model and these other models are found in remarks at the end of this introduction.
We now give details on our model and results.

In affine type, taking $\tau_c$ to be the usual deformation of $c$, the set $\AP{c}$ is the union of the $\tau_c$-orbits of the positive roots that do not have full support, together with the imaginary root $\delta$.
(See \cref{def:Phic,apSchur} for several other characterizations of $\AP{c}$.)

We give a new formula for the usual finite-type compatibility degree $\cm\alpha\beta c$ \cite{FoZe03,MRZ} and extend the formula to affine type.  Two roots in $\AP{c}$ are $c$-compatible if their compatibility degree is zero.
We give the affine versions of the key properties already known in finite type, including cardinalities of clusters (maximal sets of pairwise $c$-compatible roots in $\AP{c}$).
We also prove the existence of unique cluster expansions (\cref{clus exp thm}), or in other words, we show (\cref{fan thm}) that the nonnegative spans of clusters are the maximal cones of a complete fan $\Fan_c(\RS)$. 
Finally, we give the affine version of the characterization of exchangeability in terms of compatibility degree.

We now describe the connection to cluster algebras.
Suppose $B$ is an exchange matrix arising from a Cartan matrix $A$ of affine type and an acyclic orientation of the associated Dynkin diagram.
The orientation of the Dynkin diagram is encoded in~$B$ as a choice of signs of off-diagonal entries.
We write $\A_\bullet(B)$ for the principal-coefficients cluster algebra determined by $B$.
Let $c$ be the Coxeter element obtained by multiplying the simple reflections $S$ in an order such that $s_i$ precedes $s_j$ if $b_{ij}>0$.
The notation $\Fan_c^\re(\RS)$ denotes the subfan of $\Fan_c(\RS)$ consisting of cones spanned by clusters not containing~$\delta$, and $\nu_c$ is a piecewise linear map defined in \cref{g d sec}.

\begin{theorem}\label{nu thm}
Suppose that $B$ is an acyclic exchange matrix whose associated Cartan matrix is of affine type.
Let $\RS$ be the associated root system and $c$ the associated Coxeter element.  
\begin{enumerate}[\rm(1)]
\item The piecewise-linear map $\nu_c$ induces an isomorphism from $\Fan_c^\re(\RS)$ to the $\g$-vector fan of $\A_\bullet(B)$.  
\item The cluster complex of $\A_\bullet(B)$ is isomorphic to the simplicial complex underlying $\Fan_c^\re(\RS)$.
\end{enumerate}
\end{theorem}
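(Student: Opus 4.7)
The plan is to prove part~(1) first and then deduce part~(2). Since \cref{fan thm} establishes $\Fan_c(\RS)$ as a complete fan whose maximal cones are spanned by clusters of $\AP{c}$, the subfan $\Fan_c^\re(\RS)$ consists of the maximal cones coming from $\delta$-free clusters, together with their faces. To produce a fan isomorphism, I would verify that $\nu_c$ restricts on each such maximal cone to a linear isomorphism onto a maximal cone of the $\g$-vector fan of $\A_\bullet(B)$, and that every $\g$-vector cone arises in this way.

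The inductive engine is the exchange graph. Starting from the cone over $-\Simples$, which maps by the very construction of $\nu_c$ onto the $\g$-vector cone of the initial cluster of $\A_\bullet(B)$, I would propagate the correspondence cone-by-cone via mutations. This requires two ingredients. First, every $\delta$-free cluster of $\AP{c}$ must be reachable from $-\Simples$ through a sequence of mutations that never produces $\delta$; this follows from the exchangeability results extended to affine type together with the structure of $\tau_c$-orbits of non-full-support roots. Second, $\nu_c$ must intertwine root exchange with the tropical mutation of $\g$-vectors: if $\beta \leftrightarrow \beta'$ is exchanged against a cluster $C \subset \AP{c}$, then $\nu_c(\beta)$ and $\nu_c(\beta')$ should satisfy the tropical $\g$-vector exchange formula against the $B$-matrix encoded by $\nu_c(C)$. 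The verification splits geometrically according to whether the mutation wall lies inside a single linearity chamber of $\nu_c$, in which case the finite-type argument transplants directly, or on a chamber boundary, where the explicit piecewise-linear transition rule for $\nu_c$ must be substituted into the tropical formula and shown to give the correct output.

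Part~(2) is then immediate. The cluster complex of $\A_\bullet(B)$ is the simplicial complex whose faces record which cluster variables share a cluster, while the simplicial complex underlying $\Fan_c^\re(\RS)$ has faces equal to subsets of $\delta$-free clusters of $\AP{c}$. Linear isomorphisms of cones automatically preserve face inclusions, so the cone-level bijection of part~(1) descends to the desired simplicial isomorphism.

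The main obstacle is the mutation verification at walls adjacent to a cluster containing $\delta$: these are exactly the walls where $\Fan_c^\re(\RS)$ fails to close up into a complete fan, and they correspond to directions in which the $\g$-vector fan of the affine cluster algebra $\A_\bullet(B)$ is itself known to be incomplete. The crux is confirming that these two incompletenesses align precisely under $\nu_c$, so that no $\g$-vector cone is missed and no cone of $\Fan_c^\re(\RS)$ is sent outside the $\g$-vector fan. Without this alignment the argument would produce only a local isomorphism, and it is here that the affine setting genuinely departs from the finite-type proof.
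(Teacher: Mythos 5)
Your overall strategy is genuinely different from the paper's, but as written it has a gap at exactly the point you flag as the ``crux.'' Your argument hinges on the second ingredient: that $\nu_c$ intertwines root exchange in $\AP{c}$ with the tropical mutation rule for $\g$-vectors, including across the linearity walls of $\nu_c$ and at facets of real cluster cones lying in $\Delta_c$. You do not supply this verification, and it is not a routine transplant of the finite-type argument: establishing it amounts to re-deriving the structural results about the affine $\g$-vector fan (sign-coherence, the mutation recursion for $\g$-vectors, and the fact that the $\g$-vector cones form a fan whose complement is the $(n-1)$-dimensional cone $\nu_c(\mathring\Delta_c)$). Without it, your propagation along the exchange graph never gets started past the initial cone, and your surjectivity claim (``every $\g$-vector cone arises in this way'') is circular, since it presupposes that each cluster-algebra mutation corresponds to a real exchange in $\Fan_c^\re(\RS)$ --- which is the very intertwining you have not proved. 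Your own closing paragraph concedes that without the alignment of the two ``incompletenesses'' the argument yields only a local statement, so the proposal is a plan with an identified missing lemma rather than a proof.

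The paper routes around this entirely. It imports from \cite{afframe} the facts that the $\g$-vector fan equals the doubled Cambrian fan $\DF_c=\F_c\cup(-\F_{c^{-1}})$ (\cref{DF g}) and that $V^*\setminus|\DF_c|$ is an $(n-1)$-dimensional cone (\cref{DF comp}). The new work is then modest: an induction on length showing that $\cl_c(v)$ is a real $c$-cluster for every $c$-sortable $v$ (using \cref{prop:inductive_cluster} and the finite-type result), the identity $\nu_{c^{-1}}^{-1}\circ(-\nu_c)=\tau_c$ on $\AP{c}$ (\cref{nu nu tau}) to handle the antipodal half $-\F_{c^{-1}}$, and a dimension count: since $\nu_c^{-1}$ is a homeomorphism, the complement of $\nu_c^{-1}(|\DF_c|)$ is $(n-1)$-dimensional, so every $n$-dimensional cone of $\Fan_c^\re(\RS)$ must already appear in the image, forcing the correspondence to be a bijection of fans. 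No mutation formula is ever checked. If you want to pursue your route, you would need to either prove the tropical intertwining directly (essentially reproving \cref{DF g}) or find an independent argument that the walls of $\Fan_c^\re(\RS)$ meeting $\Delta_c$ map onto the boundary walls of the $\g$-vector fan; the paper's dimension-count trick is the cleaner substitute for both.
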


The finite-type version of the theorem follows immediately from \cite[Theorem~8.1]{sortable} and \cite[Theorem~5.39]{framework}.
We prove \cref{nu thm} using results (proved here and in \cite{framework,afframe}) that use only the combinatorics of root systems and Coxeter groups.
In contrast, the following result requires a theorem (\cite[Proposition~9]{Rupel}, quoted here as \cref{nu d g}) that is proved using representation theory.
Given a seed $\Sigma$ (essentially a choice of $B$ and coefficients), write $\A_\Sigma$ for the cluster algebra determined by~$\Sigma$.

\begin{theorem}\label{denom thm}
Suppose $\Sigma$ is a seed with exchange matrix $B$ that is acyclic and whose associated Cartan matrix is of affine type.
Let $\RS$ be the associated root system and $c$ the associated Coxeter element.  
The map from cluster variables to $\d$-vectors is a bijection to $\AP{c}\setminus\{\delta\}$.
The nonnegative linear spans of $\d$-vectors of clusters in $\A_\Sigma$ are the maximal cones of a fan, which coincides with $\Fan_c^\re(\RS)$. 
\end{theorem}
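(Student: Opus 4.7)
The plan is to deduce this statement from \cref{nu thm} combined with Rupel's formula relating $\d$-vectors and $\g$-vectors via $\nu_c$ (\cref{nu d g}). Since $\d$-vectors depend only on the exchange matrix $B$ of $\Sigma$ and not on the choice of coefficients, I would work throughout with the principal-coefficients cluster algebra $\A_\bullet(B)$; transferring the bijection statement back to $\A_\Sigma$ is then automatic from the standard identification of cluster variables across different coefficient choices.

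The first step is to pass to the $\g$-vector side. By \cref{nu thm}(1), $\nu_c$ is a piecewise-linear isomorphism from $\Fan_c^\re(\RS)$ to the $\g$-vector fan of $\A_\bullet(B)$. It therefore induces a bijection between the rays of $\Fan_c^\re(\RS)$, whose primitive generators are the elements of $\AP{c}\setminus\{\delta\}$, and the rays of the $\g$-vector fan, whose primitive generators are the $\g$-vectors of cluster variables. Since cluster variables in a principal-coefficients cluster algebra are separated by their $\g$-vectors, this yields an a priori bijection from cluster variables to $\AP{c}\setminus\{\delta\}$.

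The second step is to identify this bijection with the $\d$-vector map. For every cluster variable $x$, Rupel's formula gives $\nu_c(\d(x))=\g(x)$. Since $\nu_c$ restricts to a bijection sending $\AP{c}\setminus\{\delta\}$ onto the set of $\g$-vectors, it follows that $\d(x)\in\AP{c}\setminus\{\delta\}$ and that $x\mapsto\d(x)$ coincides with the bijection constructed in step one. Initial cluster variables appear as a degenerate case: their $\d$-vectors are the negative simple roots, which lie in $\AP{c}$ (as negatives of simples) and correspond under $\nu_c$ to the $\g$-vectors of the initial cluster.

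For the fan assertion, fix a cluster $\x$ and let $D$ be the maximal cone of the $\g$-vector fan whose rays are $\{\reals_{\geq 0}\g(x):x\in\x\}$. Then $C=\nu_c^{-1}(D)$ is a maximal cone of $\Fan_c^\re(\RS)$, and $\nu_c|_C$ is linear. Applying $(\nu_c|_C)^{-1}$ to the identity $\nu_c(\d(x))=\g(x)$ shows that the $\d$-vectors of $\x$ are exactly the primitive generators of the rays of $C$; letting $\x$ vary, the cones spanned by $\d$-vectors of clusters coincide with the maximal cones of $\Fan_c^\re(\RS)$, and the fan axioms are inherited via the isomorphism $\nu_c^{-1}$. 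The main obstacle I anticipate is making sure that the inversion $\d(x)=\nu_c^{-1}(\g(x))$ is performed in the correct linearity piece of the piecewise-linear map $\nu_c$, so that Rupel's identity picks out a unique root in $\AP{c}\setminus\{\delta\}$ rather than merely a positive real multiple; this is where the chamber structure of $\Fan_c^\re(\RS)$ developed earlier in the paper does the essential work.
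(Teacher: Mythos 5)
Your proposal is correct and follows essentially the same route as the paper: combine \cref{nu thm} with Rupel's identity $\g(x)=\nu_c(\d(x))$ (\cref{nu d g}), use the fact that each cone of $\Fan_c^\re(\RS)$ lies in a domain of linearity of $\nu_c$, and reduce arbitrary coefficients to the principal-coefficients case via the coefficient-independence of $\d$-vectors (the paper's \cref{denom coeff}). The only cosmetic difference is that you first build the ray bijection and then match it to the $\d$-vector map, whereas the paper combines the two theorems in one step.
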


Ceballos and Pilaud \cite[Corollary~3.3]{CP} showed that the almost positive roots model describes denominator vectors in all finite types (without requiring acyclicity).
We conjecture that the same is true in affine type.
We write $\d(x)$ or $\d_\Sigma(x)$ for the denominator vector of a cluster variable $x$ with respect to an initial seed $\Sigma$.

\begin{conjecture}\label{c p conj} 
Suppose $\Sigma$ is an acyclic seed of affine type with exchange matrix~$B$, associated root system $\RS$ and associated Coxeter element~$c$.  
Index the cluster variables of $\A_\Sigma$ as $x(\beta)$ for $\beta\in\AP{c}$ according to the bijection in \cref{denom thm}.
Given any seed $\Sigma'$ mutation-equivalent to $\Sigma$ and with the cluster in $\Sigma'$ indexed as~$x(\beta_1),\ldots,x(\beta_n)$ and given any $\beta\in\AP c$, we have
\[\d_{\Sigma'}(x(\beta))=\bigl(\cm{\beta_1}\beta c,\ldots,\cm{\beta_n}\beta c\bigr).\]
\end{conjecture}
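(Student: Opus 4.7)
The plan is to induct on mutation distance from the acyclic initial seed $\Sigma$, extending Ceballos--Pilaud's finite-type argument. The base case $\Sigma'=\Sigma$ follows from \cref{denom thm}: the initial cluster roots are the negative simples $\beta_i=-\simple_i$, and one needs to verify that for every $\beta\in\AP{c}$ the value $\cm{-\simple_i}{\beta}{c}$ equals the $i$-th simple-root coordinate of $\beta$ (with the convention that $\cm{-\simple_i}{-\simple_i}{c}=-1$). This should follow directly from the explicit simple-root-coordinate formula for the compatibility degree sketched in the introduction.

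For the inductive step, suppose $\Sigma''$ is obtained from $\Sigma'$ by mutation at position $k$. Because the clusters in $\AP{c}$ are the maximal faces of the simplicial complex underlying the complete fan $\Fan_c(\RS)$ (\cref{fan thm}), the cluster of $\Sigma''$ differs from that of $\Sigma'$ by replacing $\beta_k$ with a uniquely determined root $\beta_k'\in\AP{c}$. Assuming the conjecture holds for $\Sigma'$, the entries of $\d_{\Sigma''}(x(\beta))$ with index $i\neq k$ already agree with $\cm{\beta_i}{\beta}{c}$. The tropical mutation rule for $\d$-vectors then reduces the inductive step to proving, for every $\beta\in\AP{c}$, an identity of the form
\[
\cm{\beta_k'}{\beta}{c}+\cm{\beta_k}{\beta}{c}=\max\!\left(\sum_{b_{ki}>0}b_{ki}\,\cm{\beta_i}{\beta}{c},\;\sum_{b_{ki}<0}(-b_{ki})\,\cm{\beta_i}{\beta}{c}\right),
\]
where the $b_{ki}$ are the entries of the exchange matrix of $\Sigma'$ (with appropriate symmetrizer factors in the skew-symmetrizable case).

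The main obstacle is this last identity: a tropical ``mutation rule'' for the compatibility degree in its first argument, entirely on the root-system side. One natural strategy is to transport the problem to the $\g$-vector fan via the piecewise-linear isomorphism $\nu_c$ of \cref{nu thm}, where mutation acts on $\g$-vectors by an explicit tropical rule; combining this with Rupel's conversion (\cref{nu d g}) between $\g$- and $\d$-vectors and pulling back through $\nu_c$ should yield the required identity. A complementary, more combinatorial approach is to use the affine exchangeability results of the paper to localize the identity to the two adjacent maximal cones of $\Fan_c(\RS)$ separated by the wall opposite $\beta_k$: inside this low-rank slice, both sides can be computed directly and compared. A further subtlety, which likely requires separate handling, is the case $\beta=\delta$: the imaginary root sits in a distinguished position within $\AP{c}$, and both the definition of $\cm{\cdot}{\delta}{c}$ and the cluster structure in its neighborhood require special care.
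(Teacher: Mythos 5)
The statement you are proving is labeled a \emph{conjecture} in the paper, and the paper does not prove it: it only establishes the base case and some special cases (\cref{conj acyc} for seeds reachable by source-sink mutations, \cref{conj surf} for types $A_{n-1}^{(1)}$ and $D_{n-1}^{(1)}$ via surface intersection numbers, plus computational checks via \cref{conj check}). Your proposal correctly identifies the natural inductive framework and correctly reduces the problem to a tropical ``mutation rule'' for the compatibility degree in its first argument, but it does not prove that identity, and the two strategies you offer for it do not close the gap. The route through $\nu_c$ and \cref{nu d g} is blocked because both are tied to an \emph{acyclic} reference seed: $\nu_c$ converts $\d$-vectors to $\g$-vectors only with respect to $\Sigma$ itself, whereas the identity you need concerns $\d_{\Sigma'}$ for an arbitrary (generally non-acyclic) seed $\Sigma'$, and no such conversion is available there. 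The ``localize to two adjacent cones'' route also cannot work as stated, because the identity quantifies over all $\beta\in\AP{c}$, and the values $\cm{\beta_i}{\beta}{c}$ for the $n$ cluster roots $\beta_i$ are not determined by any low-rank slice of $\Fan_c(\RS)$ near the wall being crossed.

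Two further points. First, even granting the compatibility-degree identity, the inductive step silently uses the componentwise-max recurrence for denominator vectors, i.e.\ that no cancellation occurs in the numerator of the exchange relation when expressed in the initial cluster; this is itself a nontrivial statement (a piece of the Fomin--Zelevinsky denominator conjectures) and needs justification in affine type. Second, your treatment of $\beta=\delta$ is flagged as ``requiring special care'' but never carried out; note that the bijection of \cref{denom thm} is onto $\AP{c}\setminus\set{\delta}$, so $x(\delta)$ is not a cluster variable and the $\delta$ case of the conjectural formula concerns a different (conjectural) element of the cluster algebra. In short, the proposal is a reasonable reduction of an open problem to another open problem, not a proof; the honest conclusion is that the key identity remains exactly as hard as the conjecture itself.
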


After playing a crucial role in the classification of cluster algebras of finite type (as described above), $\d$-vectors have fallen somewhat out of favor, because $\g$-vectors appear to have nicer properties.
However, \cite[Corollary~3.3]{CP} and \cref{c p conj} exhibit a nice property of $\d$-vectors that appears to have no analog for $\g$-vectors.
In \cref{g d sec}, we discuss evidence for \cref{c p conj}, including the case of surfaces, many cases where $\Sigma'$ is also acyclic, and some computational evidence.

\begin{remark}[Applications]\label{app remark}  
The connections to cluster algebras mentioned above concern $\Fan_c^\re(\RS)$, which is only part of the complete fan $\Fan_c(\RS)$, but we expect the entire fan $\Fan_c(\RS)$ to be important to the theory of cluster algebras.
As part of work in progress, we have constructed cluster scattering diagrams (in the sense of \cite{GHKK}) of affine type, and proved that the fan defined by these cluster scattering diagrams coincides with $\nu_c\Fan_c(\RS)$.
Furthermore, we have proved that the mutation fan (in the sense of \cite{universal}) coincides with $\nu_c\Fan_c(\RS)$.
As a consequence, we expect to prove a conjectured description of affine universal geometric cluster algebras \cite[Conjecture~10.15]{universal}.
We also intend to study, with Jon McCammond, the surprising similarities between $\Fan_c(\RS)$ and a lattice constructed by McCammond and Sulway \cite{McSul} to prove longstanding conjectures about Euclidean Artin groups.
(Compare especially \cite[Table~1]{McSul} and \cite[Proposition~7.6]{McSul} with \cref{tab:type-by-type,cyclo}.)
\end{remark}

We now make some remarks on the connection between the current paper and the existing literature.
In particular, \cref{denom rem,compat ext rem,decomp rem} describe the relationship between our results and some related representation-theoretic results.

\begin{remark}[Related work on the affine case]\label{affine rem}
\cref{nu thm,denom thm} build affine $\g$-vector fans and $\d$-vector fans in terms of compatibility of roots in $\AP{c}$.  
Affine $\g$-vector fans were previously constructed in \cite{afframe} as doubled Cambrian fans.
In particular, cones were constructed by specifying their normal vectors.
The construction here gives $\g$-vectors directly (rather than by inverting a matrix) and provides a direct test for compatibility (whether two cluster variables are in a common cluster) and exchangeability (whether they are in adjacent clusters).
Affine $\d$-vectors were also constructed in \cite{Buan08b}, as we will discuss further in \cref{compat ext rem}.
Affine cluster algebras of rank $2$ are treated in \cite{CalZel,MusPro,SZ,Zel}.  
The affine types (except for a finite number of exceptional cases) can be approached through the surfaces model \cite[Section~6]{Fomin08} or the orbifolds model \cite{FeShTu12}.
Finally, \cite{RupSteWil} characterizes cluster variables of affine type as generalized minors.
\end{remark}

\begin{remark}[Schur roots] \label{denom rem} 
  Caldero and Keller \cite[Theorem~4]{Caldero05} showed, for any acyclic quiver, that the cluster character
  map \cite{CalderoChapoton} is a bijection from the set of indecomposable objects without self-extensions in the cluster category to the set of cluster variables.
They also showed \cite[Theorem~3]{Caldero05} that the dimension vector of the object is the $\d$-vector of the corresponding cluster variable.  
The dimension vectors of these indecomposable objects are the real \newword{Schur roots}.
Thus the first assertion in our \cref{denom thm} combines with the Caldero-Keller results to identify the roots $\AP{c}$ as follows:
\begin{corollary}\label{apS}  
  If $B$ is acyclic and skew-symmetric, with associated root system $\RS$ and Coxeter element $c$, then the positive real 
  roots in $\AP{c}$ are precisely the Schur roots for the corresponding quiver.
\end{corollary}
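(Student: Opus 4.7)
The result follows by composing two bijections. First I would apply \cref{denom thm}: under the hypotheses of the corollary (the acyclic skew-symmetric case is covered; the finite case is the classical Fomin–Zelevinsky bijection), the map $x\mapsto\d_\Sigma(x)$ is a bijection from the cluster variables of $\A_\Sigma$ to $\AP{c}\setminus\{\delta\}$. Second, by the Caldero–Keller results quoted just above, the cluster character is a bijection from indecomposable rigid objects of the cluster category to cluster variables, and it carries the dimension vector of each object to the $\d$-vector of the corresponding cluster variable. Composing, we identify $\AP{c}\setminus\{\delta\}$ with the set of dimension vectors of indecomposable rigid objects of the cluster category.

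To extract the claim I would restrict both sides to the ``positive real'' part. On the root-system side, $\AP c$ contains the imaginary root $\delta$ (excluded by reality) and the negative simple roots $-\simple_i$, which sit inside the $\tau_c$-orbits appearing in the definition of $\AP c$ and are excluded by positivity. On the cluster-category side, the initial cluster variables of $\A_\Sigma$, which have $\d$-vectors $-\simple_i$ in the principal-coefficients convention, correspond under Caldero–Keller to the shifted projectives $P_i[1]$; these are precisely the indecomposable rigid objects of the cluster category that are not honest modules. Stripping these off on both sides identifies the positive real roots in $\AP c$ with the dimension vectors of the indecomposable rigid (equivalently, exceptional) modules of the path algebra of the quiver, which are by definition the real Schur roots.

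The proof therefore contains no substantive mathematical obstacle; the entire content lies in \cref{denom thm} together with the Caldero–Keller bijection. The only point requiring care is the bookkeeping step of matching the negative simple roots in $\AP c$ with the initial cluster variables on the $\d$-vector side and with the shifted projectives on the cluster-category side, so that removing them on all three sides simultaneously yields the desired identification between positive real roots in $\AP c$ and Schur roots.
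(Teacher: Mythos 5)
Your proposal is correct and follows essentially the same route as the paper: \cref{apS} is obtained in \cref{denom rem} precisely by composing the bijection of \cref{denom thm} (cluster variables $\to$ $\d$-vectors $=\AP{c}\setminus\set\delta$) with the Caldero--Keller bijection (indecomposable rigid objects $\to$ cluster variables, carrying dimension vectors to $\d$-vectors). Your extra bookkeeping matching the negative simple roots with the initial cluster variables and the shifted projectives is a reasonable explicit justification of the restriction to positive real roots, which the paper leaves implicit.
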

Our \cref{denom thm} uses \cref{nu d g}, which is proved in \cite{Rupel} using the same circle of ideas that appear in \cite{Caldero05} (generalized to the skew-symmetrizable setting).
It seems likely that one could use ideas from \cite{Caldero05,Rupel} to prove \cref{apS} directly, and then cite \cite[Theorem~3]{Caldero05} (or its generalization \cite[Proposition~5]{Rupel}) to prove \cref{denom thm}.
In any case, in light of \cref{apS}, it seems reasonable to call $\AP{c}$ the ``almost positive Schur roots''.
Comparison with \cite{IPT,Scherotzke} makes it clear that $\APT{c}=\APre{c}\cap\eigenspace{c}$ (\cref{def:Phic}) is the set of dimension vectors of the regular representations.
As usual, the deformed Coxeter element $\tau_c$ corresponds to the Auslander-Reiten translation (or its inverse, depending on conventions).
\end{remark}

\begin{remark}[Compatibility degree, $\textrm{Ext}^1$, and $\textrm{Hom}$]\label{compat ext rem}
One representation-theoretic notion of compatibility degree is the dimension of $\textrm{Ext}^1$ between the corresponding modules in the cluster category.
Comparing \cite[Theorems~3~and~4]{Caldero05} to our \cref{nu thm,denom thm} and comparing \cite[Lemma~3.2]{Scherotzke} or \cite[Lemma~5.3]{IPT} with our  \cref{delta c compat}, we see that our compatibility degree \emph{on distinct roots} is zero if and only if the $\mathrm{Ext}^1$ compatibility degree is zero.
In other words, our notion of $c$-compatibility (\cref{compat def}) agrees with the $\mathrm{Ext}^1$ notion of compatibility.
However, beyond that, the two notions of compatibility degree need not agree.
By \cite[Theorem~7.5]{Buan06}, the $\mathrm{Ext}^1$ compatibility degree of real roots $\alpha$ and $\beta$ is $1$ in both directions if and only if $\alpha$ and $\beta$ are \emph{real} $c$-exchangeable in the sense of \cref{def exchangeable}.
\cref{exchangeable} shows that our $c$-compatibility degree on real roots is 1 in both directions if and only if the two roots are $c$-exchangeable (as opposed to \emph{real} $c$-exchangeable).  
The notions of $c$-exchangeability and real $c$-exchangeability are distinct, as made clear in \cref{exchangeable}.

In \cite[Theorem~A]{Buan08b}, denominator vectors in affine type are described in terms of the dimensions of certain spaces of homomorphisms.
(This is not called a ``compatibility degree'' in \cite{Buan08b}, but perhaps should be.)
If \cref{c p conj} is true, then our $c$-compatibility degree constitutes a root-theoretic formulas for these dimensions.
\end{remark}

\begin{remark}[Cluster expansions and canonical decompositions]\label{decomp rem}
Since compatibility is the same in our setting and the representation-theoretic setting, our cluster expansions (\cref{clus exp def}) are analogous to the canonical decompositions (or generic decompositions) of \cite{Kac82}.
Thus, parts of \cref{cluster properties} correspond to results proved in \cite[Section~3]{Scherotzke} and \cite[Section~6]{IPT}.
The canonical decomposition fan constructed in \cite{IPT} coincides with $\Fan_c(\RS)$ within the span of the positive roots.  
\end{remark}

We conclude this introduction with a remark about some potentially simplifying choices that we did not make.

\begin{remark}
In constructing the $\d$-vector fan and $\g$-vector fan, we could have saved some complications by ignoring the imaginary root $\delta$.
However, the complete fan that we obtain by including $\delta$ is crucial to the future work mentioned in \cref{app remark}.
Similarly, if our goals were entirely combinatorial, we could have restricted our attention to the standard affine root systems (the affinizations of finite root systems). 
However, the full range of affine root systems is needed in order to model all cluster algebras of affine type.
Because we didn't make these simplifications, some results cannot be stated uniformly.
Rather, there are exceptions in one affine type:  Type $A_{2k}^{(2)}$ in Kac's notational system \cite[\S4.8]{Kac90}.
This is the unique type with three distinct root lengths (except when $k=1$, when there are two lengths with ratio $2$).
\end{remark}

\section{Background}\label{sec:background}

\subsection{Root systems and Coxeter groups}\label{sec:root_systems}
We assume familiarity with the most basic theory of Cartan matrices, root systems and Coxeter groups, and in this background section the goal is to establish terminology and notation and to recall some some of the less basic aspects of the theory.

Let $n$ be a positive integer.
A Cartan matrix is a symmetrizable integer matrix $A=[a_{ij}]_{1\le i,j\le n}$, with symmetrizing constants $d_i$ (so that $d_i a_{ij}=d_j a_{ji}$ for all $i,j$).

Let $V$ \nomenclature[v]{$V$}{ambient vector space} be a real vector space with basis $\Simples=\set{\alpha_1,\ldots,\alpha_n}$.\nomenclature[zzq]{$\Simples$}{simple roots}
The $\alpha_i$ are the \newword{simple roots}.\nomenclature[zza1]{$\alpha_i$}{simple root}
The \newword{simple co-roots} are $\alpha_i\ck= d_i^{-1} \alpha_i$ \nomenclature[zza2c]{$\alpha_i\ck$}{simple coroot} and $\Simples\ck$ is the set of simple co-roots. \nomenclature[zzqc]{$\Simples\ck$}{simple coroots}
Given $v\in V$ and $\alpha_i\in\Simples$, we write $[v:\alpha_i]$ \nomenclature[zzzz]{$[v:\alpha_i]$}{$i$-th coefficient of $v$ in the basis $\Simples$} for the coefficient of $\alpha_i$ when $v$ is expanded in the basis of simple roots.
Let $V^*$ \nomenclature[v_zzzzd]{$V^*$}{dual space to $V$} be the dual space to $V$ and let $\br{\,\cdot\,,\,\cdot\,}$ \nomenclature[zzzz]{$\br{\,\cdot\,,\,\cdot\,}$}{canonical pairing between $V^*$ and $V$} be the canonical pairing between $V^*$ and $V$. 
The \newword{fundamental weights} $\set{\rho_i:i=1,\ldots,n}$ \nomenclature[zzri]{$\rho_i$}{fundamental weight} are the basis of $V^*$ that is dual to the basis $\Simples\ck$ of co-roots.

The Cartan matrix $A$ encodes a symmetric bilinear form $K$ on $V$ defined by ${K(\alpha\ck_i, \alpha_j)=a_{ij}}$. 
\nomenclature[k]{$K$}{symmetric bilinear form on $V$}
For each $i=1,\ldots,n$, the \newword{simple reflection} $s_i$ \nomenclature[si]{$s_i$}{simple reflection} is defined on the basis $\Simples$ by $s_i(\alpha_j)=\alpha_j-K(\alpha\ck_i, \alpha_j) \alpha_i$.
Equivalently, $s_i(\alpha\ck_j)=\alpha\ck_j-K(\alpha\ck_j, \alpha_i) \alpha\ck_i$.
The \newword{Weyl group} $W$ \nomenclature[w]{$W$}{Weyl group} is the group generated by $S=\set{s_i:i=1,\ldots,n}$. \nomenclature[s]{$S$}{simple reflections} 
Each $s_i$ acts as a reflection with respect to $K$, and thus the action of $W$ preserves $K$.
As usual, the action of $W$ on $V^*$, dual to its action on $V$, is given by $\br{w\phi,v}=\br{\phi,w^{-1}v}$.

\begin{remark}\label{where things live}
Our convention places both roots and co-roots in the space $V$ and places weights and co-weights in the dual space $V^*$.
The standard Lie-theoretic setup places roots and weights in $V^*$ and co-roots and co-weights in $V$.
The approach used here matches the approach implied in \cite[Chapter~4]{Bj-Br} and the approach in \cite{framework,cyclic,typefree,afframe}.
Furthermore, while the scattering diagram construction in \cite{GHKK} is not phrased in terms of roots and weights, it is naturally rephrased in those terms, and this rephrasing also follows the convention of the present paper.
\end{remark}

The \newword{real roots} are the vectors $w\alpha_i\in V$ for $w\in W$ and $i=1,\ldots,n$, and the \newword{real co-roots} are the vectors $w\alpha_i\ck$.
The \newword{real root system} $\RSre$ \nomenclature[zzv3re]{$\RSre$}{real root system} is the set of all real roots.
The \newword{root system} is a larger set $\RS$, \nomenclature[zzv1]{$\RS$}{root system} containing $\RSre$ (strictly when $\RSre$ is infinite).
The set $\RS\setminus\RSre$ is the set of \newword{imaginary roots}.
We do not need the full generality of imaginary roots, so we do not define them here. 

The root system $\RS$ is a subset of the \newword{root lattice} (the lattice spanned by $\Simples$), and is the disjoint union of \newword{positive roots} $\RSpos=\set{\beta\in\RS:[\beta:\alpha_i]\ge0\text{ for }i=1,\ldots n}$ \nomenclature[zzv1zza]{$\RSpos$}{positive roots} and \newword{negative roots} $\RSneg=-\RSpos$.
For each real root $\beta$ there is a co-root $\beta\ck=\frac{2}{K(\beta,\beta)}\beta$.
There is a bijection $\beta\mapsto t_\beta$ between real positive roots and reflections in $W$ given by $t_\beta(v)=v-K(\beta\ck, v) \beta$\nomenclature[sitbeta]{$t_\beta$}{reflection with respect to $\beta$} for $v\in V$.

A \newword{Coxeter element} $c$ \nomenclature[c]{$c$}{Coxeter element $s_1\cdots s_n$} is the product of any permutation of~$S$.
Mostly, we fix~$c$ and assume that $\Simples$ has been indexed so that $c=s_1\cdots s_n$.
But sometimes we let~$c$ vary, usually without referring directly to any numbering of the simple reflections.
For $s\in S$, we say $s$ is \newword{initial in $c$} if $c$ has a reduced word whose first letter is $s$ and $s$ is \newword{final in $c$} if $c$ has a reduced word whose last letter is $s$.
A \newword{source-sink move} is the operation of replacing $c$ by the Coxeter element $scs$ for $s$ initial or final in~$c$.

%

Given a reduced word $s_1\cdots s_n$ for $c$, the \newword{Euler form} $E_c$ \nomenclature[ec]{$E_c$}{Euler form on $V$} is defined on the bases of simple roots and co-roots by 
\begin{equation}\label{Ec def}
E_c(\alpha\ck_i,\alpha_j)=\begin{cases}
a_{ij}&\text{if } i>j,\\
1&\text{if }i=j,\text{ or}\\
0&\text{if } i<j.
\end{cases}
\end{equation}
Since $a_{ij}=K(\alpha\ck_i,\alpha_j)$, for any $\alpha,\beta\in V$ we have
\begin{equation}\label{K Ec}
K(\alpha, \beta)=E_c(\alpha, \beta)+E_{c}(\beta, \alpha).
\end{equation}
The form $E_c$ depends on $c$ but is independent of the choice of reduced word for~$c$.
Some facts about it will be useful; the first is \cite[Lemma~3.3]{typefree}.

\begin{lemma}\label{Ec invariant}
If~$s$ is initial or final in $c$, then $E_c(\alpha,\beta)=E_{scs}(s\alpha,s\beta)$ for all $\alpha$ and~$\beta$ in $V$.
\end{lemma}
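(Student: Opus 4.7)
The plan is to verify the identity directly on the basis $\Simples\ck \times \Simples$, exploiting the stated independence of $E_c$ from the choice of reduced word. Assume first that $s = s_1$ is initial in $c$, and choose the reduced word $s_1 s_2 \cdots s_n$ for $c$; then $scs = s_1 c s_1$ has reduced word $s_2 s_3 \cdots s_n s_1$, and when we compute $E_{scs}$ from its defining formula \eqref{Ec def}, the relevant index ordering on $\set{1,2,\ldots,n}$ is $2 < 3 < \cdots < n < 1$. Since both $E_c$ and $E_{scs}$ are bilinear, it suffices to show that $E_c(\alpha_i\ck, \alpha_j) = E_{scs}(s_1 \alpha_i\ck, s_1 \alpha_j)$ for all $i,j \in \set{1,\ldots,n}$.

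I would break this into four cases according to whether $i$ and $j$ equal $1$. The key ingredients are $s_1 \alpha_k = \alpha_k - a_{1k}\alpha_1$ and $s_1 \alpha_k\ck = \alpha_k\ck - a_{k1}\alpha_1\ck$ for $k \neq 1$ (together with $s_1 \alpha_1 = -\alpha_1$), plus the following evaluations of $E_{scs}$ which are immediate from the ordering $2 < \cdots < n < 1$: for $i,j \neq 1$ one has $E_{scs}(\alpha_i\ck, \alpha_j) = E_c(\alpha_i\ck, \alpha_j)$, while $E_{scs}(\alpha_i\ck, \alpha_1) = 0$ for $i \neq 1$ and $E_{scs}(\alpha_1\ck, \alpha_j) = a_{1j}$ for $j \neq 1$. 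Expanding $E_{scs}(s_1 \alpha_i\ck, s_1 \alpha_j)$ bilinearly in each of the four cases and collecting terms then reduces, after a short cancellation in the $i,j \neq 1$ case, to the desired value of $E_c(\alpha_i\ck, \alpha_j)$. The $i = j = 1$ case is trivial because the two sign changes cancel, and the remaining two mixed cases collapse immediately.

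For the case where $s$ is final in $c$, I would argue symmetrically: choose the reduced word ending in $s = s_n$, so that $scs$ has reduced word beginning with $s_n$, and the ordering $n < 1 < 2 < \cdots < n-1$ governs $E_{scs}$. The same four-case verification applies, with the roles of ``initial'' and ``final'' exchanged in the analogue of \eqref{Ec def}. Alternatively, one could observe that the statement for $s$ final in $c$ follows from the statement for $s$ initial in $c^{-1}$ via the identity $E_{c^{-1}}(\alpha,\beta) = E_c(\beta,\alpha)$, which itself is immediate from \eqref{Ec def} since reversing the reduced word reverses the ordering of the indices.

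The main obstacle is purely bookkeeping: making the sign conventions, the direction of the arrow in \eqref{Ec def}, and the repositioning of the index~$1$ (or~$n$) under conjugation by $s$ all align, and confirming that the cross terms $a_{i1} a_{1j}$ that appear when both coordinates are shifted do indeed cancel against the diagonal contribution $E_{scs}(\alpha_1\ck, \alpha_1) = 1$. Once the cases are set up correctly, each one is a two-line computation.
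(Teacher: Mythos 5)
Your proof is correct. The paper does not actually prove \cref{Ec invariant} itself --- it quotes the statement from an external source (Lemma~3.3 of \cite{typefree}) --- so your direct verification on the basis $\Simples\ck\times\Simples$ is a legitimately self-contained alternative, and I checked the computations: with the ordering $2<3<\cdots<n<1$ governing $E_{scs}$, the four cases come out exactly as you say, the cross term $a_{i1}a_{1j}$ cancelling against $a_{i1}a_{1j}\,E_{scs}(\alpha_1\ck,\alpha_1)$ in the $i,j\neq 1$ case, and the symmetric computation for $s$ final works the same way. One small caution on your ``alternative'' route for the final case: the identity $E_{c^{-1}}(\alpha,\beta)=E_c(\beta,\alpha)$ is the paper's \cref{Ec inv T}, and it is not quite immediate from reversing the index order --- because the form is defined with a co-root in the first slot and a root in the second, converting $E_c(\beta,\alpha)$ back to basis values requires the symmetrizability relation $d_ia_{ij}=d_ja_{ji}$, exactly as in the paper's proof of that lemma. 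Since your primary argument for the final case is the direct four-case check, this does not affect the validity of your proof, but the word ``immediate'' oversells that shortcut.
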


Applying \cref{Ec invariant} repeatedly with $s$ running backwards through a reduced word for $c$, we obtain the following fact:

\begin{lemma}\label{Ec c}
$E_c(\alpha,\beta)=E_c(c\alpha,c\beta)$ for all $\alpha,\beta\in V$.
\end{lemma}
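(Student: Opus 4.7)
The plan is to iterate Lemma~\ref{Ec invariant} by running a simple reflection backwards through a fixed reduced word for $c$. Concretely, fix $c=s_1s_2\cdots s_n$ and let $c_0=c$. Since $s_n$ is final in $c_0$, Lemma~\ref{Ec invariant} gives
\[E_c(\alpha,\beta)=E_{s_nc_0s_n}(s_n\alpha,s_ns_n\beta).\]
Setting $c_1=s_nc_0s_n=s_ns_1s_2\cdots s_{n-1}$ (after cancelling the final $s_ns_n$), one checks that $s_{n-1}$ is now final in $c_1$.

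Inductively, define $c_k=s_{n-k+1}c_{k-1}s_{n-k+1}$. A direct check shows
\[c_k=s_{n-k+1}s_{n-k+2}\cdots s_n\cdot s_1s_2\cdots s_{n-k},\]
so that $s_{n-k}$ is final in $c_k$ and Lemma~\ref{Ec invariant} applies at the next step. Iterating this $n$ times, the factor introduced on the arguments at step $k$ is $s_{n-k+1}$, so after $n$ steps the accumulated prefix is $s_1s_2\cdots s_n=c$. Meanwhile $c_n=s_1s_2\cdots s_n\cdot(\text{empty})=c$, so we arrive at
\[E_c(\alpha,\beta)=E_c(c\alpha,c\beta),\]
which is the desired identity.

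There is no real obstacle here beyond bookkeeping with indices; the whole argument is just a careful iteration of Lemma~\ref{Ec invariant}. The only point to double-check is that at each intermediate stage the element $c_k$ is still a Coxeter element (i.e., a product of all the simple reflections in some order) so that ``initial/final'' makes sense, which is automatic because conjugation by simple reflections only cycles the reduced word.
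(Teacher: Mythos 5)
Your argument is correct and is exactly the paper's proof: the paper proves this lemma by "applying Lemma~\ref{Ec invariant} repeatedly with $s$ running backwards through a reduced word for $c$," which is precisely the iteration you carry out explicitly. The only blemish is the typo $s_ns_n\beta$ in your first display, which should read $s_n\beta$ (as your subsequent steps make clear you intended).
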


We also check two more useful facts about $E_c$.
\begin{lemma}\label{Ec 1}
$E_c(\beta\ck,\beta)=1$ for all $\beta\in\RSre$.
\end{lemma}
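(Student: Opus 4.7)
The plan is to prove the lemma by a direct three-line calculation rather than by invoking \cref{Ec invariant,Ec c}. The key observation is that for a real root $\beta$, the co-root $\beta\ck=\frac{2}{K(\beta,\beta)}\beta$ is simply a scalar multiple of $\beta$ itself, and this collapses the problem to a manipulation of the identity $K=E_c+E_c^\top$ given in \cref{K Ec}.

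First, I would note that $E_c$, being defined on a pair of bases and extended, is a bilinear form on $V$. Since $\beta\ck$ and $\beta$ are proportional, bilinearity yields
\[E_c(\beta\ck,\beta)=\frac{2}{K(\beta,\beta)}E_c(\beta,\beta)=E_c(\beta,\beta\ck),\]
so the two ``off-diagonal'' evaluations coincide. Next, applying \cref{K Ec} with $\alpha=\beta\ck$ gives
\[K(\beta\ck,\beta)=E_c(\beta\ck,\beta)+E_c(\beta,\beta\ck)=2\,E_c(\beta\ck,\beta).\]
Finally, the left-hand side is computed directly from the definition: $K(\beta\ck,\beta)=\frac{2}{K(\beta,\beta)}K(\beta,\beta)=2$. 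Dividing by $2$ produces $E_c(\beta\ck,\beta)=1$.

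There is no real obstacle here; the only thing one must check is that $K(\beta,\beta)\neq 0$, which holds for every real root $\beta$ because $\beta$ is obtained from a simple root $\alpha_i$ by the $K$-preserving action of $W$, and $K(\alpha_i,\alpha_i)=2d_i\neq 0$. In particular, neither \cref{Ec invariant} nor \cref{Ec c} is needed for this lemma (they are presumably set up for the next fact about $E_c$), and no induction on the length of a Weyl-group element expressing $\beta$ is required. The proof works uniformly in finite and affine type and relies only on the bilinearity of $E_c$ together with the symmetric-plus-antisymmetric decomposition encoded in \cref{K Ec}.
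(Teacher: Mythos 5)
Your proof is correct and is essentially identical to the paper's: both use that $\beta\ck$ is a positive scaling of $\beta$ so that bilinearity gives $E_c(\beta\ck,\beta)=E_c(\beta,\beta\ck)$, and then apply \cref{K Ec} together with $K(\beta\ck,\beta)=2$. The paper likewise does not invoke \cref{Ec invariant} or \cref{Ec c} here.
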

\begin{proof}
Since $\beta\ck$ is a positive scaling of $\beta$ and $E_c$ is bilinear, we have $E_c(\beta,\beta\ck)=E_c(\beta\ck,\beta)$.
Thus \cref{K Ec} says that $E_c(\beta\ck,\beta)=\frac12K(\beta\ck,\beta)=1$ for $\beta\in\RSre$. 
\end{proof}

\begin{lemma}\label{Ec inv T}
$E_c(\alpha,\beta)=E_{c^{-1}}(\beta,\alpha)$ for all $\alpha,\beta\in V$.
\end{lemma}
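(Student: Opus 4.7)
The plan is to verify the identity on basis vectors and extend by bilinearity. Since $E_c$ and $E_{c^{-1}}$ are both bilinear forms on $V$, it is enough to show that $E_c(\alpha_i,\alpha_j)=E_{c^{-1}}(\alpha_j,\alpha_i)$ for all $i,j\in\{1,\ldots,n\}$. Using $\alpha_i\ck=d_i^{-1}\alpha_i$ and bilinearity, rewrite these as $d_i E_c(\alpha_i\ck,\alpha_j)$ and $d_j E_{c^{-1}}(\alpha_j\ck,\alpha_i)$ so that the defining formula~\eqref{Ec def} applies directly on each side.

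The key observation is that reversing a reduced word for $c$ produces a reduced word for $c^{-1}$: if $c=s_1\cdots s_n$ is reduced, then $s_n\cdots s_1$ is reduced for $c^{-1}$. Because $E_c$ is independent of the choice of reduced word, I can evaluate $E_{c^{-1}}$ using the reversed word; the effect is that the strict inequalities in~\eqref{Ec def} flip when passing from $c$ to $c^{-1}$. Concretely, $E_{c^{-1}}(\alpha_j\ck,\alpha_i)$ equals $a_{ji}$ if $j<i$, equals $1$ if $j=i$, and equals $0$ if $j>i$.

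Then I would compare the two sides in the three cases. When $i>j$, the left side equals $d_i a_{ij}$ and the right side equals $d_j a_{ji}$, and these agree by the symmetrizability relation $d_i a_{ij}=d_j a_{ji}$. The diagonal case $i=j$ is immediate, and when $i<j$ both sides are zero.

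The proof is essentially bookkeeping; the only thing that needs care is the interplay between reversing the ordering on indices (when passing from $c$ to $c^{-1}$) and swapping the two arguments of the Euler form, together with the rescaling by $d_i$ or $d_j$ that arises in switching between the simple-root and simple-coroot bases. No deeper input (such as \cref{Ec invariant} or \cref{Ec c}) seems to be needed.
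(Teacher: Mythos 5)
Your proof is correct and follows essentially the same route as the paper: both arguments reduce to checking the identity on a basis, observing that reversing the word flips the inequalities in the definition of $E_{c^{-1}}$, and invoking the symmetrizability relation $d_i a_{ij}=d_j a_{ji}$. The only cosmetic difference is that you check the identity on pairs of simple roots while the paper checks it on a simple root paired with a simple coroot.
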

\begin{proof}
For any simple co-root $\alpha_i\ck$ and simple root $\alpha_j$, we have
\[E_{c^{-1}}(\alpha_j,\alpha\ck_i)=\frac{K(\alpha_j,\alpha_j)}{K(\alpha_i,\alpha_i)}E_{c^{-1}}(\alpha_j\ck,\alpha_i)=\begin{cases}
\frac{K(\alpha_j,\alpha_j)}{K(\alpha_i,\alpha_i)}a_{ji}&\text{if } i>j,\\
1&\text{if }i=j,\text{ or}\\
0&\text{if } i<j,
\end{cases}
\]
which agrees with \cref{Ec def}.
\end{proof}

We now review, and slightly modify for the present purposes, a result of Howlett \cite[Theorem~2.1]{Howlett}.
The proof given here is from \cite{Howlett}.

We abuse the notation $E_c$ by allowing it to stand not only for the bilinear form as above, but also for the matrix giving that form in the basis of simple roots on the right and simple co-roots on the left. 
Thus $E_c$ is the $n\times n$ matrix whose $ij$-entry is~$a_{ij}$ if $i>j$, is $1$ if $i=j$, and is $0$ if $i<j$.
Similarly, $E_{c^{-1}}$ is the $n\times n$ matrix whose $ij$-entry is $0$ if $i>j$, is $1$ if $i=j$, and is $a_{ij}$ if $i<j$.

\begin{theorem}\label{Howlett's}
Given an arbitrary symmetrizable Cartan matrix $A$ and a Coxeter element $c=s_1\cdots s_n$, the matrix for $c$ in the basis of simple roots is $-E_{c^{-1}}^{-1}E_c$.
\end{theorem}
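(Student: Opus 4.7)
My plan is to unpack the claimed matrix identity into a scalar identity involving the bilinear forms and then verify that identity by telescoping $c = s_1 \cdots s_n$. Since $(E_c)_{ij} = E_c(\alpha_i^\vee, \alpha_j)$ and similarly for $E_{c^{-1}}$, the assertion $[c] = -E_{c^{-1}}^{-1} E_c$ is equivalent to $E_{c^{-1}}(\alpha_i^\vee, c\alpha_j) = -E_c(\alpha_i^\vee, \alpha_j)$ for all $i,j$. Applying \cref{Ec inv T} and then \cref{K Ec}, together with the symmetry of $K$, I would rewrite this as
\[
E_c(\alpha_i^\vee, (c-1)\alpha_j) = K(\alpha_i^\vee, c\alpha_j). \qquad (\star)
\]

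To evaluate both sides of $(\star)$, I would set $w_k = s_{k+1} \cdots s_n$ (so $w_0 = c$ and $w_n = 1$) and use the telescoping
\[
(1-c)\alpha_j = \sum_{k=1}^n (w_k - w_{k-1})\alpha_j = \sum_{k=1}^n (1-s_k)w_k\alpha_j = \sum_{k=1}^n f_k\, \alpha_k,
\]
where $f_k := K(\alpha_k^\vee, w_k\alpha_j)$, using that $(1-s_k)v = K(\alpha_k^\vee, v)\alpha_k$ for every $v \in V$. The triangular structure of $E_c$ then turns the left side of $(\star)$ into $-f_i - \sum_{k<i}a_{ik}f_k$, while the right side becomes $K(\alpha_i^\vee, \alpha_j) - \sum_k a_{ik}f_k = a_{ij} - \sum_k a_{ik}f_k$. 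Rearranging and using $a_{ii}=2$, the identity $(\star)$ reduces to
\[
f_i + \sum_{l>i} a_{il}f_l = a_{ij} \qquad \text{for every } i.
\]

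To prove this last equation I would introduce, with $i$ fixed, the auxiliary quantities $g_k := K(\alpha_i^\vee, w_k\alpha_j)$; note that $g_n = K(\alpha_i^\vee,\alpha_j) = a_{ij}$ and $g_i = f_i$. From $w_{k-1} = s_k w_k$ together with $K(\alpha_i^\vee, s_k v) = K(\alpha_i^\vee,v) - a_{ik}K(\alpha_k^\vee,v)$, one obtains the recursion $g_{k-1} = g_k - a_{ik}f_k$. Summing from $k = i+1$ to $n$ telescopes to $\sum_{l>i}a_{il}f_l = g_n - g_i = a_{ij} - f_i$, which rearranges to the required identity. The main obstacle I anticipate is purely bookkeeping: keeping careful track of coroots versus roots, of $c$ versus $c^{-1}$, and of the asymmetric triangular conditions ($i>j$, $i=j$, $i<j$) in the definitions of $E_c$ and $E_{c^{-1}}$. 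Once the reduction to $f_i + \sum_{l>i}a_{il}f_l = a_{ij}$ is in place, the second telescoping is mechanical.
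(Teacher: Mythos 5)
Your proposal is correct, and it takes a genuinely different route from the paper. The paper proves the identity $E_{c^{-1}}c=-E_c$ by a block-matrix induction: it shows that $E_{c^{-1}}s_1\cdots s_i$ has the block form $\begin{pmatrix}-L_i&0\\0&U_{n-i}\end{pmatrix}$, peeling off one simple reflection at a time, so that at $i=n$ the whole matrix has become $-L_n=-E_c$. You instead reduce the matrix identity to the scalar identity $E_c(\alpha_i\ck,(c-1)\alpha_j)=K(\alpha_i\ck,c\alpha_j)$ using \cref{Ec inv T} and \cref{K Ec} (both of which are established before the theorem and independently of it, so there is no circularity), and then verify it by two telescoping computations: first expanding $(1-c)\alpha_j=\sum_k f_k\alpha_k$ with $f_k=K(\alpha_k\ck,s_{k+1}\cdots s_n\alpha_j)$, and then telescoping $g_k=K(\alpha_i\ck,w_k\alpha_j)$ via the recursion $g_{k-1}=g_k-a_{ik}f_k$ to obtain $f_i+\sum_{l>i}a_{il}f_l=a_{ij}$. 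I checked the reduction and both telescopes; they are correct. What each approach buys: the paper's proof is self-contained matrix algebra requiring no auxiliary lemmas, while yours makes visible how the result interacts with the decomposition $K=E_c+E_c^{T}$ and the transpose relation between $E_c$ and $E_{c^{-1}}$, at the cost of more index bookkeeping. Both implicitly use that $E_{c^{-1}}$ is invertible (it is unitriangular) to pass from $E_{c^{-1}}[c]=-E_c$ to the stated form $[c]=-E_{c^{-1}}^{-1}E_c$.
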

\begin{proof}
Throughout the proof, the symbol $1$ denotes an identity matrix of appropriate size and the symbol $0$ denotes a matrix of zeros of appropriate shape.
These sizes and shapes are clear from context.

For each $i$ from $1$ to $n$, let $\ell_i$ and $u_i$ be the row vectors such that row $i$ in $A$ is $\begin{bsmallmatrix}\ell_i&2&u_i\end{bsmallmatrix}$.
In particular, $\ell_i$ has $i-1$ entries and $u_i$ has $n-i$ entries.
Thus the $k\th$ row of $E_c$ is $\begin{bsmallmatrix}\ell_k&1&0\end{bsmallmatrix}$.
Similarly, the $k\th$ row of $E_{c^{-1}}$ is $\begin{bsmallmatrix}0&1&u_k\end{bsmallmatrix}$.
The matrix for the simple reflection $s_k$ in the basis of simple roots is 
$\begin{bsmallmatrix}
1&0&0\\
-\ell_k&-1&-u_k\\
0&0&1
\end{bsmallmatrix}$.

For each $i$ from $1$ to $n$, define $L_i$ to be the $i\times i$ matrix whose $k\th$ row is $\begin{bsmallmatrix}\ell_k&1&0\end{bsmallmatrix}$.
Define $U_{n-i}$ to be the $(n-i)\times(n-i)$ matrix whose $k\th$ row is $\begin{bsmallmatrix}0&1&u_{i+k}\end{bsmallmatrix}$
We now prove by induction on $i$ that for $i=0,1,\ldots,n$ the matrix $E_{c^{-1}}s_1\cdots s_i$ equals $\begin{bsmallmatrix}-L_i&0\\0&U_{n-i}\end{bsmallmatrix}$.
The base case, where $i=0$, says that $E_{c^{-1}}=U_n$, which is true by construction.
If~$i>0$ then by induction $E_{c^{-1}}s_1\cdots s_i$ is 
\[
\begin{bsmallmatrix}-L_{i-1}&0\\0&U_{n-i+1}\end{bsmallmatrix}s_i
=
\begin{bsmallmatrix}-L_{i-1}&0&0\\0&1&u_i\\0&0&U_{n-i}\end{bsmallmatrix}
\begin{bsmallmatrix}1&0&0\\-\ell_i&-1&-u_i\\0&0&1\end{bsmallmatrix}
=
\begin{bsmallmatrix}-L_{i-1}&0&0\\-\ell_i&-1&0\\0&0&U_{n-i}\end{bsmallmatrix}
=\begin{bsmallmatrix}-L_i&0\\0&U_{n-i}\end{bsmallmatrix}.
\]
This completes the inductive proof.
In particular, for $i=n$ we have $E_{c^{-1}}c=-L_n$, but by construction $L_n=E_c$, so $E_{c^{-1}}c=-E_c$ as desired.
\end{proof}

As a consequence of \cref{Howlett's}, we obtain two more facts about the form $E_c$. 
\begin{lemma}
  \label{Ec Ecinv} $E_c(\alpha,\beta)=-E_{c^{-1}}(\alpha,c\beta)$ for all $\alpha,\beta\in V$.
\end{lemma}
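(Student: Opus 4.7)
The plan is to read this identity straight off the matrix identity $E_{c^{-1}}c=-E_c$ obtained at the end of the proof of \cref{Howlett's}. Concretely, I would expand $\alpha$ in the basis $\Simples\ck$ of simple co-roots with coordinate column $a$, and expand $\beta$ in the basis $\Simples$ of simple roots with coordinate column $b$. By definition of the matrix $E_c$, which represents the form in the basis of simple co-roots on the left and simple roots on the right, we have $E_c(\alpha,\beta)=a^T E_c\, b$, and similarly $E_{c^{-1}}(\alpha,\gamma)=a^T E_{c^{-1}}\,g$ whenever $\gamma$ has simple-root coordinate column $g$.

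Applying this with $\gamma=c\beta$, whose simple-root coordinate column is $Cb$ where $C$ is the matrix of $c$ in $\Simples$, gives $E_{c^{-1}}(\alpha,c\beta)=a^T E_{c^{-1}}C\,b$. \cref{Howlett's} (in the form $E_{c^{-1}}c=-E_c$ recorded in the last line of its proof) says $E_{c^{-1}}C=-E_c$, so $E_{c^{-1}}(\alpha,c\beta)=-a^T E_c\,b=-E_c(\alpha,\beta)$, which is the claim. Since both sides are bilinear in $\alpha$ and $\beta$, this argument can equivalently be carried out entrywise on the basis pairs $(\alpha_i\ck,\alpha_j)$ without introducing matrix notation.

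There is no real obstacle; the only thing to watch is the basis bookkeeping, namely that the first argument of $E_c$ is expanded in simple co-roots while the second is expanded in simple roots, so that the matrix $E_c$ of \cref{Howlett's} applies directly on both sides of the identity.
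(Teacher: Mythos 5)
Your proposal is correct and is essentially the paper's own argument: the paper's proof likewise reads the identity directly off the matrix relation $E_{c^{-1}}c=-E_c$ from \cref{Howlett's}, with the form $E_c$ interpreted in the basis of simple co-roots on the left and simple roots on the right. Your version merely spells out the coordinate bookkeeping more explicitly.
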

\begin{proof}
\cref{Howlett's} says that the matrix describing the form $E_c$ in the basis of simple roots on the right and the basis of simple co-roots on the left is $-E_{c^{-1}}c$, where $E_{c^{-1}}$ is the matrix describing the form $E_{c^{-1}}$ in the same bases and $c$ is the matrix describing the action of $c$ in the basis of simple roots (on both sides).
\end{proof}

\begin{lemma}\label{Ec K=0}
For $\alpha,\beta\in V$, if $K(\alpha,\beta)=0$, then $E_c(\alpha,\beta)=E_c(\alpha,c^{-1}\beta)$.
\end{lemma}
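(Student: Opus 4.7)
The plan is to chain together three of the identities just established for $E_c$: the symmetrization identity \cref{K Ec}, the transpose identity \cref{Ec inv T}, and the Howlett-type identity \cref{Ec Ecinv}. Each of them is a direct consequence of the definitions or of \cref{Howlett's}, so no new ingredients are needed.

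First I would rewrite the hypothesis $K(\alpha,\beta)=0$ using \cref{K Ec} as
\[
E_c(\alpha,\beta) = -E_c(\beta,\alpha).
\]
Next, I would apply \cref{Ec inv T} to the right-hand side to convert the swap of arguments into a swap of $c$ with $c^{-1}$, obtaining
\[
E_c(\alpha,\beta) = -E_{c^{-1}}(\alpha,\beta).
\]
At this point the left-hand side is expressed in terms of $E_{c^{-1}}$, so the natural thing is to transport that back to $E_c$ using \cref{Ec Ecinv}. Substituting $c^{-1}\beta$ for $\beta$ in \cref{Ec Ecinv} gives $E_c(\alpha,c^{-1}\beta) = -E_{c^{-1}}(\alpha,\beta)$, which matches the right-hand side of the previous display and yields the claim.

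I do not anticipate any real obstacle: the computation is a two-step substitution and the only subtlety is picking the right specialization of \cref{Ec Ecinv} (apply it with $c^{-1}\beta$ in the second slot rather than $\beta$). The hypothesis $K(\alpha,\beta)=0$ is used exactly once, at the first step, to move from the symmetric form $K$ to an antisymmetry for $E_c$; without it the relation $E_c(\alpha,\beta)=-E_{c^{-1}}(\alpha,\beta)$ fails by exactly the amount $K(\alpha,\beta)$. This also explains conceptually why the conclusion is an invariance of $E_c(\alpha,\,\cdot\,)$ under $c^{-1}$ along the $K$-orthogonal complement of $\alpha$.
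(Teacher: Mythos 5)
Your argument is correct and is essentially identical to the paper's proof: both use \cref{K Ec} to turn the hypothesis into $E_c(\alpha,\beta)=-E_c(\beta,\alpha)$, then \cref{Ec inv T} to get $-E_{c^{-1}}(\alpha,\beta)$, and finally \cref{Ec Ecinv} (specialized with $c^{-1}\beta$ in the second slot) to identify this with $E_c(\alpha,c^{-1}\beta)$. No differences worth noting.
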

\begin{proof}
Since $K(\alpha,\beta)=0$, we have $E_c(\alpha,\beta)=-E_c(\beta,\alpha)$ by \cref{K Ec}.
The latter equals $-E_{c^{-1}}(\alpha,\beta)$ by \cref{Ec inv T}, which equals $E_c(\alpha,c^{-1}\beta)$ by \cref{Ec Ecinv}.
\end{proof}

A \newword{(standard) parabolic subgroup} $W'$ of a Coxeter group $W$ is a subgroup generated by the reflections in some subset $S'\subset S$.
The associated \newword{(standard) parabolic root subsystem} is the set $\RS'=\RS\cap\Span(\set{\alpha_i\in\RS:s_i \in S'})$.
This is a root system in its own right with simple roots $\set{\alpha_i:s_i\in S'}$.
Given a Coxeter element $c$ of $W$ and $S'\subseteq S$, the \newword{restriction} of~$c$ to $W'$ is the Coxeter element $c'$ of $W'$ obtained by taking a reduced word for $c$ and deleting all the letters in $S\setminus S'$.

\subsection{Affine type}\label{sec:aff}  
When the symmetric bilinear form $K$ is positive definite, $W$ is finite and $A$ is said to be of \newword{finite type}.
Otherwise, $W$ is infinite.

When $K$ is positive semidefinite and not positive definite and, for all $\Pi'\subsetneq\Pi$, the restriction of $K$ to $\Span\set{\alpha_i\in \Pi'}$ is positive definite, $A$ is of \newword{affine type}, and $W$ and $\RS$ are called \newword{affine}.
Both $W$ and $\RS$ are irreducible when they are affine. 
Details on affine root systems are found, for example, in \cite{Kac90,Macdonald}.
We break with the common practice of taking an affine root system to have rank $n+1$ and, instead, we continue to index simple roots by $\set{1,\dots,n}$.

When $A$ is of affine type, there exists an index $\aff\in\set{1,\ldots,n}$ \nomenclature[aff]{$\aff$}{index of the affine simple root} such that, writing $W_\fin$ \nomenclature[wfin]{$W_\fin$}{finite Weyl group associated to an affine Cartan matrix} for the parabolic subgroup of $W$ generated by $S_\fin=S\setminus\set{s_\aff}$,\nomenclature[s_fin]{$S_\fin$}{simple reflections associated to $W_\fin$} the group $W$ is isomorphic to a semidirect product of $W_\fin$ with the lattice generated by the simple co-roots $\set{\alpha_i\ck:i\neq \aff}$.
There may not be a unique choice of $\aff$, but the choices are equivalent up to diagram automorphisms, and we fix a choice.
We call $\alpha_\aff$ \nomenclature[zza3aff]{$\alpha_\aff$}{affine simple root} the \newword{affine simple root} and $s_\aff$ \nomenclature[saff]{$s_\aff$}{affine simple reflection} the \newword{affine simple reflection}.

We write $\Simples_\fin$ \nomenclature[zzqfin]{$\Simples_\fin$}{finite simple roots associated to an affine Cartan matrix} for $\Simples\setminus\set{\alpha_\aff}$ and $\RSfin$ \nomenclature[zzv5fin]{$\RSfin$}{finite root system associated to an affine Cartan matrix} for the corresponding parabolic root subsystem of $\RS$.
This is an indecomposable finite root system.
We write $V_\fin$ \nomenclature[vfin]{$V_\fin$}{subspace spanned by $\Simples_\fin$} for the subspace of $V$ spanned by $\Simples_\fin$.
We have $V=V_\fin\oplus\Span\alpha_\aff$. 

There is a standard construction (see for example~\cite[Proposition~2.1]{Macdonald}) that builds an affine root system from an indecomposible finite root system. 
We call a root system arising in this way a \newword{standard} affine root system.
The standard affine root systems are represented on Table Aff~1 of \cite[Chapter~4]{Kac90}.
(See also \cref{tab:type-by-type} of the present paper.)
Some affine root systems are not standard, but every affine root system $\RS$ is a \newword{rescaling} of a unique standard affine root system~$\RS'$.
That is, each root of $\RS$ is a positive scaling of a root in $\RS'$ and the Cartan matrices of $\RS$ and $\RS'$ define the same symmetric bilinear form $K$ on the space $V$.
The rescaling factors are necessarily constant on $W$-orbits of roots.

When $\RS$ is an affine root system, the form $K$ has a one-dimensional kernel, which contains a one-dimensional sublattice of the root lattice.
The nonzero elements of this sublattice are the imaginary roots.
Let $\delta$ \nomenclature[zzd]{$\delta$}{positive imaginary root closest to the origin} be the positive imaginary root closest to the origin.
Because $\delta$ is in the kernel of $K$, it is fixed by $W$.
Every real root in $\RS$ is a positive scaling of $\beta+k\delta$ for some $\beta\in\RSfin$ and $k\in\integers$.
If $\RS$ is a standard affine root system, then every root is $\beta+k\delta$ (with no scaling needed).

The imaginary root $\delta$ has strictly positive simple-root coordinates, so in particular its $\alpha_\aff$-coordinate $[\delta:\alpha_\aff]$ is positive.
In fact, in almost every root system of affine type, $[\delta:\alpha_\aff]=1$.  
The exception is type $A^{(2)}_{2k}$, where $[\delta:\alpha_\aff]=2$. 
The vector $\theta=\delta-[\delta:\alpha_\aff]\alpha_\aff$ \nomenclature[zzh]{$\theta$}{highest root or highest short root in $\RSfin$} is a positive root in $\RSfin$.
In the standard affine root systems $\theta$ is the highest root of $\RSfin$, but in other affine root systems, it is either the highest root or the highest short root \cite[Proposition~6.4]{Kac90}.  

The co-roots $\set{\beta\ck:\beta\in\RSre}$ are the real roots of the \newword{dual root system} $\RS\ck$ \nomenclature[zzv2zzc]{$\RS\ck$}{root system dual to $\RS$} (the root system associated to the transpose $A^T$ of the Cartan matrix~$A$.)
The Cartan matrices $A$ and $A^T$ define the same symmetric bilinear form $K$, so if $\RS$ is affine, then $\RS\ck$ is also affine.
Let $\delta\ck$ \nomenclature[zzdc]{$\delta\ck$}{positive imaginary co-root closest to the origin} be the positive imaginary root in $\RS\ck$ that is closest to zero.
This is a positive scaling of $\delta$.
Indeed, one can calculate that 
\begin{equation}\label{delta eq}
\delta\ck=\begin{cases}
\displaystyle\frac2{K(\alpha_\aff,\alpha_\aff)}\delta=\frac2{K(\theta,\theta)}\delta 	&\text{if $\RS$ is not of type $A^{(2)}_{2k}$, or}\\[12pt]
\displaystyle\frac1{K(\alpha_\aff,\alpha_\aff)}\delta=\frac4{K(\theta,\theta)}\delta		&\text{if $\RS$ is of type $A^{(2)}_{2k}$.}
\end{cases}
\end{equation}
The non-uniformity in \eqref{delta eq} arises for two reasons:
First, $\alpha_\aff=\delta-\theta$ in every case except type $A^{(2)}_{2k}$, where $\alpha_\aff=\frac12(\delta-\theta)$; and second, the index $\aff\ck$ for $\RS\ck$ can be taken to coincide with the index $\aff$ for $\RS$ in every type except type~$A^{(2)}_{2k}$.

\subsection{Coxeter elements in affine type}  \label{sec:cox aff}
The following proposition is known and not difficult in type $A^{(1)}_n$. It follows from \cite[Theorem~1.2]{BGP} in the other types.

\begin{proposition}\label{aff conj} 
If $W$ is an affine Weyl group not of type $A_{n-1}^{(1)}$, then any two Coxeter elements of $W$ are conjugate in $W$.
If $W$ is of type $A_{n-1}^{(1)}$, then there is one conjugacy class for each $k\in\set{1,\dots, n-1}$, represented by the Coxeter element $s_1\cdots s_n$ with $(s_1s_{k+1})^3=(s_ks_n)^3=1$ and $(s_is_{i+1})^3=1$ for $i\neq k,n$.
The conjugations can be carried out by a sequence of source-sink moves.
\end{proposition}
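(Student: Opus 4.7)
The plan is to translate the proposition into a statement about acyclic orientations of the Coxeter diagram, where it becomes a standard combinatorial fact. The main ingredient is the well-known bijection between Coxeter elements of $W$ and acyclic orientations of the Coxeter diagram of $W$: given a reduced word $s_{i_1}\cdots s_{i_n}$ for $c$, orient the edge joining $s_i$ and $s_j$ from $s_i$ to $s_j$ iff $s_i$ precedes $s_j$ in the word. Since commuting simple reflections are not joined by an edge, this orientation depends only on $c$, and the simple reflections initial (resp.\ final) in $c$ are precisely the sources (resp.\ sinks) of the orientation. Under this correspondence, a source-sink move $c\mapsto scs$ with $s$ initial or final in $c$ matches the operation of reversing every edge incident to the source or sink $s$. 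Since $scs=scs^{-1}$, source-sink moves are automatically realized by conjugation.

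The first step is then to reduce the proposition to the purely combinatorial claim that two acyclic orientations of the diagram lie in the same orbit under source-sink flips exactly under the specified conditions. For every affine type other than $A_{n-1}^{(1)}$, the Coxeter diagram is a tree (even the diagrams with multiple edges or labels have a tree as their underlying graph). The standard fact that all acyclic orientations of a tree lie in a single source-sink flip orbit can be proved by induction on the number of edges: pick a leaf~$v$; by flipping $v$ if needed, make it a source in each of the two orientations; then delete~$v$ and apply induction to the remaining smaller tree, lifting source-sink flips from the smaller diagram to the larger one (note that flipping a source or sink in the truncated diagram is still a source-sink flip in the full diagram since $v$ plays no role as a neighbor). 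This handles all affine types except $A_{n-1}^{(1)}$, and since [BGP, Theorem~1.2] explicitly carries this out in the Dynkin cases with single bonds, we simply cite it for convenience.

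For type $A_{n-1}^{(1)}$ the Coxeter diagram is an $n$-cycle. An orientation of a cycle is acyclic iff not all edges are oriented in the same rotational direction, so the number $k$ of edges oriented clockwise (say) lies in $\{1,\ldots,n-1\}$. This invariant is unchanged by source-sink flips (reversing all edges at a vertex changes two edges, one clockwise and one counterclockwise, so the count is preserved). Conversely, any two acyclic orientations with the same invariant $k$ differ by a permutation of the $n$ vertices realizable by successive source-sink flips: the positions of clockwise edges can be shifted one step at a time by flipping an adjacent vertex whose two incident edges point in opposite directions. Identifying $k$ with the number of simple reflections appearing after $s_{\aff}$ (or equivalently the position of the specified ``3-cycle relation'' in the cyclic labeling) gives the representative $c=s_1\cdots s_n$ described in the statement, with exactly $n-1$ conjugacy classes.

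The main potential obstacle is verifying cleanly that the source-sink orbit structure on orientations coincides with the conjugacy class structure on Coxeter elements. For the tree case, one must check that \emph{no} additional conjugacy identifications occur beyond those induced by source-sink moves, but this is automatic here because we only need to show that source-sink moves suffice to connect all Coxeter elements, which the tree argument provides. For the cycle case, one must argue both that the clockwise-edge count is a conjugacy invariant (not merely a source-sink invariant) and that it distinguishes conjugacy classes; the former follows from the fact that this count equals (up to sign) the trace of $c$ acting on a one-dimensional quotient related to $\delta$, or more elementarily from the cycle structure of $c$ as an affine permutation, and the latter from the explicit orbit description above.
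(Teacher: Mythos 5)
Your argument is correct in outline and is essentially the standard proof; note that the paper itself gives no argument at all here, merely citing \cite[Theorem~1.2]{BGP} for the tree cases and declaring the type $A_{n-1}^{(1)}$ case ``known and not difficult,'' so your write-up is a reconstruction of what those sources contain rather than a genuinely different route. Two details deserve more care. First, in the tree induction the assertion that a source-sink flip in the truncated diagram ``is still a source-sink flip in the full diagram since $v$ plays no role as a neighbor'' fails for the unique neighbor $w$ of the deleted leaf $v$: if $w$ is a source in $T\setminus v$ but the edge from $v$ points into $w$, then $w$ is not a source in $T$. The fix is easy (conjugate the flip at $w$ by a flip at $v$, i.e.\ flip $v$, then $w$, then $v$ again), and in any case the citation of \cite{BGP} covers the tree case, but as written the inductive step is false. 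Second, your cycle argument establishes that the clockwise-edge count $k$ classifies the \emph{source-sink flip} orbits, which shows there are \emph{at most} $n-1$ conjugacy classes reachable this way; to get exactly $n-1$ classes you must show $k$ is a conjugacy invariant (in particular that $k$ and $n-k$ give non-conjugate elements, i.e.\ $c\not\sim c^{-1}$ in general). You only gesture at this via the affine-permutation cycle structure; that gesture is correct --- the multiset of (cycle length, winding number) pairs of $c$ as an affine permutation distinguishes the classes --- but it is the one substantive claim in the type $A$ case and should be carried out rather than asserted.
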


The following is part of \cite[Proposition~3.1]{afforb}.   
For an example related to this proposition, see \cite[Example~1.3]{afforb}.
\begin{proposition}\label{eigen}
Let $\RS$ be an affine root system and let $c$ be a Coxeter element.
\begin{enumerate}[\qquad \upshape (1)]
  \item \label{eigen 1}
    $c$ has eigenvalue $1$ with algebraic multiplicity $2$ and geometric multiplicity~$1$.
    The imaginary root $\delta$ is a $1$-eigenvector of $c$.
  \item \label{eigen 1 gen}
    There exists a unique generalized $1$-eigenvector $\gamma_c$ \nomenclature[zzcc]{$\gamma_c$}{$1$-eigenvector of $c$} contained in the subspace $V_\fin$ of $V$.
    (This means that $(c-1)\gamma_c=\delta$.)
  \item \label{eigen U}  
    $c$ has finite order on the hyperplane $\eigenspace{c}=\set{v\in V:K(\gamma_c,v)=0}$. \nomenclature[uc]{$\eigenspace{c}$}{subspace generated by eigenvectors of $c$}
\end{enumerate}
\end{proposition}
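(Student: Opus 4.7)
The plan is to prove the three parts in order, using the bilinear form $K$ (positive semidefinite with one-dimensional radical $\Span(\delta)$), the positive-definiteness of $K|_{V_\fin}$, and Howlett's formula from Theorem~\ref{Howlett's}.

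Part (1): For each simple reflection, $s_i\delta = \delta - K(\alpha_i^\vee,\delta)\alpha_i = \delta$, using that $\delta$ lies in the radical of $K$. Hence $c$ fixes $\delta$, giving a $1$-eigenvector. To control the multiplicity of $1$, I would use Theorem~\ref{Howlett's} to compute the characteristic polynomial $\chi_c(\lambda) = \det(\lambda E_{c^{-1}} + E_c)$ (noting $\det E_{c^{-1}} = 1$) and verify both $\chi_c(1) = \det(A) = 0$ and $\chi_c'(1) = 0$; the latter follows via Jacobi's formula and the rank-one structure of $\mathrm{adj}(A)$, which is (up to scalar) the outer product of the simple-root coordinates of $\delta$ and the simple-coroot coordinates of $\delta^\vee$, using the relation $E_c + E_{c^{-1}} = A$. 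Thus the algebraic multiplicity of $1$ is at least $2$. The induced map $\bar c$ on $V/\Span(\delta)$ is an isometry of the induced positive-definite form, hence semisimple with eigenvalues on the unit circle. Since a genuine $1$-eigenvector of $c$ descends to a genuine $1$-eigenvector of $\bar c$, controlling the geometric multiplicity of $1$ for $\bar c$ (an exact count via the explicit Howlett matrix, or via the semidirect product decomposition $W = W_\fin \ltimes Q^\vee$) shows that the geometric multiplicity of $1$ for $c$ on $V$ is exactly $1$.

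Part (2): From part (1), the generalized $1$-eigenspace of $c$ is two-dimensional and $c - 1$ maps it surjectively onto the genuine $1$-eigenspace $\Span(\delta)$. Hence some $v_0 \in V$ satisfies $(c - 1)v_0 = \delta$, and the set of all such solutions is the affine line $v_0 + \Span(\delta)$. Because $[\delta : \alpha_\aff] > 0$, this line meets the hyperplane $V_\fin$ in exactly one point, yielding the unique $\gamma_c \in V_\fin$.

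Part (3): First, $c$ preserves $\eigenspace{c}$: for $v \in \eigenspace{c}$, combining $K$-invariance of $c$ with $c^{-1}\gamma_c = \gamma_c - \delta$ (which follows from $c\gamma_c = \gamma_c + \delta$ and $c\delta = \delta$) yields
\[
K(\gamma_c, cv) = K(c^{-1}\gamma_c, v) = K(\gamma_c, v) - K(\delta, v) = 0,
\]
since $\delta \in \mathrm{rad}(K)$. Because $K|_{V_\fin}$ is positive definite and $\gamma_c \in V_\fin$ is nonzero, $K(\gamma_c, \gamma_c) > 0$, so $\gamma_c \notin \eigenspace{c}$ and hence $\eigenspace{c} \cap \Span(\delta, \gamma_c) = \Span(\delta)$. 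For $\lambda \neq 1$ and each $\lambda$-eigenvector $v$ of $c$ we obtain $K(\gamma_c, v) = K(c\gamma_c, cv) = K(\gamma_c+\delta,\lambda v) = \lambda K(\gamma_c, v)$, which forces $K(\gamma_c, v) = 0$, i.e., $v \in \eigenspace{c}$. Semisimplicity of $c$ away from eigenvalue $1$ (inherited from $\bar c$) then yields a decomposition $\eigenspace{c} = \Span(\delta) \oplus W$, where $W$ is the $c$-invariant sum of $\lambda$-eigenspaces for $\lambda \neq 1$ and $K|_W$ is positive definite. The restriction $c|_W$ is an isometry whose characteristic polynomial has integer coefficients (as a factor of $\chi_c$), so by Kronecker's theorem its eigenvalues are roots of unity and $c|_W$ has finite order. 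Combined with $c|_{\Span(\delta)} = \mathrm{id}$, this shows $c$ has finite order on all of $\eigenspace{c}$. The main obstacle is the precise multiplicity computation in part (1); the remaining structural work in parts (2) and (3) then proceeds by the bilinear-form arguments outlined above.
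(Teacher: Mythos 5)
The paper does not actually prove this proposition: it is imported verbatim as \cite[Proposition~3.1]{afforb}, so there is no in-paper argument to compare against line by line. Judged on its own terms, your outline is essentially sound, and parts (2) and (3) are complete: the existence and uniqueness of $\gamma_c$ follows exactly as you say once part (1) is in place, and your part (3) correctly combines $c$-invariance of $\eigenspace{c}$, positive definiteness of $K$ on the complement $W$ of $\Span(\delta)$ inside $\eigenspace{c}$, and Kronecker's theorem applied to the integer polynomial $\chi_c(\lambda)/(\lambda-1)^2$. Your computation $\chi_c'(1)=\operatorname{tr}\bigl(\operatorname{adj}(A)E_{c^{-1}}\bigr)=0$ is also correct: writing $\operatorname{adj}(A)$ as a multiple of the outer product of the coordinate vectors of $\delta$ and $\delta\ck$, the trace becomes a multiple of $E_{c^{-1}}(\delta\ck,\delta)$, which vanishes by \cref{K Ec} and \cref{Ec inv T} since $K(\delta\ck,\delta)=0$.

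The one soft spot is in part (1), in how you pin down the multiplicities exactly. First, the descent argument as stated does not give geometric multiplicity exactly $1$: the image $\overline{\gamma_c}$ already lies in $\Fix(\bar c)$, so knowing $\dim\Fix(\bar c)=1$ only bounds $\dim\Fix(c)\le 2$ (the quotient map kills $\delta$). The clean fix is the other route you mention: by \cref{Howlett's}, $c-1=-E_{c^{-1}}^{-1}(E_c+E_{c^{-1}})=-E_{c^{-1}}^{-1}A$ in the simple-root basis, so $\operatorname{rank}(c-1)=\operatorname{rank}(A)=n-1$ and $\ker(c-1)=\Span(\delta)$ outright. Second, you only establish that the algebraic multiplicity is \emph{at least} $2$; to get \emph{exactly} $2$ (as the proposition asserts), note that geometric multiplicity $1$ forces the generalized $1$-eigenspace to be a single Jordan block with socle $\Span(\delta)$, and its image in $V/\Span(\delta)$ is a Jordan block of size one less on which $\bar c$ acts; semisimplicity of $\bar c$ (an isometry of a positive-definite form) forces that size to be at most $1$, hence the block has size exactly $2$. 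With these two repairs the argument is complete and is a reasonable self-contained substitute for the citation.
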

Denote by $\phi_c$ \nomenclature[zzv8c]{$\phi_c$}{$K(\gamma_c,\,\cdot\,)$} the element of $V^*$ defined by $\br{\phi_c,v}=K(\gamma_c,v)$ for all $v\in V$.
In view of \cref{eigen}(\ref{eigen U}) it is useful to know $\phi_c$ up to scaling.
The following lemma is a concatenation of \cite[Lemma~3.5]{afforb} and \cite[Lemma~3.6]{afforb}.

\begin{lemma}\label{phic sign}
  $\phi_c$ is a negative scalar times $\sum_{1\le i<j\le n}\bigl([\delta:\alpha_j]a_{ij}\rho_i-[\delta:\alpha_i]a_{ji}\rho_j\bigr)$.
\end{lemma}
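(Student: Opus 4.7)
The plan is to identify $\phi_c$ with an explicit linear functional produced from Howlett's theorem, pin down the constant of proportionality by evaluating at $\gamma_c$, and then translate the result into the fundamental-weight basis using Cartan relations.

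First I would verify that $\phi_c$ vanishes on $\mathrm{im}(c-1)$. Using $W$-invariance of $K$ and the identity $c\gamma_c=\gamma_c+\delta$, together with $K(\delta,v)=0$, one checks $\phi_c(cv)=K(\gamma_c,cv)=K(c^{-1}\gamma_c,v)=K(\gamma_c-\delta,v)=\phi_c(v)$. Since the $1$-eigenspace of $c$ is spanned by $\delta$ (\cref{eigen}(\ref{eigen 1})), the subspace $\mathrm{im}(c-1)$ has codimension one, so $\phi_c$ is pinned down up to scaling by this vanishing property. Next I would exhibit a candidate functional. Rewriting Howlett's theorem (\cref{Howlett's}) as the matrix identity $E_{c^{-1}}(c-1)=-(E_c+E_{c^{-1}})=-K$, where the last step invokes \cref{K Ec}, gives the bilinear-form identity $E_{c^{-1}}(\alpha,(c-1)\beta)=-K(\alpha,\beta)$ for all $\alpha,\beta\in V$. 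Setting $\alpha=\delta$ and using $K(\delta,\cdot)=0$ shows $v\mapsto E_{c^{-1}}(\delta,v)$ vanishes on $\mathrm{im}(c-1)$, hence is proportional to $\phi_c$.

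To determine the constant I would evaluate at $\gamma_c$. Setting $\alpha=\beta=\gamma_c$ in the same identity yields $E_{c^{-1}}(\gamma_c,\delta)=-K(\gamma_c,\gamma_c)$. Combining this with the symmetry $E_{c^{-1}}(\gamma_c,\delta)+E_{c^{-1}}(\delta,\gamma_c)=K(\gamma_c,\delta)=0$, which follows from \cref{K Ec} and \cref{Ec inv T}, yields $E_{c^{-1}}(\delta,\gamma_c)=K(\gamma_c,\gamma_c)=\phi_c(\gamma_c)$. Because $K$ is positive semidefinite with kernel $\reals\delta$ and $\gamma_c\in V_\fin$ is not a scalar multiple of $\delta$, this common value is strictly positive; hence the two functionals agree on a vector transverse to $\mathrm{im}(c-1)$ and therefore coincide everywhere.

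It then remains to expand $\phi_c=\sum_k\phi_c(\alpha_k\ck)\rho_k$ using the upper-triangular matrix description of $E_{c^{-1}}$. Writing $\delta=\sum_j[\delta:\alpha_j]\alpha_j$ and converting to coroots via $d_j a_{jk}=d_k a_{kj}$, a direct bookkeeping gives $\phi_c(\alpha_k\ck)=[\delta:\alpha_k]+\sum_{j<k}[\delta:\alpha_j]a_{kj}$. Substituting the identity $[\delta:\alpha_k]=-\tfrac12\sum_{j\neq k}[\delta:\alpha_j]a_{kj}$, which comes from $K(\delta,\alpha_k\ck)=\sum_j[\delta:\alpha_j]a_{kj}=0$ together with $a_{kk}=2$, rewrites this as $-\tfrac12\bigl(\sum_{j>k}[\delta:\alpha_j]a_{kj}-\sum_{j<k}[\delta:\alpha_j]a_{kj}\bigr)$, which is exactly $-\tfrac12$ times the coefficient of $\rho_k$ in the asserted sum. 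The explicit scalar $-\tfrac12$ is negative, proving the lemma. The main obstacle is mixed root/coroot bookkeeping when converting $E_{c^{-1}}(c-1)=-K$ from matrix form to a bilinear-form identity and when expanding $E_{c^{-1}}(\delta,\alpha_k\ck)$; the need to nail down the \emph{sign} (rather than only the direction) is precisely what forces the slightly delicate evaluation at $\gamma_c$ and the appeal to positive semidefiniteness of $K$.
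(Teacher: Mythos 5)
Your proof is correct. Note that the paper does not actually prove this lemma---it is quoted as a concatenation of \cite[Lemmas~3.5--3.6]{afforb}---so there is no in-paper argument to compare against; what you have produced is a self-contained derivation using only the tools already assembled in Section~2, which is a genuine contribution relative to the citation. Each step checks out: the identity $E_{c^{-1}}(\alpha,(c-1)\beta)=-K(\alpha,\beta)$ is exactly \cref{Ec Ecinv} combined with \cref{K Ec}; specializing $\alpha=\delta$ shows $E_{c^{-1}}(\delta,\,\cdot\,)$ annihilates $\mathrm{im}(c-1)$, which has codimension one by \cref{eigen}(1); and the evaluation $E_{c^{-1}}(\delta,\gamma_c)=K(\gamma_c,\gamma_c)=\br{\phi_c,\gamma_c}>0$ (positive because $\gamma_c$ is a nonzero element of $V_\fin$, hence outside the kernel $\reals\delta$ of the semidefinite form $K$) both rules out the degenerate case and pins the proportionality constant to $1$, so you in fact prove the sharper statement $\phi_c=E_{c^{-1}}(\delta,\,\cdot\,)=-E_c(\delta,\,\cdot\,)$. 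The coordinate bookkeeping is also right: $E_{c^{-1}}(\alpha_j,\alpha_k\ck)=d_jd_k^{-1}E_{c^{-1}}(\alpha_j\ck,\alpha_k)$ together with $d_ja_{jk}=d_ka_{kj}$ gives $\br{\phi_c,\alpha_k\ck}=[\delta:\alpha_k]+\sum_{j<k}[\delta:\alpha_j]a_{kj}$, and eliminating $[\delta:\alpha_k]$ via $K(\delta,\alpha_k\ck)=0$ yields exactly $-\tfrac12$ times the $\rho_k$-coefficient of the displayed sum. The explicit scalar $-\tfrac12$ is negative, as required.
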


We now summarize some results from \cite{afforb} concerning the $c$-orbits of roots in an affine root system.
Let $\RST{c}$ be $\RS\cap \eigenspace{c}$\nomenclature[zzuc]{$\RST{c}$}{$\RS\cap \eigenspace{c}$} and define $\RSTfin{c}$ to be $\RSfin\cap \eigenspace{c}$.\nomenclature[zzufinc]{$\RSTfin{c}$}{$\RSfin\cap \eigenspace{c}$}
Let $\SimplesT{c}_\fin$ \nomenclature[zzocfin]{$\SimplesT{c}_\fin$}{simple roots for $\RSTfin{c}$} be the unique simple system of $\RSTfin{c}$ such that $\SimplesT{c}\subset\RSpos$.
The following is a rephrasing of \cite[Proposition~4.4]{afforb}.

\begin{proposition}\label{Up details}
  For any affine root system $\RS$ and a Coxeter element $c$
  \begin{enumerate}[\qquad \upshape (1)]
  \item 
    \label{Upsilon full rank}
    $\RSTfin{c}$ is a finite root system of rank $n-2$.
  \item 
    \label{type A}
    The irreducible components  of $\RSTfin{c}$ are all of finite type $A$.  
  \item 
    \label{canon order}
    $\SimplesT{c}_\fin=\set{\beta_i}_{i=1}^{n-2}$ can be ordered so that either $c\beta_i = \beta_{i+1}$ or $c\beta_i\not\in\SimplesT{c}_\fin$.
  \item 
    \label{Omega indep}
    The order in \eqref{canon order} may not be unique but $\TravReg{c}=\set{\beta_1,t_{\beta_1}\beta_2,\ldots,t_{\beta_1}\cdots t_{\beta_{n-2}}\beta_{n-2}}$ is the same for any choice.
\nomenclature[zzzc1]{$\TravReg{c}$}{transversal of the finite $c$-orbits in $\AP{c}$}
  \end{enumerate}
\end{proposition}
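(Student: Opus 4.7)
The plan is to identify $\RSTfin{c}$ with the root system of the pointwise stabilizer of $\gamma_c$ in $W_\fin$, and then to use the finite-order action of $c$ on $\eigenspace{c}$ together with the explicit form of $\phi_c$ from \cref{phic sign} to pin down its type.

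For part (\ref{Upsilon full rank}), since $\gamma_c\in V_\fin$ is nonzero and $K$ is positive definite on $V_\fin$, we have $K(\gamma_c,\gamma_c)>0$, so $\gamma_c\notin\eigenspace{c}$ and $\eigenspace{c}\cap V_\fin$ is an $(n-2)$-dimensional hyperplane of $V_\fin$. The elements of $\RSTfin{c}$ are precisely the roots of $\RSfin$ that are $K$-orthogonal to $\gamma_c$, and by Steinberg's fixed-point theorem the reflections $t_\beta$ for $\beta\in\RSTfin{c}$ generate the stabilizer of $\gamma_c$ in $W_\fin$, whose root system is $\RSTfin{c}$. That this stabilizer has rank exactly $n-2$ should follow by combining \cref{eigen}(\ref{eigen U}) with the coordinate description of $\phi_c$ in \cref{phic sign} to rule out any fixed vector of the stabilizer in $V_\fin$ beyond the line $\reals\gamma_c$.

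Part (\ref{type A}) is the crux of the proposition. The most direct approach is a type-by-type analysis through the affine root systems of \cref{tab:type-by-type}: using \cref{phic sign}, write $\gamma_c$ explicitly in each case, compute the roots of $\RSfin$ orthogonal to it, and verify that every irreducible component is of type~$A$. A uniform argument, if one is available, should exploit the facts that $c$ induces a finite-order action on $\RST{c}$ with orbits of uniform length equal to the order of $c|_{\eigenspace{c}}$, and that among irreducible finite root systems only type~$A$ admits such an action with no fixed simple root.

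For parts (\ref{canon order}) and (\ref{Omega indep}), the type-$A$ structure closes things out once one rules out cyclic orbits within components. Writing $c$ in translation-by-coroot form and computing its action on a simple $\beta\in\SimplesT{c}_\fin$ as a Weyl-group image plus a $\delta$-shift, these shifts accumulate around a would-be cycle, so returning to the starting simple is impossible and the action within each $A_k$ component is forced to be a chain. This gives a canonical starting simple per component (the unique one whose $c$-preimage leaves $\SimplesT{c}_\fin$) and pins down the ordering within that component. The only remaining freedom is the relative order of the components; since reflections in distinct components commute and act trivially on each other's roots, $\TravReg{c}=\set{\beta_1,t_{\beta_1}\beta_2,\ldots,t_{\beta_1}\cdots t_{\beta_{n-3}}\beta_{n-2}}$ is invariant under permutation of components. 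The main obstacle throughout is part (\ref{type A}); the other parts admit comparatively clean structural arguments once the classification of components is in hand.
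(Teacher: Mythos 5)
First, a point of comparison: the paper does not prove \cref{Up details} at all --- it is imported verbatim as a rephrasing of \cite[Proposition~4.4]{afforb} --- so there is no in-paper argument to match, and any proof you give must essentially redo work from that companion paper. Judged on its own terms, your proposal is a plan with two genuine gaps rather than a proof. The first is in part~(\ref{Upsilon full rank}): your argument shows that $\eigenspace{c}\cap V_\fin$ is $(n-2)$-dimensional, hence that the rank of $\RSTfin{c}$ is \emph{at most} $n-2$, but the substantive claim is the lower bound. For a generic vector of $V_\fin$ there are no orthogonal roots whatsoever, so one must use something concrete about $\gamma_c$; the sentence ``should follow by combining \cref{eigen}(\ref{eigen U}) with \cref{phic sign}'' does not identify what that something is, and ``ruling out fixed vectors of the stabilizer beyond $\reals\gamma_c$'' is just a restatement of the lower bound (the fixed space of the reflection subgroup is the orthogonal complement of the span of its roots), not an argument for it. The second gap is part~(\ref{type A}), which you rightly call the crux and then do not prove: the type-by-type route is only named, and the proposed uniform argument rests on a false premise --- the $c$-orbits of simple roots of $\RST{c}$ do \emph{not} all have length equal to the order of $c$ on $\eigenspace{c}$ (in type $E_6^{(1)}$ the components of $\RST{c}$ have ranks $3$, $3$, and $2$, so the orbit lengths are $3$, $3$, and $2$ while the order of $c$ on $\eigenspace{c}$ is $6$).

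Your treatment of parts~(\ref{canon order}) and~(\ref{Omega indep}) contains the right ideas, modulo one unacknowledged input: the ``chain versus cycle'' dichotomy presupposes that $c$ permutes $\SimplesT{c}$ component by component, which is \cref{c on tUp components} --- itself also only quoted from \cite{afforb} in this paper. Granting that, your acyclicity argument is sound: the sum over a finite $c$-orbit is $c$-fixed, hence a positive multiple of $\delta$, which is impossible for a set of roots contained in $V_\fin$, so each component contributes a chain rather than a cycle. Likewise, since roots in distinct components of $\RSTfin{c}$ are $K$-orthogonal, the reflections $t_{\beta_i}$ coming from other components act trivially, and $\TravReg{c}$ is indeed independent of how the component blocks are ordered. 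To close the remaining gaps honestly you would need to compute $\gamma_c$ (equivalently $\phi_c$, via \cref{phic sign}) and the roots orthogonal to it explicitly --- which is exactly the content of \cref{tab:type-by-type} --- or reproduce the companion paper's argument; as it stands, parts~(\ref{Upsilon full rank}) and~(\ref{type A}) are not established.
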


The set $\RST{c}$ is not a root system in the usual sense, but in accordance with \cite[Theorem~2.7]{typefree} (which is a theorem of \cite{Deodhar} and \cite{Dyer}), it inherits from~$\RS$ a canonical system $\SimplesT{c}$ \nomenclature[zzoc]{$\SimplesT{c}$}{simple roots for $\RST{c}$}of simple roots. 
Specifically, $\SimplesT{c}$ is the unique minimal subset of $\RST{c}\cap\RSpos$ containing $\RST{c}\cap\RSpos$ in its nonnegative span.   
See also \cref{SuppT delta} and \cite[Remark~1.7]{afforb}.  

For a reduced word $c=s_1\dots s_n$, let $\psi^\proj_{c;j}=s_1\cdots s_{j-1}\alpha_j$ \nomenclature[zzyzzzzc1]{$\psi^\proj_{c;j}$}{$s_1\cdots s_{j-1}\alpha_j\in\TravProj{c}$}
and $\psi^\inj_{c;j}=s_n\cdots s_{j+1}\alpha_j$.  \nomenclature[zzyzzzzc2]{$\psi^\inj_{c;j}$}{$s_n\cdots s_{j+1}\alpha_j\in\TravInj{c}$}
Define the sets  $\TravProj{c}=\set{\psi^\proj_{c;j}:j=1,\ldots,n}$ and $\TravInj{c}=\set{\psi^\inj_{c;j}:j=1,\ldots,n}$, and let $\TravInf{c}=\TravProj{c}\cap\TravInj{c}$. 
\nomenclature[zzyzzzzc0]{$\TravInf{c}$}{$\TravProj{c}\cup\TravInj{c}$, transversal of infinite $c$-orbits in $\RS$}%
\nomenclature[zzyzzzzc01]{$\TravProj{c}$}{}%
\nomenclature[zzyzzzzc02]{$\TravInj{c}$}{}%
Since different reduced words for the same Coxeter element are related by a sequence of commutations of adjacent commuting letters, the sets $\TravProj{c}$, $\TravInj{c}$, and $\TravInf{c}$ depend only on $c$, not on the chosen reduced word.

Let $\TravReg{c}$ be as defined in \cref{Up details}.
For any $\beta\in\TravReg{c}$, there exists a smallest positive integer $\kappa(\beta)$ such that $\kappa(\beta)\delta-\beta$ is a root.
The following is \cite[Theorem~1.2]{afforb}.  (A similar result is \cite[Proposition~1.9]{Dlab76}, but the result from \cite{afforb} has additional details that are crucial for our purposes.  
See \cite[Remark~1.9]{afforb}.)

\begin{theorem}\label{aff c-orbits}
  Suppose $\RS$ is an affine root system and $c$ is a Coxeter element in the associated Weyl group $W$.   
\begin{enumerate}[\qquad \upshape (1)]
    \item	\label{infinite transversal}
      There are exactly $2n$ infinite $c$-orbits in $\RS$. 
      The set $\TravInf{c}$ is a transversal of these orbits.
    \item \label{criterion}
      The $c$-orbit of a root $\beta\in\RS$ is finite if and only if $\beta\in \eigenspace{c}$.
    \item \label{im orb}
      Every imaginary root is fixed by $c$.
    \item \label{finite transversal}
      For $\RS$ of rank $2$, there are no finite $c$-orbits of real roots.
      For larger rank, there are infinitely many finite $c$-orbits of real roots and the set $\set{\beta + m\cdot\kappa(\beta)\delta:\beta\in\TravReg{c},\,m\in\integers}$ is a transversal of them.
    \item	\label{pos neg}
      Each finite $c$-orbit contains either only positive roots or only negative roots. 
      In particular, the $c$-orbit of a real root $\beta + m\cdot\kappa(\beta)\delta$ for $\beta\in\TravReg{c}$ consists of positive roots if and only if $m\ge0$.    \item \label{finite transversal finite}
      A finite $c$-orbit intersects $\RSfinpos$ if and only if it intersects $\TravReg{c}$.
  \end{enumerate}		
\end{theorem}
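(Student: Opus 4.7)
The plan is to use the eigenvalue decomposition of $c$ from \cref{eigen} to separate the finite from the infinite $c$-orbits, and then to use the structure of $\RSTfin{c}$ from \cref{Up details} to parametrize the finite orbits of real roots. Part (\ref{im orb}) is immediate: the imaginary sublattice is spanned by $\delta$, which is fixed by $c$ by \cref{eigen}(\ref{eigen 1}). For part (\ref{criterion}), since $K$ is positive definite on $V_\fin$ (by the definition of affine type) and $\gamma_c\in V_\fin$, we have $K(\gamma_c,\gamma_c)>0$, so $\gamma_c\notin\eigenspace{c}$ and $V=\Span(\gamma_c)\oplus\eigenspace{c}$. Writing any $\beta\in V$ as $\beta=a\gamma_c+v$ with $v\in\eigenspace{c}$, a short induction using $(c-1)\gamma_c=\delta$ yields $c^k\beta=a\gamma_c+ak\delta+c^k v$; since $c$ has finite order on $\eigenspace{c}$ by \cref{eigen}(\ref{eigen U}), the sequence $c^k v$ is bounded, so the orbit of $\beta$ is bounded if and only if $a=0$, equivalently if and only if $\beta\in\eigenspace{c}$.

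Next I would dispose of parts (\ref{finite transversal}), (\ref{pos neg}), and (\ref{finite transversal finite}) by parametrizing real finite orbits via $\RSTfin{c}$. By \cref{Up details}(\ref{Upsilon full rank}), $\RSTfin{c}$ has rank $n-2$, hence is empty when $n=2$, giving the rank-two clause of (\ref{finite transversal}). For $n\ge 3$, every real root in $\eigenspace{c}$ can be written as $\beta_0+k\delta$ with $\beta_0\in\RSTfin{c}$ and $k\in\integers$ (with an appropriate half-integer variant in type $A^{(2)}_{2k}$). Shifting $\beta_0$ into the positive roots of $\RSTfin{c}$ and using \cref{Up details}(\ref{Omega indep}) to pick a canonical $\TravReg{c}$-representative of each $c$-orbit there yields the collection $\set{\beta+m\kappa(\beta)\delta:\beta\in\TravReg{c},\,m\in\integers}$; the minimality of $\kappa(\beta)$ ensures that distinct $(\beta,m)$ yield distinct roots and that the collection is exhaustive. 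Part (\ref{finite transversal finite}) then follows from $\TravReg{c}\subseteq\RSfinpos$: a finite $c$-orbit meets $\RSfinpos$ exactly when it contains an element of $\TravReg{c}$, namely the $m=0$ representative.

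For part (\ref{pos neg}), since $\beta\in\TravReg{c}\subseteq\RSfinpos$ and $[\delta:\alpha_\aff]>0$, the root $\beta+m\kappa(\beta)\delta$ is positive for $m\ge 0$. The root $\kappa(\beta)\delta-\beta$ is positive (its $\alpha_\aff$-coefficient $\kappa(\beta)[\delta:\alpha_\aff]$ is positive while $[\beta:\alpha_\aff]=0$), so $\beta+m\kappa(\beta)\delta\in\RSneg$ for $m<0$. Since $c\delta=\delta$ and $W$ permutes roots, one checks $\kappa(c\beta)=\kappa(\beta)$, so $c$ sends $\beta+m\kappa(\beta)\delta$ to $c\beta+m\kappa(\beta)\delta$, preserving $m$ and hence the sign throughout the orbit.

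The main obstacle is part (\ref{infinite transversal}), the assertion that $\TravInf{c}=\TravProj{c}\cup\TravInj{c}$ is a transversal of exactly $2n$ infinite $c$-orbits. The plan is to verify that $|\TravProj{c}|=|\TravInj{c}|=n$, that $\TravProj{c}\cap\TravInj{c}=\emptyset$, and that each infinite $c$-orbit meets $\TravInf{c}$ in exactly one point. A direct calculation from the reduced word $c=s_1\cdots s_n$ gives $c^{-1}\psi^\proj_{c;j}=-\psi^\inj_{c;j}$; combined with the observation that $-\beta$ lies in a different $c$-orbit from $\beta$ whenever the orbit is infinite (which follows from comparing $\gamma_c$-coefficients in the decomposition used for (\ref{criterion})), this places $\TravProj{c}$ and $\TravInj{c}$ in disjoint collections of orbits of opposite sign. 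The remaining work is to show that every infinite orbit admits a unique ``minimal-shift'' representative in $\TravInf{c}$, exploiting the linear growth of the $ak\delta$ term in the expansion $c^k\beta=a\gamma_c+ak\delta+c^k v$ to pin down the unique $k$ for which the representative is minimal in a suitable reflection order; I expect to carry this out by induction on rank, stripping off an initial or final simple reflection and using \cref{Ec invariant} to reduce to a parabolic subsystem.
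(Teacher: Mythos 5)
First, a structural point: the paper does not prove \cref{aff c-orbits} at all — it is quoted verbatim as \cite[Theorem~1.2]{afforb}, so there is no internal proof to compare against. Your attempt must therefore stand on its own, and as it stands it has genuine gaps. Parts (\ref{im orb}) and (\ref{criterion}) are fine: the decomposition $c^k\beta=a\gamma_c+ak\delta+c^kv$ with $v\in\eigenspace{c}$ and the finite order of $c$ on $\eigenspace{c}$ do exactly what you want, and your observation that $\gamma_c$-coefficients separate $\beta$ from $-\beta$ on infinite orbits is correct and consistent with \cref{useful}.

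The gaps are in (\ref{infinite transversal}), (\ref{finite transversal}) and (\ref{pos neg}). For (\ref{infinite transversal}) you correctly compute $c^{-1}\psi^\proj_{c;j}=-\psi^\inj_{c;j}$ and correctly separate $\TravProj{c}$ from $\TravInj{c}$, but the two essential claims — that the $n$ roots of $\TravProj{c}$ lie in $n$ \emph{distinct} orbits, and that \emph{every} infinite orbit meets $\TravInf{c}$ — are only announced as ``the remaining work,'' with a plan (minimal-shift representative, induction on rank) rather than an argument; this is the hard core of the statement and cannot be waved at. For (\ref{finite transversal}) you assert that ``the minimality of $\kappa(\beta)$ ensures \ldots that the collection is exhaustive'' and that $\TravReg{c}$ (a rather specific set, $\set{\beta_1,t_{\beta_1}\beta_2,\ldots}$) is a transversal of the $c$-orbits meeting $\RSfinpos$; neither follows from \cref{Up details} without an analysis of how $c$ moves roots between the sets $\RSTfin{c}+k\delta$ for varying $k$, which you do not supply. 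For (\ref{pos neg}) the final step is a non sequitur: you verify the sign of the distinguished representative $\beta+m\kappa(\beta)\delta$ using $[\beta:\alpha_\aff]=0$, but $c\beta$ need not lie in $\RSfin$, so ``preserving $m$'' does not by itself preserve the sign along the orbit; already for $m=0$ one needs a separate argument (e.g.\ that every root of the orbit is a nonnegative combination of $\SimplesT{c}\subset\RSpos$) to conclude the whole orbit is positive.
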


\cref{tab:type-by-type} shows the roots $\beta_i$ (indexed as in \cref{Up details}) for standard affine root systems and a particular choice of~$c$.  
(This table also appears as \cite[Table~1]{afforb}.)
The types are named as in \cite[\S4.8]{Kac90}, except that, as mentioned earlier, we use $n$ as the rank of the root system in every case.
The Coxeter element $c=s_1\cdots s_n$ is described by the labeling of nodes in the second column.
In every case, $\alpha_\aff=\alpha_n$.
\begin{table}[bp]
  \centering
  \begin{tabular}{|c|c|c|l|}

    \hline

    \hspace{-0.25ex}Type\hspace{-0.25ex} & Diagram of $\RS$ & \hspace{-0.55ex}Diagram of $\RSTfin{c}$\hspace{-0.55ex} & \multicolumn{1}{c|}{Simple roots of $\RSTfin{c}$}\\

    \hline&&&\\[-11pt]
  \hline&&& \\[-9pt]

      $A^{(1)}_1$

      &

      \raisebox{4pt}{\begin{dynkin}
        \draw[thin,double distance=\dynkinlinesep,postaction={decorate,decoration={markings,mark=at position 0.15 with {\arrowreversed[line width=0.06cm,xshift=-1pt]{angle 60}},mark=at position 0.85 with {\arrow[line width=0.06cm,xshift=1pt]{angle 60}}}}](0,0) -- (1.6*\dynkinstep,0);
     \dynkindot{0}{0}{1}{below}
     \dynkinaffinedot{1.6}{0}{2}{below}
    \end{dynkin}}

      &

      &

      \\

    \hline

      $\begin{array}{c}A^{(1)}_{n-1}\\[-2pt] _{(n\ge3)}\\[-2pt] _{k\neq n}\end{array}$

      &

      \begin{dynkin}
     \dynkinline{1}{0.7}{0.5}{.7}
     \dynkinline{2}{0.7}{2.5}{.7}
     \dynkinline{1}{-0.7}{0.5}{-0.7}
     \dynkinline{2}{-0.7}{2.5}{-0.7}
     \dynkinline{0}{0}{0.5}{.7}
     \dynkinline{0}{0}{0.5}{-.7}
     \dynkindotline{1}{.7}{2}{.7}
     \dynkindotline{1}{-.7}{2}{-.7}
     \dynkinline{2.5}{.7}{3}{0}
     \dynkinline{2.5}{-.7}{3}{0}
     \dynkindot{0}{0}{1}{left}
     \dynkindot{0.5}{.7}{2}{above}
     \dynkindot{0.5}{-.7}{k+1}{below}
     \dynkindot{2.5}{.7}{k}{above}
     \dynkindot{2.5}{-.7}{n-1}{below}
     \dynkinaffinedot{3}{0}{n}{right}
    \end{dynkin}

    &

      \begin{dynkin}
    \dynkinline{1}{0.7}{2.4}{0.7}
    \dynkinline{3.6}{0.7}{4}{0.7}
    \dynkindotline{2.4}{0.7}{3.6}{0.7}
    \dynkindot{1}{0.7}{1}{below}
    \dynkindot{2}{0.7}{2}{below}
    \dynkindot{4}{0.7}{k-1}{below}

    \dynkinline{1}{-0.3}{2.4}{-0.3}
    \dynkinline{3.6}{-0.3}{4}{-0.3}
    \dynkindotline{2.4}{-0.3}{3.6}{-0.3}
    \dynkindot{1}{-0.3}{k}{below}
    \dynkindot{2}{-0.3}{k+1}{below}
    \dynkindot{4}{-0.3}{n-2}{below}
   \end{dynkin}

      &

      {\small$
      \begin{array}{l}
        \beta_j=\alpha_{j+1}
      \end{array}
      $}

      \\

    \hline

      $\begin{array}{c}B^{(1)}_{n-1}\\[-2pt] _{(n\ge4)}\end{array}$

      &

    \begin{dynkin}
     \dynkinline{0}{0}{0.5}{0}
    \dynkinline{1.5}{0}{2}{0}
     \dynkinline{2.5}{.7}{2}{0}
     \dynkinline{2.5}{-.7}{2}{0}
     \dynkindotline{0.5}{0}{1.5}{0}
     \dynkindoubleline{0}{0}{-1.25}{0}
     \dynkindot{2.5}{.7}{n-1}{right}
     \dynkinaffinedot{2.5}{-.7}{n}{right}
     \dynkindot{2}{0}{n-2}{right}
     \dynkindot{0}{0}{2}{below}
     \dynkindot{-1.25}{0}{1}{below}
    \end{dynkin}

      &

   \begin{dynkin}
    \dynkinline{1}{0.7}{2.4}{0.7}
    \dynkinline{3.6}{0.7}{4}{0.7}
    \dynkindotline{2.4}{0.7}{3.6}{0.7}
    \dynkindot{1}{0.7}{1}{below}
    \dynkindot{2}{0.7}{2}{below}
    \dynkindot{4}{0.7}{n-3}{below}
    \dynkindot{2.5}{-0.3}{n-2}{below}
   \end{dynkin}

      &

      {\small$
    \begin{array}{l}
          \beta_{n-2}=\sum_{i=1}^{n-1} \alpha_i\\
     \beta_j=\alpha_{j+1}
    \end{array}
   $}

      \\

    \hline

      $\begin{array}{c}C^{(1)}_{n-1}\\[-2pt] _{(n\ge3)}\end{array}$

      &

    \begin{dynkin}
    \dynkinline{1}{0}{1.5}{0}
   \dynkinline{2.5}{0}{3}{0}
     \dynkindoubleline{-0.25}{0}{1}{0}
     \dynkindotline{1.5}{0}{2.5}{0}
     \dynkindoubleline{4.25}{0}{3}{0}
     \dynkindot{-0.25}{0}{1}{below}
     \dynkindot{1}{0}{2}{below}
     \dynkindot{3}{0}{n-1}{below}
     \dynkinaffinedot{4.25}{0}{n}{below}
    \end{dynkin}

    &

   \begin{dynkin}
    \dynkinline{1}{0.2}{2.4}{0.2}
    \dynkinline{3.6}{0.2}{4}{0.2}
    \dynkindotline{2.4}{0.2}{3.6}{0.2}
    \dynkindot{1}{0.2}{1}{below}
    \dynkindot{2}{0.2}{2}{below}
    \dynkindot{4}{0.2}{n-2}{below}
   \end{dynkin}

      &

      {\small$
      \begin{array}{l}
        \beta_j=\alpha_{j+1}
      \end{array}
      $  }

      \\

    \hline&&&\\[-9pt]

      $\begin{array}{c}D^{(1)}_{n-1}\\[-2pt] _{(n\ge5)}\end{array}$

      &

      \begin{dynkin}
   \dynkinline{0.5}{0}{1}{0}
   \dynkinline{2.5}{0}{2}{0}
     \dynkinline{0}{.7}{0.5}{0}
     \dynkinline{0}{-.7}{0.5}{0}
     \dynkindotline{1}{0}{2}{0}
     \dynkinline{2.5}{0}{3}{.7}
     \dynkinline{2.5}{0}{3}{-.7}
     \dynkindot{0}{.7}{1}{left}
     \dynkindot{0}{-.7}{2}{left}
     \dynkindot{0.5}{0}{3}{left}
     \dynkindot{2.5}{0}{n-2}{right}
     \dynkindot{3}{.7}{n-1}{right}
     \dynkinaffinedot{3}{-.7}{n}{right}
    \end{dynkin}

      &

   \begin{dynkin}
    \dynkinline{1}{1.3}{2.4}{1.3}
    \dynkinline{3.6}{1.3}{4}{1.3}
    \dynkindotline{2.4}{1.3}{3.4}{1.3}
    \dynkindot{1}{1.3}{1}{below}
    \dynkindot{2}{1.3}{2}{below}
    \dynkindot{4}{1.3}{n-4}{below}
    \dynkindot{2.5}{0.3}{n-3}{below}
    \dynkindot{2.5}{-0.7}{n-2}{below}
   \end{dynkin}

      &

      {\small$
    \begin{array}{l}
          \beta_{n-3}=\alpha_1+\sum_{i=3}^{n-1} \alpha_i \\
     \beta_{n-2}=\alpha_2+\sum_{i=3}^{n-1} \alpha_i \\
     \beta_j=\alpha_{j+2}
    \end{array}
   $}

      \\[18pt]

    \hline

    $E^{(1)}_6$

      &
  \raisebox{-12pt}{
    \begin{dynkin}
     \dynkinline{0}{0}{4}{0}
     \dynkinline{2}{0}{2}{2}
     \dynkindot{0}{0}{3}{below}
     \dynkindot{1}{0}{4}{below}
     \dynkindot{2}{0}{5}{below}
     \dynkindot{3}{0}{6}{below}
     \dynkinaffinedot{4}{0}{7}{below}
     \dynkindot{2}{1}{2}{right}
     \dynkindot{2}{2}{1}{right}
    \end{dynkin}
  }
      &

   \begin{dynkin}
    \dynkinline{1}{1.3}{2}{1.3}
    \dynkinline{1}{0.3}{2}{0.3}
    \dynkindot{1}{1.3}{1}{below}
    \dynkindot{2}{1.3}{2}{below}
    \dynkindot{1}{0.3}{3}{below}
    \dynkindot{2}{0.3}{4}{below}
    \dynkindot{1.5}{-0.7}{5}{below}
   \end{dynkin}

   &

      {\small$
    \begin{array}{l}
     \beta_1= \alpha_{4} + \alpha_{5} \\
     \beta_2= \alpha_{1} + \alpha_{2} + \alpha_{5} + \alpha_{6} \\
     \beta_3= \alpha_{2} + \alpha_{5} \\
     \beta_4= \alpha_{3} + \alpha_{4} + \alpha_{5} + \alpha_{6} \\
     \beta_5= \alpha_{2} + \alpha_{4} + \alpha_{5} + \alpha_{6}
    \end{array}

   $}

    \\

    \hline

    $E^{(1)}_7$

      &

      \begin{dynkin}
     \dynkinline{0}{0}{6}{0}
     \dynkinline{3}{0}{3}{1}
     \dynkindot{0}{0}{2}{below}
     \dynkindot{1}{0}{3}{below}
     \dynkindot{2}{0}{4}{below}
     \dynkindot{3}{0}{5}{below}
     \dynkindot{4}{0}{6}{below}
     \dynkindot{5}{0}{7}{below}
     \dynkinaffinedot{6}{0}{8}{below}
     \dynkindot{3}{1}{1}{right}
    \end{dynkin}

      &

   \begin{dynkin}
       \dynkinline{1}{1.3}{3}{1.3}
        \dynkindot{1}{1.3}{1}{below}
        \dynkindot{2}{1.3}{2}{below}
        \dynkindot{3}{1.3}{3}{below}
        \dynkindot{1.5}{0.3}{4}{below}
        \dynkindot{2.5}{0.3}{5}{below}
        \dynkinline{1.5}{0.3}{2.5}{0.3}
        \dynkindot{2}{-0.7}{6}{below}
   \end{dynkin}

      &

      {\small$
    \begin{array}{l}
     \beta_1=\alpha_{4} + \alpha_{5}\\
     \beta_2=\alpha_{1} + \alpha_{5} + \alpha_{6}\\
          \beta_3=\sum_{i=2}^7 \alpha_{i} \\
          \beta_4=\sum_{i=3}^6 \alpha_{i} \\
          \beta_5=\alpha_{1} + \sum_{i=4}^7 \alpha_{i} \\
          \beta_6=\alpha_{1} + \alpha_{5} + \sum_{i=3}^7 \alpha_{i}
    \end{array}
   $}

    \\

    \hline

      $E^{(1)}_8$

      &

      \!\!\!\begin{dynkin}
     \dynkinline{1}{0}{8}{0}
     \dynkinline{3}{0}{3}{1}
     \dynkindot{1}{0}{2}{below}
     \dynkindot{2}{0}{3}{below}
     \dynkindot{3}{0}{4}{below}
     \dynkindot{4}{0}{5}{below}
     \dynkindot{5}{0}{6}{below}
     \dynkindot{3}{1}{1}{right}
     \dynkindot{6}{0}{7}{below}
     \dynkindot{7}{0}{8}{below}
     \dynkinaffinedot{8}{0}{9}{below}
    \end{dynkin}\!\!\!

      &

   \begin{dynkin}
     \dynkinline{1}{1.3}{4}{1.3}
        \dynkindot{1}{1.3}{1}{below}
        \dynkindot{2}{1.3}{2}{below}
        \dynkindot{3}{1.3}{3}{below}
        \dynkindot{4}{1.3}{4}{below}
        \dynkindot{2}{0.3}{5}{below}
        \dynkindot{3}{0.3}{6}{below}
        \dynkinline{2}{0.3}{3}{0.3}
        \dynkindot{2.5}{-0.7}{7}{below}
   \end{dynkin}

   &

  {\small $
    \begin{array}{l}
     \beta_1=\alpha_{3} + \alpha_{4} + \alpha_{5}\\
          \beta_2=\alpha_{1} + \sum_{i=4}^6 \alpha_{i} \\
          \beta_3=\sum_{i=2}^7 \alpha_{i} \\
          \beta_4=\alpha_{1} + \sum_{i=3}^8 \alpha_{i} \\
          \beta_5=\alpha_{1} + \alpha_{4} + \sum_{i=3}^7 \alpha_{i} \\
          \beta_6=\alpha_{4} + \alpha_{5} + \sum_{i=1}^8 \alpha_{i} \\
          \beta_7=\alpha_{4} + \sum_{i=3}^6 \alpha_{i} + \sum_{i=1}^8 \alpha_{i}\!\!\\
    \end{array}
   $}

    \\

    \hline

      $F^{(1)}_4$

      &

      \begin{dynkin}
     \dynkinline{4}{0}{2}{0}
     \dynkindoubleline{2}{0}{0.75}{0}
     \dynkinline{0.75}{0}{-0.250}{0}
     \dynkinaffinedot{4}{0}{5}{below}
     \dynkindot{3}{0}{4}{below}
     \dynkindot{2}{0}{3}{below}
     \dynkindot{0.75}{0}{2}{below}
     \dynkindot{-0.25}{0}{1}{below}
    \end{dynkin}

      &

   \begin{dynkin}
    \dynkinline{1}{0.7}{2}{0.7}
    \dynkindot{1}{0.7}{1}{below}
    \dynkindot{2}{0.7}{2}{below}
    \dynkindot{1.5}{-0.3}{3}{below}
   \end{dynkin}

      &

      {\small$
    \begin{array}{l}
     \beta_1=\alpha_2+\alpha_3 \\
     \beta_2=\alpha_1+\alpha_2+\alpha_3+\alpha_4 \\
     \beta_3=2\alpha_2+\alpha_3+\alpha_4
    \end{array}
   $}

      \\

    \hline

    $G^{(1)}_2$

      &

      \begin{dynkin}
     \dynkinline{2}{0}{1}{0}
     \dynkintripleline{1}{0}{-0.25}{0}
     \dynkinaffinedot{2}{0}{3}{below}
     \dynkindot{1}{0}{2}{below}
     \dynkindot{-0.25}{0}{1}{below}
    \end{dynkin}

    &

   \begin{dynkin}
    \dynkindot{1}{0.3}{1}{below}
   \end{dynkin}

   &

      {\small$
      \begin{array}{l}
        \beta_1=\alpha_1+\alpha_2
      \end{array}
      $ }

      \\

    \hline
  \end{tabular}

 \bigskip

  \caption{Standard affine root systems and their finite orbits}
  \label{tab:type-by-type}
\end{table}

The following is \cite[Proposition~4.3]{afforb}.  
\begin{proposition}\label{useful}
The orbits of roots in $\TravProj{c}$ are separated from the orbits of roots in $\TravInj{c}$ by the hyperplane $\eigenspace{c}$.
Specifically, $K(\gamma_c,\beta)>0$ for $\beta\in c^{m}\TravProj{c}$ and $m\in\mathbb{Z}$, while $K(\gamma_c,\beta)<0$ for $\beta\in c^{m}\TravInj{c}$ and $m\in\mathbb{Z}$.
\end{proposition}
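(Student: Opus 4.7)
The plan is to reduce the inequality to a finite check on the transversal $\TravInf{c}$ and then to compute the relevant signs via the Euler form, which is more tractable under cyclic rotation of $c$.

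\textbf{Reduction to a transversal.} The equation $(c-1)\gamma_c=\delta$ combined with $c\delta=\delta$ yields $c^{-1}\gamma_c=\gamma_c-\delta$. Since $K$ is $W$-invariant and $\delta$ lies in the radical of $K$,
\[
K(\gamma_c,c\beta)=K(c^{-1}\gamma_c,\beta)=K(\gamma_c,\beta)-K(\delta,\beta)=K(\gamma_c,\beta).
\]
So $K(\gamma_c,\cdot)$ is constant along every $c$-orbit and it suffices to check the signs on the $2n$ representatives $\psi^\proj_{c;j}$ and $\psi^\inj_{c;j}$, $j=1,\ldots,n$.

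\textbf{From $K(\gamma_c,\cdot)$ to $E_c(\delta,\cdot)$.} By \cref{Ec c} and $c\delta=\delta$, the functional $E_c(\delta,\cdot)$ is also $c$-invariant. The space of $c$-invariant linear functionals on $V$ has dimension $\dim\ker(c-1)=1$ by \cref{eigen}, so $K(\gamma_c,\cdot)=\mu\,E_c(\delta,\cdot)$ for some scalar $\mu\ne0$. Evaluating both sides at $\alpha_1$ using \cref{phic sign} and \cref{Ec def}, together with the identity $\sum_i[\delta:\alpha_i]d_i a_{ij}=K(\delta,\alpha_j)=0$, shows that $\mu<0$; so I only need to show $E_c(\delta,\psi^\proj_{c;j})<0$ and $E_c(\delta,\psi^\inj_{c;j})>0$.

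\textbf{Computation via rotations of $c$.} Because $\delta$ is $W$-fixed, \cref{Ec invariant} specializes to $E_c(\delta,s\cdot u)=E_{scs}(\delta,u)$ whenever $s$ is initial or final in $c$. Iterating with $s_1,s_2,\ldots,s_{j-1}$ (each initial in the current rotation) yields
\[
E_c(\delta,\psi^\proj_{c;j})=E_{c'}(\delta,\alpha_j),\qquad c'=s_j s_{j+1}\cdots s_n s_1\cdots s_{j-1}.
\]
In this reduced word for $c'$, the letter $s_j$ is first, so \cref{Ec def} gives $E_{c'}(\alpha_i\ck,\alpha_j)=a_{ij}$ for $i\ne j$ and $=1$ for $i=j$; combined with $a_{jj}=2$ and $\sum_i[\delta:\alpha_i]d_i a_{ij}=0$ this produces $E_{c'}(\delta,\alpha_j)=-[\delta:\alpha_j]d_j<0$. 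The symmetric computation peels $s_n,s_{n-1},\ldots,s_{j+1}$ from the left of $\psi^\inj_{c;j}$, each being successively final in the appropriate rotation, and lands at $E_{c''}(\delta,\alpha_j)$ with $c''=s_{j+1}\cdots s_n s_1\cdots s_j$. In $c''$ the letter $s_j$ is last, so $E_{c''}(\alpha_i\ck,\alpha_j)=0$ for $i\ne j$ and $E_{c''}(\delta,\alpha_j)=[\delta:\alpha_j]d_j>0$, as required.

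The only delicate point is bookkeeping: the matrix of $E_c$ in the basis of simple roots depends on the order in which the simple reflections appear in the chosen reduced word for $c$, and this matrix transforms accordingly under cyclic rotations of $c$. Once this is tracked carefully (so that the roles of ``first'' and ``last'' in \cref{Ec def} are correctly identified after each rotation), the computation itself is immediate from the definitions.
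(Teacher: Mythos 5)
Your argument is correct, and it is self-contained modulo results the paper already states. The paper itself gives no proof of this proposition --- it is imported verbatim from \cite[Proposition~4.3]{afforb} --- so there is no in-text argument to compare against; judged on its own, your proof holds up. The three pillars all check out: (i) $c^{-1}\gamma_c=\gamma_c-\delta$ together with $W$-invariance of $K$ and $K(\delta,\cdot)=0$ does make $K(\gamma_c,\cdot)$ constant on $c$-orbits, reducing everything to the $2n$ roots of $\TravProj{c}\cup\TravInj{c}$; (ii) both $K(\gamma_c,\cdot)$ and $E_c(\delta,\cdot)$ are nonzero $c$-invariant functionals, and since $\dim\ker(c-1)=1$ forces the space of such functionals to be one-dimensional, they are proportional; evaluating at $\alpha_1$ gives $K(\gamma_c,\alpha_1)=(\text{pos.})\cdot 2d_1[\delta:\alpha_1]>0$ and $E_c(\delta,\alpha_1)=-[\delta:\alpha_1]d_1<0$, so indeed $\mu<0$; (iii) the peeling computation is right: $E_c(\delta,\psi^\proj_{c;j})=E_{c'}(\delta,\alpha_j)=-[\delta:\alpha_j]d_j<0$ because $s_j$ is initial in $c'$, and $E_c(\delta,\psi^\inj_{c;j})=E_{c''}(\delta,\alpha_j)=[\delta:\alpha_j]d_j>0$ because $s_j$ is final in $c''$, using $\sum_i[\delta:\alpha_i]d_ia_{ij}=K(\delta,\alpha_j)=0$ in both cases. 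The only point worth making explicit in a write-up is the one you flag yourself: the matrix of $E_{c'}$ is defined relative to the order of letters in a reduced word for $c'$, not relative to the original indexing, so ``$i>j$'' in \cref{Ec def} must be read as ``$s_i$ occurs after $s_j$ in the chosen word.'' With that bookkeeping spelled out, the proof is complete.
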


\cref{Up details}\eqref{type A} implies that the irreducible components of~$\RST{c}$ are of affine type $A$.
The following is \cite[Proposition~4.6]{afforb}.

\begin{proposition}\label{c on tUp components}
The action of $c$ on each component of $\RST{c}$ is to rotate the Dynkin diagram of the component, taking each node to an adjacent node (or when the component has rank $2$, to transpose the two nodes of the Dynkin diagram).
\end{proposition}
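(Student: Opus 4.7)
The plan is to combine three ingredients: (i) $c$ restricts to a finite-order automorphism of $\RST{c}$ that permutes its simple system $\SimplesT{c}$; (ii) each irreducible component of $\RST{c}$ is an $A_{k-1}^{(1)}$ cycle obtained by adjoining a single \emph{affine node} $\xi$ to the corresponding $A_{k-1}$ component of $\RSTfin{c}$; and (iii) the explicit shift rule $c\beta_\ell=\beta_{\ell+1}$ from \cref{Up details}\eqref{canon order}.

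First I would verify that $c$ preserves $\eigenspace{c}$. Since $c\gamma_c=\gamma_c+\delta$ by \cref{eigen}\eqref{eigen 1 gen} and $\delta\in\ker K$, the $W$-invariance of $K$ gives $K(\gamma_c,cv)=K(c^{-1}\gamma_c,v)=K(\gamma_c-\delta,v)=K(\gamma_c,v)$, so $cv\in\eigenspace{c}$ whenever $v\in\eigenspace{c}$. Hence $c$ preserves $\RST{c}$, acts there with finite order by \cref{eigen}\eqref{eigen U}, and preserves $\RST{c}\cap\RSpos$ by \cref{aff c-orbits}\eqref{pos neg}. The minimal-generating-set characterization of $\SimplesT{c}$ forces $c(\SimplesT{c})=\SimplesT{c}$, so $c$ acts on $\SimplesT{c}$ as a Dynkin-diagram automorphism of each component.

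Next I would identify $\SimplesT{c}$ component by component. A short $\alpha_\aff$-coordinate argument shows $\SimplesT{c}_\fin\subseteq\SimplesT{c}$: any hypothetical decomposition of some $\beta_\ell\in\SimplesT{c}_\fin$ as a sum of positive roots in $\RST{c}$ must have all $\delta$-shifted summands contributing zero to $[\,\cdot\,:\alpha_\aff]$, reducing to a decomposition inside $\RSTfin{c}$ and contradicting simplicity of $\beta_\ell$ there. Counting ranks, each $A_{k-1}$ component of $\RSTfin{c}$ is completed to an $A_{k-1}^{(1)}$ component of $\RST{c}$ by exactly one extra simple root $\xi$. Moreover, since $c$ preserves $K$ and shifts consecutive $\beta_\ell$, the linear ordering from \cref{Up details}\eqref{canon order} must traverse the underlying $A_{k-1}$ Dynkin path in order; thus the simple roots of one component of $\RST{c}$ are $\beta_i,\beta_{i+1},\ldots,\beta_{i+k-2},\xi$, arranged as the $k$-cycle of $A_{k-1}^{(1)}$.

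Finally, \cref{Up details}\eqref{canon order} gives $c\beta_\ell=\beta_{\ell+1}$ for $\ell=i,\ldots,i+k-3$ together with $c\beta_{i+k-2}\notin\SimplesT{c}_\fin$, so $c\beta_{i+k-2}$ must be the only remaining simple root in the component, namely $\xi$. It remains to pin down $c\xi$. If $c\xi$ were some $\beta_{i+j}$ with $j>0$, then injectivity of $c$ and $c\beta_{i+j-1}=\beta_{i+j}$ would force $\xi=\beta_{i+j-1}$, contradicting $\xi\notin\SimplesT{c}_\fin$; if instead $c\xi=\xi$, the forward $c$-orbit of $\beta_i$ could never return to $\beta_i$, contradicting finite order. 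Therefore $c\xi=\beta_i$, so $c$ acts on the component as the one-step rotation of the $k$-cycle, sending each node to an adjacent node; when $k=2$ this one-step rotation is exactly the transposition of the two nodes of $A_1^{(1)}$. The main obstacle is Step~2: verifying both that $\SimplesT{c}_\fin\subseteq\SimplesT{c}$ with exactly one affine node adjoined per component, and that the ordering of \cref{Up details}\eqref{canon order} genuinely walks along each $A_{k-1}$ Dynkin path in the right order. Once that structural picture is in hand, the rotation conclusion is forced by pigeonhole in the dihedral automorphism group of the $k$-cycle.
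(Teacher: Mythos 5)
The paper does not actually prove this proposition: it is imported verbatim as \cite[Proposition~4.6]{afforb}, so there is no in-paper argument to compare yours against, and your proposal has to stand on its own. Your first step does stand: $c^{-1}\gamma_c=\gamma_c-\delta$ together with $W$-invariance of $K$ shows $\eigenspace{c}$ is $c$-stable, \cref{aff c-orbits}(2),(5) show $c$ preserves $\RST{c}\cap\RSpos$, and uniqueness of the minimal generating set forces $c\,\SimplesT{c}=\SimplesT{c}$, so $c$ is a finite-order diagram automorphism. The $\alpha_\aff$-coordinate argument for $\SimplesT{c}_\fin\subseteq\SimplesT{c}$ is also sound. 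The problems are in the step you yourself flag as the obstacle, and they are not merely expository.

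First, ``counting ranks'' does not deliver the claimed structure. Beyond the bookkeeping you gesture at (which must also rule out finite components of $\RST{c}$, a single component absorbing two components of $\RSTfin{c}$, and the possibility that $c$ permutes distinct components rather than preserving each one), you must show that the extra node $\xi$ closes the $A_{k-1}$ path into a \emph{simply-laced} cycle. There are other irreducible affine diagrams that become type $A$ after deleting one node (e.g.\ $A_2^{(2)}$ for $k=2$, $G_2^{(1)}$ and $D_4^{(3)}$ for $k=3$), so in the ambient types with several root lengths one must control the length of $\xi$ and its bond multiplicities; rank counting is silent on this. Second, and more seriously: your assertion that ``since $c$ preserves $K$ and shifts consecutive $\beta_\ell$, the ordering of \cref{Up details}(3) must traverse the $A_{k-1}$ Dynkin path in order'' is a non sequitur. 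The equation $c\beta_\ell=\beta_{\ell+1}$ concerns the $c$-action, not the diagram, and gives no information about $K(\beta_\ell,\beta_{\ell+1})$. Without that adjacency, your concluding pigeonhole in the dihedral group only shows that $c$ acts on the $k$ nodes as a single $k$-cycle, i.e.\ as rotation by some $j$ with $\gcd(j,k)=1$; for $j\neq\pm1$ such a rotation does \emph{not} take each node to an adjacent node, which is exactly the content of the proposition. Closing this gap amounts to proving $K(\beta,c\beta)\neq0$ for $\beta\in\SimplesT{c}$, and nothing in the background you invoke supplies that; note in particular that you cannot borrow it from \cref{E in tubes}, whose proof already assumes the present proposition at the step $K(c^{-1}\beta\ck,\beta)=-1$.
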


We now prove some special facts about the form $E_c$ in affine type.

\begin{proposition}\label{E delta in tubes}
Suppose $\RS$ is of affine type and $\beta$ is a root contained in $\eigenspace{c}$.
Then $E_c(\beta\ck,\delta)=E_c(\delta\ck,\beta)=0$.
\end{proposition}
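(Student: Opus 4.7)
My plan is to exploit two facts from \cref{eigen}: $\delta$ spans the one-dimensional $1$-eigenspace of $c$, and $c$ acts with finite order on the hyperplane $\eigenspace{c}$. I would first reduce the proposition to the single claim that $E_c(v,\delta)=0$ and $E_c(\delta,v)=0$ for every $v\in\eigenspace{c}$. The first of these identities handles $E_c(\beta\ck,\delta)$ by taking $v=\beta\ck$, which lies in $\eigenspace{c}$ because $\beta\ck$ is a positive rescaling of $\beta$. The second handles $E_c(\delta\ck,\beta)$ by taking $v=\beta$ and invoking \cref{delta eq} to write $\delta\ck$ as a positive scalar multiple of $\delta$, so $E_c(\delta\ck,\beta)=\mu\,E_c(\delta,\beta)$.

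The key observation is that $E_c(\delta,\delta)=\tfrac12 K(\delta,\delta)=0$ by \cref{K Ec}, since $\delta$ lies in the radical of $K$. Given this, the main step is an orbit-averaging argument. Fix $v\in\eigenspace{c}$ and, using \cref{eigen}(\ref{eigen U}), choose a positive integer $N$ with $c^Nv=v$. Since $c\delta=\delta$, iterating \cref{Ec c} yields $E_c(c^kv,\delta)=E_c(v,\delta)$ for every $k$, so summing gives
\[
N\cdot E_c(v,\delta)\;=\;E_c\Bigl(\sum_{k=0}^{N-1}c^kv,\;\delta\Bigr).
\]
The vector $u=\sum_{k=0}^{N-1}c^kv$ is fixed by $c$ and hence is a scalar multiple $\lambda\delta$ of $\delta$ by \cref{eigen}(\ref{eigen 1}); bilinearity and the vanishing of $E_c(\delta,\delta)$ then yield $E_c(v,\delta)=0$. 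The mirror argument, placing the orbit sum in the first slot and using $c\delta=\delta$ there, delivers $E_c(\delta,v)=0$.

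The one delicate point is ensuring that $\eigenspace{c}$ is stable under $c$, so that each $c^kv$ remains in $\eigenspace{c}$ and \cref{eigen}(\ref{eigen U}) genuinely produces a finite orbit. This follows because $c\gamma_c-\gamma_c=\delta$ lies in $\ker K$ and $c$ preserves $K$, giving $K(\gamma_c,c\,\cdot\,)=K(\gamma_c,\,\cdot\,)$ and hence $\eigenspace{c}=\ker\phi_c$ is $c$-invariant. Once that is noted, no further obstacle should arise; the whole proof is essentially a three-line orbit average once the reductions are in place.
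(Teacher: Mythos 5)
Your proof is correct, and it reaches the conclusion by a route that differs from the paper's in a meaningful way even though both rest on the same basic trick: $E_c$ is $c$-invariant (\cref{Ec c}), $\delta$ is $c$-fixed, and $E_c$ vanishes on the pair $(\delta,\delta)$ by \cref{K Ec}, so one averages over a $c$-orbit and identifies the average as a multiple of $\delta$. The paper averages over the $c$-orbit of a \emph{simple root} of $\RST{c}$ and invokes the root-combinatorial fact that this orbit sums to $\kappa(\beta)\delta$ (a consequence of \cref{Up details,c on tUp components}, i.e.\ that each component of $\RST{c}$ is of affine type $A$ cyclically rotated by $c$); it then extends to all of $\eigenspace{c}$ by linearity and transfers to the other slot of $E_c$ via \cref{K Ec} together with $K(\delta\ck,\beta)=0$. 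You instead average the orbit of an \emph{arbitrary} vector $v\in\eigenspace{c}$, using \cref{eigen}(\ref{eigen U}) for finiteness of the orbit and \cref{eigen}(\ref{eigen 1}) (geometric multiplicity $1$ of the eigenvalue $1$) to conclude that the $c$-fixed orbit sum lies on the line $\reals\delta$; the two slots are then handled by symmetric arguments. What your version buys is independence from the structure theory of $\RST{c}$ and its simple system $\SimplesT{c}$ --- only the eigenstructure of $c$ from \cref{eigen} is needed --- and a uniform treatment of both orderings of the arguments; what the paper's version buys is that it stays entirely inside the root combinatorics it has already set up and needs no remark about the $c$-invariance of the hyperplane $\eigenspace{c}$ (which, as you correctly note, is anyway implicit in \cref{eigen}(\ref{eigen U}) and can be checked from $c\gamma_c=\gamma_c+\delta$). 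All the auxiliary facts you use --- $\beta\ck\in\eigenspace{c}$ by rescaling, $\delta\ck$ a positive multiple of $\delta$ via \cref{delta eq}, and $E_c(\delta,\delta)=\tfrac12K(\delta,\delta)=0$ --- are available and correctly applied, so there is no gap.
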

\begin{proof}
Since $E_c(\delta\ck,\delta)=E_c(\delta,\delta\ck)$ and also $K(\delta\ck,\delta)=0$, \cref{K Ec} implies $E_c(\delta\ck,\delta)=E_c(\delta,\delta\ck)=0$.
We check that $E_c(\delta\ck,\beta)=0$ whenever $\beta$ is a simple root of $\RST{c}$.  We apply \cref{Ec c} repeatedly and recall that $\delta$ is fixed by the action of $W$ to conclude that $E_c(\delta\ck,\beta')$ is constant for roots $\beta'$ in the $c$-orbit of $\beta$.
The sum over the $c$-orbit of $\beta$ is $\kappa(\beta)\delta$, so the sum of the terms $E_c(\delta\ck,\beta')$ over roots $\beta'$ in the $c$-orbit of $\beta$ is $\kappa(\beta)E_c(\delta\ck,\delta)=0$.
Thus each of these terms is zero, and in particular $E_c(\delta\ck,\beta)=0$.
By linearity $E_c(\delta\ck,\beta)=0$ for any $\beta\in \eigenspace{c}$, and since $K(\delta\ck,\beta)=0$ for any root~$\beta$, \cref{K Ec} implies that $E_c(\beta,\delta\ck)=0$ as well, and thus $E_c(\beta\ck,\delta)=0$ by linearity.
\end{proof}

\begin{proposition}\label{E in tubes}
  Suppose $\RS$ is of affine type and $\beta,\beta'$ are simple roots of the root subsystem $\RST{c}$ of $\RS$.
  Then 
  \begin{align}\label{Ec tube}
    E_c(\beta\ck,\beta')&=
    \left\lbrace\begin{array}{rl}
    1&\text{if }\beta'=\beta,\\
    -1&\text{if }\beta'=c^{-1}\beta,\text{ or}\\
    0&\text{otherwise.}
  \end{array}\right.
\end{align}
\end{proposition}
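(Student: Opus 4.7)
My plan is to treat the three branches of~\cref{Ec tube} separately.

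The diagonal case $\beta'=\beta$ is immediate from \cref{Ec 1}.  For $\beta'=c^{-1}\beta$, I would apply \cref{Ec Ecinv} to rewrite $E_c(\beta\ck,c^{-1}\beta)=-E_{c^{-1}}(\beta\ck,\beta)$; the proof of \cref{Ec 1} uses only bilinearity and~\cref{K Ec}, so it applies verbatim with $c^{-1}$ in place of $c$ and yields the value $-1$.

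The `otherwise' case, where the target is $E_c(\beta\ck,\beta')=0$, is the heart of the proof. The three ingredients I would rely on are \cref{c on tUp components} (which tells us that $c$ cyclically rotates the Dynkin diagram of each component of $\RST{c}$, so the $c$-orbit of any simple root of $\RST{c}$ exhausts the simple roots of its component), \cref{Ec K=0} (which promotes the vanishing of a $K$-pairing to shift-invariance of $E_c$ in its second slot), and \cref{E delta in tubes} (which kills any sum around a $c$-orbit, because every such sum is a multiple of $\delta$). I would split on whether $\beta$ and $\beta'$ lie in the same or different components of $\RST{c}$.

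If the components differ, the $K$-orthogonality of distinct components forces $K(\beta\ck,c^{-m}\beta')=0$ for every $m$; iterating \cref{Ec K=0} then shows that $E_c(\beta\ck,c^{-m}\beta')$ is independent of $m$, and summing over the $c$-orbit of $\beta'$ and invoking \cref{E delta in tubes} forces that common value to be zero. If the components coincide, write the simple roots of the common component as $\beta,c\beta,\ldots,c^{k-1}\beta$ and set $f(j)=E_c(\beta\ck,c^j\beta)$; from the first two cases $f(0)=1$ and $f(k-1)=-1$. For $j\in\{2,\ldots,k-2\}$ the root $c^j\beta$ is not a Dynkin-neighbor of $\beta$ in the $k$-cycle, so $K(\beta\ck,c^j\beta)=0$ and \cref{Ec K=0} telescopes to give $f(1)=f(2)=\cdots=f(k-2)=:a$.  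The orbit-sum identity $\sum_j f(j)=0$ then reduces to $(k-2)a=0$, forcing $a=0$ whenever the `otherwise' subcase actually arises (i.e.\ $k\ge3$; the rank-$2$ component case $k=2$ has no `otherwise' subcase). The main delicate point I anticipate is precisely the interaction between the telescope and the edge $k=3$, where the telescope range is empty but the global sum identity still determines the single free value; combining the two tools handles every small-$k$ corner uniformly.
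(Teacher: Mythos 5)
Your proposal is correct, and the first three branches (the diagonal, the case $\beta'=c^{-1}\beta$, and the different-components case) coincide exactly with the paper's argument: \cref{Ec 1}, then \cref{Ec Ecinv} combined with \cref{Ec 1} for $c^{-1}$, then iterated use of \cref{Ec K=0} plus the orbit-sum identity and \cref{E delta in tubes}. Where you genuinely diverge is the remaining subcase, $\beta'$ in the $c$-orbit of $\beta$ but $\beta'\notin\set{\beta,c^{-1}\beta}$. The paper proves $E_c(\beta\ck,c^{-i}\beta)=0$ by induction on $i=2,\ldots,k-1$, and the delicate step is the base case $i=2$: since $K(\beta\ck,c^{-1}\beta)=-1\neq0$, \cref{Ec K=0} cannot bridge from $i=1$ to $i=2$, so the paper instead chains \cref{Ec c}, \cref{K Ec}, bilinearity, and \cref{Ec Ecinv} to compute $E_c(\beta\ck,c^{-2}\beta)=0$ directly. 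You avoid that computation entirely: the telescope $f(1)=\cdots=f(k-2)$ (valid because $K(\beta\ck,c^j\beta)=0$ exactly for $j\in\set{2,\ldots,k-2}$) leaves a single unknown $a$, which the orbit-sum identity $1+(k-2)a-1=0$ pins to zero for $k\ge3$; your handling of the vacuous-telescope edge case $k=3$ and of $k=2$ (no ``otherwise'' subcase) is also correct. The trade-off is that the paper's induction is self-contained within each step, while your argument leans again on the global relation $\sum_j c^j\beta=\kappa(\beta)\delta$ and \cref{E delta in tubes} --- a tool the paper already deploys in the different-components case --- and in exchange eliminates the one genuinely fiddly computation in the proof.
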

\begin{proof}
\cref{Ec 1} says that $E_c(\beta\ck,\beta)=1$.
\cref{Ec Ecinv} says that $E_c(\beta\ck,c^{-1}\beta)=-E_{c^{-1}}(\beta\ck,\beta)=-1$.
If $\beta$ and $\beta'$ are in different components of $\RST{c}$, then ${K(\beta\ck,\beta')=0}$ and thus \cref{Ec K=0} says that $E_c(\beta\ck,\beta')=E_c(\beta\ck,c^{-1}\beta')$.
But then $c^{-1}\beta'$ is in the same component as $\beta'$, so we can continue to show that $E_c(\beta\ck,c^{-1}\beta')=E_c(\beta\ck,c^{-2}\beta')$, and so forth until we conclude that $E_c(\beta\ck,\beta')=E_c(\beta\ck,c^k\beta')$ is constant as $k$ varies.
The sum over the $c$-orbit of $\beta'$ is $\kappa(\beta)\delta$, and since $E_c(\beta\ck,\delta)=0$ by \cref{E delta in tubes}, we see that $E_c(\beta\ck,\beta')=0$.

It remains to show that $E_c(\beta\ck,\beta')=0$ when $\beta'$ is in the $c$-orbit of $\beta$ but $\beta'\not\in\set{\beta,c^{-1}\beta}$.
The $c$-orbit of $\beta$ has finite size $k\ge2$.  
If $k=2$, then we are done, so assume $k>2$.
We argue by induction on $i=2,\ldots,k-1$ that $E_c(\beta\ck,c^{-i}\beta)=0$.
For the base case $i=2$, by replacing $c$ by $c^{-1}$ in the statement $E_c(\beta\ck,c^{-1}\beta)=-1$ which we already proved, we obtain $E_{c^{-1}}(\beta\ck,c\beta)=-1$.
Then \cref{Ec c} says that $E_{c^{-1}}(c^{-1}\beta\ck,\beta)=-1$.
Since $K(c^{-1}\beta\ck,\beta)=-1$, \cref{K Ec} implies that $E_{c^{-1}}(\beta,c^{-1}\beta\ck)=0$.
By bilinearity, $E_{c^{-1}}(\beta\ck,c^{-1}\beta)=0$ as well, so $E_c(\beta\ck,c^{-2}\beta)=0$ by \cref{Ec Ecinv}.
If $2<i\le k-1$ then by induction $E_c(\beta\ck,c^{-i+1}\beta)=0$.
Since $K(\beta\ck,c^{-i+1}\beta)=0$, we appeal to \cref{Ec K=0} to conclude that $E_c(\beta\ck,c^{-i}\beta)=E_c(\beta\ck,c^{-i+1}\beta)=0$.
\end{proof}

\section{The roots \texorpdfstring{$\AP{c}$}{Phi c}}\label{apS sec} 
\begin{definition}\label{def:Phic}
Suppose $\RS$ is an affine root system and $c$ is a Coxeter element.
We write $\APTre{c}=\set{c^k\beta:k\in\integers,\beta\in\TravReg{c}}$.\nomenclature[zzlcre]{$\APTre{c}$}{$\set{c^k\beta:k\in\integers,\beta\in\TravReg{c}}=\APre{c}\cap\RSpos\cap \eigenspace{c}$}
This is finite by \cref{aff c-orbits}.
We write $\APT{c}$ for $\APTre{c}\cup\set\delta$. \nomenclature[zzlc]{$\APT{c}$}{$\set{c^k\beta:k\in\integers,\beta\in\TravReg{c}}\cup\set\delta=\AP{c}\cap\RSpos\cap \eigenspace{c}$}  
We define  \nomenclature[zzv6c]{$\AP{c}$}{almost positive Schur roots}  \nomenclature[zzv7c]{$\APre{c}$}{almost positive real Schur roots}
\begin{align*}
\AP{c}&=-\Simples\cup(\RSpos\setminus \eigenspace{c})\cup\APT{c},\\
\APre{c}&=-\Simples\cup(\RSpos\setminus \eigenspace{c})\cup\APTre{c} = \AP{c}\setminus\set\delta.
\end{align*}
\end{definition}
Since $\TravReg{c}=\TravReg{c^{-1}}$ and $\eigenspace{c}=\eigenspace{c^{-1}}$,
we have the following proposition:
\begin{proposition}\label{Phic inv}
$\AP{c}=\AP{c^{-1}}$.
\end{proposition}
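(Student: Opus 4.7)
The plan is to verify that $\AP{c}$ and $\AP{c^{-1}}$ agree on each of the three pieces in the definition $\AP{c}=-\Simples\cup(\RSpos\setminus\eigenspace{c})\cup\APT{c}$. The set $-\Simples$ is manifestly independent of $c$, so the work reduces to (i) checking $\eigenspace{c}=\eigenspace{c^{-1}}$ and (ii) checking $\APT{c}=\APT{c^{-1}}$.

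For (i), I would start from the defining equation $(c-1)\gamma_c=\delta$ of \cref{eigen}\eqref{eigen 1 gen}. Applying $c^{-1}$ and using $c\delta=\delta$ yields $(c^{-1}-1)(-\gamma_c)=\delta$. Since $-\gamma_c\in V_\fin$, the uniqueness clause in \cref{eigen}\eqref{eigen 1 gen} forces $\gamma_{c^{-1}}=-\gamma_c$, whence $\phi_{c^{-1}}=-\phi_c$ cuts out the same hyperplane, so $\eigenspace{c^{-1}}=\eigenspace{c}$ and in particular $\RSpos\setminus\eigenspace{c}=\RSpos\setminus\eigenspace{c^{-1}}$.

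For (ii), the root $\delta$ lies in both sets, so it suffices to show $\APTre{c}=\APTre{c^{-1}}$. By definition $\APTre{c}=\set{c^k\beta:k\in\integers,\,\beta\in\TravReg{c}}$ is the union of the finite $c$-orbits passing through $\TravReg{c}$. Combining \cref{aff c-orbits}\eqref{finite transversal finite} with the transversal statement in \cref{aff c-orbits}\eqref{finite transversal}, this union equals the set of all real roots lying in some finite $c$-orbit that meets $\RSfinpos$, and by \cref{aff c-orbits}\eqref{pos neg} each such orbit consists of positive roots. Since $c$ and $c^{-1}$ generate the same cyclic subgroup and hence induce identical orbit partitions of $\RS$, the same description applies verbatim with $c^{-1}$ in place of $c$, yielding $\APTre{c}=\APTre{c^{-1}}$.

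The one subtlety worth flagging is that $\TravReg{c}$ and $\TravReg{c^{-1}}$ need not coincide as subsets of $\RS$: the ordering condition in \cref{Up details}\eqref{canon order} typically picks out a different representative from each $c$-orbit when $c$ is replaced by $c^{-1}$. What matters for the proposition, and what \cref{aff c-orbits} delivers, is that the \emph{union} of orbits over any such transversal is intrinsic to the orbit partition. Beyond recognizing this point, every step is a one-line manipulation, so I do not anticipate a serious obstacle.
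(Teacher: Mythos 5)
Your proof is correct, and it follows the same decomposition as the paper's (one-line) argument, which simply asserts $\TravReg{c}=\TravReg{c^{-1}}$ and $\eigenspace{c}=\eigenspace{c^{-1}}$ and stops. Your treatment of $\eigenspace{c}=\eigenspace{c^{-1}}$ via $\gamma_{c^{-1}}=-\gamma_c$ is exactly the right way to fill in that assertion from \cref{eigen}. Where you genuinely diverge is the $\APT{}$ piece: the paper leans on the literal set equality $\TravReg{c}=\TravReg{c^{-1}}$, whereas you argue at the level of orbits, using \cref{aff c-orbits}(4),(6) to identify $\APTre{c}$ intrinsically as the union of the finite orbits meeting $\RSfinpos$ and then observing that $c$ and $c^{-1}$ induce the same orbit partition. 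Your skepticism about the transversals coinciding is well founded: the ordering forced by \cref{Up details}(3) reverses when $c$ is replaced by $c^{-1}$, and already in a rank-$2$ component of $\RSTfin{c}$ with simple roots $\beta_1,\beta_2$ one gets $\set{\beta_1,\beta_1+\beta_2}$ versus $\set{\beta_2,\beta_1+\beta_2}$ — different sets, but transversals of the same two orbits. So your orbit-level argument is not just an alternative; it repairs a step the paper glosses over, at no extra cost in length.
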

For any simple reflection $s\in S$ denote by $\alpha_s\in \Simples$ the simple root associated to $s$ and define a map $\sigma_s$ \nomenclature[zzss]{$\sigma_s$}{deformation of the simple reflection $s$}
on $-\Simples\cup\RSpos$ by 
		\[
			\sigma_s(\alpha)=
			\begin{cases}
				\alpha & \text{if }\alpha\in-\Simples\setminus\set{-\alpha_s}\\
				s(\alpha) & \text{otherwise}.
			\end{cases}
		\]

\begin{proposition}\label{sigma prop}
The map $\sigma_s$ is an involution on $-\Simples\cup\RSpos$.
If $s$ is initial or final in $c$, then $\sigma_s$ restricts to a bijection from $\AP{c}$ to $\AP{scs}$.
\end{proposition}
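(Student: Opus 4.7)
The first assertion is a routine case check: $\sigma_s$ fixes $-\alpha_j$ for $j\neq s$ by definition, interchanges $-\alpha_s$ and $\alpha_s$, and acts on $\RSpos\setminus\{\alpha_s\}$ as $s$, which is a standard involution of that set.

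For the second assertion, I would assume $s$ is initial in $c$; the final case follows from the $c\leftrightarrow c^{-1}$ symmetry of \cref{Phic inv}, since $s$ is final in $c$ iff $s$ is initial in $c^{-1}$ and $\AP{c^{-1}}=\AP{c}$. Split $\AP{c}=X_c\sqcup\APT{c}$ with $X_c=-\Simples\cup(\RSpos\setminus\eigenspace{c})$, and similarly for $scs$. Two lemmas do the heavy lifting. The first is $s\eigenspace{c}=\eigenspace{scs}$: using $(c-1)\gamma_c=\delta$ from \cref{eigen}, one computes $(scs)(s\gamma_c)=sc\gamma_c=s(\gamma_c+\delta)=s\gamma_c+\delta$, so $s\gamma_c$ is a generalized $1$-eigenvector of $scs$; since that generalized eigenspace is two-dimensional and spanned by $\gamma_{scs}$ and $\delta$, we get $s\gamma_c=\gamma_{scs}+b\delta$ for some scalar $b$, and then $K(\delta,\,\cdot\,)=0$ yields $K(s\gamma_c,v)=K(\gamma_{scs},v)$ for all $v$, i.e.\ $s\eigenspace{c}=\eigenspace{scs}$. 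The second is $\alpha_s\notin\eigenspace{c}\cup\eigenspace{scs}$: since $s$ is initial in $c$, we have $\alpha_s=\psi^\proj_{c;1}\in\TravProj{c}$, so \cref{useful} gives $K(\gamma_c,\alpha_s)>0$, and symmetrically for $scs$ (in which $s$ is final). With these, the bijection $\sigma_s\colon X_c\to X_{scs}$ is a short case analysis: the $-\alpha_j$ for $j\neq s$ are fixed; the pair $\{\alpha_s,-\alpha_s\}$ lies in both $X_c$ and $X_{scs}$ and gets swapped; and the remaining positive roots in $\RSpos\setminus\eigenspace{c}\setminus\{\alpha_s\}$ map bijectively onto $\RSpos\setminus\eigenspace{scs}\setminus\{\alpha_s\}$ via $s$.

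The equality $\sigma_s(\APT{c})=\APT{scs}$ is more delicate. The imaginary root is fixed: $\sigma_s(\delta)=s\delta=\delta$. For the real part, the identity $sc^ks=(scs)^k$ gives $s\APTre{c}=\{(scs)^k s\beta:k\in\integers,\,\beta\in\TravReg{c}\}$, so $s\APTre{c}$ is a disjoint union of $(scs)$-orbits of the $n-2$ roots in $s\TravReg{c}$; disjointness follows because $\TravReg{c}$ is a $c$-orbit transversal and $s$ bijects $c$-orbits with $(scs)$-orbits. Since $|\TravReg{c}|=|\TravReg{scs}|=n-2$ by \cref{Up details}, the equality $s\APTre{c}=\APTre{scs}$ reduces by counting to showing that each $(scs)$-orbit appearing in $s\APTre{c}$ meets $\RSfinpos$ (equivalently meets $\TravReg{scs}$, by \cref{aff c-orbits}). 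This is the main obstacle. For $s\in W_\fin$ it is immediate, since $s$ permutes $\RSfinpos\setminus\{\alpha_s\}$ and the second lemma guarantees $\alpha_s\notin\TravReg{c}$. For $s=s_\aff$, however, $s_\aff$ may carry individual roots of $\TravReg{c}$ out of $V_\fin$, and one must use the rotational description of $c$ on the components of $\RST{c}$ from \cref{c on tUp components} to argue that the $(scs)$-orbit of each $s_\aff\beta$ still visits $\RSfinpos$.
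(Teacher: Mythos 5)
Most of your proposal is sound: the involution claim, the handling of $X_c=-\Simples\cup(\RSpos\setminus\eigenspace{c})$ via $s\eigenspace{c}=\eigenspace{scs}$ (which the paper simply quotes from \cite{afforb} as \cref{scs U}), and the observation that $\alpha_s\notin\eigenspace{c}$ (the paper's \cref{tUp init fin}, which it proves differently, via \cref{aff c-orbits}) all check out. The problem is the last step. Your reduction is correct as far as it goes: $s\APTre{c}$ is a union of $n-2$ distinct finite positive $(scs)$-orbits, $\APTre{scs}$ is the union of the $n-2$ finite positive $(scs)$-orbits meeting $\RSfinpos$, so equality follows once every orbit in $s\APTre{c}$ meets $\RSfinpos$. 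But that last claim is exactly where the content lies, and for $s=s_\aff$ you only assert that "one must use the rotational description \ldots to argue" it. Concretely, for $\beta\in\TravReg{c}\subset\RSfinpos$ the orbit of $s_\aff\beta$ is $\set{s_\aff c^k\beta}_k$, and you would need some $k$ with $[c^k\beta:\alpha_\aff]=K(\alpha_\aff\ck,c^k\beta)$; nothing in \cref{c on tUp components} hands you this, and no argument is supplied. As written, the proof of the key case is missing.

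The paper avoids this obstacle entirely by first establishing an intrinsic characterization of the tube part: $\APTre{c}$ is the set of positive real roots of $\RST{c}$ whose tube support is not component-full (\cref{pos real in tubes}, immediate from \cref{Up details} and \cref{c on tUp components}). Combined with $\SimplesT{scs}=s\,\SimplesT{c}$ (\cref{tUp simples}), the map $s$ carries tube supports to tube supports, so $\sigma_s(\APTre{c})=\APTre{scs}$ falls out with no counting and no analysis of which orbits meet $\RSfinpos$ (\cref{sigma pos real in tubes}). If you want to salvage your route, the cleanest fix is to prove \cref{pos real in tubes} first and substitute it for the "meets $\RSfinpos$" criterion; trying to push the $\RSfinpos$ criterion through $s_\aff$ directly is genuinely harder than the statement you are proving.
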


The proof of \cref{sigma prop} uses a few lemmas.
The first is \mbox{\cite[Corollary 3.3]{afforb}}.
\begin{lemma}\label{scs U}
If $s$ is initial or final in $c$, then $\eigenspace{scs}=s\eigenspace{c}$.
\end{lemma}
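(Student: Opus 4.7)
The plan is to identify both hyperplanes $s\eigenspace{c}$ and $\eigenspace{scs}$ as $K$-orthogonal complements of parallel vectors, using the characterization from \cref{eigen}(\ref{eigen U}) that $\eigenspace{c}=\set{v\in V:K(\gamma_c,v)=0}$. Since $s$ acts as an isometry of $K$, we have
\[s\eigenspace{c}=\sett{sw:K(\gamma_c,w)=0}=\sett{v\in V:K(s\gamma_c,v)=0}.\]
Similarly $\eigenspace{scs}=\set{v\in V:K(\gamma_{scs},v)=0}$, noting that $scs$ is again a Coxeter element (because $s$ is initial or final in $c$), so $\gamma_{scs}$ is defined by \cref{eigen}(\ref{eigen 1 gen}). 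Hence it will suffice to show that $s\gamma_c$ and $\gamma_{scs}$ define the same linear functional $K(\,\cdot\,,v)$ on $V$.

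First, I would verify that $s\gamma_c$ is itself a generalized $1$-eigenvector of $scs$ sent to $\delta$ by $scs-1$. Using the $W$-invariance of $\delta$ and the relation $(c-1)\gamma_c=\delta$,
\[(scs-1)(s\gamma_c)=sc\gamma_c-s\gamma_c=s(c-1)\gamma_c=s\delta=\delta.\]

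Next, since \cref{eigen}(\ref{eigen 1}) says the $1$-eigenspace of any Coxeter element is spanned by $\delta$ while the generalized $1$-eigenspace has algebraic multiplicity $2$, any two generalized $1$-eigenvectors of $scs$ whose images under $scs-1$ both equal $\delta$ differ by a scalar multiple of $\delta$. Therefore $s\gamma_c=\gamma_{scs}+k\delta$ for some $k\in\reals$.

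Finally, because $\delta$ lies in the kernel of $K$, this scalar is invisible to $K$: for every $v\in V$,
\[K(s\gamma_c,v)=K(\gamma_{scs},v)+k\cdot K(\delta,v)=K(\gamma_{scs},v).\]
Consequently $s\eigenspace{c}=\set{v:K(s\gamma_c,v)=0}=\set{v:K(\gamma_{scs},v)=0}=\eigenspace{scs}$, as required. I do not anticipate a serious obstacle; the only point where the hypothesis is used is in invoking \cref{eigen} for $scs$, namely that $scs$ is a Coxeter element so that $\gamma_{scs}$ exists, which is exactly what ``$s$ is initial or final in $c$'' guarantees.
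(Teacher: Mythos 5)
Your proof is correct. Note that the paper does not actually prove this lemma: it is quoted as \cite[Corollary 3.3]{afforb}, so there is no in-paper argument to compare against. Your derivation is a clean, self-contained substitute that uses only facts already recorded in the background section: the characterization $\eigenspace{c}=\set{v\in V:K(\gamma_c,v)=0}$ from \cref{eigen}, the $W$-invariance of $K$ and of $\delta$, the computation $(scs-1)(s\gamma_c)=s(c-1)\gamma_c=\delta$, and the fact that the $1$-eigenspace of a Coxeter element is the line through $\delta$, so that $s\gamma_c$ and $\gamma_{scs}$ differ by a multiple of $\delta$, which is invisible to $K$. Each step checks out, and you correctly identify that the hypothesis ``$s$ initial or final'' is needed precisely so that $scs$ is again a Coxeter element and \cref{eigen} applies to it.
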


\begin{lemma}\label{tUp init fin}
If $s$ is initial or final in $c$, then $\alpha_s\not\in\RST{c}$.
\end{lemma}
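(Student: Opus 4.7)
The plan is to show directly that $\alpha_s$ fails to lie in the hyperplane $\eigenspace{c}$, by exhibiting it as an element of one of the transversals $\TravProj{c}$ or $\TravInj{c}$ and then invoking \cref{useful}. Since $\RST{c}=\RS\cap\eigenspace{c}$, this will suffice.

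First I would handle the initial case. Suppose $s$ is initial in $c$, and pick a reduced word $c=s_1\cdots s_n$ with $s_1=s$. Then by definition $\psi^\proj_{c;1}=\alpha_1=\alpha_s$, so $\alpha_s\in\TravProj{c}$. Applying \cref{useful} with $m=0$ gives $K(\gamma_c,\alpha_s)>0$, so in particular $\alpha_s\notin\eigenspace{c}$ and hence $\alpha_s\notin\RST{c}$.

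The final case is symmetric: if $s$ is final in $c$, pick a reduced word with $s_n=s$, so that $\psi^\inj_{c;n}=\alpha_n=\alpha_s$. Then $\alpha_s\in\TravInj{c}$, and \cref{useful} (again with $m=0$) yields $K(\gamma_c,\alpha_s)<0$, giving $\alpha_s\notin\eigenspace{c}$ and therefore $\alpha_s\notin\RST{c}$.

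There is really no obstacle here: the whole argument is a one-line application of \cref{useful} once one notes the trivial identifications $\psi^\proj_{c;1}=\alpha_s$ (resp.\ $\psi^\inj_{c;n}=\alpha_s$) available to us by choosing the reduced word for $c$ appropriately. The only small point worth verifying is that the definitions of $\TravProj{c}$ and $\TravInj{c}$ indeed depend only on $c$ (as stated in the text) so that we are free to pick such a reduced word.
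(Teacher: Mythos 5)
Your proof is correct, but it takes a different route from the paper's. The paper argues via orbit structure: since $s$ is initial or final, one of $c^{-1}\alpha_s$ or $c\alpha_s$ is a negative root, so the $c$-orbit of $\alpha_s$ contains roots of both signs; by \cref{aff c-orbits}\eqref{pos neg} such an orbit cannot be finite, and by \cref{aff c-orbits}\eqref{criterion} an infinite orbit forces $\alpha_s\notin\eigenspace{c}$. You instead identify $\alpha_s$ as $\psi^\proj_{c;1}$ (resp.\ $\psi^\inj_{c;n}$) for a suitably chosen reduced word and invoke \cref{useful} to get $K(\gamma_c,\alpha_s)\neq 0$ directly. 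Both arguments are one-line applications of results imported from the companion paper on affine orbits, and both are sound; yours has the mild advantage of producing the sign of $K(\gamma_c,\alpha_s)$ explicitly (positive for $s$ initial, negative for $s$ final), while the paper's avoids any mention of $\gamma_c$ and the transversals. Your side remark is also handled in the text: the independence of $\TravProj{c}$ and $\TravInj{c}$ from the choice of reduced word is stated explicitly where those sets are defined, so your freedom to choose a reduced word beginning (or ending) with $s$ is legitimate.
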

\begin{proof}
Since $s$ is initial or final in $c$, either $c\alpha_s$ or $c^{-1}\alpha_s$ is a negative root, and thus \cref{aff c-orbits} lets us rule out the possibility that $\alpha_s\in\RST{c}$.
\end{proof}

\begin{lemma}\label{tUp simples}
If $s$ is initial or final in $c$, then the simple roots of $\RST{c}$ and of $\RST{scs}$ are related by $\SimplesT{scs}=s\,\SimplesT{c}$, or equivalently, $\SimplesT{scs}=\sigma_s\,\SimplesT{c}$.
\end{lemma}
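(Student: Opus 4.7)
The plan is to deduce the identity $\SimplesT{scs} = s\SimplesT{c}$ by transporting, via $s$, the intrinsic characterization of the canonical simple system recalled just before \cref{Up details}, and then to observe that $\sigma_s$ coincides with $s$ on $\SimplesT{c}$.

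First I would apply \cref{scs U} to get $\eigenspace{scs} = s\eigenspace{c}$, and hence $\RST{scs} = \RS \cap s\eigenspace{c} = s\RST{c}$ by $W$-invariance of $\RS$. Combined with \cref{tUp init fin}, which says $\alpha_s \notin \RST{c}$ (and hence $-\alpha_s\notin\RST{c}$ as well, since $\RST{c}$ is symmetric), this means that $s$ does not flip any root of $\RST{c}$ between positive and negative, so $s$ restricts to an involution on $\RST{c} \cap \RSpos$, carried bijectively onto $\RST{scs} \cap \RSpos$.

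The core step is to transport the characterization of $\SimplesT{c}$ as the unique minimal subset of $\RST{c} \cap \RSpos$ whose nonnegative span contains $\RST{c} \cap \RSpos$. Linearity of $s$ immediately gives $s\SimplesT{c} \subseteq \RST{scs} \cap \RSpos$ and shows that the nonnegative span of $s\SimplesT{c}$ contains every element of $\RST{scs} \cap \RSpos$. Minimality transfers because $s^2 = 1$: any strictly smaller subset of $s\SimplesT{c}$ with the spanning property would pull back, via $s$, to a proper subset of $\SimplesT{c}$ with the spanning property, contradicting the minimality of $\SimplesT{c}$. Uniqueness of the minimal set then forces $\SimplesT{scs} = s\SimplesT{c}$.

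For the equivalent formulation $\SimplesT{scs} = \sigma_s\SimplesT{c}$, I would note that $\sigma_s$ agrees with $s$ on every positive root except $-\alpha_s$, which is not an input that arises since $\SimplesT{c}\subseteq\RSpos$. The main subtlety to watch is the positivity-preserving property of $s$ on $\RST{c}$, which is exactly where \cref{tUp init fin} becomes essential; without $\alpha_s \notin \RST{c}$ the restriction of $s$ would fail to send $\RST{c} \cap \RSpos$ into $\RSpos$ and the whole transport argument would collapse.
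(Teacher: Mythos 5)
Your proof is correct and follows essentially the same route as the paper's: \cref{scs U} gives $\eigenspace{scs}=s\eigenspace{c}$, \cref{tUp init fin} guarantees $s$ preserves positivity on $\RST{c}$, and the canonical characterization of the simple system transports under the linear involution $s$; the paper is merely terser about the minimality transfer that you spell out. One trivial wording slip: $\sigma_s$ agrees with $s$ on \emph{all} positive roots (the exceptional inputs are the negative simples other than $-\alpha_s$, where $\sigma_s$ is the identity), but since $\SimplesT{c}\subseteq\RSpos$ this does not affect your argument.
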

\begin{proof}
\cref{scs U} says that $\eigenspace{scs}=s\eigenspace{c}$.
Since $\RST{c}$ is the set of roots in $\RS$ that are contained in $\eigenspace{c}$, and similarly for $\RST{scs}$, and since $\pm\alpha_s$ are the only roots that change sign under the action of $s$, and $\alpha_s\not\in\RST{c}$ by \cref{tUp init fin}, the simple roots $\SimplesT{scs}$ are obtained from the simple roots $\SimplesT{c}$ by the action of $s$.
Since all of these roots are positive, the action of $s$ on them corresponds to the action of $\sigma_s$.
\end{proof}

\begin{definition}\label{def Supp}
The \newword{support} $\Supp(\beta)$ \nomenclature[supp]{$\Supp(\beta)$}{support of $\beta$ as a root in $\RS$} of a root $\beta$ is the set of simple roots that contribute with non-zero coefficient to the simple root expansion of $\beta$.
The support $\Supp(R)$ of a set $R$ of roots is $\bigcup_{\beta\in R}\Supp(\beta)$.
A support is \newword{full} if it is the entire set $\Simples$ of simple roots.
\end{definition}

\begin{definition}\label{def SuppU}
Given a real root $\beta\in\RST{c}$, its \newword{tube support} $\SuppT(\beta)$ \nomenclature[suppzzo]{$\SuppT(\beta)$}{support of $\beta$ as a root in $\RST{c}$} is the support of $\beta$ as a root in $\RST{c}$ (the set of roots that appear with nonzero coefficients in the expansion of $\beta$ as a linear combination of roots in $\SimplesT{c}$).
The tube support of a set $R\subseteq\RST{c}$ of real roots is $\SuppT(R)=\bigcup_{\beta\in R}\SuppT(\beta)$.
A tube support~is \newword{component-full} if it contains the full set of simple roots in some component of~$\RST{c}$.  
\end{definition}

\begin{remark}\label{SuppT delta}
The simple roots $\SimplesT{c}$ of $\RST{c}$ may fail to be a basis for their linear span, precisely to the extent that $\RST{c}$ is reducible.
However, every real root in $\RST{c}$ is in some irreducible component of $\RST{c}$ and thus has a unique expansion as a linear combination of simple roots.
We do not consider $\SuppT(\delta)$, which is not well-defined when $\RST{c}$ is reducible.
For a set $R$ of real roots in $\RST{c}$, the tube support $\SuppT(R)$ may contain simple roots in multiple components of $\RST{c}$.
It is component-full if it contains the entire set of simple roots in at least one of them.
\end{remark}

\begin{lemma}\label{pos real in tubes} 
The set $\APTre{c}$ is the set of positive real roots of $\RST{c}$ whose tube support is not component-full.
\end{lemma}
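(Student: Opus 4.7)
The plan is to prove the lemma one component of $\RST c$ at a time. I would first establish the structural fact that each such component is an affine root system of type $A^{(1)}$ on which $c$ acts by a cyclic rotation of its $m+1$ simple roots. This combines several ingredients: the finite-type-$A$ structure of the components of $\RSTfin c$ (\cref{Up details}\eqref{type A}); the adjacency-preserving rotation action of $c$ on the Dynkin diagram of each component of $\RST c$ (\cref{c on tUp components}); the finite order of $c$ on $\eigenspace c$ (\cref{eigen}\eqref{eigen U}), which forces any such adjacency-preserving rotation to act on a cycle (or an edge, in the rank-$2$ case); and a dimension count showing that $\SimplesT c$ spans the $(n-1)$-dimensional hyperplane $\eigenspace c$ while $\SimplesT c_\fin$ spans a subspace of dimension $n-2$, so each component of $\RST c$ contains exactly one simple root outside $V_\fin$.

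Next I would fix a component and label its simple roots $\gamma_0,\gamma_1,\ldots,\gamma_m$ cyclically so that $c\gamma_i=\gamma_{i+1\bmod(m+1)}$ and $\gamma_0$ is the unique simple root of the component lying outside $V_\fin$. The simple roots of the corresponding component of $\RSTfin c$ are then $\gamma_1,\ldots,\gamma_m$. Since $c$ fixes the sum $\gamma_0+\cdots+\gamma_m$, this sum is a positive scalar multiple of $\delta$, and the positive real roots of the component can be parametrized as
\[q(\gamma_0+\cdots+\gamma_m)+\gamma_i+\gamma_{i+1}+\cdots+\gamma_{i+l-1}\]
with $i\in\integers/(m+1)\integers$, $q\ge 0$, $0\le l\le m$, and $(q,l)\ne(0,0)$. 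Reading off the coefficients, the tube support is component-full precisely when $q\ge 1$, so the positive real roots of the component with not-component-full tube support are exactly the $m(m+1)$ truncated cyclic sums $\gamma_i+\cdots+\gamma_{i+l-1}$ with $1\le l\le m$.

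It remains to check that these same roots are precisely the elements of $\APTre c$ lying in the component. Taking the ordering $\beta_k=\gamma_k$ for $k=1,\ldots,m$ satisfies \cref{Up details}\eqref{canon order} because $c\gamma_k=\gamma_{k+1}\in\SimplesT c_\fin$ for $k<m$ while $c\gamma_m=\gamma_0\notin\SimplesT c_\fin$. A short induction on $k$, using that consecutive $\gamma$'s are adjacent in the finite Dynkin diagram, gives $t_{\beta_1}\cdots t_{\beta_{k-1}}\beta_k=\gamma_1+\cdots+\gamma_k$, so the portion of $\TravReg c$ in this component is $\{\gamma_1+\cdots+\gamma_k:1\le k\le m\}$. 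Since $c$ rotates cyclically, $c^j(\gamma_1+\cdots+\gamma_k)=\gamma_{1+j}+\cdots+\gamma_{k+j}$, and as $j$ runs over $\integers/(m+1)\integers$ and $k$ over $\{1,\ldots,m\}$ we recover exactly the $m(m+1)$ truncated cyclic sums. Summing over components completes the proof.

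The main obstacle is the initial structural step: identifying each component of $\RST c$ as an affine $A^{(1)}$ system, pinning down the cyclic $c$-action on its simple roots, and locating the one that lies outside $V_\fin$. Once that setup is in hand the rest is an explicit matching with equal counts on both sides.
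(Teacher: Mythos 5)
Your argument is correct and rests on exactly the two ingredients the paper invokes — the type-$A$ structure of the components (\cref{Up details}\eqref{type A}) and the cyclic rotation action of $c$ (\cref{c on tUp components}) — which the paper's proof simply declares make the lemma ``immediate.'' You have fleshed out in full the explicit matching (cyclic intervals $\gamma_i+\cdots+\gamma_{i+l-1}$ versus the $c$-orbits of $\TravReg{c}$) that the paper leaves to the reader, so the two proofs are essentially the same.
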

\begin{proof}
This is immediate from \cref{Up details}\eqref{type A} and \cref{c on tUp components}.
\end{proof}

\begin{lemma}\label{sigma pos real in tubes}
If $s$ is initial or final in $c$, then $\APTre{scs}=\sigma_s(\APTre{c})$.
\end{lemma}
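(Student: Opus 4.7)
The plan is to combine the explicit characterization of $\APTre{c}$ given by \cref{pos real in tubes} with the description of $\SimplesT{scs}$ from \cref{tUp simples}, and then check that the action of $\sigma_s$ on $\APTre{c}$ really is the action of $s$, so that tube supports and their component structure are preserved under the bijection.

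First, I would unpack $\sigma_s$ on $\APTre{c}$. By \cref{tUp init fin}, $\alpha_s\notin\RST{c}$, so no element of $\APTre{c}\subset\RST{c}$ equals $\alpha_s$. Since $\APTre{c}\subset\RSpos$, the definition of $\sigma_s$ gives $\sigma_s(\beta)=s(\beta)$ for every $\beta\in\APTre{c}$, and $s(\beta)$ remains in $\RSpos$ (as $s$ sends only $\alpha_s$ among the positive roots to a negative root).

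Next I would check that $s$ maps $\RST{c}$ bijectively onto $\RST{scs}$: by \cref{scs U} we have $\eigenspace{scs}=s\,\eigenspace{c}$, and since $W$ preserves $\RS$,
\[\RST{scs}=\RS\cap\eigenspace{scs}=\RS\cap s\,\eigenspace{c}=s(\RS\cap \eigenspace{c})=s\,\RST{c}.\]
Combined with \cref{tUp simples}, which says $\SimplesT{scs}=s\,\SimplesT{c}$, the linear map $s$ sends the $\SimplesT{c}$-expansion of any real root $\beta\in\RST{c}$ to the $\SimplesT{scs}$-expansion of $s(\beta)$, with identical coefficients. Therefore $\SuppT(s(\beta))=s(\SuppT(\beta))$ under the bijection $s\colon\SimplesT{c}\to\SimplesT{scs}$.

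Then I would observe that $s$ is an isometry for $K$, so it carries the orthogonal decomposition of $\RST{c}$ into irreducible components to the orthogonal decomposition of $\RST{scs}$ into components, preserving the partition of simple roots by component. Consequently $\SuppT(\beta)$ is component-full in $\RST{c}$ if and only if $s(\SuppT(\beta))=\SuppT(s(\beta))$ is component-full in $\RST{scs}$. Applying \cref{pos real in tubes} to both $\RST{c}$ and $\RST{scs}$, I conclude that $\sigma_s$ sends $\APTre{c}$ into $\APTre{scs}$; surjectivity follows from the same argument with the roles of $c$ and $scs$ swapped, using that $s$ is also initial or final in $scs$ and that $\sigma_s$ is an involution by \cref{sigma prop}.

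I do not expect any serious obstacle here: once the simple-root bijection $\SimplesT{c}\leftrightarrow\SimplesT{scs}$ from \cref{tUp simples} is in hand and one notes that $\alpha_s\notin\RST{c}$, the proof is a routine transport-of-structure argument. The only point requiring mild care is confirming that $s$ permutes the irreducible components of the rank $n-2$ system $\RST{c}$, which is immediate because $s$ is $K$-orthogonal and $K$ detects the decomposition into components.
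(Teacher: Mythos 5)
Your proof is correct and follows the same route as the paper, which simply declares the lemma an immediate consequence of \cref{tUp simples,pos real in tubes}; your write-up just makes the implicit transport-of-structure steps (that $\sigma_s$ acts as $s$ on $\APTre{c}$, that $s\,\RST{c}=\RST{scs}$, and that tube supports and components are carried along) explicit. No gaps.
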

\begin{proof}
This is an immediate consequence of \cref{tUp simples,pos real in tubes}.
\end{proof}

\begin{proof}[Proof of \cref{sigma prop}]
The first assertion is immediate because $\alpha_s$ is the unique positive root whose sign is changed by $s$.
By the symmetry of swapping $c$ and $scs$, it is enough to show that $\sigma_s(\AP{c})=\AP{scs}$.
The map $\sigma_s$ fixes $-\Simples\setminus\set{-\alpha_s}$, swaps~$\pm\alpha_s$, and sends all positive roots, aside from $\alpha_s$, to positive roots.
It also fixes $\delta$.
\cref{scs U,pos real in tubes,sigma pos real in tubes} complete the proof.
\end{proof}

Define $\tau_c=\sigma_1\cdots\sigma_n$  \nomenclature[zztc]{$\tau_c$}{deformation $\sigma_1\cdots\sigma_n$ of the Coxeter element $c$}
where $\sigma_i$ \nomenclature[zzsi]{$\sigma_i$}{$\sigma_{s_i}$} is an abbreviation for $\sigma_{s_i}$.   
\begin{proposition}\label{tau prop}
Suppose $\RS$ is an affine root system, $c=s_1\cdots s_n$ is a Coxeter element in the associated Weyl group $W$, and $\tau_c$ is $\sigma_1\cdots\sigma_n$ as above.
\begin{enumerate}[\qquad \upshape (1)]
\item \label{tauc perm}
The map $\tau_c$ restricts to a permutation of $\AP{c}$.
\item \label{tauc def}
For $\alpha\in\AP{c}$, 
\[\qquad\tau_c(\alpha)=
		\begin{cases}
			\psi^\proj_{c;i}\!	& \text{if } \alpha=-\alpha_i\\
			-\alpha_i \!\!& \text{if } \alpha=\psi^\inj_{c;i}\\
			c\alpha		&	\text{otherwise}
		\end{cases}
\qquad\text{and}\qquad 
\tau_c^{-1}(\alpha)=
		\begin{cases}
			\psi^\inj_{c;i}	& \text{if } \alpha=-\alpha_i\\
			-\alpha_i & \text{if } \alpha=\psi^\proj_{c;i}\\
			c^{-1}\alpha\!\!		&	\text{otherwise.}
		\end{cases}\]
\item\label{orbit count}
  There are $n$ infinite $\tau_c$-orbits  and $n-2$ finite $\tau_c$-orbits in $\APre{c}$.
\item \label{tauc traversal inf}
The set $-\Simples$ is a transversal of the infinite $\tau_c$-orbits in $\AP{c}$.
\item \label{tauc fin}
A root $\alpha\in\AP{c}$ is in a finite $\tau_c$-orbit if and only if $\alpha\in\RSpos\cap \eigenspace{c}$.
\item \label{tauc traversal fin}
$\TravReg{c}$ is a transversal of the finite $\tau_c$-orbits in $\APre{c}$.
\item\label{delta orbit}
$\set\delta$ is a $\tau_c$-orbit.
\item \label{tidily}
For all $i$, ${K(\gamma_c,\tau_c^m(-\alpha_i))>0}$ if ${m>0}$ and $K(\gamma_c,\tau_c^m(-\alpha_i))<0$ if $m<0$.
\end{enumerate}
\end{proposition}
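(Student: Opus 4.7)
The main work lies in proving the piecewise formula in (2); once that is in hand, statements (1) and (3)--(8) all follow quickly. I would dispatch (1) first using \cref{sigma prop}: since $s_n$ is final in $c$, the involution $\sigma_n$ bijects $\AP{c}$ onto $\AP{s_n c s_n}$; then $s_{n-1}$ is final in $s_n c s_n$, so $\sigma_{n-1}$ maps to $\AP{s_{n-1}s_n c s_n s_{n-1}}$, and so on, returning to $\AP{s_1s_2\cdots s_n}=\AP{c}$ after all $n$ steps.

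For (2), I would compute $\tau_c=\sigma_1\cdots\sigma_n$ right-to-left by case analysis on the input $\alpha$. When $\alpha=-\alpha_i$ the involutions $\sigma_n,\ldots,\sigma_{i+1}$ all fix $-\alpha_i$; then $\sigma_i$ flips it to $\alpha_i$; then each subsequent $\sigma_j$ for $j<i$ acts as $s_j$ on the positive-root intermediate result, yielding $s_1\cdots s_{i-1}\alpha_i=\psi^\proj_{c;i}$. The crucial input for the remaining cases is the standard inversion-set fact: for a positive root $\alpha$, the partial product $s_{j+1}\cdots s_n\alpha$ is a negative root if and only if $\alpha$ lies in the inversion set of $s_n\cdots s_{j+1}$, which is exactly $\set{\psi^\inj_{c;k}:k>j}$. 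Hence when $\alpha$ is positive and not any $\psi^\inj_{c;k}$, every intermediate product stays positive, each $\sigma_j$ really acts as $s_j$, and $\tau_c(\alpha)=s_1\cdots s_n\alpha=c\alpha$; this branch also covers $\alpha=\delta$ since $\delta$ is positive but not real. When $\alpha=\psi^\inj_{c;i}$, the same tracking shows that $\sigma_n,\ldots,\sigma_{i+1}$ successively peel letters off to leave $\alpha_i$, then $\sigma_i$ flips to $-\alpha_i$, and $\sigma_{i-1},\ldots,\sigma_1$ fix $-\alpha_i$; so $\tau_c(\alpha)=-\alpha_i$. The $\tau_c^{-1}$ formula is obtained symmetrically, or by inverting (2) piece by piece.

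Statements (3)--(8) now unfold from the formula together with results from \cref{sec:cox aff}. Iterating (2) forward from $-\alpha_i$ gives $-\alpha_i\mapsto\psi^\proj_{c;i}\mapsto c\psi^\proj_{c;i}\mapsto c^2\psi^\proj_{c;i}\mapsto\cdots$, while iterating backward gives $-\alpha_i\leftarrow\psi^\inj_{c;i}\leftarrow c^{-1}\psi^\inj_{c;i}\leftarrow\cdots$. The two halves lie in distinct infinite $c$-orbits (on opposite sides of $\eigenspace{c}$ by \cref{useful}), so this $\tau_c$-orbit is infinite and contains $-\alpha_i$ as its only negative simple root; letting $i$ vary, we obtain $n$ infinite orbits transversed by $-\Simples$, yielding (4) and the count of infinite orbits in (3), and (8) is then an immediate corollary of \cref{useful}. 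On the complement $\APT{c}=\AP{c}\cap\RSpos\cap\eigenspace{c}$, formula (2) gives $\tau_c=c$; since $c\delta=\delta$ this yields (7), and on $\APTre{c}$ the finite $\tau_c$-orbits coincide with finite $c$-orbits, of which $\TravReg{c}$ is a transversal of size $n-2$ by \cref{aff c-orbits}(4), handling (6) and the remaining count in (3). Statement (5) then follows because any $\alpha\in\AP{c}\cap\RSpos\cap\eigenspace{c}$ automatically lies in $\APT{c}$ by the very definition of $\AP{c}$. The main obstacle is the bookkeeping in (2): showing that in the ``otherwise'' branch every intermediate $\sigma_j$ really agrees with $s_j$. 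This reduces to the inversion-set lemma above together with the observation that roots in $\APTre{c}$ cannot themselves be $\psi^\inj_{c;k}$'s, which holds because $\APTre{c}\subseteq\eigenspace{c}$ while $\TravInj{c}$ is disjoint from $\eigenspace{c}$ by \cref{useful}.
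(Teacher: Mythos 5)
Your proposal is correct and follows essentially the same route as the paper: part (1) by iterating \cref{sigma prop} through the source-sink moves $c\mapsto s_ncs_n\mapsto\cdots$, part (2) by direct case analysis of $\sigma_1\cdots\sigma_n$ (which the paper dismisses as "readily verified by inspection" and you flesh out via the inversion-set observation that the partial products stay positive precisely off $\TravInj{c}$), and parts (3)--(8) by combining the formula in (2) with \cref{aff c-orbits} and \cref{useful}. The only difference is that you supply the bookkeeping the paper leaves implicit.
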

\begin{proof}
The first assertion is obtained by applying \cref{sigma prop} $n$ times.
The expressions in \eqref{tauc def} are readily verified by inspection.
Assertions \eqref{orbit count}--\eqref{delta orbit} follow from \eqref{tauc def} and \cref{aff c-orbits}.
\cref{useful} and \eqref{tauc def} imply \eqref{tidily}.
\end{proof}

The properties of $\tau_c$-orbits given in \cref{tau prop} allow us to give several additional characterizations of $\AP{c}$.  

\begin{proposition}\label{apSchur}
Each of the following expressions specifies the set $\APre{c}$. 
(The symbol $\cupdot$ is disjoint union.)

\begin{enumerate}[\quad \upshape (1)]
\item \label{tauc easy}
$\set{\tau_c^k\beta:\beta\in(-\Simples\cup\TravReg{c}),k\in\integers}$.
\item \label{3cup}
$\set{c^{-m}\psi^\inj_{c;j}:m\ge0,\,1\le j\le n}\cupdot-\Simples\cupdot\set{c^m\psi^\proj_{c;j}:m\ge0,\,1\le j\le n}\cupdot\APTre{c}$.
\item \label{un inf fin}
The union of all finite $\tau_c$-orbits of roots $\beta\in\RSfinpos$ and all infinite $\tau_c$-orbits.
\item \label{support c}
$\set{c^k\beta\in\RS:\Supp(\beta)\subsetneq\Simples,k\in\integers}\cap(\RSpos\cup-\Simples)$.  \item \label{support}
$\set{\tau_c^k\beta\in\RSpos:\Supp(\beta)\subsetneq\Simples,k\in\integers}$.
\end{enumerate}
\end{proposition}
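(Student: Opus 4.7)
The plan is to show (1)--(5) all equal $\APre{c}$ by cascading the $\tau_c$-orbit transversals of \cref{tau prop} with the $c$-orbit structure of \cref{aff c-orbits}.

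Expressions (1), (2), (3) follow directly from the $\tau_c$-orbit picture. For (1): by \cref{tau prop}\eqref{tauc traversal inf},\eqref{tauc traversal fin}, the set $-\Simples\cup\TravReg{c}$ is a transversal of all $\tau_c$-orbits inside $\APre{c}$, so its $\tau_c$-orbit closure equals $\APre{c}$. For (2): iterating \cref{tau prop}\eqref{tauc def} unrolls the $\tau_c$-orbit of each $-\alpha_i$ as the disjoint union $\{c^{-m}\psi^\inj_{c;i}:m\ge0\}\sqcup\{-\alpha_i\}\sqcup\{c^m\psi^\proj_{c;i}:m\ge0\}$, while the $\tau_c$-orbit of each $\beta\in\TravReg{c}$ is its finite $c$-orbit, lying in $\APTre{c}$; disjointness of the four pieces of (2) follows from \cref{useful}, which separates $c$-orbits of $\TravProj{c}$- and $\TravInj{c}$-roots across $\eigenspace{c}$, together with $\APTre{c}\subseteq\eigenspace{c}$. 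For (3): by \cref{tau prop}\eqref{tauc fin} and \cref{aff c-orbits}\eqref{finite transversal finite}, the finite $\tau_c$-orbits in $\APre{c}$ are exactly the finite $\tau_c$-orbits meeting $\RSfinpos$, while the infinite ones are the orbits of $-\Simples$.

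For (4), the forward inclusion checks each piece of $\APre{c}$: negative simples lie trivially in (4); each $\alpha\in\RSpos\setminus\eigenspace{c}$ has an infinite $c$-orbit (\cref{aff c-orbits}\eqref{criterion}) with representative $\psi^\proj_{c;i}$ or $\psi^\inj_{c;j}$ (supported in $\{\alpha_1,\dots,\alpha_i\}$ or $\{\alpha_j,\dots,\alpha_n\}$, non-full except possibly when $i=n$ or $j=1$; in those borderline cases the identities $c(-\alpha_n)=\psi^\proj_{c;n}$ and $c^{-1}(-\alpha_1)=\psi^\inj_{c;1}$ place $-\alpha_n$ or $-\alpha_1$ in the same $c$-orbit, supplying a non-full-support witness); each $\alpha\in\APTre{c}$ lies in the $c$-orbit of some $\beta\in\TravReg{c}\subseteq\RSfinpos$, which misses $\alpha_\aff$. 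For the reverse inclusion the only nontrivial case is $\alpha\in\RSpos\cap\eigenspace{c}$: the finite $c$-orbit of $\alpha$ then contains the non-full-support root $\beta$, and we must show $\alpha\in\APTre{c}$. Expression (5) is handled identically with $\tau_c$-orbits in place of $c$-orbits, using the same witnesses.

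The main obstacle is that last reverse-direction step, which I would resolve as follows. By \cref{aff c-orbits}\eqref{finite transversal},\eqref{pos neg}, the finite $c$-orbit of $\alpha$ is the $c$-orbit of $\beta_0+m_0\kappa(\beta_0)\delta$ for a unique $\beta_0\in\TravReg{c}$ and some integer $m_0\ge0$. If $m_0=0$, then the orbit lies in $\APTre{c}$ by definition, so $\alpha\in\APTre{c}$. If instead $m_0\ge1$, every element of the orbit has the form $c^k\beta_0+m_0\kappa(\beta_0)\delta$; since $c^k\beta_0$ is positive by \cref{aff c-orbits}\eqref{pos neg} (hence has non-negative $\Simples$-coefficients) and $[\delta:\alpha_i]>0$ for every $i$, the $\alpha_i$-coefficient of this element is at least $m_0\kappa(\beta_0)[\delta:\alpha_i]>0$, so every orbit element has full $\RS$-support, contradicting $\Supp(\beta)\subsetneq\Simples$. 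Hence $m_0=0$ and $\alpha\in\APTre{c}$, which closes both (4) and (5).
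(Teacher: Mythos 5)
Your proof is correct and takes essentially the same route as the paper: characterizations (1)--(3) come from the transversal statements in \cref{tau prop} and \cref{aff c-orbits}, and your resolution of the ``main obstacle'' --- a positive root in a finite $c$-orbit lies in the orbit of $\beta_0+m_0\kappa(\beta_0)\delta$, and $m_0\ge 1$ forces full support on the whole orbit because $\delta$ has strictly positive coordinates --- is precisely the paper's central observation. The only adjustment is that for characterization (5) the borderline witnesses must be the \emph{positive} non-full-support roots $\alpha_n=\psi^\inj_{c;n}$ and $\alpha_1=\psi^\proj_{c;1}$ rather than $-\alpha_n$ and $-\alpha_1$; since $\tau_c(\alpha_n)=-\alpha_n$ and $\tau_c^{-1}(\alpha_1)=-\alpha_1$, these lie in the same $\tau_c$-orbits and the argument goes through unchanged.
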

\begin{proof}
Characterization~\eqref{tauc easy} is a direct restatement of \cref{tau prop}(\ref{tauc traversal inf},\ref{tauc traversal fin}).
Characterization \eqref{3cup} also follows easily from \cref{tau prop}.

Observe that any root $\beta$ in $\RSpos\setminus\AP{c}$ has $\Supp(\beta)=\Simples$; indeed by \cref{aff c-orbits}(\ref{finite transversal},\ref{pos neg})  
 all the real roots $\beta$ in $\RSpos\setminus\AP{c}$ are of the form $\beta = c^k\beta'+m\cdot\kappa(\beta')\delta$ for some positive root~$c^k\beta'\in \APTre{c}$ (with $\beta'\in\TravReg{c}$) and $m>0$.
In particular $\Supp(\beta)\supseteq\Supp(\delta)=\Simples$ so that $\beta\not\in\RSfin$.
Characterization~\eqref{un inf fin} then follows immediately from the inclusion $\TravReg{c}\subsetneq\RSfin$.

By the same observation, a root $\beta\in\AP{c}$ is in a finite $c$- (or equivalently $\tau_c$-) orbit if and only if it is in the orbit of some $\beta'\in\TravReg{c}\subset\RSfin$.
Such a $\beta'$ has $\Supp(\beta')\subsetneq\Simples$, so for Characterizations~\eqref{support c}~and~\eqref{support} it remains only to consider infinite orbits.

The only roots in $\TravInf{c}$ that can possibly have full support are $s_1\cdots s_{n-1}\alpha_n$ and $s_n\cdots s_2\alpha_1$.
We have $cs_1\cdots s_{n-1}\alpha_n=-\alpha_n$ and $c^{-1}s_n\cdots s_2\alpha_1=-\alpha_1$, 
establishing Characterization~\eqref{support c}.
We also have $\tau_c^{2}s_1\cdots s_{n-1}\alpha_n=\alpha_n$ and $\tau_c^{-2}s_n\cdots s_2\alpha_1=\alpha_1$, proving Characterization~\eqref{support}.
\end{proof}

\begin{remark}
\label{finite ap}  
The set $\AP{c}$ is an affine version of the set $\RS_{\ge-1}$ of almost positive roots in a root system $\RS$ of finite type.
We deviate from the standard convention and denote the latter also by $\AP{c}$ even though this set does not depend on the choice of Coxeter element.
This notation allows us treat the finite and affine cases together in several definitions and proofs.
\end{remark}

\begin{proposition}\label{AP restrict} 
If $\RS$ is of affine type and $\RS'$ is a parabolic root subsystem of $\RS$, then $\AP{c}\cap\RS'=\RS'_{c'}$ where $c'$ is the restriction of $c$ to the parabolic subgroup~$W'$.
\end{proposition}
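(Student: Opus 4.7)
The plan is to argue by cases on whether $\RS'$ is proper in $\RS$. If $\RS'=\RS$, then $c'=c$ and there is nothing to prove. Otherwise, because $\RS$ is irreducible affine, removing any nonempty subset of simple roots yields a Dynkin diagram of finite type (possibly disconnected), so $\RS'$ is of finite type. In that case, the notation $\RS'_{c'}$ refers to the almost positive roots $-\Simples'\cup\RSpos'$ of the finite root system $\RS'$ (cf.\ \cref{finite ap}), which is independent of $c'$. So the task reduces to proving the set equality
\[
\AP{c}\cap\RS'=-\Simples'\cup\RSpos'.
\]

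For the inclusion $\supseteq$, observe first that $-\Simples'=-\Simples\cap\RS'\subseteq\AP{c}\cap\RS'$, using that $-\Simples\subseteq\AP{c}$ by \cref{def:Phic}. For the positive roots, let $\beta\in\RSpos'$. Then $\Supp(\beta)\subseteq\Simples'\subsetneq\Simples$, so $\beta$ has non-full support and $\beta=\tau_c^0\beta$ puts $\beta$ in the set described by Characterization~\eqref{support} of \cref{apSchur}, i.e.\ $\beta\in\APre{c}\subseteq\AP{c}$. Since $\beta\in\RS'$ as well, $\beta\in\AP{c}\cap\RS'$.

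For the reverse inclusion $\subseteq$, take any $\gamma\in\AP{c}\cap\RS'$. By \cref{def:Phic}, $\gamma$ lies in one of three sets: if $\gamma\in-\Simples$, then $\gamma=-\alpha_i$ for some $i$, and $-\alpha_i\in\RS'$ forces $\alpha_i\in\Simples'$, so $\gamma\in-\Simples'$; if $\gamma\in\RSpos\setminus\eigenspace{c}$, then $\gamma\in\RSpos\cap\RS'=\RSpos'$; and if $\gamma\in\APT{c}$, note that $\APTre{c}\subseteq\RSpos$ by \cref{aff c-orbits}\eqref{pos neg} and that $\delta\notin\RS'$ (since $\delta$ has full support while every root of the proper parabolic $\RS'$ does not), so again $\gamma\in\RSpos\cap\RS'=\RSpos'$.

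There is no real obstacle here; the proof is essentially bookkeeping, and the only substantive input is the support-based Characterization~\eqref{support} of \cref{apSchur} together with the standard fact that $\delta$ has full support. The one point worth highlighting is the need to rule out $\delta$ from the intersection with a proper parabolic, which is immediate from this same support observation.
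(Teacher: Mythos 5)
Your proof is correct and follows essentially the same route as the paper's: the paper's (very terse) proof likewise reduces to the observation that a proper parabolic subsystem of an affine root system is finite and then invokes Characterization~(5) of \cref{apSchur}, which is exactly the support-based argument you spell out. You have simply filled in the bookkeeping (both inclusions, and the exclusion of $\delta$ via full support) that the paper leaves implicit.
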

\begin{proof} 
If $\RS'$ is a proper parabolic root subsystem of $\RS$, then $\RS'$ is finite because $\RS$ is affine.
Thus the assertion follows from \cref{apSchur}\eqref{support}.
\end{proof}

\section{The compatibility degree}\label{compat sec}  
In this section, we define the compatibility degree for ordered pairs roots in $\AP{c}$ and establish its crucial properties.  
We need all of the notation of the previous sections.
Reminders on notation are found in the Index to Notation on page~\pageref{index}.

\begin{definition}\label{def adj}  
Suppose $\alpha\in\RST{c}$ is a root in the subsystem $\RS\cap \eigenspace{c}$ and suppose $\simpleT_j\in\SimplesT{c}$ (i.e.\ $\simpleT_j$ is a simple root of the subsystem).
Then $\simpleT_j$ is \newword{adjacent} to $\alpha$ if $\simpleT_j$ is in $\SuppT(c\alpha)\cup\SuppT(c^{-1}\alpha)$ but not in $\SuppT(\alpha)$.   
Given $\alpha,\beta\in\RST{c}$, define $\adj\alpha\beta$ \nomenclature[adj]{$\adj\alpha\beta$}{number of roots in $\SuppT(\beta)$ that are adjacent to $\alpha$} to be the number of roots of $\SimplesT{c}$ that are adjacent to $\alpha$ and contained in $\SuppT(\beta)$.
This number is not symmetric in $\alpha$ and $\beta$.
Define also  
\[
  \cmcirc\alpha\beta c=
  \left\lbrace\begin{array}{rl}
    -1&\text{if }\alpha=\beta,\\
    0&\text{if }\SuppT(\alpha)\subsetneq\SuppT(\beta)\text{ or }\SuppT(\beta)\subsetneq\SuppT(\alpha),\\
    \adj\alpha\beta&\text{otherwise.}
  \end{array}\right.
\]
\nomenclature[zzzzz]{$\cmcirc\alpha\beta c$}{expression for the compatibility degree on $\APTre{c}$}
\end{definition}

\begin{remark}
Recall from \cref{Up details}\eqref{type A} and from \cref{c on tUp components} that the irreducible components of~$\RST{c}$ are of affine type $A$ and that $c$ acts on a component by rotating its Dynkin diagram.
The tube support $\SuppT(\alpha)$ of a root $\alpha\in\RST{c}$ is a connected subgraph of one of these components.
Thus $\simpleT_j$ is adjacent to $\alpha$ if and only if $\simpleT_j$ is not in $\SuppT(\alpha)$ but is connected to $\SuppT(\alpha)$ by an edge in the Dynkin diagram of $\RST{c}$.
\end{remark}

We continue to denote by $[\beta:\alpha_i]$ the $\alpha_i$-coordinate of $\beta$ in the basis of simple roots and set $[\beta\ck:\alpha_i\ck]$ to be the $\alpha_i\ck$-coordinate of $\beta\ck$ in the basis of simple co-roots.
We write $[\beta:\alpha_i]_+$\nomenclature[zzzz]{$[x]_+$}{$\max(x,0)$} for $\max([\beta:\alpha_i],0)$ and $[\alpha\ck:\alpha_i\ck]_+$ for $\max([\alpha\ck:\alpha_i\ck],0)$.
For roots $\alpha$ and $\beta$ in $\AP{c}$, define  
\begin{align}
\cmr\alpha\beta c&=-\sum_{i=1}^n[\alpha\ck:\alpha_i\ck][\beta:\alpha_i]-\sum_{1\le j<i\le n}a_{ij}[\alpha\ck:\alpha_i\ck]_+[\beta:\alpha_j]_+\\
\cml\alpha\beta c&=-\sum_{i=1}^n[\alpha\ck:\alpha_i\ck][\beta:\alpha_i]-\sum_{1\le i< j\le n}a_{ij}[\alpha\ck:\alpha_i\ck]_+[\beta:\alpha_j]_+.
\end{align}
\nomenclature[zzzz]{$\cmr\alpha\beta c$}{part of the definition of compatibility degree}
\nomenclature[zzzz]{$\cml\alpha\beta c$}{part of the definition of compatibility degree}

\begin{definition}\label{compat deg def}
  For $\alpha$ and $\beta$ in $\AP{c}$, the $c$-\newword{compatibility degree} of $\alpha$ with $\beta$ is   
    \begin{align*}
    \cm\alpha\beta c&=\left\lbrace\begin{array}{ll}
\max\left[\cmr\alpha\beta c,\cml\alpha\beta c\right]
&\begin{array}{l}\text{except when }\alpha,\beta\in\APTre{c}\\\qquad\text{ and }\SuppT(\alpha,\beta)\text{ is component-full.}\end{array}
\\[10pt]  
\adj\alpha\beta&\begin{array}{l}\text{in that case.}\end{array}
\end{array}\right.
  \end{align*}
  \nomenclature[zzzz]{$\cm\alpha\beta c$}{compatibility degree}
\end{definition}
\begin{example}
  To help the reader parse the definition we compute the $c$-compatibility degree $\cm{2\alpha_1+\alpha_2}{\alpha_2}{c}$ in type $D_3^{(2)}$ for $c=s_1s_2s_3$.
  (Compare this example with \cite[Example 1.3]{afforb} and \cref{fanex}.)
  First observe that $2\alpha_1+\alpha_2$ is in an infinite $\tau_c$-orbit so we are in the first case of \cref{compat deg def}.
  Since $(2\alpha_1+\alpha_2)^\vee=\alpha_1^\vee+\alpha_2^\vee$ we have:
  \begin{align*}
    \cmr{2\alpha_1+\alpha_2}{\alpha_2}{c} = -(1 \cdot 0 + 1 \cdot 1 + 0 \cdot 0) - ( -1 \cdot 1 \cdot 0 + 0 \cdot 0 \cdot 0 - 2 \cdot 0 \cdot 0) =& -1\\
    \cml{2\alpha_1+\alpha_2}{\alpha_2}{c} = -(1 \cdot 0 + 1 \cdot 1 + 0 \cdot 0) - ( -2 \cdot 1 \cdot 1 + 0 \cdot 1 \cdot 0 - 1 \cdot 1 \cdot 0) =& 1 
  \end{align*}
  and we get $\cm{2\alpha_1+\alpha_2}{\alpha_2}{c}=1$.
  For further examples involving pairs of roots in $\eigenspace{c}$ we refer the reader to the proof of \cref{compat prop,compat prop c fixed}.
\end{example}
In the next two theorems, we establish the following properties and show that they uniquely characterize the compatibility degree.
\begin{align}
\label{compat base}
\cm{-\alpha_i}{\beta}c&=[\beta:\alpha_i]\text{ for }\alpha_i\text{ simple and }\beta\in\AP{c}.\\
\label{compat cobase}
\cm{\beta}{{-\alpha_i}}c&=[\beta\ck:\alpha_i\ck]\text{ for }\alpha_i\text{ simple and }\beta\in\AP{c}.\\
\label{compat U}		
\cm{\alpha}{\beta}c&=\cmcirc\alpha\beta c\text{ if }\alpha,\beta\in\APTre{c}.\\
\label{compat delta U}
\cm{\delta}{\alpha}c&=\cm{\alpha}{\delta}c=0\text{ if }\alpha\in\APT{c}.\\
\label{compat sigma}
\cm\alpha\beta c&=\cm{\sigma_s\alpha}{\sigma_s\beta}{scs} \text{ for }\alpha,\beta\in\AP{c}\text{ and }s\text{ initial or final in }c.\\
\label{compat tau} 
\cm\alpha\beta c&=\cm{\tau_c\alpha}{\tau_c\beta}c.
\end{align} 

\begin{theorem}\label{compat prop}  Fix a finite or affine root system $\RS$.
The assignment $(c,\alpha,\beta)\mapsto\cm\alpha\beta c$ is the unique function satisfying {\rm\cref{compat base}--\cref{compat sigma}}.
\end{theorem}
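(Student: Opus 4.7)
The plan is to prove existence and uniqueness separately. For existence, I would verify that $\cm{\alpha}{\beta}{c}$ as defined in \cref{compat deg def} satisfies each of \cref{compat base}--\cref{compat sigma} by direct computation. For uniqueness, I would argue by induction that any function satisfying these five properties is uniquely determined on each triple $(c,\alpha,\beta)$ by reducing to the explicitly specified base cases via source-sink moves.

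The uniqueness argument proceeds by cases on $(\alpha,\beta)$. If $\alpha\in-\Simples$ apply \cref{compat base}; if $\beta\in-\Simples$ apply \cref{compat cobase}; if both lie in $\APT{c}$ apply \cref{compat U} or \cref{compat delta U}. Otherwise at least one of $\alpha,\beta$ lies in an infinite $\tau_c$-orbit, by \cref{tau prop}(\ref{tauc fin}). Since $-\Simples$ is a transversal of the infinite $\tau_c$-orbits (\cref{tau prop}(\ref{tauc traversal inf})) and $\tau_c=\sigma_1\cdots\sigma_n$ factors as a composition of source-sink moves (along the cyclic sequence $c,\,s_1cs_1,\,s_2s_1cs_1s_2,\ldots$, which returns to $c$ after $n$ steps), repeated applications of $\tau_c^{\pm 1}$ bring such a root to $-\Simples$. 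By \cref{compat sigma} the compatibility degree is preserved under each intermediate $\sigma_s$-step, and by \cref{tUp simples,pos real in tubes,sigma pos real in tubes} roots in $\APT{c}$ map to roots in the analogous $\APT{c'}$, so the pair eventually reaches a base case. Induction on the number of source-sink moves needed closes the argument.

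For existence, \cref{compat base,compat cobase} follow from $[(-\alpha_i)^\vee\!:\!\alpha_j^\vee]=-\delta_{ij}$, which annihilates the positive-part cross-terms and reduces both $\cmr{-\alpha_i}{\beta}{c}$ and $\cml{-\alpha_i}{\beta}{c}$ to $[\beta:\alpha_i]$; the component-full exception does not apply since $-\alpha_i\notin\APTre{c}$. For \cref{compat delta U}, I would rewrite the sums defining $\cmr{\alpha}{\beta}{c}$ and $\cml{\alpha}{\beta}{c}$ in terms of the Euler form via \cref{Ec def,K Ec} and invoke \cref{E delta in tubes} to make them vanish. For \cref{compat U}, the crucial input is \cref{E in tubes}: when $\alpha=\beta$, \cref{Ec 1} yields $\cmr{\alpha}{\alpha}{c}=\cml{\alpha}{\alpha}{c}=-1$; strictly nested supports produce $\max=0$ by cancellation; in the generic non-component-full case the max collects precisely the adjacency contributions, matching $\adj{\alpha}{\beta}$; the component-full case is covered by the alternative $\adj{\alpha}{\beta}$-formula in \cref{compat deg def}. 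For \cref{compat sigma}, invariance of $E_c$ under source-sink moves (\cref{Ec invariant}) together with the explicit action of $\sigma_s$ on $-\Simples\cup\RSpos$ and on the tube simple roots (\cref{tUp simples}) shows that every ingredient of \cref{compat deg def} is source-sink invariant.

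The main obstacle is \cref{compat U}: matching the asymmetric bilinear $\max$ expression against the combinatorial adjacency count $\adj{\alpha}{\beta}$ requires a careful case analysis over the possible configurations of tube supports in affine type $A$ components (single-component nested, cross-component, adjacent, and component-full). The component-full exception in \cref{compat deg def} is engineered precisely to catch the situation where the uniform $\max$ formula would overshoot $\adj{\alpha}{\beta}$; one must verify both the reach of the exception and the sufficiency of the generic formula in all other sub-cases.
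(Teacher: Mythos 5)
Your proposal follows essentially the same route as the paper's proof: existence is checked property by property (base/cobase by direct computation, \cref{compat delta U} via \cref{E delta in tubes}, \cref{compat U} via the tube-support case analysis underlying \cref{compute in tubes}, and \cref{compat sigma} via \cref{Ec invariant} together with \cref{tUp simples}), and uniqueness is obtained by pinning down pairs in $\APT{c}$ with \cref{compat U}--\cref{compat delta U} and reducing all remaining pairs to a negative simple root using \cref{tau prop} and the fact that \cref{compat sigma} implies \cref{compat tau}. The only point worth tightening is the sub-case of \cref{compat sigma} where one root lies in $-\Simples\setminus\set{-\alpha_s}$, so that $\sigma_s$ fixes it rather than acting as $s$; there one checks directly that $[\sigma_s\beta:\alpha_i]=[\beta:\alpha_i]$, as in the paper.
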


\begin{theorem}\label{compat prop c fixed}  
Fix a finite or affine root system $\RS$ and a Coxeter element~$c$.
The assignment $(\alpha,\beta)\mapsto\cm\alpha\beta c$ is the unique function satisfying {\rm\cref{compat base}--\cref{compat delta U}} and {\rm\cref{compat tau}}.
\end{theorem}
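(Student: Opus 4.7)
The plan is to split the argument into existence (verifying that $\cm\alpha\beta c$ of \cref{compat deg def} satisfies all the listed properties) and uniqueness (showing the properties determine the function).

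For existence, I would verify \cref{compat base,compat cobase,compat U,compat delta U} by direct inspection: when one argument is $-\alpha_i$ or $\delta$, the $[\cdot]_+$ terms in the definitions of $\cmr\cdot\cdot c$ and $\cml\cdot\cdot c$ collapse and the expressions reduce to a single simple-root (or simple co-root) coordinate. The key new ingredient is $\tau_c$-invariance \cref{compat tau}. I would derive this by iterating the source-sink invariance \cref{compat sigma} of \cref{compat prop} along a chain of length $n$: writing $\tau_c=\sigma_1\cdots\sigma_n$, at each intermediate step the reflection applied is final in the current Coxeter element, since starting from $c=s_1\cdots s_n$ we have $s_n$ final in $c$, then $s_{n-1}$ final in $s_ncs_n=s_ns_1\cdots s_{n-1}$, and so on. After $n$ steps the cumulative conjugation is by $s_1\cdots s_n=c$, returning us to $\AP{c}$, so applying \cref{compat sigma} along this chain yields \cref{compat tau}.

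For uniqueness, suppose $f\colon\AP{c}\times\AP{c}\to\integers$ satisfies \cref{compat base,compat cobase,compat U,compat delta U,compat tau}. I would use \cref{tau prop} to reduce each pair to a canonical representative. If both $\alpha$ and $\beta$ lie in finite $\tau_c$-orbits, they both lie in $\APT{c}$ by \cref{tau prop}\eqref{tauc fin}, so \cref{compat U} or \cref{compat delta U} determines $f(\alpha,\beta)$ directly. Otherwise at least one argument is in an infinite $\tau_c$-orbit, whose transversal is $-\Simples$ by \cref{tau prop}\eqref{tauc traversal inf}. If $\alpha$ is in an infinite orbit, pick $k$ with $\tau_c^k\alpha=-\alpha_i$ and use \cref{compat tau,compat base} to force $f(\alpha,\beta)=[\tau_c^k\beta:\alpha_i]$; otherwise $\alpha$ is in a finite orbit and $\beta$ is in an infinite one, and we shift $\beta$ instead to a negative simple root using \cref{compat tau,compat cobase}. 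Since each case forces $f(\alpha,\beta)$ to agree with $\cm\alpha\beta c$, uniqueness follows.

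The main obstacle lies in the existence half, specifically in establishing \cref{compat sigma} (as part of \cref{compat prop}), since our derivation of \cref{compat tau} depends on it. Verifying \cref{compat sigma} requires case analysis on the positions of $\alpha$ and $\beta$ among $-\Simples$, $\RSpos\setminus\eigenspace c$, $\APTre c$, and $\set\delta$, using \cref{Ec invariant,tUp init fin,sigma pos real in tubes} to track how each piece of \cref{compat deg def} transforms under $\sigma_s$. By contrast, the uniqueness half sketched above is essentially mechanical once \cref{tau prop} is in hand.
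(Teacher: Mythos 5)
Your proposal is correct and follows essentially the same route as the paper: the existence half verifies \cref{compat base}--\cref{compat delta U} directly and obtains \cref{compat tau} by iterating the source-sink invariance \cref{compat sigma} along the chain $c \to s_ncs_n \to \cdots \to c$, while the uniqueness half uses \cref{tau prop} to reduce pairs with an infinite-orbit member to a negative simple root and handles pairs in $\APT{c}$ via \cref{compat U} and \cref{compat delta U}. This is exactly the decomposition in the paper's joint proof of \cref{compat prop,compat prop c fixed}, including your correct identification that the real work lies in verifying \cref{compat sigma} by cases.
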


We now give two results on computing the compatibility degree for special pairs of roots (positive roots in $\AP{c}$ or roots in $\APTre{c}$).
The first of these results is immediate from the definitions:
\begin{lemma}\label{compat Ec}
  If $\alpha$ and $\beta$ are positive roots, then 
  \[
    \cmr\alpha\beta c=-E_c(\alpha\ck,\beta) 
    \qquad\text{and}\qquad
    \cml\alpha\beta c=-E_{c^{-1}}(\alpha\ck,\beta).
  \]
\end{lemma}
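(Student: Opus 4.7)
The plan is to observe that when $\alpha$ and $\beta$ are both positive roots, the truncations $[\,\cdot\,]_+$ in the summation formulas for $\cmr\alpha\beta c$ and $\cml\alpha\beta c$ are redundant; after this simplification both identities follow by a direct expansion of $E_c(\alpha\ck,\beta)$ (respectively $E_{c^{-1}}(\alpha\ck,\beta)$) in the basis of simple coroots on the left and simple roots on the right, using the defining equation~\cref{Ec def}.

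The first step is to show that the simple-coroot coordinates of $\alpha\ck$ are nonnegative when $\alpha$ is a positive root. Writing $\alpha=\sum_i[\alpha:\alpha_i]\alpha_i$ with all $[\alpha:\alpha_i]\ge 0$, the relation $\alpha\ck=\frac{2}{K(\alpha,\alpha)}\alpha$ together with $\alpha_i=d_i\alpha_i\ck$ gives $[\alpha\ck:\alpha_i\ck]=\frac{2d_i}{K(\alpha,\alpha)}[\alpha:\alpha_i]\ge 0$ for each $i$. Combined with $[\beta:\alpha_j]\ge 0$ for $\beta$ positive, this lets us drop every occurrence of $[\,\cdot\,]_+$ in $\cmr\alpha\beta c$ and $\cml\alpha\beta c$.

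The second step is a straightforward expansion. By bilinearity,
\[E_c(\alpha\ck,\beta)=\sum_{i,j}[\alpha\ck:\alpha_i\ck][\beta:\alpha_j]\,E_c(\alpha_i\ck,\alpha_j),\]
and substituting \cref{Ec def} gives
\[E_c(\alpha\ck,\beta)=\sum_{i=1}^n[\alpha\ck:\alpha_i\ck][\beta:\alpha_i]+\sum_{1\le j<i\le n}a_{ij}[\alpha\ck:\alpha_i\ck][\beta:\alpha_j].\]
Comparing with the (now simplified) expression for $\cmr\alpha\beta c$ yields $\cmr\alpha\beta c=-E_c(\alpha\ck,\beta)$. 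The second identity is obtained in exactly the same way, using the description of $E_{c^{-1}}$ as the matrix whose $(i,j)$-entry is $a_{ij}$ for $i<j$, is $1$ for $i=j$, and is $0$ for $i>j$.

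The argument is routine; there is no real obstacle beyond the sign observation in the first paragraph, which ensures that the formulas for $\cmr\alpha\beta c$ and $\cml\alpha\beta c$ are genuinely bilinear expressions in the coordinates of $\alpha\ck$ and $\beta$ when both roots are positive, so that they match the bilinear form $E_c$ (respectively $E_{c^{-1}}$) exactly.
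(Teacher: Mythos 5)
Your proof is correct and is exactly the verification the paper has in mind: the paper states this lemma is ``immediate from the definitions,'' and your two observations (that the truncations $[\,\cdot\,]_+$ are vacuous for positive roots, and that the resulting bilinear expressions match the expansion of $E_c(\alpha\ck,\beta)$ and $E_{c^{-1}}(\alpha\ck,\beta)$ via \cref{Ec def}) are precisely why. No further comment is needed.
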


\cref{compat Ec} combines with \cref{E in tubes} (and the fact that each component of $\RST{c}$ is of affine type A) to give the following description of $\cmr\alpha\beta c$ and $\cml\alpha\beta c$ when $\alpha$ and $\beta$ are in $\APTre{c}$.
(That is, $\alpha$ and $\beta$ are positive roots in $\AP{c}$, contained in finite $\tau_c$-orbits.)

\begin{proposition}\label{compute in tubes}
If $\alpha,\beta\in\APTre{c}$, then
\begin{enumerate}[\rm(1)]
\item $\cmr\alpha\beta c$ is the number of roots $\beta_i\in\SuppT(\alpha)$ with $c\beta_i\in\SuppT(\beta)$ minus the number of roots in $\SuppT(\alpha)\cap\SuppT(\beta)$.
\item $\cml\alpha\beta c$ is the number of roots $\beta_i\in\SuppT(\alpha)$ with $c^{-1}\beta_i\in\SuppT(\beta)$ minus the number of roots in $\SuppT(\alpha)\cap\SuppT(\beta)$.
\end{enumerate}
\end{proposition}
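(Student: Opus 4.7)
The plan is to reduce each of the two expressions to a sum of values of $E_c$ (respectively $E_{c^{-1}}$) on pairs of simple roots of $\RST{c}$, and then evaluate those values using \cref{E in tubes}, which is exactly what the hint before the proposition suggests.

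The first step is to describe the simple-root expansions of $\alpha$ and $\beta$. Since $\alpha\in\APTre{c}$, \cref{pos real in tubes} identifies $\alpha$ as a positive real root of $\RST{c}$ whose tube support is not component-full; because each irreducible component of $\RST{c}$ is of affine type $A$ by \cref{Up details}\eqref{type A} and \cref{c on tUp components}, such a root is a sum of consecutive simple roots along a single arc of the cyclic Dynkin diagram, each with coefficient $1$. Hence $\alpha=\sum_{\beta_i\in\SuppT(\alpha)}\beta_i$, and analogously for $\beta$. Moreover, within a type-$A$ component all simple roots share a common length, so $\beta_i\ck$ is the same scalar multiple of $\beta_i$ for every $\beta_i$ in the component; combining this with the length-$2$ computation $K(\alpha,\alpha)=2\#\SuppT(\alpha)-2(\#\SuppT(\alpha)-1)=2$ (since the arc has one fewer adjacency than node) gives $\alpha\ck=\sum_{\beta_i\in\SuppT(\alpha)}\beta_i\ck$.

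Next I would invoke \cref{compat Ec} to write
\[
\cmr\alpha\beta c=-E_c(\alpha\ck,\beta)=-\sum_{\beta_i\in\SuppT(\alpha)}\sum_{\beta_j\in\SuppT(\beta)}E_c(\beta_i\ck,\beta_j),
\]
using bilinearity on the expansions just obtained, with the analogous expression for $\cml\alpha\beta c$ and $E_{c^{-1}}$.

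Then \cref{E in tubes} evaluates each summand: $E_c(\beta_i\ck,\beta_j)$ equals $1$ if $\beta_j=\beta_i$, equals $-1$ if $\beta_j=c^{-1}\beta_i$, and is $0$ otherwise, and symmetrically for $E_{c^{-1}}$ (where the $-1$ case becomes $\beta_j=c\beta_i$, using \cref{Ec Ecinv} or by applying the result to $c^{-1}$). Grouping surviving contributions gives two tallies: the diagonal terms contribute $+\#(\SuppT(\alpha)\cap\SuppT(\beta))$ to $E_c(\alpha\ck,\beta)$, while the $-1$ terms contribute $-\#\{(\beta_i,\beta_j): \beta_j=c^{-1}\beta_i\}$, which is exactly the size of $\{\beta_i\in\SuppT(\alpha):c^{-1}\beta_i\in\SuppT(\beta)\}=\{\beta_j\in\SuppT(\beta):c\beta_j\in\SuppT(\alpha)\}$. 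Negating yields the stated formula for $\cmr\alpha\beta c$, and the same argument applied to $E_{c^{-1}}$ gives the formula for $\cml\alpha\beta c$.

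There is no real conceptual obstacle: once \cref{compat Ec,E in tubes} have been proved and the type-$A$ structure of the components of $\RST{c}$ has been recorded, the proof is a short bookkeeping exercise. The only place to exercise care is matching the $c$ versus $c^{-1}$ direction with the conventions of \cref{Ec def,Ec Ecinv}, and double-checking that the bilinear expansion is valid, which in turn rests on the fact that $\alpha$ and $\beta$ each live in a single component of $\RST{c}$ so that their simple-root decompositions involve only roots of equal length.
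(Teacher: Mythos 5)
Your route is exactly the paper's: the paper's entire proof of this proposition is the one-line remark that \cref{compat Ec} combines with \cref{E in tubes} and the type-$A$ structure of the components of $\RST{c}$, and you have filled in precisely that bookkeeping. The decomposition $\alpha=\sum_{\beta_i\in\SuppT(\alpha)}\beta_i$ and $\alpha\ck=\sum_{\beta_i\in\SuppT(\alpha)}\beta_i\ck$ (which needs only that all simple roots of a type-$A$ component share a common length $L=K(\beta_i,\beta_i)$, not that $L=2$ as your side computation assumes), the bilinear expansion via \cref{compat Ec}, and the termwise evaluation via \cref{E in tubes} are all correct.

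The problem is your final identification. Your computation gives
\[
\cmr\alpha\beta c \;=\; \#\set{\beta_i\in\SuppT(\alpha):c^{-1}\beta_i\in\SuppT(\beta)}-\#\bigl(\SuppT(\alpha)\cap\SuppT(\beta)\bigr),
\]
and you correctly re-index the first count as $\#\set{\beta_j\in\SuppT(\beta):c\beta_j\in\SuppT(\alpha)}$. But that is not the count appearing in the statement, which is $\#\set{\beta_i\in\SuppT(\alpha):c\beta_i\in\SuppT(\beta)}$ --- the roles of the two supports are swapped, and for arcs on a cycle these counts genuinely differ. Concretely, in $A_4^{(1)}$ with $c=s_1\cdots s_5$ and $k=3$ as in \cref{tab:type-by-type}, one component of $\RST{c}$ has simple roots $\alpha_2$, $\alpha_3$, $\delta-\alpha_2-\alpha_3$, cyclically permuted by $c$ in that order. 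Take $\alpha=\alpha_2$ and $\beta=\alpha_2+\alpha_3$: directly from the definition, $\cmr\alpha\beta c=-E_c(\alpha_2\ck,\alpha_2+\alpha_3)=-(1+0)=-1$, which agrees with your derived formula ($0-1$), while the formula in the statement gives $1-1=0$ because $c\alpha_2=\alpha_3\in\SuppT(\beta)$. So your derivation is right and the literal statement appears to contain a slip ($c$ and $c^{-1}$ interchanged between the two parts, equivalently the two supports swapped in the shifted count); either way, ``negating yields the stated formula'' is false as written, and you should have flagged the discrepancy rather than asserting the match. Note that the downstream use of the proposition in \cref{tubes table} is insensitive to this slip, since only $\max\left[\cmr\alpha\beta c,\cml\alpha\beta c\right]$ is used there and interchanging $c$ with $c^{-1}$ merely swaps the two quantities.
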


We now prove \cref{compat prop,compat prop c fixed}.

\begin{proof}[Proof of \cref{compat prop,compat prop c fixed}]
Property~\eqref{compat base} records the simple observation that $\cmr{(-\alpha_i)}\beta c$ and $\cml{(-\alpha_i)}\beta c$ both equal $[\beta:\alpha_i]$.
Property~\eqref{compat cobase} is similar.

To verify Property~\eqref{compat U}, we need to show that $\max\left[\cmr\alpha\beta c,\cml\alpha\beta c\right]$ equals $\cmcirc\alpha\beta c$ when $\alpha,\beta\in\APTre{c}$, except possibly when $\SuppT(\alpha,\beta)$ is component-full.
This comes down to checking several cases and computing $\cmr\alpha\beta c$ and $\cml\alpha\beta c$ as described in \cref{compute in tubes}.
If $\alpha$ and $\beta$ are in different components of $\RST{c}$, then $\cmr\alpha{\beta}c=\cml\alpha{\beta}c=0$, so their maximum is $0$ as desired.
The remaining cases are described in \cref{tubes table}, with representative pictures.
In each case, $\SuppT(\alpha)$ is outlined in red and $\SuppT(\beta)$ is outlined in dotted blue.
The quantity $\adj\alpha\beta$ is left out of the table when it is irrelevant.
\begin{table}
\centering
\begin{tabular}{|C{66pt}|c|c|c|c|c|c|}\hline
\textbf{\small Description}&\textbf{\small Pictures}&\small\!$\cmr\alpha\beta c$\!&\small\!$\cml\alpha\beta c$\!&\small\!$\adj\alpha\beta\!$&\small$\cmcirc\alpha\beta c$\!&\small$\cm\alpha\beta c$\\\hline
\small$\alpha=\beta$&\raisebox{-28pt}{\includegraphics{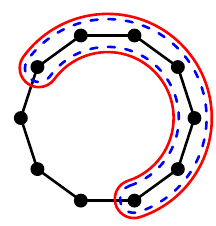}}&-1&-1&&-1&-1\\\hline
\small$\alpha$ and $\beta$ disjoint, not adjacent&\raisebox{-28pt}{\includegraphics{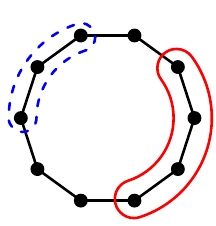}}&0&0&0&0&0\\\hline
\small
$\SuppT(\alpha)$ and $\SuppT(\beta)$ nested&\raisebox{-28pt}{\includegraphics{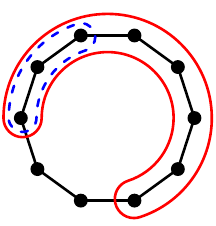}}&0&-1&&0&0\\\hline
$\alpha$ and $\beta$ overlapping or adjacent on one side&\raisebox{-28pt}{\includegraphics{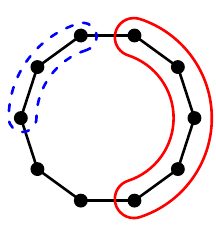}}&1&0&1&1&1\\\hline
\end{tabular}\\[10pt]
  \caption{Compatibility degree for positive real roots in $\eigenspace{c}$}
\label{tubes table}
\end{table}  
The action of $c$ is to rotate counterclockwise by one position.
Some of the cases have variations not pictured.
For example, in the case where $\SuppT(\alpha)$ and $\SuppT(\beta)$ are nested, their tube supports can have one or the other endpoint in common (or neither, but not both).
In these variations, the values of $\cmr\alpha\beta c$ and $\cml\alpha\beta c$ can vary, but their max does not vary and by definition $\cmcirc\alpha\beta c$ does not vary.
In the cases where $\SuppT(\alpha)$ and $\SuppT(\beta)$ are nested, $\cmr\alpha\beta c$ and $\cml\alpha\beta c$ may be $0$ or $-1$, but at least one of them is $0$.

Property~\eqref{compat delta U} holds by \cref{E delta in tubes,compat Ec}.

To show that \cref{compat sigma} holds, first, consider the case where one of $\alpha$ and $\beta$ (or both) is in $-\Simples\setminus\set{-\alpha_s}$. 
In particular, we know that the exceptional case of the definition of $\cm\alpha\beta c$ does not apply, so we have to prove that the maximum of $\cmr\alpha\beta c$ and $\cml\alpha\beta c$ equals the maximum of $\cmr{\sigma_s\alpha}{\sigma_s\beta}{scs}$ and $\cml{\sigma_s\alpha}{\sigma_s\beta}{scs}$.

If $\alpha=-\alpha_i$, then both $\cmr\alpha\beta c$ and $\cml\alpha\beta c$ equal $[\beta:\alpha_i]$.
But $\sigma_s\alpha=\alpha=-\alpha_i$ so both $\cmr{\sigma_s\alpha}{\sigma_s\beta}{scs}$ and $\cml{\sigma_s\alpha}{\sigma_s\beta}{scs}$ equal $[\sigma_s\beta:\alpha_i]$.
Since $\sigma_s\beta=\beta+a\alpha_s$ for some $a$ we have $[\sigma_s\beta:\alpha_i]=[\beta:\alpha_i]$.
The case where $\beta=-\alpha_i$ is similar.

We can now assume that neither $\alpha$ nor $\beta$ is in $-\Simples\setminus\set{-\alpha_s}$.
In particular $\sigma_s$ acts on $\alpha$ and $\beta$ as $s$, so \cref{Ec invariant,compat Ec} combine to prove \cref{compat sigma} except in the case where $\alpha,\beta\in\APTre{c}$ and $\SuppT(\alpha,\beta)$ is component-full.
\cref{scs U} says that $\eigenspace{scs}=s\eigenspace{c}$.
Furthermore, \cref{tUp simples} says that the simple roots of $\RST{scs}$ are obtained from the simple roots of $\RST{c}$ by the action of $s$.
Thus $s$ also takes $\SuppT(\alpha)$ and $\SuppT(\beta)$ (with respect to $c$) to $\SuppT(s\alpha)$ and $\SuppT(s\beta)$ (with respect to $scs$).
Therefore $\adj{s\alpha}{s\beta}$, with respect to $scs$, equals $\adj\alpha\beta$, with respect to $c$.

We have verified \cref{compat sigma} and now \cref{compat tau} follows by repeated applications of \cref{compat sigma}.
It remains only to verify the uniqueness statements in \cref{compat prop,compat prop c fixed}.
Together, Properties \eqref{compat U} and \eqref{compat delta U} specify the compatibility degree for all pairs of roots in~$\APT{c}$.
By \cref{tau prop}, we see that Properties \eqref{compat base}, \eqref{compat cobase}, and \eqref{compat tau} completely specify the compatibility degree on all other pairs of roots, so we have proved the uniqueness in \cref{compat prop c fixed}.
But if a function on pairs of roots and Coxeter elements satisfies \eqref{compat sigma}, then it satisfies \eqref{compat tau} for each Coxeter element $c$, and thus it is uniquely determined by Properties \eqref{compat base} and \eqref{compat cobase} for each $c$.
That is the uniqueness assertion of \cref{compat prop}.
\end{proof}

We now gather some properties of the compatibility degree.

The usual notion of compatibility degree for $\RS$ of finite type, defined in \cite{FoZe03,MRZ}, is the unique function satisfying \eqref{compat base} and \eqref{compat tau}.
See also \cite{sortable,shih, Ste13}.
(We have followed Ceballos and Pilaud \cite[Remark~2.10]{CP} in modifying the usual notion by taking $\cm\alpha\alpha c=-1$ rather than $0$ for $\alpha\in\AP{c}$.)

Thus the following is an immediate corollary of \cref{compat prop c fixed}.
\begin{corollary}\label{compat usual}
If $\RS$ is of finite type, then $\cm\alpha\beta c$ agrees with the usual compatibility degree on almost positive roots, modified to set $\cm\alpha\alpha c=-1$.
Thus, the usual compatibility degree is $\cm\alpha\beta c=\max\left[\cmr\alpha\beta c,\cml\alpha\beta c\right]$.
\end{corollary}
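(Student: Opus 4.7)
The plan is to invoke the uniqueness assertion of \cref{compat prop c fixed}. The first observation is that when $\RS$ is of finite type, the imaginary root $\delta$ does not exist and the set $\APTre{c}$ is empty, so the exceptional clause in \cref{compat deg def} is never triggered. By definition, then, $\cm\alpha\beta c = \max[\cmr\alpha\beta c,\cml\alpha\beta c]$, which is precisely the ``Thus'' conclusion of the corollary, conditional on the first assertion.

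For the first assertion, I would verify that the usual compatibility degree on $\RS_{\ge-1}$ from \cite{FoZe03,MRZ}, modified so as to take the value $-1$ on diagonal pairs, satisfies properties \eqref{compat base}--\eqref{compat delta U} together with \eqref{compat tau}. In finite type, \eqref{compat U} and \eqref{compat delta U} are vacuous because $\APTre{c}$ is empty and $\delta$ does not appear. Property \eqref{compat base} is essentially the defining property of the usual compatibility degree, \eqref{compat tau} is its fundamental $\tau_c$-invariance, and \eqref{compat cobase} is a classical duality; moreover, \eqref{compat cobase} is in fact redundant here, because $-\Simples$ is a transversal of the $\tau_c$-orbits on $\AP{c}$ in finite type (a standard result of Fomin--Zelevinsky), so \eqref{compat base} and \eqref{compat tau} together already pin down the compatibility degree on every pair. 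The uniqueness portion of \cref{compat prop c fixed} then forces the usual compatibility degree to coincide with $\cm{\cdot}{\cdot}c$.

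There is no substantial obstacle here; the argument is essentially bookkeeping and the authors themselves describe the result as immediate. The one point that requires care is citing the Fomin--Zelevinsky fact that $-\Simples$ is a transversal of the $\tau_c$-orbits on $\AP{c}$, which is what makes the uniqueness reduction transparent and removes any reliance on \eqref{compat cobase}, \eqref{compat U}, or \eqref{compat delta U} beyond their vacuous status in finite type.
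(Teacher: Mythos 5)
Your proposal is correct and matches the paper's argument: the paper likewise observes that the usual compatibility degree is characterized as the unique function satisfying \eqref{compat base} and \eqref{compat tau}, and deduces the corollary immediately from \cref{compat prop c fixed} (you apply the uniqueness in \cref{compat prop c fixed} while the paper applies the uniqueness of the classical degree, but these are the same argument). One small imprecision: in finite type $-\Simples$ need not be a \emph{transversal} of the $\tau_c$-orbits (an orbit can contain two negative simple roots); what matters, and what holds, is that every orbit meets $-\Simples$, which suffices for your reduction.
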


\begin{proposition}\label{c cinv prop}
  For all $\alpha,\beta\in\AP{c}$ we have $\cm\alpha\beta c=\cm\alpha\beta{c^{-1}} $.
\end{proposition}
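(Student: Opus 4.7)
The plan is to verify $\cm\alpha\beta c=\cm\alpha\beta{c^{-1}}$ by cases matching the two branches of \cref{compat deg def}. Everything depending only on $\eigenspace{c}$ is intrinsically symmetric in $c$ and $c^{-1}$: since $\eigenspace{c}=\eigenspace{c^{-1}}$ (cf.\ the remark preceding \cref{Phic inv}), we have $\RST{c}=\RST{c^{-1}}$, $\SimplesT{c}=\SimplesT{c^{-1}}$, $\APTre{c}=\APTre{c^{-1}}$, and the tube-support function $\SuppT$ is the same. In particular, the exceptional branch of \cref{compat deg def}, applying when $\alpha,\beta\in\APTre{c}$ and $\SuppT(\alpha,\beta)$ is component-full, holds for $c$ if and only if it holds for $c^{-1}$.

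The key step is the symmetry
\[\cmr\alpha\beta{c^{-1}}=\cml\alpha\beta c\qquad\text{and}\qquad\cml\alpha\beta{c^{-1}}=\cmr\alpha\beta c\]
for every $\alpha,\beta\in\AP{c}$. When $\alpha$ and $\beta$ are both positive (real or imaginary, including $\delta$), this is immediate from \cref{compat Ec} applied to both $c$ and $c^{-1}$, using $(c^{-1})^{-1}=c$. When $\alpha\in-\Simples$ or $\beta\in-\Simples$, the positive-part factors $[\alpha\ck:\alpha_i\ck]_+$ or $[\beta:\alpha_j]_+$ in the second summation vanish identically, so $\cmr\alpha\beta c=\cml\alpha\beta c$ reduces to $[\beta:\alpha_i]$ or $[\alpha\ck:\alpha_j\ck]$, independently of the Coxeter element.

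With this symmetry established, the non-exceptional branch is immediate: the two-element multisets $\{\cmr\alpha\beta c,\cml\alpha\beta c\}$ and $\{\cmr\alpha\beta{c^{-1}},\cml\alpha\beta{c^{-1}}\}$ coincide, hence have the same maximum. In the exceptional branch, both sides equal $\adj\alpha\beta$, which is manifestly symmetric in $c$ and $c^{-1}$ because the adjacency of a simple root $\simpleT_j$ to $\alpha$ is defined through membership in $\SuppT(c\alpha)\cup\SuppT(c^{-1}\alpha)$, and the tube supports are the same regardless of direction.

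I expect the main obstacle to be the careful reduced-word bookkeeping needed to apply \cref{compat Ec} to $c^{-1}$: the formula there presumes that the $E$-form is read off from a reduced word $s_1\cdots s_n$ of the Coxeter element in question, so one must verify that the natural reduced word for $c^{-1}$, namely $s_n\cdots s_1$, interchanges the ordering conditions $i<j$ and $j<i$ that distinguish $\cmr$ from $\cml$. Once this indexing is made precise, the proof is structural rather than computational.
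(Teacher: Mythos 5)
Your proof is correct and follows essentially the same route as the paper's: the paper's argument is exactly that replacing $c$ by $c^{-1}$ swaps $\cmr\alpha\beta c$ with $\cml\alpha\beta c$ (so the maximum is unchanged), while $\APTre{c}=\APTre{c^{-1}}$, tube supports, and $\adj\alpha\beta$ are manifestly the same for $c$ and $c^{-1}$. You have merely spelled out the index-reversal bookkeeping and the negative-simple-root cases that the paper leaves implicit.
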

\begin{proof}
We can write $\cm\alpha\beta{c^{-1}}$ because $\AP{c}=\AP{c^{-1}}$.
(See \cref{Phic inv}.) 
Replacing~$c$ with $c^{-1}$ swaps $\cmr\alpha\beta c$ with $\cml\alpha\beta c$.
Since $\APTre{c} =\APTre{c^{-1}}$, tube supports and the quantity $\adj\alpha\beta$ are the same with respect to $c$ and $c^{-1}$.
\end{proof}

In the following proposition, the quantity $\cm{\alpha\ck}{\beta\ck}c$ means the compatibility degree of $\alpha\ck$ and $\beta\ck$ in the root system $\RS\ck$.
Since $\RS$ and $\RS\ck$ define the same Weyl group, it makes sense to consider the same $c$ in both contexts.
Furthermore, it is apparent (for example by \cref{apSchur}) that $\RS\ck_c=\set{\beta\ck:\beta\in\AP{c}}$.

\begin{proposition}\label{compat commute} 
If $\alpha,\beta\in\AP{c}$, then $\cm\alpha\beta c=\cm{\beta\ck}{\alpha\ck}c$ except possibly when $\alpha,\beta\in\APTre{c}$ and $\SuppT(\alpha,\beta)$ is component-full, in which case $\set{\cm\alpha\beta c,\cm{\beta\ck}{\alpha\ck}c}\subseteq\set{1,2}$.
\end{proposition}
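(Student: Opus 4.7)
The plan is to prove the identity outside the exceptional case by a direct algebraic manipulation of the bilinear expressions $\cmr\alpha\beta c$ and $\cml\alpha\beta c$, and to handle the exceptional case by a short combinatorial argument on the cyclic components of $\RST{c}$.

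First I would establish, for arbitrary $\alpha,\beta\in V$ and viewing each side in the indicated root system,
\[
\cmr{\beta\ck}{\alpha\ck}{c}\big|_{\RS\ck} = \cml{\alpha}{\beta}{c}\big|_{\RS}
\quad\text{and}\quad
\cml{\beta\ck}{\alpha\ck}{c}\big|_{\RS\ck} = \cmr{\alpha}{\beta}{c}\big|_{\RS}.
\]
This is a routine calculation: in $\RS\ck$ the Cartan entries transpose ($a_{ij}\leftrightarrow a_{ji}$), the simple roots are the $\alpha_i\ck$ and the simple co-roots are the $\alpha_i$, so $[(\beta\ck)\ck:(\alpha_i\ck)\ck]=[\beta:\alpha_i]$ while $[\alpha\ck:\alpha_i\ck]$ is unchanged. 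Substituting into the defining formula for $\cmr\alpha\beta c$ in $\RS\ck$ and then swapping the summation indices $i\leftrightarrow j$ in the double sum recovers exactly $\cml\alpha\beta c$ computed in $\RS$. Taking maxima yields $\cm\alpha\beta c = \cm{\beta\ck}{\alpha\ck}c$ whenever both sides are defined by the max formula. Since the exceptional condition---belonging to $\APTre{c}$ with component-full tube support---is symmetric under the co-root map (the co-roots of the simple roots of $\RST{c}$ form the canonical simple roots of $(\RST{c})\ck$, which moreover has the same Dynkin diagram because each component of $\RST{c}$ is simply laced), the two compatibility degrees enter the exceptional case together, and the identity is established for every non-exceptional pair.

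For the exceptional case, both compatibility degrees reduce to adjacency counts, and I need to pin these in $\set{1,2}$. By \cref{Up details}\eqref{type A} and \cref{c on tUp components}, each irreducible component of $\RST{c}$ is of affine type~$A$ and $c$ rotates its cyclic Dynkin diagram by one node. For $\alpha\in\APTre{c}$ the tube support $\SuppT(\alpha)$ is a proper connected arc of the cycle, so the simple roots adjacent to $\alpha$---the elements of $(\SuppT(c\alpha)\cup\SuppT(c^{-1}\alpha))\setminus\SuppT(\alpha)$---are precisely the one or two vertices bordering the arc. In the component-full case $\SuppT(\alpha)\cup\SuppT(\beta)$ is the whole cycle, so the complement of $\SuppT(\alpha)$, which contains every vertex adjacent to $\alpha$, lies in $\SuppT(\beta)$; hence $\adj\alpha\beta$ equals the full boundary count, a value in $\set{1,2}$. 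The same reasoning applied in $\RS\ck$ gives $\adj{\beta\ck}{\alpha\ck}\in\set{1,2}$.

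The main obstacle is the careful bookkeeping in the first identity: keeping straight which coordinates and Cartan entries appear when the formula is evaluated in $\RS$ versus $\RS\ck$, and verifying that the index relabelling really does produce $\cml\alpha\beta c$. The exceptional analysis itself is short once the cyclic structure of $\RST{c}$ is invoked; the conceptual point worth flagging is that $\adj\alpha\beta$ and $\adj\beta\alpha$ need not agree (for instance when $\SuppT(\alpha)$ fills all but one vertex while $\SuppT(\beta)$ is smaller), which is precisely the reason the proposition allows the two compatibility degrees to differ in this case rather than asserting equality.
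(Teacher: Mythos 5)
Your proof is correct and follows essentially the same route as the paper's: the non-exceptional case is the observation that passing to $\RS\ck$ and swapping the arguments interchanges $\cmr{\,\cdot\,}{\,\cdot\,}c$ and $\cml{\,\cdot\,}{\,\cdot\,}c$ (which the paper dismisses as ``immediate from the definitions''), and the exceptional case is the adjacency count on the affine type $A$ components of $\RST{c}$. You simply make explicit the index-swap computation and the reason the adjacency counts land in $\set{1,2}$, both of which the paper leaves implicit.
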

\begin{proof}
Except when $\alpha,\beta\in\APTre{c}$ and $\SuppT(\alpha,\beta)$ is component-full, this is immediate from the definitions of $\cmr\alpha\beta c$ and $\cml\alpha\beta c$.
When $\alpha,\beta\in\APTre{c}$ and $\SuppT(\alpha,\beta)$ is component-full, $\SuppT(\beta\ck,\alpha\ck)$ is also component-full.
However, it is possible that $\adj{\alpha}{\beta}=1$ while $\adj{\beta\ck}{\alpha\ck}=2$ or vice-versa.
\end{proof}

We now describe how the compatibility degree changes under rescaling.

\begin{proposition}\label{prop:scaling}  
Suppose $\RS$ and $\RS'$ are related by the rescaling $\beta\mapsto\beta'=\lambda_\beta\beta$.
Fix a Coxeter element~$c$ of their common Weyl group.
The rescaling map restricts to a bijection from $\AP{c}$ to $\RS'_c$.
For $\alpha,\beta\in\AP{c}$, the compatibility degree $\cm{\alpha'}{\beta'}c$, computed in $\RS'$ equals $f\cdot\cm\alpha\beta c$, where $\cm\alpha\beta c$ is the compatibility degree computed in $\RS$ and $f$ is some positive scalar. 
The scalar $f$ is $\frac{\lambda_\beta}{\lambda_\alpha}$, except possibly when $\alpha=\delta$.
\end{proposition}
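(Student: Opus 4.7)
The plan is to reduce everything to the explicit formulas for $\cmr{\alpha}{\beta}{c}$ and $\cml{\alpha}{\beta}{c}$ and track how each ingredient transforms under the rescaling. Since the rescaling $\beta\mapsto\lambda_\beta\beta$ is constant on $W$-orbits and $W$ is common to $\RS$ and $\RS'$, it commutes with the action of the Coxeter element~$c$; together with the facts that positivity is preserved (all $\lambda_\beta>0$), $-\Simples$ maps to $-\Simples'$, and $\delta$ maps to a positive multiple of the imaginary root of $\RS'$, any of the characterizations in \cref{def:Phic} or \cref{apSchur} shows that the rescaling restricts to a bijection $\AP{c}\to\RS'_c$.

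For the main calculation, write $\mu_i=\lambda_{\alpha_i}$, so that $\alpha_i'=\mu_i\alpha_i$; since $K$ is preserved by the rescaling, one has $(\beta')\ck=\lambda_\beta^{-1}\beta\ck$ for any real root $\beta$, and in particular $(\alpha_i')\ck=\mu_i^{-1}\alpha_i\ck$. Expanding $\beta'=\lambda_\beta\beta$ in the basis $\Simples'$ yields $[\beta':\alpha_i']=(\lambda_\beta/\mu_i)[\beta:\alpha_i]$, and dually $[(\alpha')\ck:(\alpha_i')\ck]=(\mu_i/\lambda_\alpha)[\alpha\ck:\alpha_i\ck]$. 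The Cartan entries transform as $a_{ij}'=K((\alpha_i')\ck,\alpha_j')=(\mu_j/\mu_i)\,a_{ij}$. Substituting these identities into the definition of $\cmr{\alpha'}{\beta'}{c}$, every factor of $\mu_i$ or $\mu_j$ cancels (each appears once with positive and once with negative exponent), leaving the overall factor $\lambda_\beta/\lambda_\alpha$; the analogous cancellation yields $\cml{\alpha'}{\beta'}{c}=(\lambda_\beta/\lambda_\alpha)\,\cml{\alpha}{\beta}{c}$. Since $\lambda_\beta/\lambda_\alpha>0$, the factor passes through the maximum.

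This handles every clause of \cref{compat deg def} except the tube component-full case and the exception $\alpha=\delta$. In the tube case, the roots $\alpha$ and $\beta$ must lie in the same irreducible component of $\RST{c}$, since by \cref{pos real in tubes} neither tube support is component-full on its own and each root's tube support is confined to a single component; by \cref{Up details}\eqref{type A} this component is of type $A$, hence simply laced with its Weyl group acting transitively on its roots, and this Weyl group is a subgroup of~$W$, so all roots of the component share a single rescaling factor and in particular $\lambda_\alpha=\lambda_\beta$. The adjacency count $\adj{\alpha}{\beta}$ is a combinatorial invariant of $\RST{c}$ together with the $c$-action, both of which are preserved by the rescaling, so $\adj{\alpha'}{\beta'}=\adj{\alpha}{\beta}$, matching $f=\lambda_\beta/\lambda_\alpha=1$. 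The case $\alpha=\delta$ is genuinely exceptional: $\delta\ck$ is not governed by the rule $(\beta')\ck=\lambda_\beta^{-1}\beta\ck$ (it comes instead from \eqref{delta eq}), so the clean cancellation fails and $f$ must be argued separately. The main point of care is keeping the $\mu$-bookkeeping honest in the main calculation; the tube and $\delta$ cases reduce cleanly to the observations cited.
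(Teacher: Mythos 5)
Your overall strategy matches the paper's: establish the bijection from the characterizations of $\AP{c}$, push the rescaling through $\cmr\alpha\beta c$ and $\cml\alpha\beta c$, and treat the component-full tube case and $\alpha=\delta$ separately. Your main calculation is in fact slightly more uniform than the paper's: by substituting directly into the defining sums (rather than first reducing to $E_c$ via \cref{compat Ec}, which only applies to positive roots), you handle positive roots and negative simples in one stroke, whereas the paper treats $\alpha=-\alpha_j$ and $\beta=-\alpha_k$ as separate cases via \eqref{compat base} and \eqref{compat cobase}. The bookkeeping with $\mu_i$, the transformation $a_{ij}'=(\mu_j/\mu_i)a_{ij}$, and the compatibility of $[\,\cdot\,]_+$ with positive scalings are all correct.

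Two points need repair. First, in the tube case your justification that $\lambda_\alpha=\lambda_\beta$ rests on the claim that the Weyl group of a type-$A$ component of $\RST{c}$ acts transitively on its real roots. This is false for rank-$2$ components (which do occur, e.g.\ in type $B_{n-1}^{(1)}$): in an $A_1^{(1)}$-type component the two simple roots lie in \emph{different} orbits of the component's Weyl group, so transitivity cannot deliver equality of rescaling factors there. The fix is the length argument the paper uses: all real roots of an affine type-$A$ component have the same $K$-length (for the two simple roots $\beta$ and $\delta-\beta$ this is immediate since $\delta\in\ker K$), the same holds in $\RS'$, and $K(\alpha',\alpha')=\lambda_\alpha^2K(\alpha,\alpha)$ then forces $\lambda_\alpha=\lambda_\beta$. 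Second, for $\alpha=\delta$ you correctly identify the exception but leave the claim that $\cm{\delta'}{\beta'}c=f\cdot\cm\delta\beta c$ for \emph{some} positive $f$ unproved; this is still part of the proposition. The missing line is that \eqref{delta eq} shows $(\delta')\ck$ is a positive multiple of $\delta\ck$ (both being positive multiples of $\delta$), so your cancellation argument goes through verbatim with that multiple replacing $\lambda_\delta^{-1}$, yielding a positive $f$ that merely need not equal $\lambda_\beta/\lambda_\delta$.
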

\begin{proof}
\cref{apSchur} implies that $\beta\mapsto\beta'$ restricts to a bijection from $\AP{c}$ to~$\RS'_c$.
Scaling roots does not affect membership in $\eigenspace{c}$.
If $\alpha$ and $\beta$ are in $\RST{c}$ but in different components or if they are in the same component and have nested tube support, then $\cm\alpha\beta c$ and $\cm{\alpha'}{\beta'}c$ are both zero.
Now suppose $\alpha$ and $\beta$ are in the same component of $\RST{c}$ but do not have nested tube supports.
The two compatibility degrees are $\adj{\alpha'}{\beta'}$ and $\adj{\alpha}{\beta}$.
Rescaling does not affect tube supports, so $\adj{\alpha'}{\beta'}=\adj\alpha\beta$.
Since $\alpha$ and $\beta$ are of the same length and $\alpha'$ and $\beta'$ are of the same length, we have $\frac{\lambda_\beta}{\lambda_\alpha}=1$, and we are done in the case where $\alpha,\beta\in\APTre{c}$.

We next show that $\cmr\alpha\beta c$ and $\cml\alpha\beta c$ change as desired.
When $\alpha$ and $\beta$ are positive, by \cref{compat Ec} and the fact that $E_c$ is bilinear, this amounts to showing that $E_c$ is the same bilinear form whether it is defined in terms of $\RS$ or $\RS'$.
The real co-roots in $\RS$ and $\RS'$ are related by 
\begin{equation}\label{lambda ck}
(\beta')\ck=\frac{2}{K(\beta',\beta')}\beta'=\frac{2\lambda_\beta}{(\lambda_\beta)^2K(\beta,\beta)}\beta=\frac1{\lambda_\beta}\beta\ck.
\end{equation}
Using the definition \eqref{Ec def} in terms of $\RS$, we calculate $E_c((\alpha'_i)\ck,\alpha'_j)$ on simple roots/co-roots of $\RS'$ to be
\[
\frac{\lambda_j}{\lambda_i}E_c(\alpha_i\ck,\alpha_j)
=\left\lbrace\begin{array}{ll}
\frac{\lambda_j}{\lambda_i}K(\alpha_i\ck,\alpha_j)=K((\alpha'_i)\ck,\alpha'_j)&\text{if } i>j,\\
1&\text{if }i=j,\\
0&\text{if } i<j,
\end{array}\right.
\]
where $\lambda_i$ is shorthand for $\lambda_{\alpha_i}$.
This agrees with the definition of $E_c((\alpha'_i)\ck,\alpha'_j)$ in terms of $\RS'$.
In light of \eqref{lambda ck}, we see that $\cm{\alpha'}{\beta'}c=\frac{\lambda_\beta}{\lambda_\alpha}\cm\alpha\beta c$, except possibly when $\alpha=\delta$.
Furthermore, when $\alpha=\delta$, \eqref{delta eq} implies that $\cm{\alpha'}{\beta'}c=f\cdot\cm\alpha\beta c$ for some $f>0$, but there is no guarantee that $f=\frac{\lambda_\beta}{\lambda_\alpha}$.

If $\alpha=-\alpha_j$ for some~$j$, then \eqref{compat base} says that $\cm\alpha\beta c=[\beta:\alpha_j]$ and $\cm{\alpha'}{\beta'} c=[\beta':\alpha'_j]$.
The latter is equal to $\frac{\lambda_\beta}{\lambda_\alpha}[\beta:\alpha_j]$.
If $\beta=-\alpha_k$, then \eqref{compat cobase} says that $\cm\alpha\beta c=[\alpha\ck:\alpha_k\ck]$ and $\cm{\alpha'}{\beta '}c=[(\alpha')\ck:(\alpha_k')\ck]$.
If $\alpha$ is real, then \eqref{lambda ck} implies that $[(\alpha')\ck:(\alpha_k')\ck]=\frac{\lambda_\beta}{\lambda_\alpha}[\alpha\ck:\alpha_k\ck]$.
If $\alpha=\delta$, then we use \eqref{delta eq} again to conclude that $\cm{\alpha'}{\beta'}c=f\cdot\cm\alpha\beta c$ for some positive $f$.
\end{proof}

\begin{proposition}\label{compatibility almost symmetrizable}
For $\alpha,\beta\in\AP{c}$, the quantities $\cm\beta\alpha c$ and $\cm\alpha\beta c$ are related by a positive scaling depending on $\alpha$ and $\beta$.
When $\alpha,\beta\in\APre{c}$, except possibly when $\alpha,\beta\in\APTre{c}$ and $\SuppT(\alpha,\beta)$ is component-full,
\[\cm\beta\alpha c=\frac{K(\alpha,\alpha)}{K(\beta,\beta)}\cm{\alpha}{\beta}c.\]
\end{proposition}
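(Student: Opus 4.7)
The strategy is to combine \cref{compat commute} with \cref{prop:scaling}, applied to the duality map $\gamma \mapsto \gamma\ck$ from $\RS$ to the dual root system $\RS\ck$. First I would verify that this duality is an instance of the rescaling setup of \cref{sec:aff}: the two systems share the Weyl group (hence the Coxeter element $c$); the Cartan matrices $A$ and $A^T$ define the same symmetric bilinear form $K$ on $V$; and each real root $\gamma\in\RS$ satisfies $\gamma\ck = \lambda_\gamma\gamma$ with rescaling factor $\lambda_\gamma = 2/K(\gamma,\gamma) > 0$, while $\delta$ and $\delta\ck$ are positive multiples of each other by \eqref{delta eq}. The identification $\RS\ck_c = \{\gamma\ck : \gamma\in\AP c\}$ recalled just before \cref{compat commute} confirms that this rescaling restricts to a bijection $\AP c \to \RS\ck_c$.

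Next I would invoke \cref{prop:scaling} for this rescaling to conclude that, for every $\alpha,\beta\in\AP c$,
\[
\cm{\alpha\ck}{\beta\ck}c = f\cdot\cm\alpha\beta c
\]
for some positive scalar $f$ (the left-hand side being computed in $\RS\ck$), with the explicit value $f = \lambda_\beta/\lambda_\alpha = K(\alpha,\alpha)/K(\beta,\beta)$ whenever $\alpha\neq\delta$. Then \cref{compat commute} gives $\cm\beta\alpha c = \cm{\alpha\ck}{\beta\ck}c$, valid outside the case in which $\alpha,\beta\in\APTre c$ and $\SuppT(\alpha,\beta)$ is component-full. Chaining these two identities yields $\cm\beta\alpha c = f\cdot\cm\alpha\beta c$, simultaneously establishing the positive-scaling assertion of the first sentence and, whenever $\alpha,\beta\in\APre c$ and we are outside the component-full exception, the explicit formula $\cm\beta\alpha c = \tfrac{K(\alpha,\alpha)}{K(\beta,\beta)}\cm\alpha\beta c$.

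The only case of the first sentence not yet covered is $\alpha,\beta\in\APTre c$ with $\SuppT(\alpha,\beta)$ component-full. Here \cref{compat commute} tells us directly that both $\cm\alpha\beta c$ and $\cm\beta\alpha c$ lie in $\set{1,2}$; in particular both are positive, so they are again related by a positive scaling (with ratio in $\set{1,2,1/2}$). The main delicate point of the argument is the bookkeeping of exceptions: the component-full exception in \cref{compat commute} lines up precisely with the exception in the proposition's explicit formula, and the $\alpha=\delta$ caveat in \cref{prop:scaling} is harmless because the explicit formula is only asserted for $\alpha\in\APre c$. Verifying that $\gamma\mapsto\gamma\ck$ satisfies the rescaling definition of \cref{sec:aff} is routine once the hypotheses are laid out side by side.
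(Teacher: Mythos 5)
Your proof is correct and is essentially the paper's own argument: the paper's proof is literally ``Combine \cref{compat commute} with \cref{prop:scaling},'' and you have simply written out the details of that combination (the duality rescaling $\gamma\mapsto\gamma\ck$ with $\lambda_\gamma=2/K(\gamma,\gamma)$, the resulting factor $\lambda_\beta/\lambda_\alpha=K(\alpha,\alpha)/K(\beta,\beta)$, and the matching of the exceptional cases), all of which checks out.
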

\begin{proof}
Combine \cref{compat commute} with \cref{prop:scaling}.
\end{proof}
\begin{proposition}\label{compat ge -1}
For distinct roots $\alpha\neq\beta$ in $\APre{c}$, we have $\cm\alpha\beta c>-1$.
\end{proposition}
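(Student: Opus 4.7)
The plan is to case-split on the nature of the $\tau_c$-orbits containing $\alpha$ and $\beta$, and use the $\tau_c$-invariance of the compatibility degree (Property~\eqref{compat tau}) to reduce the bulk of the argument to the one-line computation \eqref{compat base} on negative simple roots.

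First I would dispose of the case where at least one of $\alpha,\beta$---say $\alpha$---lies in an infinite $\tau_c$-orbit, i.e.\ in $-\Simples\cup(\RSpos\setminus \eigenspace{c})$. Since $-\Simples$ is a transversal of the infinite $\tau_c$-orbits in $\AP{c}$ by \cref{tau prop}(\ref{tauc traversal inf}), there is some $k\in\integers$ with $\tau_c^k\alpha=-\alpha_i$. Applying \eqref{compat tau} and then \eqref{compat base} gives $\cm{\alpha}{\beta}{c}=[\tau_c^k\beta:\alpha_i]$. Now $\tau_c^k\beta$ still lies in $\APre{c}$, because $\beta\neq\delta$ and $\set\delta$ is a $\tau_c$-orbit by \cref{tau prop}(\ref{delta orbit}). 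Thus $\tau_c^k\beta$ is either positive (giving a non-negative coefficient) or a negative simple $-\alpha_j$. In the latter situation $j=i$ would force $\tau_c^k\alpha=\tau_c^k\beta$ and hence $\alpha=\beta$, contradicting the hypothesis; so $j\neq i$ and $[-\alpha_j:\alpha_i]=0$.

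Next I would handle the remaining case, in which both $\alpha,\beta\in\APTre{c}$. Here I would invoke the tube-support classification already compiled during the proof of \cref{compat prop} and summarised in \cref{tubes table}, together with \cref{compute in tubes}. For $\alpha\neq\beta$, the possible configurations (distinct components; same component with nested supports; disjoint but non-adjacent supports; or overlapping/adjacent supports) each yield $\cm{\alpha}{\beta}{c}\in\set{0,1}$. In the exceptional ``component-full'' branch of \cref{compat deg def} the value is $\adj{\alpha}{\beta}$, which is a non-negative count. The value $-1$ arises only on the diagonal via $\cmcirc\alpha\alpha c=-1$, which is excluded by the hypothesis $\alpha\neq\beta$.

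Combining the two cases gives $\cm{\alpha}{\beta}{c}\geq 0$, \emph{a fortiori} strictly greater than $-1$. The argument is essentially bookkeeping, and I do not expect a serious obstacle; the only point worth double-checking is that the ``component-full'' exceptional branch of \cref{compat deg def} cannot silently introduce a negative value, but $\adj{\cdot}{\cdot}$ is manifestly non-negative.
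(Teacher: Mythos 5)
Your proof is correct and is essentially the paper's argument spelled out in detail: the paper simply cites \cref{compat prop c fixed} together with \cref{apSchur}, which encode exactly your case split (reduce infinite $\tau_c$-orbits to negative simples via \eqref{compat tau} and \eqref{compat base}, and read off the values on $\APTre{c}$ from $\cmcirc\alpha\beta c$). No gaps; the bookkeeping, including the exclusion of $\delta$ and of the diagonal, is handled correctly.
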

\begin{proof}
  This follows immediately from \cref{compat prop c fixed} and \cref{apSchur}.
\end{proof}

\begin{proposition}\label{compat restrict}
Let $\RS'$ be a parabolic root subsystem of $\RS$ and let $c'$ be the restriction of $c$ to the corresponding parabolic subgroup $W'$.
Then, for any $\alpha,\beta\in\RS'_{c'}$, we have $\cm\alpha\beta c=\cm\alpha\beta{c'}$, where the latter compatibility degree is computed in $\RS'$.
\end{proposition}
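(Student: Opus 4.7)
The plan is to verify the identity clause by clause in \cref{compat deg def}. If $\RS' = \RS$ then $c' = c$ and there is nothing to prove, so assume $\RS' \subsetneq \RS$; then $\RS'$ is finite (proper parabolics of finite or affine root systems are finite), and since $\delta$ has strictly positive coefficient on every simple root of $\RS$, we have $\delta \notin \RS'$. In particular, neither $\alpha$ nor $\beta$ equals $\delta$.

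If $\alpha = -\alpha_i$ (necessarily with $\alpha_i \in \Simples'$), then property \eqref{compat base} of \cref{compat prop} gives $\cm{-\alpha_i}{\beta}{c} = [\beta:\alpha_i]$, a coordinate that does not change when $\beta$ is viewed inside $\RS'$ rather than $\RS$, so it equals $\cm{-\alpha_i}{\beta}{c'}$. The case $\beta = -\alpha_i$ is symmetric via \eqref{compat cobase}.

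In the remaining case, $\alpha$ and $\beta$ are both positive, and \cref{compat Ec} gives $\cmr\alpha\beta c = -E_c(\alpha\ck, \beta)$ and $\cml\alpha\beta c = -E_{c^{-1}}(\alpha\ck, \beta)$. Expanding bilinearly, only terms indexed by pairs $(\alpha_i, \alpha_j)$ with $\alpha_i, \alpha_j \in \Simples'$ contribute, since $\alpha$ and $\beta$ are supported on $\Simples'$. For such pairs, $E_c(\alpha_i\ck, \alpha_j)$ from \eqref{Ec def} depends only on $a_{ij}$ (the same entry in both Cartan matrices) and on the relative order of $s_i$ and $s_j$ in a reduced word for $c$, an order preserved by passing to $c'$, whose reduced word is obtained by deleting the letters outside $S'$. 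Hence $E_c(\alpha\ck, \beta) = E_{c'}(\alpha\ck, \beta)$ and similarly for $c^{-1}$, so $\cmr\alpha\beta c = \cmr\alpha\beta{c'}$ and $\cml\alpha\beta c = \cml\alpha\beta{c'}$. Because $\RS'$ is finite, \cref{compat usual} gives $\cm\alpha\beta{c'} = \max[\cmr\alpha\beta{c'}, \cml\alpha\beta{c'}]$, and the identity $\cm\alpha\beta c = \cm\alpha\beta{c'}$ follows provided the exceptional clause of \cref{compat deg def} does not apply to $\alpha, \beta$ in $\RS$.

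The final step is to rule out this exceptional case. Suppose $\alpha, \beta \in \APTre{c} \cap \RS'$ and $\SuppT(\alpha, \beta)$ contains all simple roots $\simpleT_1, \ldots, \simpleT_{m+1}$ of some component $C$ of $\RST{c}$. By the definition of $\SimplesT{c}$, the expansion of $\alpha$ in $\SimplesT{c}$ has non-negative coefficients supported on $\SuppT(\alpha)$, and similarly for $\beta$. Since each $\simpleT_i$ is a positive root of $\RSfin$ with non-negative $\Simples$-coefficients, the condition $\Supp(\alpha) \subseteq \Simples'$ forces $\Supp(\simpleT_i) \subseteq \Simples'$ for every $\simpleT_i \in \SuppT(\alpha)$, and analogously for $\beta$. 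Thus every simple root of $C$ lies in $\Span(\Simples')$, so their sum does as well; but this sum is a nonzero multiple of $\delta$ (the null root of the affine-type-$A$ component $C$), contradicting the full $\Simples$-support of $\delta$. This eliminates the exceptional case. The main obstacle is exactly this last step; everything else is routine bookkeeping with the Euler form.
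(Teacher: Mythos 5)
Your proof is correct and follows essentially the same route as the paper's: reduce to the formula $\max\bigl[\cmr\alpha\beta c,\cml\alpha\beta c\bigr]$, observe that it restricts verbatim to the parabolic (the paper does this directly from the defining sums rather than via $E_c$, and without your case split on negative simples), and rule out the component-full exceptional case using the full support of $\delta$. The paper's exclusion of that case is a one-line variant of yours ($\alpha+\beta-\delta$ has nonnegative simple-root coordinates, so $\alpha+\beta$ has full support), resting on the same underlying fact.
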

\begin{proof}
\cref{AP restrict} says that $\RS'_{c'}=\AP{c}\cap\RS'$.
If $\alpha$ and $\beta$ are in $\APTre{c}$ and $\SuppT(\alpha,\beta)$ is component-full, then $\alpha+\beta-\delta$ has nonnegative simple root coordinates.
Since $\delta$ already has strictly positive simple root coordinates, we see that $\alpha$ and $\beta$ are not both contained in the same proper parabolic root subsystem, so this case is impossible.
We thus restrict our attention to the case where $\cm\alpha\beta c=\max\left[\cmr\alpha\beta c,\cml\alpha\beta c\right]$.
By the definition of $\cmr\alpha\beta c$ and $\cml\alpha\beta c$, this maximum is $\max\left[\cmr\alpha\beta {c'},\cml\alpha\beta {c'}\right]$ which is $\cm\alpha\beta{c'}$ because $\RS'$ is finite.
\end{proof}
\begin{remark}
The finite cases of several propositions in this section are known (see for example \cite[Section~3]{MRZ}) but can also be proved by the same arguments given here for the affine case.
\end{remark}

\section{Compatibility and clusters}\label{clus sec}
\begin{definition}\label{compat def}
Distinct roots $\alpha$ and $\beta$ in $\AP{c}$ are \newword{$c$-compatible} if and only if ${\cm\alpha\beta c=0}$. 
\cref{compatibility almost symmetrizable} implies that $c$-compatibility is a symmetric relation.
\end{definition}

\begin{remark}\label{compat rem}
\cref{compat def} applies only to pairs of \emph{distinct} roots.
If we drop the requirement of distinctness, then by \eqref{compat delta U} and \cref{compat ge -1}, the only root that is ``$c$-compatible with itself'' is $\delta$.
We don't know whether the fact that $\cm\delta\delta c=0$ is significant, but we also see no reason to declare by fiat that $\cm\delta\delta c$ should be $-1$.
\end{remark}

\reversemarginpar
\begin{definition}\label{clus def}
The \newword{$c$-cluster complex} is the abstract simplicial whose vertex set is $\AP{c}$ and whose faces are the sets in which any two distinct roots are $c$-compatible.
  Maximal faces in the $c$-cluster complex are called \newword{$c$-clusters}.
  An \newword{imaginary $c$-cluster} is a cluster containing $\delta$ and \newword{real $c$-cluster} is a cluster not containing~$\delta$.  
\end{definition}
\cref{nu thm,denom thm}, which we will prove in \cref{g d sec}, state that the $c$-cluster complex (restricted to real $c$-clusters) is the cluster complex of the corresponding cluster algebra, and more specifically that real $c$-clusters are precisely the denominator vectors of clusters of cluster variables.
It is currently less clear what imaginary $c$-clusters mean in the context of cluster algebras, but as stated in the introduction, we expect that $\delta$ is the denominator vector of an important element of the cluster algebra.
An example of imaginary $c$-clusters can be seen in \cref{fanex}:
There are two imaginary $c$-clusters, each composed of $\delta$ (pictured at infinity in \cref{fig:fan}) and one of the two roots in $\APTre{c}$ (pictured in cyan in the figure).

We now establish the essential properties of compatibility and clusters.
The finite type versions of these properties can be found in \cite{FoZe03,MRZ} or can be proved using the arguments given here.

The following proposition is immediate from the characterization of compatibility degree in \cref{compat prop,compat prop c fixed,compat restrict}.
\begin{proposition}\label{prop:inductive_cluster}
For any root system $\RS$ of affine type and any Coxeter element~$c$,
\begin{enumerate}[\qquad \upshape (1)]
\item \label{cluster tau}
$\tau_c$ acts as a permutation of the set of $c$-clusters.
\item \label{cluster sigma}
If $s$ is an initial or final reflection in $c$ then $\sigma_s$ acts as a bijection from the set of $c$-clusters in $\AP{c}$ to the set of $scs$-clusters in $\AP{scs}$.
\item\label{cluster induct}
For each $i\in\set{1,\ldots n}$, write $\RS'$ for the parabolic root subsystem whose simple roots are $\Simples\setminus\set{\alpha_i}$ and write $c'$ for the restriction of $c$ to the corresponding parabolic subgroup of $W$.
Then $C\mapsto C\setminus\{-\alpha_i\}$ is a bijection from the set of $c$-clusters in $\AP{c}$ containing $-\alpha_i$ to the set of $c'$-clusters in~$\RS'_{c'}$.
\end{enumerate}
\end{proposition}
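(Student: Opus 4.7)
The plan is to extract each part directly from the invariance/restriction properties of $\cm{\cdot}{\cdot}{c}$ that are already listed in \cref{compat prop,compat prop c fixed,compat restrict}, together with the facts about $\tau_c$ and $\sigma_s$ established in \cref{tau prop,sigma prop}. In each case the strategy is the same: a map that preserves (or transports) the compatibility relation automatically takes maximal pairwise compatible sets to maximal pairwise compatible sets.

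For (1), I would invoke property \eqref{compat tau}, $\cm\alpha\beta c=\cm{\tau_c\alpha}{\tau_c\beta}c$, together with \cref{tau prop}\eqref{tauc perm}, which says $\tau_c$ is a permutation of $\AP{c}$. Since compatibility is preserved in both directions along a bijection, $\tau_c$ sends any face of the $c$-cluster complex to a face of the $c$-cluster complex, and applying the same to $\tau_c^{-1}$ gives the inverse bijection on faces; maximal faces thus go to maximal faces. For (2), the argument is formally identical, but using \cref{sigma prop} (which gives the bijection $\sigma_s\colon\AP{c}\to\AP{scs}$ when $s$ is initial or final in $c$) together with property \eqref{compat sigma}, $\cm\alpha\beta c=\cm{\sigma_s\alpha}{\sigma_s\beta}{scs}$.

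The main content is in (3), and even there nothing is hard. First, for $\beta\in\AP{c}\setminus\set{-\alpha_i}$, property \eqref{compat base} says $\cm{-\alpha_i}\beta c=[\beta:\alpha_i]$, so $-\alpha_i$ is $c$-compatible with $\beta$ if and only if $[\beta:\alpha_i]=0$, i.e.\ $\beta\in\RS'$. By \cref{AP restrict}, $\AP{c}\cap\RS'=\RS'_{c'}$, so the set of roots in $\AP{c}\setminus\set{-\alpha_i}$ that are $c$-compatible with $-\alpha_i$ is exactly $\RS'_{c'}$. Second, \cref{compat restrict} says that for $\alpha,\beta\in\RS'_{c'}$ the value $\cm\alpha\beta c$ equals $\cm\alpha\beta{c'}$ computed inside $\RS'$, so the notions of $c$-compatibility and $c'$-compatibility coincide on $\RS'_{c'}$.

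Combining these two observations, a $c$-cluster $C$ containing $-\alpha_i$ has the form $\set{-\alpha_i}\cup C'$ where $C'\subseteq\RS'_{c'}$ is a set of pairwise $c'$-compatible roots; conversely, any such $\set{-\alpha_i}\cup C'$ is a face of the $c$-cluster complex. Maximality transfers in both directions: if $C'$ were not maximal in $\RS'_{c'}$, adding a $c'$-compatible root $\gamma\in\RS'_{c'}$ would also be $c$-compatible with everything in $C$ (including $-\alpha_i$, since $[\gamma:\alpha_i]=0$), contradicting maximality of $C$; and conversely, if $C$ were not maximal, any $\gamma\in\AP{c}$ that could be added would satisfy $[\gamma:\alpha_i]=0$, hence lie in $\RS'_{c'}$, contradicting maximality of $C'$. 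The only subtle point — and really the only thing worth checking carefully — is that the exceptional ``component-full'' case in the definition of $\cm\alpha\beta c$ does not disturb the argument for (3); but this is covered by \cref{compat restrict}, whose proof already observes that this exceptional case cannot occur inside a proper parabolic subsystem. So there is no genuine obstacle; the proposition is a clean bookkeeping corollary of the preceding invariance results.
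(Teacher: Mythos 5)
Your proof is correct and follows exactly the route the paper intends: the paper dismisses this proposition as ``immediate from'' \cref{compat prop,compat prop c fixed,compat restrict}, and your write-up simply supplies the details of that same argument (invariance of the compatibility degree under $\tau_c$ and $\sigma_s$ for parts (1)--(2), and \eqref{compat base} together with \cref{AP restrict,compat restrict} for part (3)). No gaps; the maximality transfer in (3) is handled correctly.
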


The following theorem reconciles \cref{clus def} with another reasonable way one might have defined ``real $c$-clusters.''

\begin{theorem}\label{clus def OK}
  A set $C\subseteq\AP{c}$ is a real $c$-cluster (a $c$-cluster not containing $\delta$) if and only if it is a maximal set of pairwise $c$-compatible roots in $\APre{c}$.
\end{theorem}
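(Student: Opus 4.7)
The plan is to prove the biconditional directly. The forward direction is immediate: a real $c$-cluster $C$ is by definition contained in $\APre{c}$ and is a maximal pairwise $c$-compatible set in $\AP{c}$, hence \emph{a fortiori} a maximal pairwise $c$-compatible set in the smaller $\APre{c}$.

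For the converse, let $C \subseteq \APre{c}$ be a maximal pairwise $c$-compatible set. Since $C$ is already pairwise $c$-compatible as a subset of $\AP{c}$ and is maximal in $\APre{c} = \AP{c} \setminus \set{\delta}$, the set $C$ is a $c$-cluster (automatically not containing $\delta$) if and only if $C \cup \set{\delta}$ fails to be pairwise $c$-compatible. So my goal is to show that this failure always happens.

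The key lemma is: for any $\alpha \in \AP{c} \setminus \set{\delta}$, the root $\alpha$ is $c$-compatible with $\delta$ if and only if $\alpha \in \APTre{c}$. The ``if'' direction is exactly \eqref{compat delta U}. For ``only if'', the remaining cases are $\alpha \in -\Simples$ and $\alpha \in \RSpos \setminus \eigenspace{c}$. In the first case, \eqref{compat cobase} gives $\cm\delta{-\alpha_i}c = [\delta\ck:\alpha_i\ck]$, which is strictly positive because \eqref{delta eq} shows $\delta\ck$ is a positive scalar multiple of $\delta$ and $\delta$ has strictly positive simple-root coefficients. In the second case, \cref{tau prop}(\ref{tauc fin}) places $\alpha$ in an infinite $\tau_c$-orbit, which by \cref{tau prop}(\ref{tauc traversal inf}) contains some $-\alpha_i$; then $\tau_c$-invariance of compatibility \eqref{compat tau} combined with $\tau_c\delta = \delta$ (\cref{tau prop}(\ref{delta orbit})) yields $\cm\delta\alpha c = \cm\delta{-\alpha_i}c > 0$.

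With the lemma in hand, $C \cup \set{\delta}$ is pairwise $c$-compatible precisely when $C \subseteq \APTre{c}$, so the remaining task --- and the main obstacle --- is to show that no maximal pairwise $c$-compatible $C \subseteq \APre{c}$ lies entirely inside $\APTre{c}$. Equivalently, for any pairwise $c$-compatible $C \subseteq \APTre{c}$, I must exhibit some $\alpha \in \APre{c} \setminus C$ compatible with all of $C$. When $\Supp(C) \ne \Simples$, \eqref{compat base} makes $-\alpha_i$ compatible with $C$ for any $\alpha_i \notin \Supp(C)$, so I may take $\alpha = -\alpha_i$. In the remaining case $\Supp(C) = \Simples$, no $-\alpha_i$ will do, and I must produce some $\alpha \in \RSpos \setminus \eigenspace{c}$ compatible with $C$. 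I expect to handle this by using the affine type A structure of the components of $\RST{c}$ (\cref{Up details}(\ref{type A}), \cref{c on tUp components}) together with the non-crossing arc combinatorics of \cref{tubes table} to bound the cardinality of any pairwise $c$-compatible $C \subseteq \APTre{c}$ strictly below the cardinality of a maximal pairwise $c$-compatible set in $\APre{c}$, forcing the existence of a further compatible root outside $\APTre{c}$.
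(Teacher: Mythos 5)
Your forward direction, your key lemma (which is the paper's \cref{delta c compat}, proved the same way via \eqref{compat base}, \eqref{compat cobase}, \eqref{compat tau} and the transversal property of $-\Simples$), and your reduction to extending a pairwise compatible $C\subseteq\APTre{c}$ are all correct and match the paper. The gap is in your final step, which is exactly where the real content of the theorem lives. You propose to handle the case $\Supp(C)=\Simples$ by bounding $|C|$ strictly below the cardinality of a maximal pairwise compatible set in $\APre{c}$, "forcing the existence of a further compatible root." This inference is not valid: a simplicial complex can have maximal faces of different sizes, so knowing that faces supported in $\APTre{c}$ have at most $n-2$ vertices while some maximal faces have $n$ vertices does not show that the small ones extend. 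Worse, the fact that every maximal pairwise compatible set in $\APre{c}$ has cardinality $n$ is \cref{cluster properties}\eqref{Q n}, whose proof in the paper \emph{uses} \cref{clus def OK}, so invoking it here would be circular.

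What is actually needed is a construction: given pairwise compatible $R\subseteq\APTre{c}$, exhibit a root in $\APre{c}\setminus\APTre{c}$ compatible with all of $R$. The paper does this via \cref{compat not full} (pairwise compatibility forces $\SuppT(R)$ to be non-component-full) followed by \cref{prop in the tubes}, which rests on \cref{lemma in the tubes}: a sequence of source-sink conjugations $\sigma_{i_k}\cdots\sigma_{i_1}$ carries $R$ to a set whose support $\Supp$ in $\Simples$ is not full, so that some $\sigma_{i_1}\cdots\sigma_{i_k}(-\alpha_j)$ is the desired root. That lemma is proved by a case-by-case check over the classification of affine root systems --- the paper explicitly flags this as the first place the classification is needed. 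Your split into $\Supp(C)\neq\Simples$ versus $\Supp(C)=\Simples$ identifies the hard case correctly, but the mechanism you propose for it does not close the argument; you would need either the paper's type-by-type construction or a genuinely new uniform one.
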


A real $c$-cluster is certainly a maximal set of pairwise $c$-compatible roots in $\APre{c}$, but \cref{clus def OK} is needed because it is conceivable that a maximal set $C$ of pairwise $c$-compatible roots in $\APre{c}$ can fail to be a $c$-cluster.
This would happen precisely if every root in $C$ were $c$-compatible with $\delta$.
To rule out this possibility, we need several preliminary results, one of which requires, for the first time in the paper, case-by-case checking using the classification of affine root systems.

\begin{proposition}\label{delta c compat}
  A root $\alpha\in\APre{c}$ is $c$-compatible with~$\delta$ if and only if $\alpha\in\APTre{c}$.
\end{proposition}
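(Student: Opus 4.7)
The forward direction is immediate: if $\alpha \in \APTre{c}$ then $\alpha \in \APT{c}$, and property \eqref{compat delta U} of \cref{compat prop c fixed} gives $\cm\alpha\delta c = 0$. For the converse I must show that $\cm\alpha\delta c > 0$ for every $\alpha$ in $\APre{c} \setminus \APTre{c} = -\Simples \cup (\RSpos \setminus \eigenspace{c})$. The case $\alpha = -\alpha_i$ is handled immediately by \eqref{compat base}: $\cm{-\alpha_i}\delta c = [\delta:\alpha_i] > 0$, since every simple-root coordinate of $\delta$ is strictly positive.

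The remaining case is $\alpha \in \RSpos \setminus \eigenspace{c}$ (a real positive root in an infinite $c$-orbit). Since $\delta \notin \APTre{c}$ the exceptional clause of \cref{compat deg def} does not apply, and \cref{compat Ec} gives $\cmr\alpha\delta c = -E_c(\alpha\ck,\delta)$ and $\cml\alpha\delta c = -E_{c^{-1}}(\alpha\ck,\delta)$. Applying \cref{Ec inv T} and then \cref{K Ec} with $K(\alpha\ck,\delta) = 0$ (because $\delta$ is in the kernel of $K$), I find $\cml\alpha\delta c = -E_c(\delta,\alpha\ck) = E_c(\alpha\ck,\delta) = -\cmr\alpha\delta c$, so $\cm\alpha\delta c = |E_c(\alpha\ck,\delta)|$.

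The crux of the argument is the identity $E_c(\alpha\ck,\delta) = K(\alpha\ck, c\gamma_c)$. I will prove it by substituting $\delta = c\gamma_c - \gamma_c$ (\cref{eigen}\eqref{eigen 1 gen}) and computing
\[
E_c(\alpha\ck,\delta) = E_c(\alpha\ck, c\gamma_c) - E_c(\alpha\ck,\gamma_c) = E_c(\alpha\ck, c\gamma_c) + E_c(c\gamma_c,\alpha\ck) = K(\alpha\ck, c\gamma_c),
\]
where the middle equality combines \cref{Ec Ecinv} with \cref{Ec inv T} and the final equality is \cref{K Ec}. Using $W$-invariance of $K$, the relation $c\gamma_c = \gamma_c + \delta$, and $K(\delta,\,\cdot\,) = 0$, I can rewrite $K(\alpha\ck, c\gamma_c)$ as $\tfrac{2}{K(\alpha,\alpha)} K(\gamma_c,\alpha)$, which is nonzero because $\alpha \notin \eigenspace{c}$ and $\eigenspace{c}$ is by definition the zero set of $K(\gamma_c,\,\cdot\,)$.

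The main obstacle I anticipate is spotting the identity $E_c(\alpha\ck,\delta) = K(\alpha\ck, c\gamma_c)$, which converts a statement about $E_c$ into one about the $W$-invariant form $K$ applied to the generalized $1$-eigenvector $\gamma_c$. Once this is in hand, nonvanishing follows directly from the definition of $\eigenspace{c}$, and the rest reduces to routine manipulation of the identities for $E_c$ collected in \cref{sec:background}.
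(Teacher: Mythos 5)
Your proof is correct, but the converse direction takes a genuinely different route from the paper's. The paper disposes of \emph{all} of $\APre{c}\setminus\APTre{c}$ in one stroke: by \cref{tau prop}, every such root lies in the $\tau_c$-orbit of a negative simple root, so since $\tau_c$ fixes $\delta$, the invariance \eqref{compat tau} reduces everything to the computation $\cm{-\alpha_i}{\delta}{c}=[\delta:\alpha_i]>0$ from \eqref{compat base}. You handle the negative simples the same way but treat the positive roots $\alpha\in\RSpos\setminus\eigenspace{c}$ by a direct Euler-form computation instead of pulling them back along their $\tau_c$-orbit; your chain of identities (\cref{compat Ec}, \cref{Ec inv T}, \cref{K Ec}, \cref{Ec Ecinv}, together with $(c-1)\gamma_c=\delta$) is valid step by step and yields the clean formula $\cm\alpha\delta c=|E_c(\alpha\ck,\delta)|=\tfrac{2}{K(\alpha,\alpha)}|K(\gamma_c,\alpha)|$, which is nonzero precisely because $\eigenspace{c}$ is the zero set of $K(\gamma_c,\,\cdot\,)$. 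What the paper's argument buys is brevity — three lines, leaning on orbit combinatorics already established. What yours buys is an explicit quantitative expression for the compatibility degree of a positive root with $\delta$ in terms of the linear functional $\phi_c=K(\gamma_c,\,\cdot\,)$, making the role of the hyperplane $\eigenspace{c}$ transparent rather than mediated through the transversal structure of \cref{tau prop}. Both are complete proofs.
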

\begin{proof}
By \eqref{compat delta U}, roots in $\APTre{c}$ are $c$-compatible with~$\delta$.
Since $\Supp(\delta)=\Simples$, \cref{compat base} implies that $\cm{-\alpha_i}\delta c\neq0$ for all $i$.
Since $\tau_c$ fixes $\delta$, \cref{compat tau} completes the proof.
\end{proof}

\begin{definition}
For any affine root system $\RS$, we define $M_\RS$ to be 
\nomenclature[m]{$M_\RS$}{a useful quantity for technical arguments}
the maximal number of steps needed to go from any Coxeter element to a conjugate Coxeter element by a sequence of conjugations by initial simple reflections plus $n$ times the least common multiple of the ranks of the components of~$\RST{c}$.
\end{definition}
The precise quantity $M_\RS$ is of no great theoretical importance, but appears as a convenient bound in several lemmas where the point is that some bound exists.

\begin{lemma}\label{lemma in the tubes}  
  Given an affine root system $\RS$, a Coxeter element $c$ and a collection $R$ of real roots in $\APTre{c}$ such that $\SuppT(R)$ is not component-full, there exists a sequence $a_1,\ldots,a_\ell$ of elements of $S$ with $\ell\le M_\RS$ such that 
\begin{enumerate}[\rm(i)]
\item $a_i$ is initial in the Coxeter element $a_{i-1}\cdots a_1ca_1\cdots a_{i-1}$ for all $i=1\ldots \ell$, and 
\item $\Supp(a_\ell\cdots a_1R)$ is not full.
\end{enumerate}
\end{lemma}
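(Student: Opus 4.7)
The plan is to build the sequence $a_1,\ldots,a_\ell$ in two phases. In the first, I use \cref{aff conj} to conjugate $c$ to a convenient Coxeter element $c_0$ as listed in \cref{tab:type-by-type} by a sequence of initial source-sink moves, whose length is bounded by a type-dependent constant that the definition of $M_\RS$ accommodates. Each $\sigma_{a_i}$ acts on the current collection by the reflection $a_i$ since the collection consists of positive roots (see \cref{sigma prop}), and \cref{sigma pos real in tubes} guarantees that the transformed set $R_0$ remains in $\APTre{c_0}$ with a still-non-component-full tube support.

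In the second phase, writing $c_0=s_1\cdots s_n$, the cycle of initial conjugations $s_1,s_2,\ldots,s_n$ returns the Coxeter element to $c_0$ while applying $c_0^{-1}$ to the current set; repeating the cycle $k$ times transforms $R_0$ to $c_0^{-k}R_0$ using $kn$ further initial conjugations. The key claim to establish is that some $k$ bounded by the least common multiple of the ranks of the components of $\RSTfin{c_0}$ makes $\Supp(c_0^{-k}R_0)\subsetneq\Simples$; this bound is exactly what the second term in the definition of $M_\RS$ provides.

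To verify the claim I would invoke \cref{c on tUp components}: inside each component of $\RST{c_0}$, the element $c_0$ acts by rotation on the cyclic affine-$A$ Dynkin diagram of that component. The non-component-full hypothesis forces the tube support of $R_0$ to occupy a proper arc in each such cycle, so by cyclically shifting the arcs I want to position the missing tube simple roots so that the union of $\Simples$-supports of the remaining tube simples avoids $\alpha_\aff$. Combined across all components, this would put all of $c_0^{-k}R_0$ inside $\RSfin^+$, whose support is contained in $\Simples\setminus\set{\alpha_\aff}$, hence not full.

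The main obstacle is producing a single offset $k$ that simultaneously aligns the missing tube simples in every component. Different components have different ranks (hence different rotation periods), so existence of a simultaneous offset is a congruence condition; moreover, the precise bookkeeping of which $\alpha_j$ appears in each $\Supp(\beta_i)$ depends on the affine type, with $A^{(2)}_{2k}$ requiring extra care because of the exceptional coefficient $[\delta:\alpha_\aff]=2$. I expect this step to require a type-by-type verification against \cref{tab:type-by-type}, matching the remark before the lemma statement that ``one of'' the preliminary results requires case-by-case analysis using the classification.
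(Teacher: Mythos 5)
Your two-phase structure (conjugate $c$ to the standard choice from \cref{tab:type-by-type}, then apply powers of $c$ via cycles of $n$ initial conjugations) matches the paper's proof exactly, and the reduction to a standard affine root system by rescaling is also the paper's first step. However, the key claim you propose to verify is false in general, and this is a genuine gap rather than just a deferred computation. You aim to choose a single offset $k$ so that the union of the $\Simples$-supports of the tube simples remaining in $\SuppT(c^{-k}R_0)$ avoids $\alpha_\aff$; since the only simple roots of $\RST{c}$ whose $\Simples$-support contains $\alpha_\aff$ are the roots $\beta_\aff^i=\delta-\sum\beta$ (one per component), this amounts to simultaneously rotating every component so that its $\beta_\aff^i$ falls outside $\SuppT(c^{-k}R_0)$, i.e.\ forcing $c^{-k}R_0\subseteq\RSfin$. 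The congruence conditions this imposes can be incompatible: in type $B_{n-1}^{(1)}$ with $n$ even, for instance, $\RST{c}$ has components of ranks $n-2$ and $2$, both even, and if $\SuppT(R_0)$ omits exactly one simple root in each component the required residues modulo $n-2$ and modulo $2$ need not be simultaneously satisfiable. The same obstruction occurs in types $A_{n-1}^{(1)}$ and $D_{n-1}^{(1)}$.

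The paper's proof succeeds because it proves something weaker and different: it only arranges (by applying $c$) that $\beta_\aff^1$ is missing from the support in the \emph{first} component, and then, in the cases where some $\beta_\aff^i$ with $i\ge 2$ unavoidably remains in $\SuppT(R)$, it uses the explicit simple-root expansion of $\beta_\aff^i$ together with the expansions of the $\beta_j$ from \cref{tab:type-by-type} to exhibit a missing simple root $\alpha_j\neq\alpha_\aff$. For example, in type $B_{n-1}^{(1)}$, if $\beta_\aff^2\in\SuppT(R)$ then $\beta_{n-2}\notin\SuppT(R)$, and since $\beta_\aff^2=\delta-\beta_{n-2}=\alpha_n+\sum_{i=1}^{n-2}\alpha_i$ one reads off that $\alpha_{n-1}\notin\Supp(R)$. (Only in types $E_6^{(1)}$, $E_7^{(1)}$, $E_8^{(1)}$, $F_4^{(1)}$, where the component ranks have gcd $1$, does your simultaneous-alignment strategy actually work.) So the type-by-type analysis you anticipate is not a verification of your claim but a replacement for it: you need the additional idea of locating a non-affine simple root outside $\Supp(R)$ via the explicit coordinates when alignment into $\RSfin$ is impossible. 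Also note that the exceptional type $A^{(2)}_{2k}$ needs no special care here, since the lemma is reduced to standard affine root systems by rescaling before the case analysis begins.
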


\begin{proof}  
The conclusion of the lemma is not affected by rescaling the root system, so we may as well take $\RS$ to be a standard affine root system.
By an approporate choice of the initial letters of the sequence (making source-sink moves to change the Coxeter element), we may as well assume that $c$ as in \cref{tab:type-by-type}.
We will show that for such $c$ there exists an integer $m$ such that $\Supp(c^m(R))$ is not full.  From there, one can complete the desired sequence, because the $c$-orbit of $R$ is no larger than the least common multiple of the ranks of the components of~$\RST{c}$.

Indexing the components of $\RST{c}$ by indices $i$, we write $\beta_\aff^i$ for the root $\delta-\sum\beta$, where the sum is over those simple roots of the $i\th$ component that are also in $\RSfin$.
Thus $\beta_\aff^i$ is the unique simple root in the $i\th$ component that is not in $\RSfin$.
In each case, because $c$ cyclically permutes the simple roots of each component of $\RST{c}$ and since $\SuppT(R)$ is not component-full, by applying $c$ some number of times, we may as well assume that $\beta_\aff^1$ is not in $\SuppT(R)$.
(That is, the intersection of $R$ with the first component contains only roots in $\RSfin$.)
We now finish the proof in each case.
In each case, the simple generators of $W$ are numbered as in \cref{tab:type-by-type} and $c$ is $s_1\cdots s_n$.
We index components of $\RST{c}$ compatibly with the numbering of the roots $\beta_j$ in \cref{tab:type-by-type}:
The first component contains $\beta_1$, the second component contains $\beta_j$ for the smallest $j$ such that $\beta_j$ is not in the first component, etc.

In the proof, we use explicit simple-root coordinates of $\delta$ in types $A$, $B$, $D$, and~$E_6$.
These can be found, for example, in \cite[Table Aff 1]{Kac90}.

\noindent\textbf{Case $A_{n-1}^{(1)}$:}  
If $k\in\set{1,n-1}$, then there is nothing to prove.
Otherwise, if $\beta_\aff^2$ is not in $\SuppT(R)$, then $\alpha_n\not\in\Supp(R)$.
If $\beta_\aff^2\in\SuppT(R)$, then since $\SuppT(R)$ is not component-full, there exists $j\in\set{k,\ldots,n-2}$ such that $\beta_j\not\in\SuppT(R)$.
In this case, since $\beta_\aff^2=\delta-\sum_{j=k}^{n-2}\beta_j=\alpha_n+\sum_{i=1}^k\alpha_i$, we see that $\alpha_{j+1}\not\in\Supp(R)$.

\noindent\textbf{Case $B_{n-1}^{(1)}$:}  
If $\beta_\aff^2$ is not in $\SuppT(R)$, then $\alpha_n\not\in\Supp(R)$.
If $\beta_\aff^2\in\SuppT(R)$, then since $\SuppT(R)$ is not component-full, $\beta_{n-2}$ is not in $\SuppT(R)$.
Thus since $\beta_\aff^2=\delta-\beta_{n-2}=\alpha_n+\sum_{i=1}^{n-2}\alpha_i$, we see that $\alpha_{n-1}\not\in\Supp(R)$.

\noindent\textbf{Case $C_{n-1}^{(1)}$:} 
In this case, $\RST{c}$ has only one component. 
Since $\beta_\aff^1\not\in\SuppT(R)$, we have $\SuppT(R)\subseteq \RSfin$ and thus $\alpha_\aff=\alpha_n\not\in\Supp(R)$.

\noindent\textbf{Case $D_{n-1}^{(1)}$:}  
There are four cases depending on whether $\beta_\aff^2$ and $\beta_\aff^3$ are in $\SuppT(R)$.
If $\beta_\aff^2\not\in\SuppT(R)$ and $\beta_\aff^3\not\in\SuppT(R)$, then $\SuppT(R)\subseteq\RSfin$, so $\alpha_\aff=\alpha_n\not\in\Supp(R)$.
Now suppose $\beta_\aff^2\in\SuppT(R)$, so that $\beta_{n-3}\not\in\SuppT(R)$.
If furthermore $\beta_\aff^3\not\in\SuppT(R)$, then since $\beta_\aff^2=\delta-\beta_{n-3}=\alpha_n+\sum_{i=2}^{n-2}\alpha_i$, we have $\alpha_1\not\in\Supp(R)$.
If, on the other hand, $\beta_\aff^3\in\SuppT(R)$, then $\beta_{n-2}\not\in\SuppT(R)$.
Since $\beta_\aff^3=\delta-\beta_{n-2}=\alpha_1+\alpha_n+\sum_{i=3}^{n-2}\alpha_i$, we have $\alpha_{n-1}\not\in\Supp(R)$.
Finally, if $\beta_\aff^2\not\in\SuppT(R)$ and $\beta_\aff^3\in\SuppT(R)$, then $\beta_{n-2}\not\in\SuppT(R)$, so $\alpha_2\not\in\Supp(R)$.

\noindent\textbf{Case $E_6^{(1)}$:}  
Since there are $3$ simple roots in component $1$ of $\RST{c}$, permuted cyclically by $c$ and $2$ simple roots in component $3$, permuted cyclically by $c$, and since $gcd(2,3)=1$, we can (by applying $c$ repeatedly) assume that neither $\beta_\aff^1$ nor $\beta_\aff^3$ is in $\SuppT(R)$.
If furthermore $\beta_\aff^2\not\in\SuppT(R)$, then $\alpha_\aff=\alpha_7\not\in\Supp(R)$.
If $\beta_\aff^2\in\SuppT(R)$, then either $\beta_3$ or $\beta_4$ is not in $\SuppT(R)$.
If $\beta_4\not\in\SuppT(R)$, then since $\beta_\aff^2=\delta-(\beta_3+\beta_4)=\sum_{i\neq 3}\alpha_i$, we see that $\alpha_3\not\in\Supp(R)$.
If $\beta_3\not\in\SuppT(R)$, then we consider the collection $c^2R$.
None of the roots $\beta_2$, $\beta_\aff^2$, and $\beta_\aff^3$ is in $\SuppT(c^2R)$.
Since $\beta_\aff^1=\delta-(\beta_1+\beta_2)=\sum_{i=2}^7\alpha_i$, we see that $\alpha_1\not\in\Supp(R)$.

\noindent\textbf{Case $E_7^{(1)}$:}  
The ranks of the components of $\RST{c}$ are $4$, $3$, and $2$.
Since $\gcd(4,3,2)=1$, we can apply $c$ until $\set{\beta_\aff^i:i=1,2,3}\cap\SuppT(R)=\emptyset$.
Then $\alpha_\aff=\alpha_8\not\in\Supp(R)$.

\noindent\textbf{Case $E_8^{(1)}$:}
We proceed as in Case $E_7^{(1)}$.  
The ranks are $5$, $3$, and $2$.

\noindent\textbf{Case $F_4^{(1)}$:} 
We proceed as in the previous two cases.  
The ranks are $3$ and $2$.

\noindent\textbf{Case $G_2^{(1)}$:}  
In this case, $\RST{c}$ has only component, so we proceed as in Case~$C_{n-1}^{(1)}$. 
\end{proof}

\begin{proposition}\label{prop in the tubes}
For every affine root system $\RS$, Coxeter element $c$ and collection $R$ of roots in $\APTre{c}$ such that $\SuppT(R)$ is not component-full, there exists a root $\alpha\in\APre{c}\setminus\APTre{c}$ that is $c$-compatible with every root in $R$.
\end{proposition}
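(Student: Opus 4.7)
The plan is to reduce the problem, via a sequence of source-sink conjugations, to a Coxeter element for which the desired compatible root is obviously a negative simple root, and then transport that root back using the $\sigma$-bijections of \cref{sigma prop} together with the invariance \eqref{compat sigma} of the compatibility degree.

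First, apply \cref{lemma in the tubes} to obtain a sequence $a_1,\ldots,a_\ell$ of simple reflections such that each $a_i$ is initial in the Coxeter element $c^{(i-1)}=a_{i-1}\cdots a_1 c a_1\cdots a_{i-1}$ (with $c^{(0)}=c$) and such that, setting $c^{(\ell)}=a_\ell\cdots a_1 c a_1\cdots a_\ell$ and $R'=\sigma_{a_\ell}\cdots\sigma_{a_1}(R)\subseteq\APTre{c^{(\ell)}}$, the support $\Supp(R')$ is not full. (Observe that, because each $a_i$ is initial in $c^{(i-1)}$, each $\sigma_{a_i}$ is a valid bijection $\AP{c^{(i-1)}}\to\AP{c^{(i)}}$ by \cref{sigma prop}, and moreover $\sigma_{a_i}$ acts as $a_i$ on positive roots, so the set $R'$ coincides with $a_\ell\cdots a_1 R$ as in the lemma.) Pick a simple root $\alpha_j\notin\Supp(R')$.

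Next, observe that $-\alpha_j\in\APre{c^{(\ell)}}\setminus\APTre{c^{(\ell)}}$ and by \cref{compat base},
\[
\cm{-\alpha_j}{\beta'}{c^{(\ell)}}=[\beta':\alpha_j]=0\qquad\text{for every }\beta'\in R'.
\]
Thus $-\alpha_j$ is $c^{(\ell)}$-compatible with every root of $R'$. Now set $\alpha=\sigma_{a_1}\sigma_{a_2}\cdots\sigma_{a_\ell}(-\alpha_j)$. Applying \eqref{compat sigma} iteratively along the chain $c^{(\ell)},c^{(\ell-1)},\ldots,c^{(0)}=c$ (using that $a_i$ initial in $c^{(i-1)}$ forces $a_i$ to be final in $c^{(i)}$, so $\sigma_{a_i}$ is a legitimate move in the reverse direction as well), we conclude that
\[
\cm{\alpha}{\beta}{c}=\cm{-\alpha_j}{\sigma_{a_\ell}\cdots\sigma_{a_1}(\beta)}{c^{(\ell)}}=0\qquad\text{for every }\beta\in R.
\]

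Finally, we verify that $\alpha\in\APre{c}\setminus\APTre{c}$. By \cref{sigma prop}, each $\sigma_{a_i}$ restricts to a bijection $\AP{c^{(i-1)}}\to\AP{c^{(i)}}$; by \cref{scs U} and \cref{sigma pos real in tubes} it further restricts to a bijection $\APTre{c^{(i-1)}}\to\APTre{c^{(i)}}$ (and fixes $\delta$). Hence the decomposition $\AP{\cdot}=(-\Simples\cup(\RSpos\setminus\eigenspace{\cdot}))\sqcup\APTre{\cdot}\sqcup\{\delta\}$ is respected, and since $-\alpha_j\in\APre{c^{(\ell)}}\setminus\APTre{c^{(\ell)}}$, its preimage $\alpha$ lies in $\APre{c}\setminus\APTre{c}$. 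In particular $\alpha\notin R$, so the distinctness requirement in \cref{compat def} is automatic and $\alpha$ is $c$-compatible with every element of $R$, as required. The main obstacle is packaged into \cref{lemma in the tubes}; once that case-by-case reduction is in hand, the rest is a formal transport argument.
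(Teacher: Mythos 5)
Your proposal is correct and follows essentially the same route as the paper: invoke \cref{lemma in the tubes}, note that the roots of $R$ stay positive (hence $\sigma$ acts as $s$) because they remain in the tubes, find a negative simple root compatible with the transported set via \eqref{compat base}, and pull it back with \eqref{compat sigma}. The only cosmetic difference is that you justify positivity via \cref{sigma pos real in tubes} where the paper cites an induction using \cref{tUp init fin}; both are valid.
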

\begin{proof}
Let $s_{i_1},\ldots,s_{i_k}$ be a sequence as in \cref{lemma in the tubes} and let $\alpha_j$ be a simple root not in $\Supp(s_{i_k}\cdots s_{i_1}R)$.
Each $\beta\in R$ is positive and a simple induction using \cref{tUp init fin} implies that $s_{i_j}\cdots s_{i_1}(\beta)$ is positive for $j=1,\ldots,k$.
Thus $\sigma_{i_k}\cdots\sigma_{i_1}(\beta)=s_{i_k}\cdots s_{i_1}(\beta)$.
Now \eqref{compat base} implies that $-\alpha_j$ is $(s_{i_m}\cdots s_{i_1}cs_{i_1}\cdots s_{i_m})$\nobreakdash-compatible with $\sigma_{i_k}\cdots \sigma_{i_1}\beta$.
By \cref{sigma pos real in tubes} and \eqref{compat sigma}, the desired root is $\alpha=\sigma_{i_1}\cdots\sigma_{i_k}(-\alpha_j)$.
\end{proof}

\begin{lemma}\label{compat not full}
If $R$ is a set of pairwise $c$-compatible roots in $\APTre{c}$, then $\SuppT(R)$ is not component-full.
\end{lemma}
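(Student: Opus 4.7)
The plan is to argue by contradiction. I would suppose $\SuppT(R)$ is component-full, fix an irreducible component $K$ of $\RST{c}$ whose simple roots all lie in $\SuppT(R)$, trim $R$ down to a minimal subfamily inside $K$ still covering all simples of $K$, and then exploit pairwise $c$-compatibility together with the cyclic structure of the Dynkin diagram of $K$ (from \cref{c on tUp components}) to reach a contradiction on the combinatorics of arcs in a cycle.

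Concretely, first I would let $R_K \subseteq R$ consist of those $\gamma$ with $\SuppT(\gamma) \subseteq K$; since each real root of $\RST{c}$ sits in a single irreducible component, $\SuppT(R_K)$ is still the full set of simples of $K$. Then I would choose $R' \subseteq R_K$ minimal with $\SuppT(R')$ equal to the simples of $K$. By \cref{pos real in tubes}, each $\SuppT(\gamma)$ with $\gamma \in R'$ is a proper arc of the cyclic Dynkin diagram of $K$, so no single element suffices and $|R'| \geq 2$; and minimality supplies, for each $\gamma \in R'$, a private simple root $v_\gamma \in \SuppT(\gamma)$ lying in no other $\SuppT(\gamma')$ with $\gamma' \in R'\setminus\{\gamma\}$.

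Next, for any distinct $\alpha,\beta \in R'$, I would split on whether $\SuppT(\alpha) \cup \SuppT(\beta)$ is already component-full. If it is, \cref{compat deg def} gives $\cm\alpha\beta c = \adj\alpha\beta$, and since $\SuppT(\alpha)$ and $\SuppT(\beta)$ are proper arcs of $K$ whose union is the whole cycle, the complement of $\SuppT(\alpha)$ in $K$ is a nonempty arc sitting inside $\SuppT(\beta)$ whose endpoint(s) are simple roots adjacent to $\SuppT(\alpha)$; this forces $\adj\alpha\beta \geq 1$, contradicting $c$-compatibility. Otherwise \cref{tubes table} in the proof of \cref{compat prop} shows that $c$-compatibility of a distinct pair leaves only two options: nested supports, or disjoint non-adjacent supports. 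Nestedness would deprive the smaller arc of its private vertex, contradicting minimality of $R'$, so every pair in $R'$ must have disjoint non-adjacent supports. Picking any $\gamma \in R'$, its proper arc $\SuppT(\gamma)$ then has at least one simple neighbour $w$ in $K$ outside $\SuppT(\gamma)$, and pairwise non-adjacency forces $w$ to lie in no other $\SuppT(\gamma')$ either, giving a simple root of $K$ missing from $\SuppT(R')$ and contradicting component-fullness.

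The step I expect to be most delicate is the first branch of the case split: one must verify carefully that the exceptional component-full case in \cref{compat deg def} really does force $\adj\alpha\beta \geq 1$ whenever two proper arcs together cover the cyclic Dynkin diagram of $K$. Once that is in hand, the rest is elementary combinatorics of arcs on a cycle.
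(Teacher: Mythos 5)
Your proof is correct and rests on the same combinatorial core as the paper's: compatibility of distinct roots in $\APTre{c}$ forces their tube supports (proper arcs of a cyclic diagram, by \cref{pos real in tubes} and \cref{c on tUp components}) to be nested or disjoint-and-non-adjacent, after which a vertex adjacent to a suitably extremal arc is seen to be uncovered. The paper argues directly by choosing $\beta$ with \emph{maximal} tube support and reading off that its neighbours lie outside $\SuppT(R)$, whereas you argue by contradiction via a \emph{minimal} covering subfamily and handle the component-full-union case of \cref{compat deg def} separately; both routes are valid and essentially equivalent.
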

\begin{proof}
We need to show that there is a simple root of each component of $\RST{c}$ that is not in $\SuppT(R)$.
Fix a component $\Upsilon'$ of $\RST{c}$ and, if $\SuppT(R)$ includes any simple roots in $\Upsilon'$, choose $\beta\in R$ in $\Upsilon'$ so that $\SuppT(\beta)$ is maximal, under containment, among tube supports of roots in $R$.
By \cref{pos real in tubes}, $\SuppT(\beta)$ is not component-full.
Thus $\SuppT(\beta)$ is properly contained in the set of simple roots for $\Upsilon'$.
By \eqref{compat U} and since $\beta$ was chosen to have maximal tube support, any root $\beta'\in R\setminus\set{\beta}$ has either $\SuppT(\beta')\subsetneq\SuppT(\beta)$ or $\adj\beta{\beta'}=0$.
In particular, the simple root(s) of $\Upsilon'$ that are adjacent to $\beta$ are not in $\SuppT(R)$, and the lemma is proved.
\end{proof}

\begin{proof}[Proof of \cref{clus def OK}]
Let $R$ be a set of pairwise $c$-compatible roots in $\APre{c}$, all $c$-compatible with $\delta$.
\cref{compat not full} says that $R$ satisfies the conditions on $R$ in \cref{prop in the tubes}, which guarantees the existence a root $\alpha$ in the $\tau_c$-orbit of a negative simple root that is $c$-compatible with every root in $R$.
In particular, $R$ is not a maximal set of pairwise $c$-compatible roots.
\end{proof}

We next discuss compatibility among positive roots in $\APTre{c}$. 

\begin{definition}\label{nested spaced}
Two roots $\alpha,\beta\in\APTre{c}$ are \newword{nested} if $\SuppT(\alpha)\subseteq\SuppT(\beta)$ or $\SuppT(\beta)\subseteq\SuppT(\alpha)$.
They are \newword{spaced} if $\SuppT(c^{-1}\alpha)\cup\SuppT(\alpha)\cup\SuppT(c\alpha)$ is disjoint from $\SuppT(\beta)$.
\end{definition}

Each notion in \cref{nested spaced} is symmetric.
The roots $\alpha$ and $\beta$ are spaced if and only if $\SuppT(\alpha)$ is disjoint from $\SuppT(\beta)$ and $\adj\alpha\beta=0$.
In particular, roots from distinct components of $\RST{c}$ are spaced.
The next proposition follows immediately from \eqref{compat U}.

\begin{proposition}\label{compatible in tubes}
Two distinct roots $\alpha$ and $\beta$ in $\APTre{c}$ are $c$-compatible if and only if they are nested or spaced.
\end{proposition}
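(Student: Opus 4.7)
The plan is to invoke Property~\eqref{compat U} of \cref{compat prop c fixed}, which gives $\cm\alpha\beta c = \cmcirc\alpha\beta c$ for $\alpha,\beta\in\APTre{c}$. The ``component-full'' exception in \cref{compat deg def} is harmless: when $\SuppT(\alpha,\beta)$ is component-full, \cref{pos real in tubes} forces neither $\SuppT(\alpha)$ nor $\SuppT(\beta)$ alone to be component-full, so nesting is impossible, and both $\cm\alpha\beta c$ and $\cmcirc\alpha\beta c$ reduce to $\adj\alpha\beta$. Hence $c$-compatibility of distinct $\alpha,\beta\in\APTre{c}$ is equivalent to $\cmcirc\alpha\beta c = 0$, which by \cref{def adj} amounts to: either $\SuppT(\alpha)\subsetneq\SuppT(\beta)$ or $\SuppT(\beta)\subsetneq\SuppT(\alpha)$, or else neither containment holds and $\adj\alpha\beta = 0$.

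By \cref{Up details}\eqref{type A} each component of $\RST{c}$ is of affine type $A$, so its positive real roots with non-component-full tube support---exactly the roots of $\APTre{c}$ in that component---are in bijection with proper connected arcs of the cyclic Dynkin diagram via $\alpha\mapsto\SuppT(\alpha)$. Consequently distinct roots in $\APTre{c}$ have distinct tube supports, so for distinct $\alpha,\beta$ the strict-containment case of $\cmcirc = 0$ captures precisely the \emph{nested} pairs. By \cref{c on tUp components}, $c$ rotates the cycle by one step, so writing $\SuppT(\alpha) = \{a_1,\ldots,a_m\}$ in cyclic order, the two simple roots of $\SimplesT{c}$ adjacent to $\alpha$ in the sense of \cref{def adj} are precisely the two neighbor nodes $a_0, a_{m+1}$ flanking the arc, and $\SuppT(c^{-1}\alpha)\cup\SuppT(\alpha)\cup\SuppT(c\alpha) = \{a_0,a_1,\ldots,a_{m+1}\}$. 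Thus $\adj\alpha\beta$ equals the number of elements of $\{a_0,a_{m+1}\}$ lying in $\SuppT(\beta)$.

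The \emph{spaced} direction is immediate: if $\SuppT(\beta)$ is disjoint from the thickened arc $\{a_0,\ldots,a_{m+1}\}$ then it avoids both neighbor nodes, giving $\adj\alpha\beta = 0$, and it is disjoint from $\SuppT(\alpha)$, so the pair is non-nested. Conversely, assume $\alpha,\beta$ are distinct, non-nested, with $\adj\alpha\beta = 0$; it suffices to show $\SuppT(\beta)\cap\SuppT(\alpha) = \emptyset$. If not, $\SuppT(\beta)$ meets $\{a_1,\ldots,a_m\}$ but, being non-nested, also contains some node outside $\{a_1,\ldots,a_m\}$; since $\SuppT(\beta)$ is itself a connected arc avoiding $\{a_0,a_{m+1}\}$, bridging an interior node of $\SuppT(\alpha)$ to any node of the cycle outside $\{a_0,\ldots,a_{m+1}\}$ forces passage through $a_0$ or $a_{m+1}$, a contradiction. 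Roots in different components of $\RST{c}$ are trivially spaced with $\adj = 0$, completing the equivalence. I expect no genuine obstacle; the only subtleties are the component-full boundary (handled via \cref{pos real in tubes}) and the connectedness-on-a-cycle argument ruling out non-spaced overlapping arcs.
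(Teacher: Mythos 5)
Your proposal is correct and follows the paper's route: the paper disposes of this proposition in one line by citing Property \eqref{compat U}, with the underlying case analysis of $\cmcirc\alpha\beta c$ (equal, disjoint, nested, overlapping/adjacent supports) carried out in Table~\ref{tubes table} during the proof of \cref{compat prop}. You have simply made explicit the same translation from $\cmcirc\alpha\beta c=0$ to ``nested or spaced'' via the arc picture on the cyclic components, including the one genuinely non-immediate step (non-nested overlapping arcs force $\adj\alpha\beta\ge 1$ by connectedness), so no further comparison is needed.
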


\cref{compatible in tubes} allows us to connect the notion of compatibility among roots in $\APTre{c}$ to some well-known combinatorics.
Supports of roots in $\APTre{c}$ correspond to \newword{tubes}, in the sense of graph associahedra~\cite[Definition~2.2]{CarrDevadoss}, in the Dynkin diagram of $\RST{c}$. 
Compatibility of these roots corresponds to compatibility of tubes in the same sense.  
Thus the restriction of the $c$-cluster complex to roots in $\APTre{c}$ is isomorphic to the boundary complex of the \newword{simplicial graph associahedron} for the Dynkin diagram of $\RST{c}$, the polytope dual to the (simple) graph associahedron of \cite[Definition~2.4]{CarrDevadoss}.
Each component of the Dynkin diagram of $\RST{c}$ is a cycle or consists of two vertices connected by an edge.
The simplicial graph associahedron of a $k$-cycle is a $(k-1)$-dimensional \newword{simplicial cyclohedron} (so its boundary complex is a $(k-2)$-dimensional simplicial complex).
The simplicial graph associahedron associated to two vertices connected by an edge is a line segment (so its boundary complex is two isolated vertices, i.e.\ a $0$-dimensional sphere), and we extend terminology slightly to call this a $1$-dimensional simplicial cyclohedron.
We summarize in the following proposition.
\begin{proposition}\label{cyclo}
In affine type, the restriction of the $c$-cluster complex to roots in $\APTre{c}$ is $(n-3)$-dimensional and isomorphic to a join of boundary complexes of simplicial cyclohedra.
There is a $(k-1)$-dimensional cyclohedron (and thus a $(k-2)$-dimensional boundary complex) for each rank-$k$ component of~$\RST{c}$.
\end{proposition}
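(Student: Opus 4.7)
The plan is to transfer pairwise $c$-compatibility of roots in $\APTre{c}$ into the combinatorics of tubes in the Dynkin diagram of $\RST{c}$, then invoke the definition of the simplicial graph associahedron.

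First, by \cref{Up details}\eqref{type A} and \cref{c on tUp components}, each irreducible component of $\RST{c}$ is of affine type $A$, so its Dynkin diagram is a $k$-cycle when the component has rank $k\ge3$, and a pair of vertices joined by an edge when $k=2$. By \cref{pos real in tubes}, a root in $\APTre{c}$ lying in such a component corresponds uniquely to a proper connected subgraph of the diagram, i.e., a tube that is not the whole component.

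Second, by \cref{compatible in tubes}, two distinct roots $\alpha,\beta\in\APTre{c}$ are $c$-compatible if and only if they are nested or spaced. Roots from different components of $\RST{c}$ are automatically spaced and hence compatible, so the restricted cluster complex factors as a join indexed by components. Within a single rank-$k$ component, the conditions ``nested or spaced'' (\cref{nested spaced}) translate precisely into the Carr--Devadoss notion of compatible tubes: either one tube sits inside the other, or their vertex sets are disjoint and no edge of the cycle connects them. The latter uses that $c^{\pm1}$ rotates the cycle by one step, so two adjacent singleton tubes would have nontrivial intersection $\SuppT(c^{\pm1}\alpha)\cap\SuppT(\beta)$. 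Hence the restriction of the $c$-cluster complex to a single rank-$k$ component is the boundary complex of the corresponding simplicial cyclohedron, degenerating in the $k=2$ case to the $0$-sphere on two vertices.

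Third, I check the dimension. By \cref{Up details}\eqref{Upsilon full rank}, the rank of $\RSTfin{c}$ is $n-2$, and a rank-$k_i$ component of $\RST{c}$ contributes $k_i-1$ simple roots to $\RSTfin{c}$. Writing $r$ for the number of components, $\sum_i(k_i-1)=n-2$, so $\sum_i k_i = n-2+r$. The boundary complex of the $k_i$-cyclohedron is $(k_i-2)$-dimensional, and the join has dimension $\sum_i(k_i-2)+(r-1)=(n-2+r)-2r+(r-1)=n-3$.

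The main obstacle is verifying the dictionary between ``nested or spaced'' and the graph-theoretic notion of compatible tubes, in particular confirming the degenerate $k=2$ case where the two proper tubes are adjacent singletons and $c$ exchanges them, correctly producing an incompatible pair and the $0$-dimensional boundary complex. Once this translation is pinned down, the remainder is a bookkeeping computation from \cref{Up details}\eqref{Upsilon full rank}.
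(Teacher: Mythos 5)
Your proposal follows essentially the same route as the paper: the paper likewise deduces the result from \cref{compatible in tubes} by identifying tube supports of roots in $\APTre{c}$ with tubes in the Dynkin diagram of $\RST{c}$ in the sense of Carr--Devadoss, noting that each component is a cycle (or an edge, giving the degenerate $1$-dimensional cyclohedron), and taking the join over components. Your explicit dimension count via $\sum_i(k_i-1)=n-2$ is correct and merely makes explicit what the paper leaves implicit in the statement.
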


We conclude the section by cataloging the crucial facts about real and imaginary $c$-clusters.
The finite-type versions of these facts are in \cite{FoZe03,MRZ}, or can be proved as outlined here for affine type.
We use the notation $Q$ \nomenclature[q]{$Q$}{root lattice for $\RS$}
for the root lattice of $\RS$ and $Q_c$ for the sublattice $Q\cap \eigenspace{c}$, \nomenclature[qc]{$Q_c$}{$Q\cap\eigenspace{c}$, the root lattice for $\RST{c}$}
which should be thought of as the root lattice of $\RST{c}$.
In the following proposition, we interpret indices in the expressions $\sigma_k\cdots\sigma_1$ and $s_k\cdots s_1$ modulo $n$ and, if $k<0$ we interpret the expression $\sigma_k\cdots\sigma_1$ to mean $\sigma_{k+1}\sigma_{k+2}\cdots\sigma_{-1}\sigma_0$ and similarly for $s_k\cdots s_1$.  
\begin{proposition}\label{cluster properties}
Let $\RS$ be a root system of affine type, let $c=s_1\cdots s_n$ be a Coxeter element, and let $C$ be a $c$-cluster.
If $C$ is a real $c$-cluster, then 
\begin{enumerate}[\qquad \upshape (1)]
\item \label{Q n}
$|C|=n$,
\item \label{Q basis}
$C$ is a $\integers$-basis for the root lattice $Q$ of $\RS$, 
\item \label{at least 2}
$C$ contains at least $2$ roots in the $\tau_c$-orbits of negative simple roots, and 
\item \label{sigmas}
There exists an integer $k$ and an index $i\in\set{1,\ldots,n}$ such that
\[\sigma_k\cdots\sigma_1(C)=s_k\cdots s_1(C)=\set{-\alpha_i}\cup C'\]
for a $c'$-cluster $C'$ of roots in $\RS'_{c'}$, where $\RS'$ is the parabolic root subsystem spanned by $\Simples\setminus\set{\alpha_i}$ and $c'$ is the restriction of $s_k\cdots s_1cs_1\cdots s_k$ to the corresponding parabolic subgroup of $W$.
\end{enumerate}
If $C$ is an imaginary $c$-cluster, then 
\begin{enumerate}[\qquad \upshape (1)]\setcounter{enumi}{4}
\item \label{Qc n-1}
$|C|=n-1$,
\item \label{Qc basis}
$C$ is a $\integers$-basis for the lattice $Q_c$,
\item \label{none}
$C$ contains no roots in the $\tau_c$-orbits of negative simple roots, and
\item \label{in the cone}
$C\setminus\set{\delta}$ consists of roots in $\APTre{c}$.
\end{enumerate}
\end{proposition}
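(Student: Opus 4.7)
The plan is to proceed by induction on $n$, treating the two cases separately and invoking the inductive reductions of \cref{prop:inductive_cluster}.

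For the imaginary assertions (5)--(8), (8) is immediate from \cref{delta c compat}: every $\alpha \in C \setminus \{\delta\}$ is $c$-compatible with $\delta$, which forces $\alpha \in \APTre{c}$. Assertion (7) then follows from \cref{apSchur}, since $\APTre{c}$ consists of finite-orbit roots. For (5), \cref{cyclo} bounds any compatible subset of $\APTre{c}$ by $n-2$ elements, so $|C| \le n-1$; conversely, a facet of the cyclohedral boundary complex (of size $n-2$) adjoined to $\delta$ is compatible, and by \cref{delta c compat} no root in $\APre{c}\setminus\APTre{c}$ can be added, so maximality forces $|C| = n-1$. For (6), in each affine type~$A$ component of $\RST{c}$ of rank $k$ (a $(k+1)$-cycle), the $k$ proper tubes of a cyclohedral facet together with $\delta$ form a $\integers$-basis of that component's rank-$(k+1)$ sublattice of $Q_c$; gluing across components, which share only the $\delta$-direction, yields a $\integers$-basis of $Q_c$ of size $n-1$.

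For real clusters, I prove (1), (2), and (4) simultaneously by induction on $n$, invoking the finite-type versions of (1) and (2) from \cite{FoZe03,MRZ} once we reduce to a finite parabolic. First I establish (4). The cluster $C$ cannot be contained in $\APTre{c}$: \cref{compat not full} applies to the compatible set $C$, and \cref{prop in the tubes} would then give a root in $\APre{c}\setminus\APTre{c}$ compatible with $C$, contradicting maximality (\cref{clus def OK}). Hence \cref{apSchur} yields some $\beta \in C$ of the form $\tau_c^m(-\alpha_i)$. Since $\tau_c^{\pm 1} = (\sigma_1\cdots\sigma_n)^{\pm 1}$, the paper's convention allows $\tau_c^{-m}$ to be written as $\sigma_k\cdots\sigma_1$ for a suitable $k$ (positive, negative, or zero according to the sign of $m$). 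Choose $k$ of smallest $|k|$ such that $\sigma_k\cdots\sigma_1(C)$ meets $-\Simples$. By minimality, no intermediate cluster contains a negative simple, and since $\sigma_j$ agrees with $s_j$ on positive roots, a straightforward induction on the number of applied $\sigma$'s yields $\sigma_k\cdots\sigma_1(C) = s_k\cdots s_1(C)$. Writing this set as $\{-\alpha_i\}\cup C'$, \cref{prop:inductive_cluster}\eqref{cluster sigma} identifies it as a $c_k$-cluster for $c_k = s_k\cdots s_1 c s_1\cdots s_k$, and \cref{prop:inductive_cluster}\eqref{cluster induct} identifies $C'$ as a $c_k'$-cluster in the parabolic root subsystem $\RS'$ spanned by $\Simples \setminus \{\alpha_i\}$, where $c_k'$ is the restriction of $c_k$ to the corresponding parabolic. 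Since $\RS'$ is of finite type (because $\RS$ is affine), the finite-type analogues of (1) and (2) give $|C'| = n-1$ and $C'$ is a $\integers$-basis for the root lattice of $\RS'$; adjoining $-\alpha_i$ produces a $\integers$-basis of $Q$. Finally, $s_k\cdots s_1 \in W$ preserves $Q$, so pulling back gives $|C| = n$ and $C$ is a $\integers$-basis of $Q$.

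Assertion (3) then follows from (1) combined with \cref{cyclo}: any compatible subset of $\APTre{c}$ has at most $n-2$ elements, so $|C\setminus\APTre{c}| \ge 2$, and by \cref{apSchur} this complement lies in $\tau_c$-orbits of $-\Simples$. The main technical obstacle is the identity $\sigma_k\cdots\sigma_1(C) = s_k\cdots s_1(C)$ in the proof of (4): one must juggle both signs of $k$ according to the paper's convention, verify that the minimality of $|k|$ actually keeps every intermediate cluster free of negative simples (so that each $\sigma_j$ really does act as $s_j$ on the current set), and simultaneously track the conjugated Coxeter element $c_k$ under which the reduced cluster $C'$ is to be interpreted.
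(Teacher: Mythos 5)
Your proof is correct and follows essentially the same route as the paper: the imaginary assertions via \cref{delta c compat} and \cref{cyclo} plus a per-component basis argument for $Q_c$, and the real assertions by locating a root in the $\tau_c$-orbit of a negative simple, applying a minimal sequence of $\sigma_j$'s (each acting as $s_j$) to reduce to a finite parabolic, and pulling the finite-type basis statement back through $s_k\cdots s_1\in W$. The only cosmetic deviation is that you rule out $C\subseteq\APTre{c}$ via \cref{compat not full} and \cref{prop in the tubes} where the paper uses \cref{clus def OK} and \cref{delta c compat} directly; both are valid.
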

\begin{proof}
First, assume $C$ is an imaginary $c$-cluster.
\cref{delta c compat} implies \eqref{none} and~\eqref{in the cone}.
Thus \cref{cyclo} implies \eqref{Qc n-1}.
Furthermore, one easily convinces oneself that for each root $\beta$ in $C$, there exists a unique $\beta_i\in\SuppT(\beta)$ such that $\beta_i$ is not in the tube support of any root $\beta'\in C$ with $\SuppT(\beta')\subsetneq\SuppT(\beta)$.
Thus the $\SimplesT{c}$-coordinates of roots in $C$ form a unitriangular matrix with respect to a linear extension of the containment order on tube supports of roots in $C$.
If $\RST{c}$ is irreducible, then $\SimplesT{c}$ is a basis for $Q_c$, so it follows that $C$ is also a basis.
In general,~$C$ contains a basis for each component of $\RST{c}$, with these bases overlapping exactly in the root $\delta$, and \eqref{Qc basis} follows.

Now, assume $C$ is a real $c$-cluster.
\cref{clus def OK,delta c compat} together imply that there exists an integer $\ell$ such that $\tau_c^\ell(C)$ contains a negative simple root.
That is, using the convention described before the statement of the proposition, $\sigma_{-n\ell}\cdots\sigma_1(C)$ contains a negative simple root.
If $\ell\le0$, choose the smallest $k$ with $0\le k\le-n\ell$ such that $\sigma_k\cdots\sigma_1(C)$ contains a negative simple root $-\alpha_i$.
If $\ell>0$, then choose the largest $k$ with $-n\ell\le k\le0$ such that $\sigma_k\cdots\sigma_1(C)$ contains a negative simple root $-\alpha_i$.
Because of this choice of $k$, in either case, we know that each application of $\sigma_j$ in the expression $\sigma_k\cdots\sigma_1(C)$ acts as $s_j$ on $C$, and thus $\sigma_k\cdots\sigma_1(C)=s_k\cdots s_1(C)$.
\cref{prop:inductive_cluster}\eqref{cluster sigma} implies that $\sigma_k\cdots\sigma_1(C)$ is a $s_k\cdots s_1cs_1\cdots s_k$-cluster.
Now \eqref{sigmas} follows by \cref{prop:inductive_cluster}\eqref{cluster induct}, with $C'=(\sigma_k\cdots\sigma_1(C))\setminus\set{-\alpha_i}$.

In particular, $s_k\cdots s_1(C)=\set{-\alpha_i}\cup C'$.
Since necessarily $\RS'$ is of finite type, $C'$ is a real $c'$-cluster.
By the finite version of this result (or by induction), $C'$ is a $\integers$-basis for the root lattice of $\RS'$, and therefore $s_k\cdots s_1(C)$ is a $\integers$-basis for $Q$.
Since elements of $W$ preserve the lattice, $C$ itself is also a $\integers$-basis for $Q$.
This yields \eqref{Q basis}, and \eqref{Q n} follows immediately.
By \cref{cyclo}, the largest possible size of a set of pairwise $c$-compatible roots in $\APTre{c}$ is $n-2$, and \eqref{at least 2} follows by \cref{tau prop}\eqref{tauc fin}.
\end{proof}

\section{Cluster expansions and the cluster fan}\label{fan sec}  
In this section, we show that the $c$-compatibility relation defines a simplicial fan, or equivalently that each vector in $V$ has a unique $c$-cluster expansion.  
Finite-type versions of these results are in \cite{FoZe03,MRZ,Ste13}.

\begin{definition}\label{clus exp def}  
Given a vector in $v\in V$, a \newword{$c$-cluster expansion} of $v$ is an expression $v=\sum_{\alpha \in \AP{c}}m_\alpha \alpha$ where the $m_\alpha$ are nonnegative real numbers having $m_\alpha m_\beta=0$ whenever $\alpha$ and $\beta$ are distinct and not $c$-compatible.
In light of \cref{cluster properties}, at most $n$ of the $m_\alpha$ can be nonzero.
The \newword{support} of a $c$-cluster expansion $v=\sum_{\alpha \in \AP{c}}m_\alpha \alpha$ is the set $\set{\alpha\in\AP{c}:m_\alpha\neq0}$.
\end{definition}

\begin{theorem}\label{clus exp thm}
For any root system $\RS$ of affine type and any Coxeter element $c$, every vector in $V$ admits a unique $c$-cluster expansion.  
\end{theorem}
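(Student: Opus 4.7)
The plan is to prove the theorem by showing that the nonnegative cones $\Cone(C)$ over $c$-clusters $C$ form a complete simplicial fan covering $V$, from which existence and uniqueness of $c$-cluster expansions will follow at once. Simpliciality is already in hand: \cref{cluster properties}\eqref{Q basis} says that each real $c$-cluster is an $\reals$-basis of $V$, giving an $n$-dimensional simplicial cone, while \cref{cluster properties}\eqref{Qc basis} says that each imaginary $c$-cluster is an $\reals$-basis of the hyperplane $\eigenspace{c}$, giving an $(n-1)$-dimensional simplicial cone. What remains is to prove covering (existence of an expansion for every $v\in V$) and intersection along common faces (which yields uniqueness).

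The first step is to treat vectors $v\in\eigenspace{c}$ using the cyclohedron picture from \cref{cyclo}. By \cref{delta c compat} and \cref{cluster properties}\eqref{in the cone}, every imaginary $c$-cluster consists of $\delta$ together with $n-2$ pairwise $c$-compatible roots in $\APTre{c}$, and the restriction of the cluster complex to $\APTre{c}$ is a join of boundary complexes of simplicial cyclohedra, one per irreducible component of $\RST{c}$. Each such cyclohedron is known to realize a complete simplicial fan in the span of its simple roots; taking the join of these fans with the ray $\reals_{\ge0}\delta$ assembles them into a complete simplicial fan in $\eigenspace{c}$, handling existence and uniqueness for $v\in\eigenspace{c}$ so long as the expansion is supported in $\APT{c}$.

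For $v\notin\eigenspace{c}$ the plan is to induct on the rank $n$, taking the finite-type statement (proved in \cite{FoZe03,MRZ}) as the base together with the small-rank affine cases handled directly. The induction step uses both the conjugation action in \cref{prop:inductive_cluster}\eqref{cluster sigma} and the parabolic reduction \cref{prop:inductive_cluster}\eqref{cluster induct}. After a suitable source-sink sequence, guided by the sign information in \cref{useful} and \cref{tau prop}\eqref{tidily}, one may assume that some negative simple root $-\alpha_i$ appears in a candidate expansion of the image of $v$; writing that image as $m(-\alpha_i)+v'$ with $v'\in\Span(\Simples\setminus\set{\alpha_i})$, the remaining expansion of $v'$ is produced by induction inside the parabolic root system $\RS'$ with restricted Coxeter element $c'$, then transferred back to $\RS$ via \cref{compat restrict}.

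The main obstacle will be gluing the two pictures together across the hyperplane $\eigenspace{c}$: one must rule out a vector $v\in\eigenspace{c}$ admitting both a $c$-cluster expansion inside $\APT{c}$ and a genuinely different one using roots outside $\eigenspace{c}$, which amounts to showing that no real $c$-cluster cone meets the relative interior of $\eigenspace{c}$. I expect to establish this using that each real cluster contains at least two roots in $\tau_c$-orbits of negative simple roots (\cref{cluster properties}\eqref{at least 2}) together with the $\phi_c$-sign stratification of $\AP{c}$ from \cref{useful}, which forces the extreme rays of any real cluster cone to straddle $\eigenspace{c}$ rather than lie in it. Once covering is verified in both pictures and the hyperplane gluing is in place, uniqueness follows from simpliciality combined with the standard fact that two maximal cones in a fan meet in a common face.
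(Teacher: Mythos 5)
Your overall architecture (handle $\eigenspace{c}$ via the cyclohedron structure, reduce everything else to proper parabolics by source-sink moves, then glue) matches the paper's, but two of your key steps have genuine gaps. First, the gluing step. What must be ruled out is a real $c$-cluster cone whose interior meets $\mathring\Delta_c$ (not ``the relative interior of $\eigenspace{c}$''), and your proposed mechanism does not do this. The claim that the extreme rays of a real cluster cone ``straddle $\eigenspace{c}$ rather than lie in it'' is false: a real cluster may contain up to $n-2$ roots of $\APTre{c}$, all of which span rays inside $\eigenspace{c}$, and even negative simple roots can lie in $\eigenspace{c}$ (e.g.\ $-\alpha_2$ in type $A^{(1)}_{n-1}$ with the Coxeter element of \cref{tab:type-by-type}, since $\alpha_2\in\SimplesT{c}_\fin$ there); note also that \cref{tau prop}\eqref{tidily} gives sign information only for $\tau_c^m(-\alpha_i)$ with $m\neq 0$. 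Moreover, even a cone that genuinely straddles the hyperplane still meets it in an $(n-1)$-dimensional cone, which is exactly the dimension of $\mathring\Delta_c$, so straddling proves nothing. The paper's actual argument is different: given a hypothetical expansion of $v\in\mathring\Delta_c$ using a root outside $\APT{c}$, it applies a minimal source-sink sequence sending that root to a negative simple, deduces via \eqref{compat base} that the image of $v$ has a strictly negative simple-root coordinate, and contradicts \cref{scs Delta}. Some argument of this kind is unavoidable.

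Second, your closing appeal to ``the standard fact that two maximal cones in a fan meet in a common face'' is circular: that the collection of cluster cones \emph{is} a fan is precisely the uniqueness statement being proved (this is the content of \cref{fan thm}), so it cannot be assumed. Uniqueness has to be tracked through the reduction itself, as the paper does by showing at each source-sink step that any expansion must be supported on positive roots where $\sigma_j$ agrees with $s_j$. Two smaller points: the completeness of the cyclohedral fan in $\eigenspace{c}$ with rays at the actual roots of $\APTre{c}$ is not an off-the-shelf fact about graph associahedra (a simplicial isomorphism type does not determine a fan realization); it is \cref{clus exp imaginary}, whose proof requires care precisely because, as the paper warns, proper parabolics of $\RST{c}$ and of $\RS$ do not coincide. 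And your ``induction on rank'' collapses immediately, since every proper parabolic root subsystem of an affine root system is already finite; the paper simply cites the finite-type result rather than inducting.
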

We emphasize that a vector may have a $c$-cluster expansion is terms of a real $c$-cluster or an imaginary $c$-cluster (or both, if the support of the expansion is contained in the intersection of the two clusters).

Before proving \cref{clus exp thm}, we rephrase it geometrically.
A \newword{simplicial cone} is the nonnegative linear span of a linearly independent set $X$ of vectors and a \newword{face} of the cone is the nonnegative linear span of a subset of $X$. 
A \newword{simplicial fan} is a collection of simplicial cones that is closed under passing to faces, and such that the intersection of any two cones in the collection is a face of each.
A simplicial fan is \newword{complete} if the union of its cones is the entire ambient vector space.

\begin{definition}\label{Fanc def}
Given a set $C$ of pairwise $c$-compatible roots in $\AP{c}$, write $\Cone(C)$\nomenclature[cone]{$\Cone(C)$}{$\reals_{\ge 0}\cdot C$} for the nonnegative linear span of $C$.
Write $\Fan_c(\RS)$\nomenclature[fanc]{$\Fan_c(\RS)$}{finite or affine cluster fan} for the collection of cones $\Cone(C)$ for all sets $C$ of pairwise $c$-compatible roots in $\AP{c}$.
Write $\Fan_c^\re(\RS)$\nomenclature[fancre]{$\Fan_c^\re(\RS)$}{real affine cluster fan} for the subfan of $\Fan_c(\RS)$ consisting of cones spanned by real roots.
We call $\Fan_c(\RS)$ the \newword{affine cluster fan} or \newword{affine generalized associahedron fan}.
\end{definition}

\cref{cluster properties}\eqref{Q basis} and \cref{cluster properties}\eqref{Qc basis} imply that the cones in $\Fan_c(\RS)$ are simplicial.
The content of \cref{clus exp thm} is that every point in $V$ is in some cone of $\Fan_c(\RS)$ and that any two cones of $\Fan_c(\RS)$ intersect in such a way that, for any $v$ in their intersection, the $c$-cluster expansions of $v$ coming from the two cones agree.
This is precisely the assertion that the two cones intersect along a mutual face.
Thus \cref{clus exp thm} has the following rephrasing.

\begin{theorem}\label{fan thm}
For any root system $\RS$ of affine type and any Coxeter element $c$,
$\Fan_c(\RS)$ is a complete, simplicial fan. 
\end{theorem}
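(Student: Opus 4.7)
The plan is to prove the equivalent statement \cref{clus exp thm}: that every vector in $V$ admits a unique $c$-cluster expansion. Simpliciality of the cones in $\Fan_c(\RS)$ is already in hand from \cref{cluster properties}: real $c$-clusters are $\integers$-bases of $Q$ (so their cones are simplicial of dimension $n$) and imaginary $c$-clusters are $\integers$-bases of $Q_c$ (so their cones are simplicial of dimension $n-1$, lying in $\eigenspace{c}$). Closure of $\Fan_c(\RS)$ under faces is immediate from the definition, since any subset of a pairwise $c$-compatible collection is again pairwise $c$-compatible. So the real work is showing that the cones cover $V$ and that any two of them intersect in a common face.

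I would proceed by induction on the rank $n$, with the $n=2$ affine cases verified by hand. The inductive step rests on two tools. First, by \cref{prop:inductive_cluster}\eqref{cluster induct}, the $c$-clusters containing $-\alpha_i$ are in bijection with $c'$-clusters of the rank-$(n-1)$ parabolic root subsystem $\RS'$ spanned by $\Simples\setminus\{\alpha_i\}$. Since proper parabolic subsystems of an affine root system are of finite type, the inductive hypothesis (combined with the known finite-type version of the theorem) provides a complete simplicial fan $\Fan_{c'}(\RS')$ covering the span of $\RS'$; coning this with $\reals_{\ge0}(-\alpha_i)$ produces the desired local fan structure inside the half-space $\{v : \br{\rho_i,v}\le0\}$. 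Second, by \cref{prop:inductive_cluster}\eqref{cluster tau}, $\tau_c$ permutes the set of $c$-clusters and thus extends cone-by-cone to a piecewise-linear bijection of the support of $\Fan_c(\RS)$ onto itself that carries cones to cones.

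With these tools the strategy is a Fomin-Zelevinsky-style sweep. Start from $\Cone(-\Simples)$, which is a $c$-cluster cone since the $-\alpha_i$ are pairwise $c$-compatible by \cref{compat base}, and iterate $\tau_c^{\pm 1}$. By \cref{tau prop}\eqref{tauc def} together with \eqref{tidily}, forward iteration pushes the initial cone into the half-space $\{K(\gamma_c,v)>0\}$ through cones whose vertices progressively traverse $\TravProj{c}$-orbits toward $\eigenspace{c}$, while backward iteration sweeps the opposite half-space through $\TravInj{c}$-orbits, consistent with \cref{useful}. On the hyperplane $\eigenspace{c}$ itself, \cref{cyclo} identifies the restriction of the $c$-cluster complex to $\APTre{c}$ as a join of boundary complexes of simplicial cyclohedra; coning with the fixed ray $\reals_{\ge0}\delta$ and invoking the known fact that cyclohedra triangulate spheres then produces a complete simplicial fan of $\eigenspace{c}$ that serves as the ``seam'' between the two half-space sweeps.

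The hard part will be verifying that the two half-space sweeps and the imaginary fan on $\eigenspace{c}$ fit together exactly, with no gaps, no double covering, and matching face structure at the interfaces. Cones produced by the sweep become arbitrarily thin as $\tau_c$ is iterated, since infinitely many infinite $\tau_c$-orbits accumulate at $\eigenspace{c}$ by \cref{aff c-orbits}, so one must show that every vector strictly on one side of $\eigenspace{c}$ is reached in finitely many steps and that its cluster expansion is unique. Equivalently, one must rule out both uncovered vectors and vectors lying in two cones whose intersection is not a common face, and then verify that the limiting combinatorics on $\eigenspace{c}$ match the cyclohedral structure described by \cref{cyclo}. This limit analysis, combined with careful bookkeeping of how parabolic slabs fit inside each iterate of the sweep, is where I expect the real technical effort to concentrate.
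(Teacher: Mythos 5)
Your skeleton matches the paper's in outline (reduce to \cref{clus exp thm}, get simpliciality from \cref{cluster properties}, use parabolic induction via $\sigma_s$ and the finite-type result, treat $\eigenspace{c}$ separately), but the proposal stops exactly where the proof has to start, and one of its structural claims is false. First, the claim that coning the cyclohedral complex of \cref{cyclo} with $\reals_{\ge0}\delta$ ``produces a complete simplicial fan of $\eigenspace{c}$'' is wrong: the imaginary cluster cones cover only the imaginary cone $\Delta_c$, the nonnegative span of $\SimplesT{c}$, which is a pointed full-dimensional cone in the hyperplane $\eigenspace{c}$ but far from all of it. The rest of $\eigenspace{c}$ must be covered by boundary faces of real cluster cones, so there is no clean ``seam'' of the kind you describe; the correct dichotomy is $v\in\mathring\Delta_c$ versus $v\in V\setminus\mathring\Delta_c$, not the two sides of $\eigenspace{c}$ plus the hyperplane. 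Moreover, even on $\Delta_c$, knowing that the complex is combinatorially a cone over a join of spheres does not give you that the cones with the actual roots of $\SimplesT{c}$ as rays tile $\Delta_c$ without overlap; the paper has to prove this separately (\cref{clus exp imaginary}) by an explicit greedy construction and a minimal-counterexample argument.

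Second, the point you defer as ``the hard part'' --- that every vector off $\mathring\Delta_c$ is reached by the sweep in finitely many steps, with a unique expansion --- is the entire content of the theorem, and nothing in your proposal supplies a mechanism for it. The paper's mechanism is \cref{rotate to nonpos}: for $v\notin\eigenspace{c}$ one writes $v=a\gamma_c+b\delta+w$ and uses $c\gamma_c=\gamma_c+\delta$ to compute $c^{m\ell}v=a\gamma_c+(b+m\ell a)\delta+w$, which forces a negative simple-root coordinate after finitely many steps; for $v\in\eigenspace{c}\setminus\mathring\Delta_c$ one needs the case-by-case \cref{lemma in the tubes} to escape to a proper parabolic. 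Your $\tau_c$-sweep starting from $\Cone(-\Simples)$ only visits the $\tau_c$-orbit of that one cone, and upgrading it to cover a half-space requires precisely this finiteness statement plus the bookkeeping (present in the paper's proof) that each $\sigma_j$ in the minimal conjugating sequence acts linearly, as $s_j$, on the relevant supports --- which is also how uniqueness is transported back from the parabolic. As written, the proposal identifies the right difficulties but does not resolve any of them, so it is a plan rather than a proof.
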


\begin{example}\label{fanex}
An example of $\Fan_c(\RS)$ is shown in \cref{fig:fan}, for $\RS$ of type $D_3^{(2)}$ and $c=s_1s_2s_3$.
(Compare~\cite[Example~1.3]{afforb}.)
The intersection of $\Fan_c(\RS)$ with a unit sphere is a collection of points, geodesic arcs, and spherical triangles.  
The figure shows that collection, stereographically projected to the plane.
The cone spanned by $-\Simples$ appears as the smallest triangle on the vertical line of symmetry of the picture.
The colored points are (the directions of) some of the roots in $\AP{c}$, with a different color for each $\tau_c$-orbit. 
\end{example}
\begin{figure}

  \caption{$\Fan_c(\RS)$ for $\RS$ and $c$ as in \cref{fanex}}
\label{fig:fan}
\end{figure}

We now prepare to prove \cref{clus exp thm}.
Let $\Delta_c$\nomenclature[zzdc]{$\Delta_c$}{$\reals_{\ge 0}\cdot\RST{c}$} be the nonnegative linear span of the positive roots in $\RST{c}$.
This is a closed polyhedral cone whose extreme rays are the simple roots $\SimplesT{c}$ of $\RST{c}$.
Thus the cross section of $\Delta_c$ is a product of simplices, with one $k$-vertex simplex for each rank-$k$ component of $\RST{c}$.
The imaginary root~$\delta$ is in the relative interior of $\Delta_c$, which we thus call the \newword{imaginary cone}.
(This is not the same as the cone spanned by positive imaginary roots, which is the ray spanned by~$\delta$.  Typically $\Delta_c$ contains infinitely many positive real roots.)
We write $\mathring\Delta_c$\nomenclature[zzdczzzz]{$\mathring\Delta_c$}{relative interior of $\Delta_c$} for the relative interior of $\Delta_c$.
As an immediate consequence of \cref{tUp simples}, we have the following proposition and corollary.

\begin{proposition}\label{scs Delta}
If $s$ is initial or final in $c$, then $\Delta_{scs}=s\Delta_c$.
\end{proposition}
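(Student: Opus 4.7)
The plan is to observe that $\Delta_c$ is characterized as the nonnegative linear span of its extreme rays, and those extreme rays are precisely the simple roots $\SimplesT{c}$ of $\RST{c}$ (as noted in the paragraph preceding the proposition). The proposition should then fall out immediately from \cref{tUp simples}, which asserts $\SimplesT{scs} = s\,\SimplesT{c}$ whenever $s$ is initial or final in $c$.

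Concretely, I would proceed in two short steps. First, recall that $\Delta_c = \reals_{\ge 0}\cdot\SimplesT{c}$; since $s$ acts linearly on $V$, this gives $s\Delta_c = \reals_{\ge 0}\cdot(s\,\SimplesT{c})$. Second, apply \cref{tUp simples} to rewrite $s\,\SimplesT{c} = \SimplesT{scs}$, so that $s\Delta_c = \reals_{\ge 0}\cdot\SimplesT{scs} = \Delta_{scs}$.

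There is essentially no obstacle to overcome: the hypothesis that $s$ is initial or final in $c$ is exactly what is required to invoke \cref{tUp simples}, and everything else is linearity of the reflection $s$ together with the description of $\Delta_c$ in terms of its extreme rays. The only minor point worth flagging explicitly is that one should note the extreme rays description of $\Delta_c$ holds uniformly in $c$, so the same description applies to $\Delta_{scs}$, ensuring the final equality on the right-hand side.
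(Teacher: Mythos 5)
Your argument is correct and is precisely the reasoning the paper intends: it states \cref{scs Delta} as an immediate consequence of \cref{tUp simples}, using exactly the description of $\Delta_c$ as the nonnegative span of $\SimplesT{c}$ together with linearity of $s$. You have simply written out the one-line deduction the paper leaves implicit.
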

\begin{corollary}\label{c Delta}
$c\,\Delta_c=\Delta_c$.
\end{corollary}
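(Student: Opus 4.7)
The plan is to deduce $c\,\Delta_c=\Delta_c$ by showing that $c$ permutes the set $\SimplesT{c}$ of simple roots of $\RST{c}$; since $\Delta_c$ is by construction the nonnegative linear span of $\SimplesT{c}$, this immediately gives the desired equality of cones.

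The quickest route is to invoke \cref{c on tUp components}, which says that $c$ acts on each irreducible component of $\RST{c}$ by a rotation of that component's Dynkin diagram. Combined with \cref{Up details}\eqref{type A}, which tells us that each component is of finite type $A$ (hence has a cyclic Dynkin diagram, or, in rank~$2$, just two nodes that get swapped), rotation of nodes corresponds to a permutation of the simple roots of the component. Taking the union over all components, $c$ permutes $\SimplesT{c}$, and the corollary follows.

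An alternative approach, and the one foreshadowed by the placement of the corollary immediately after \cref{scs Delta}, is to iterate \cref{scs Delta} through the cyclic rotations of a fixed reduced word $c=s_1\cdots s_n$. Since $s_1$ is initial in $c$, we obtain $s_1\Delta_c=\Delta_{s_1cs_1}$. Since $s_2$ is initial in the cyclically rotated word $s_1cs_1=s_2\cdots s_ns_1$, a second application of \cref{scs Delta} gives $s_2s_1\Delta_c=\Delta_{s_2s_1cs_1s_2}$. Continuing through $s_3,\ldots,s_n$, after $n$ steps the sequence of conjugate Coxeter elements cycles back to $c$, yielding $s_n\cdots s_1\Delta_c=\Delta_c$, i.e.\ $c^{-1}\Delta_c=\Delta_c$, and hence $c\Delta_c=\Delta_c$.

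There is no real obstacle here; the only choice is which underlying result to invoke. I would present the first argument because it makes the geometric content visible (rotating the cyclic Dynkin diagrams of the tubes), while noting that the second derivation is what makes the corollary genuinely an ``immediate consequence'' of the propositional machinery just built.
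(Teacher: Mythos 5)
Your proposal is correct. The second derivation (iterating \cref{scs Delta} through the cyclic rotations $s_1, s_2\cdots s_ns_1, \ldots$ of the reduced word, arriving at $c^{-1}\Delta_c=\Delta_c$) is exactly what the paper intends: it gives no written proof, stating only that the proposition and corollary are immediate consequences of \cref{tUp simples}. Your first argument via \cref{c on tUp components} is also valid — $c$ permutes $\SimplesT{c}$ by rotating each component's diagram, and $\Delta_c$ is the nonnegative span of $\SimplesT{c}$ — but it imports a slightly heavier result from \cite{afforb} than the paper needs here.
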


We separate out one case of \cref{clus exp thm} as the following proposition.

\begin{proposition}\label{clus exp imaginary}
Every vector $v\in\Delta_c$ has a $c$-cluster expansion.
This expansion is the unique $c$-cluster expansion for $v$ that is supported on roots in $\APT{c}$.
\end{proposition}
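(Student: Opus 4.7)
The plan is to prove the proposition by fixing a nonneg simple-root expansion of $v$ in $\SimplesT{c}$, constructing the $\delta$-coefficient $m_\delta$ as a sum of per-component minima, and reducing the real-root part to the classical finite-type cluster expansion on a union of finite type $A$ sub-root-systems. Recall that each irreducible component $\Upsilon'$ of $\RST{c}$ is of affine type $A^{(1)}$ (\cref{Up details}\eqref{type A}), with its simple roots $\SimplesT{c} \cap \Upsilon'$ forming a cycle cyclically permuted by $c$ (\cref{c on tUp components}) and summing to $\delta$. Roots in $\APTre{c}$ correspond via $\SuppT$ to proper connected arcs of these cycles (\cref{pos real in tubes}), with compatibility given by nesting or spacing (\cref{compatible in tubes}).

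For existence, fix any nonneg expansion $v = \sum_{\beta \in \SimplesT{c}} c_\beta \beta$ and, for each component $\Upsilon'$, set $\mu(\Upsilon') = \min_{\beta \in \SimplesT{c} \cap \Upsilon'} c_\beta$. The ambiguity of nonneg simple-root expansions is precisely a shift $c_\beta \mapsto c_\beta + a_{\Upsilon'}$ for $\beta \in \Upsilon'$ with $\sum_{\Upsilon'} a_{\Upsilon'} = 0$, so the quantity $m_\delta := \sum_{\Upsilon'} \mu(\Upsilon')$ is independent of the chosen expansion, and it equals the largest $\lambda$ with $v - \lambda \delta \in \Delta_c$. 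Replacing each $c_\beta$ by $c_\beta - \mu(\Upsilon')$ gives a nonneg expansion of $v - m_\delta \delta$ with at least one zero coefficient in each component, so the simple roots with nonzero coefficient in each component form disjoint arcs that are finite type $A$ sub-root-systems. Applying the classical finite-type cluster expansion theorem of \cite{FoZe03,MRZ} to each arc writes $v - m_\delta \delta$ as a nonneg combination of positive real roots in $\APTre{c}$; roots within an arc are pairwise compatible by the finite-type theory, and roots from different arcs or components are spaced and hence also compatible. Adding $m_\delta \delta$ yields the desired cluster expansion supported on $\APT{c}$.

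For uniqueness, suppose $v = m'_\delta \delta + \sum m'_\alpha \alpha$ is another cluster expansion supported on $\APT{c}$. By \cref{compat not full}, the combined tube support of the real-root part misses at least one simple root per component, so $v - m'_\delta \delta$ admits a nonneg expansion with a zero coefficient in each component; the characterization of $m_\delta$ above then forces $m'_\delta = m_\delta$. Remaining uniqueness reduces to the classical uniqueness of finite-type cluster expansions on the arcs.

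The main technical obstacle is handling the non-uniqueness of nonneg simple-root expansions when $\RST{c}$ has $t > 1$ components, which produces a $(t-1)$-dimensional shift freedom coming from the relations $\delta = \sum_{\beta \in \SimplesT{c} \cap \Upsilon'} \beta$ for each component. Establishing both the well-definedness of $m_\delta$ and its operational characterization (as the largest $\lambda$ with $v - \lambda \delta \in \Delta_c$, equivalently the unique $\lambda$ admitting a nonneg simple-root expansion of $v - \lambda \delta$ with a zero in each component) is the main work, which can be carried out via direct linear-programming considerations on the polytope of nonneg simple-root expansions.
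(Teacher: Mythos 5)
Your setup — subtracting the unique $m_\delta$ so that $v-m_\delta\delta$ has a nonnegative $\SimplesTChar^c$-expansion with a zero coefficient in each component, and the well-definedness argument via the $(t-1)$-dimensional shift freedom — is correct and matches the paper's first paragraph. The gap is in the step ``applying the classical finite-type cluster expansion theorem of \cite{FoZe03,MRZ} to each arc \ldots\ roots within an arc are pairwise compatible by the finite-type theory.'' Compatible in which sense? The finite-type theorem yields roots that are pairwise compatible in the arc's \emph{own} cluster complex (for some choice of Coxeter element of the arc), whereas a $c$-cluster expansion requires $c$-compatibility in $\AP{c}$, which by \cref{compatible in tubes} means nested or spaced. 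These relations need not coincide: the arc is a parabolic root subsystem of $\RST{c}$, not of $\RS$, so \cref{compat restrict} does not transfer the compatibility degree — this is precisely the pitfall the paper flags inside its own proof (``One may be tempted to directly deduce the claim from the finite case, in the parabolic subsystem. This does not work\ldots''). Concretely, on a type $A_3$ arc with Coxeter element $c'=s_2s_1s_3$ one has $\tau_{c'}^{-1}(\alpha_1+\alpha_2)=-\alpha_1$ and $\tau_{c'}^{-1}(\alpha_2+\alpha_3)=-\alpha_3$, so by \eqref{compat tau} and \eqref{compat base} the overlapping pair $\set{\alpha_1+\alpha_2,\ \alpha_2+\alpha_3}$ has compatibility degree $0$ in both directions; the finite-type expansion of $\alpha_1+2\alpha_2+\alpha_3$ it produces is then not nested-or-spaced, hence not a $c$-cluster expansion in $\AP{c}$.

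To close the gap you would have to fix the orientation of each arc to be the linear one induced by the cyclic action of $c$ on the component, and then prove that for that orientation the finite-type compatibility of positive roots is exactly the nested-or-spaced relation on intervals. That statement is true, but it is nowhere in the paper, it fails for other orientations (as above), and proving it is essentially the same work as the paper's direct argument, which instead constructs the expansion greedily (peel off the minimal coefficient times a root of maximal tube support and verify nested-or-spaced directly) and proves uniqueness by a minimal-counterexample induction. The same unproved identification is what your uniqueness step leans on; your preceding observations that $m'_\delta=m_\delta$ and that any competing expansion supported on $\APT{c}$ must live on the same arcs are, however, correct.
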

\begin{proof}
Such a vector $v$ can be written as a finite positive linear combination of simple roots of $\RST{c}$.
The simple roots in each component of $\RST{c}$ sum to $\delta$.
Thus we can write $v$ as a linear combination, with nonnegative coefficients, of $\delta$ and elements of $\SimplesT{c}$, with at least one zero coefficient in each component of $\RST{c}$.
This expression is unique because the linear spans of any two components of $\RST{c}$ only intersect in the line spanned by $\delta$. 
For the same reason, the proposition reduces to proving the following claim:
A vector $v'$ in the nonnegative linear span of a proper parabolic root subsystem of a component $\Upsilon'$ 
of $\RST{c}$ has a unique cluster expansion supported on positive real roots in that component.
(One may be tempted to directly deduce the claim from the finite case, in the parabolic subsystem.
This does not work, because the notions of proper parabolics for $\RST{c}$ and $\RS$ do not coincide.)

Given such a $v'$, we can write $v'=\sum_{\beta\in N'}x_\beta\beta$ for a unique $N'\subsetneq\SimplesT{c}\cap\Upsilon'$ with $x_\beta>0$ for all $\beta\in N'$.
Let $x=\min\set{x_\beta:\beta\in N'}$.
By hypothesis, $N'$ generates a root subsystem whose components are all of finite type $A$.
Let $\beta\in N'$ have $x_\beta=x$ and let $\alpha$ have maximal tube support among roots in this subsystem with $\beta\in\SuppT(\alpha)$ such that $v''=v'-x\alpha$ is in the nonnegative linear span of~$N'$. 
Write $v''=\sum_{\beta\in N''}y_\beta\beta$ with $N''\subsetneq N'$ and $y_\beta>0$ for all $\beta\in N''$.
By the minimality of $x$ and maximality of $\alpha$, $\alpha$ is either spaced or nested with all the roots in the subsystem generated by $N''$. 
The existence of a $c$-cluster expansion for $v'$ then follows by induction on $|N'|$.

Let $\ell$ be the minimum integer such that there exists a vector $v'$ with $|N'|=\ell$, having two distinct $c$-cluster expansions.
By minimality of $\ell$, the set $N'$ (as in the previous paragraph) induces a connected proper subgraph of the diagram of $\Upsilon'$, and this subgraph is necessarily a path.
This path is the tube support of the root $\alpha$.
In light of \cref{compatible in tubes}, every $c$-cluster expansion of $x'$ must include $\alpha$ with coefficient $x$.
Therefore the vector $v''=v'-x\alpha$ has two $c$-cluster expansions of length $\ell-1$, contradicting our assumption.
We have proved the claim.
\end{proof}

Besides proving part of \cref{clus exp thm}, \cref{clus exp imaginary} is helpful in proving the following key lemma.

\begin{lemma}\label{rotate to nonpos}  
Given a nonzero $v\in V\setminus\mathring\Delta_c$, there exists a sequence of simple reflections $a_1,\ldots,a_k$ such that 
\begin{enumerate}[\rm(i)]
\item $a_i$ is initial or final in the Coxeter element $a_{i-1}\cdots a_1ca_1\cdots a_{i-1}$ for $i=1\ldots k$, and 
\item there is at least one nonpositive coefficient in the expansion of $a_ka_{k-1}\cdots a_1(v)$ in the basis of simple roots of $\RS$.
\end{enumerate}
The sequence can be chosen so that, in condition (i), either $a_i$ is initial for all $i$ or $a_i$ is final for all $i$.  
If $v\in \eigenspace{c}\setminus\mathring\Delta_c$, then the sequence can be chosen with $k\le M_\RS$.
\end{lemma}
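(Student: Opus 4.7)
The plan is to split on whether $v\in\eigenspace{c}$. A common ingredient is that arbitrary powers of $c$ or $c^{-1}$ can be realized by the required kind of sequence: taking $a_i=s_i$ for $i=1,\ldots,n$ makes each $a_i$ initial in the conjugated Coxeter element $s_{i-1}\cdots s_1 c s_1\cdots s_{i-1}$ (a cyclic rotation of the reduced word for $c$), and the cumulative action $a_n\cdots a_1=s_n\cdots s_1$ equals $c^{-1}$; iterating realizes $c^{-m}$ with $mn$ all-initial moves. Dually, reading the letters of $c$ from the right realizes $c^m$ with $mn$ all-final moves.

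In the case $v\notin\eigenspace{c}$, I use \cref{eigen} to write $v=u+\lambda\gamma_c$ with $u\in\eigenspace{c}$ and $\lambda\neq 0$. Since $(c-1)\gamma_c=\delta$, one computes
\[c^m v=c^m u+\lambda\gamma_c+m\lambda\delta.\]
By \cref{eigen}\eqref{eigen U}, $c$ has finite order on $\eigenspace{c}$, so $c^m u$ stays bounded, while $m\lambda\delta$ grows linearly without bound. Choosing the sign of $m$ opposite to that of $\lambda$ makes $m\lambda<0$; for $|m|$ large enough, $c^m v$ is dominated by $m\lambda\delta$, a negative multiple of the strictly positive vector $\delta$, so every simple-root coordinate of $c^m v$ is strictly negative. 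Realize this $c^{\pm m}$ as $|m|n$ all-initial or all-final moves.

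In the case $v\in\eigenspace{c}\setminus\mathring\Delta_c$, the $c$-orbit of $v$ is finite of order dividing the least common multiple $L$ of the ranks of the components of $\RST{c}$, so it suffices to exhibit some $c^m v$ with $0<|m|\le L$ having a nonpositive coordinate (then $|m|n\le nL\le M_\RS$). Suppose for contradiction that every $c^m v$ has strictly positive simple-root coordinates. The orbit average is $c$-invariant, hence a multiple of $\delta$ by \cref{eigen}\eqref{eigen 1}, and positivity forces this multiple to be $\mu\delta$ with $\mu>0$. Write $v=\mu\delta+w$ with $\sum_m c^m w=0$, and decompose $w$ across the components of $\RST{c}$; by \cref{c on tUp components}, $c$ rotates each component's cyclic family of simple roots $\beta_1,\ldots,\beta_k$ (which satisfy $\sum_{i=1}^k\beta_i=\delta$) by one step. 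Writing the contribution of $w$ to one component as $\sum_i x_i\beta_i$ with $\sum_i x_i=0$ and combining with $\mu\delta$ and the other components, all rotations must have nonnegative $\Simples$-coordinates; a direct component-by-component analysis then forces $x_i+\mu\ge 0$ for every $i$, showing $v\in\mathring\Delta_c$ and contradicting the hypothesis.

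The main obstacle will be the positivity analysis in the second case: the $\SimplesT{c}$-expansion of $w$ within each component is nonunique because of the relation $\sum_{\beta\in\text{component}}\beta=\delta$, so one has to be careful about how the shift by $\mu\delta$ interacts with the rotated orbit conditions. The cleanest route is probably to fix, within each component, an expansion in which some coefficient vanishes, and then use the requirement that every rotate lies in the positive $\Simples$-cone to pin down the signs of the remaining coefficients.
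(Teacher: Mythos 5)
Your first case ($v\notin\eigenspace{c}$) is correct and is essentially the paper's own argument: decompose $v$ along the generalized eigenvector $\gamma_c$, use $(c-1)\gamma_c=\delta$ and the finite order of $c$ on $\eigenspace{c}$ to drift in the $-\delta$ direction, and realize $c^{\pm m}$ by all-initial or all-final sequences. The second case, however, has a genuine gap, and it sits exactly where you flagged the ``main obstacle.''

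First, the conclusion you aim for is not the right one. With the normalization $\sum_i x_i^{(j)}=0$ in each component $j$ of $\RST{c}$, writing $v=\mu\delta+\sum_{j,i}x_i^{(j)}\beta_i^{(j)}$, membership in $\Delta_c$ requires splitting $\mu=\sum_j\mu_j$ with $\mu_j\ge -\min_i x_i^{(j)}$ for each $j$, i.e.\ $\mu\ge\sum_j\bigl(-\min_i x_i^{(j)}\bigr)$. The condition you propose to derive, $x_i^{(j)}+\mu\ge0$ for all $i,j$, only gives $\mu\ge\max_j\bigl(-\min_i x_i^{(j)}\bigr)$, which is strictly weaker as soon as two components of $\RST{c}$ each carry a negative coefficient; so even granting your analysis, it would not yield $v\in\mathring\Delta_c$. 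Second, and more seriously, the corrected claim --- that nonnegativity of the $\Simples$-coordinates of \emph{all} powers $c^mv$ forces $\mu\ge\sum_j\bigl(-\min_i x_i^{(j)}\bigr)$ --- is precisely the hard combinatorial content of this case and does not follow from a soft positivity argument. Each inequality $[c^mv:\alpha_p]\ge0$ reads $\sum_{j,i}x^{(j)}_{i-m}[\beta_i^{(j)}:\alpha_p]+\mu[\delta:\alpha_p]\ge0$, and to certify the needed bound one must produce, for every tuple consisting of one distinguished index per component, some $m$ and some $\alpha_p$ whose coefficients select exactly those indices. Such certificates need not exist a priori: when $[\delta:\alpha_p]>1$ the functional does not have the selecting form, and when the component sizes are not pairwise coprime a single power of $c$ cannot adjust the components independently. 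The paper establishes the existence of these certificates by the type-by-type analysis of \cref{lemma in the tubes} (see the $E_6^{(1)}$ case, where $\gcd(2,3)=1$ is invoked and the argument branches on which simple root of the second component is omitted), and only after first conjugating $c$ by source-sink moves to the standard Coxeter element of \cref{tab:type-by-type}. Your argument uses only powers of the original $c$; since the positive cone of $\Simples$ is not preserved by those conjugations, the claim for a general $c$ does not reduce to the standard one, so this too would need proof. The paper avoids your polyhedral formulation altogether by writing $v=-x\delta+\sum x_i\beta_i$ with $x\ge0$ via the cluster expansion of $v+x\delta$ (\cref{clus exp imaginary}) and then applying \cref{compat not full} and \cref{lemma in the tubes} to annihilate a simple-root coordinate of the positive part; some type-by-type input of this kind appears unavoidable, and your sketch does not supply it.
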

\begin{proof}
First, suppose $v\in \eigenspace{c}$.
For large enough positive $x\in\reals$, the vector $v+x\delta$ is in $\Delta_c$, and thus by \cref{clus exp imaginary}, the vector $v+x\delta$ has a cluster expansion consisting of roots in $\APT{c}$.
We can choose $x$ so that $\delta$ is not in the support of the expansion (or in other words, so that $v+x\delta$ is on the boundary of~$\Delta_c$).
We have $x\ge0$, because by hypothesis $v\not\in\mathring\Delta_c$.
Write $\sum x_i\beta_i$ for the cluster expansion of $v+x\delta$, so that $v=-x\delta+\sum x_i\beta_i$.
In light of \cref{compat not full}, we can apply \cref{lemma in the tubes} to find a sequence $a_1,\ldots,a_k$ satisfying the first condition of the lemma (choosing ``initial'' for each $i$) and a simple root $\alpha_j$ with ${[\sum x_ia_ka_{k-1}\cdots a_1\beta_i:\alpha_j]=0}$. 
Since $\delta$ has full support in the basis of simple roots, we see that ${[-x\delta+\sum x_ia_ka_{k-1}\cdots a_1\beta_i:\alpha_j]\le0}$, and since ${a_ka_{k-1}\cdots a_1\delta=\delta}$, the inequality says $[a_ka_{k-1}\cdots a_1v:\alpha_j]\le0$.

Now suppose $v\not\in \eigenspace{c}$ and write $v=a\gamma_c+b\delta+w$ with $w\in \eigenspace{c}\cap V_\fin$ and ${a\neq 0}$.
\cref{eigen}\eqref{eigen U} says that $c$ has finite order on $\eigenspace{c}$.
Thus there exists a positive integer $\ell$ such that $c^\ell w=w$. 
Since $c\gamma_c=\gamma_c+\delta$, we compute $c^{m\ell}v$ to be $a\gamma_c+(b+m\ell a)\delta+w$ for any $m\in\integers$.
We can choose $m$ so that $c^mv$ has at least one negative simple-root coordinate.
If $m=0$, then we are done.
If $m<0$, then the sequence $s_{-mn},s_{-mn+1},\ldots,s_1$, with indices interpreted modulo $n$, has the desired properties (choosing ``initial'' for each $i$).
If $m>0$, then the sequence $s_1,s_2,\ldots,s_{mn}$, again with indices modulo $n$, is the desired sequence (with ``final'' for all $i$).
\end{proof}

\begin{proof}[Proof of \cref{clus exp thm}]
A natural way to prove this theorem is to state it more broadly for finite and affine type together and argue by induction on rank.
However, since when $\RS$ is affine, every proper parabolic root subsystem of $\RS$ is finite, and since the finite case is known (\cite[Theorem~3.11]{FoZe03} and \cite[Proposition~3.6]{MRZ}), we appeal to the finite case rather than to induction.

Let $v$ be a vector in $V$.
First, suppose $v$ is zero.
Then $v$ has a cluster expansion with no terms.
If some other cluster expansion exists, then some negative simple root $-\alpha_i$ must occur in the expansion with positive coefficient.
Therefore, by \cref{compat base}, the other roots in the support of the second cluster expansion are in the complementary parabolic subspace, and this is a contradiction.
We assume from now on that $v$ is nonzero.

Suppose now that $v$ is a vector in $V\setminus\mathring\Delta_c$.
Choose a sequence $a_1,\ldots,a_k$ to satisfy conditions (i) and (ii) of \cref{rotate to nonpos}, and furthermore choose the sequence to minimize $k$.
Thus $\tilde v=a_ka_{k-1}\cdots a_1(v)$ has a nonpositive coefficient in its simple-root expansion.
Write $\tilde c$ for the Coxeter element $a_ka_{k-1}\cdots a_1ca_1\cdots a_{k-1}a_k$.

Write $\tilde v_+$ for the vector $\sum_{i=0}^n[\tilde v:\alpha_i]_+\alpha_i$.
By construction $\tilde v_+$ is supported on a proper parabolic root subsystem.
Therefore it has a unique $c'$-cluster expansion involving only positive roots in the subsystem.
(Here $c'$ is the restriction of $\tilde c$ to the parabolic subgroup.)
Property \eqref{compat base} and the fact that the only negative roots in $\AP{c}$ are negative simple roots then imply that $\tilde v$ has a unique $\tilde c$-cluster expansion and that this expansion writes $\tilde v$ as a nonnegative combination of positive roots from this parabolic and negative simple roots in its complement.

If $k=0$, then $\tilde v=v$ and we are done.
If $k>0$, then $v$ has strictly positive simple-root coordinates.
Because we chose $k$ to be minimal, $a_k\tilde v$ also has strictly positive simple-root coordinates.
But $a_k$ equals some $s_j$, so $a_k\tilde v=s_j\tilde v$ differs from $\tilde v$ by a multiple of the simple root $\alpha_j$, and therefore $\tilde v$ has strictly positive simple-root coordinates except for the coordinate of $\alpha_j$.
Thus the unique $\tilde c$-cluster expansion of~$\tilde v$ is a nonnegative combination of positive roots whose $j$-th coordinate equals~$0$ and the root~$-\alpha_j$.
On all of these roots, the map $\sigma_j$ agrees with the map $s_j$.
Property \eqref{compat sigma} thus implies that an $s_j\tilde cs_j$-cluster expansion of $s_j\tilde v$ is obtained by applying $s_j$ to each root in the $\tilde c$-cluster expansion of $\tilde v$.
This $s_j\tilde cs_j$-cluster expansion of $s_j\tilde v$ is supported on positive roots, because otherwise property \eqref{compat base} implies a contradiction to the fact that $s_j\tilde v$ has strictly positive simple-root coordinates.
We claim that this $s_j\tilde cs_j$-cluster expansion of $s_j\tilde v$ is unique. 
Any other $s_j\tilde cs_j$-cluster expansion of $s_j\tilde v$ is supported on positive roots for the same reason, so $\sigma_j$ agrees with $s_j$ on the other expansion as well.
Thus property \eqref{compat sigma} implies that by applying $s_j$ to each root appearing in the other $s_j\tilde cs_j$-cluster expansion of $s_j\tilde v$, we obtain another $\tilde c$-cluster expansion of~$\tilde v$, but we have already established that $\tilde v$ has a unique $\tilde c$-cluster expansion.
This contradiction implies the uniqueness claim.

We now apply repeatedly the same argument to obtain a unique $c$-cluster expansion of $v$. 
The minimality of the sequence $a_1,\ldots,a_k$ guarantees that, at each step, we obtain an expansion supported only on positive roots so that the relevant simple reflection $s_i$ coincides with $\sigma_i$ on its support.

Finally, suppose $v\in\mathring\Delta_c$.  
\cref{clus exp imaginary} says that $v$ has a $c$-cluster expansion supported on roots in $\APT{c}$, and that this is the unique $c$-cluster expansion of $v$ supported on such roots.
It remains to show that $v$ has no $c$-cluster expansion whose support contains a negative simple root or a root outside of $\eigenspace{c}$.
But if $v$ has a $c$-cluster expansion containing such a root, \cref{tau prop} implies that we can apply $\tau_c^m$, for some integer $m$, to one of the roots in the support of the $c$-cluster expansion to obtain a negative simple root.
Thus there is a sequence $a_1,\ldots,a_k$ satisfying condition (i) of \cref{rotate to nonpos} such that the corresponding sequence of $\sigma_i$ takes one of the roots in the support of the $c$-cluster expansion to a negative simple.
Choosing this sequence to minimize $k$, each of the $\sigma_i$ acts as the corresponding $s_i$ on each root, so $a_k\cdots a_1$ applied to the root yields a negative simple.
Furthermore, the linear map $a_k\cdots a_1$ applied to all of the roots in the $c$-cluster expansion of $v$ yields an $a_k\cdots a_1ca_1\cdots a_k$-cluster expansion of $a_k\cdots a_1v$.
By Property \eqref{compat base}, one of the simple-root coordinates of $a_k\cdots a_1v$ is strictly negative.
This contradicts \cref{scs Delta}, because the latter implies that $a_k\cdots a_1v$ is in $\mathring\Delta_c$.  
\end{proof}

If $s$ is initial or final in $c$, then we extend the map $\sigma_s$ linearly on each cone of $\Fan_c(\RS)$ to define a map (which we also call $\sigma_s$) on all of $V$.
We extend~$\tau_c$ in the same way.
The following proposition is immediate by \cref{sigma prop} and \eqref{compat sigma}.
\begin{proposition}\label{fan isom}\quad
\begin{enumerate}[\qquad \upshape (1)]
\item \label{sigma fan}
If $s$ is initial or final in $c$ then $\sigma_s$ induces an isomorphism from $\Fan_c(\RS)$ to $\Fan_{scs}(\RS)$, which restricts to an isomorphism $\Fan^\re_c(\RS)\to\Fan^\re_{scs}(\RS)$.
\item \label{tau fan}
The map $\tau_c$ induces an automorphism of $\Fan_c(\RS)$, which restricts to an automorphism of $\Fan^\re_c(\RS)$.
\end{enumerate}
\end{proposition}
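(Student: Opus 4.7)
The proof is indeed short given the tools already developed. For part (1), I would start from the bijection $\sigma_s \colon \AP{c} \to \AP{scs}$ established in \cref{sigma prop}. By the compatibility identity~\eqref{compat sigma}, we have $\cm{\sigma_s\alpha}{\sigma_s\beta}{scs} = \cm{\alpha}{\beta}{c}$ for all $\alpha,\beta\in\AP{c}$, so $\sigma_s$ sends $c$-compatible pairs bijectively to $scs$-compatible pairs, and hence sends $c$-clusters to $scs$-clusters. Consequently the image under $\sigma_s$ of each cone of $\Fan_c(\RS)$ is a cone of $\Fan_{scs}(\RS)$.

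Next I would check that the piecewise-linear extension of $\sigma_s$ to all of $V$ (linear on each maximal cone) is well-defined. Any face of $\Fan_c(\RS)$ is spanned by a subset of $\AP{c}$ on which $\sigma_s$ is already defined as a bijection into $\AP{scs}$; hence the linear extensions to two maximal cones sharing such a face necessarily agree on the face. Because $\sigma_s$ is an involution on $-\Simples \cup \RSpos$ by \cref{sigma prop}, the analogous construction applied to $\Fan_{scs}(\RS)$ produces the inverse map, so $\sigma_s$ is a fan isomorphism. The restriction to the real subfans follows because $\sigma_s(\delta)=\delta$, so $\sigma_s$ exchanges real $c$-clusters with real $scs$-clusters.

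For part (2), I would write $\tau_c = \sigma_1\sigma_2\cdots\sigma_n$ and iterate the argument from part~(1):
\[
\Fan_c(\RS) \xrightarrow{\sigma_n} \Fan_{s_ncs_n}(\RS) \xrightarrow{\sigma_{n-1}} \Fan_{s_{n-1}s_ncs_ns_{n-1}}(\RS) \xrightarrow{\sigma_{n-2}} \cdots \xrightarrow{\sigma_1} \Fan_{s_1\cdots s_n\,c\,s_n\cdots s_1}(\RS),
\]
where at each stage the reflection being applied is final in the current Coxeter element (as the rightmost letter of the obvious reduced expression). Since $s_1\cdots s_n\,c\,s_n\cdots s_1 = c\cdot c\cdot c^{-1} = c$, the composition is an automorphism of $\Fan_c(\RS)$, and, because every $\sigma_i$ fixes $\delta$, it restricts to an automorphism of $\Fan_c^\re(\RS)$.

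There is essentially no obstacle here: the entire proposition reduces to the preservation of compatibility under $\sigma_s$ and to the fact that $\delta$ is fixed, both of which are already in hand. The only care required is the gluing argument for the piecewise-linear extension, which is routine because faces of $\Fan_c(\RS)$ are spanned by roots.
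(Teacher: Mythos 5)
Your proposal is correct and follows essentially the same route as the paper, which deduces the proposition directly from \cref{sigma prop} and \eqref{compat sigma} after extending $\sigma_s$ and $\tau_c$ linearly on each cone. The extra detail you supply about gluing the piecewise-linear extension along shared faces and about factoring $\tau_c$ as a chain of source-sink moves is exactly what the paper leaves implicit.
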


\cref{prop:scaling} implies that the set $\AP{c}$ and the compatibility relation are preserved by rescaling $\RS$, so the following proposition is immediate.

\begin{proposition}\label{Fanc scaling}
If $\RS$ and $\RS'$ are finite or affine root systems related by rescaling, then $\Fan_c(\RS)$ coincides with $\Fan_c(\RS')$.
\end{proposition}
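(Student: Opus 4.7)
The plan is to derive this proposition almost immediately from \cref{prop:scaling}, which already packages the two facts we need: the rescaling map $\beta\mapsto\beta'=\lambda_\beta\beta$ is a bijection $\AP{c}\to\RS'_c$, and the compatibility degree transforms by a positive scalar, so the relation ``$\cm\alpha\beta c=0$'' is preserved. The only remaining point is the geometric one that positive scaling of the spanning rays does not change a simplicial cone.

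In detail, I would proceed as follows. First, by \cref{prop:scaling} the map $\beta\mapsto\beta'$ restricts to a bijection $\AP{c}\to\RS'_c$. Second, for distinct $\alpha,\beta\in\AP{c}$, \cref{prop:scaling} gives $\cm{\alpha'}{\beta'}c=f\cdot\cm\alpha\beta c$ for some $f>0$, so $\alpha$ and $\beta$ are $c$-compatible in $\RS$ if and only if $\alpha'$ and $\beta'$ are $c$-compatible in $\RS'$. Consequently the assignment $C\mapsto C'=\set{\beta':\beta\in C}$ is a bijection between the collections of pairwise $c$-compatible subsets of $\AP{c}$ and of $\RS'_c$. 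Third, since each $\beta'$ is a strictly positive scalar multiple of~$\beta$, we have $\Cone(C)=\Cone(C')$ as subsets of $V$, so the cones of $\Fan_c(\RS)$ and $\Fan_c(\RS')$ coincide ray by ray. Combining these steps yields $\Fan_c(\RS)=\Fan_c(\RS')$.

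There is essentially no obstacle here: the heavy lifting was already done in \cref{prop:scaling}, where the delicate type $A_{2k}^{(2)}$ complication around $\alpha=\delta$ was absorbed into the assertion that $f$ is merely ``some positive scalar.'' For the present statement we only care that $f>0$, so even that exceptional case causes no trouble. The proof is essentially a one-paragraph observation that positive rescaling of generators preserves simplicial cones.
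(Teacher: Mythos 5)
Your proposal is correct and follows exactly the paper's own argument: the paper likewise deduces the result immediately from \cref{prop:scaling}, which shows that $\AP{c}$ and the compatibility relation are preserved under rescaling, with the cone-by-cone identification being implicit. Your additional remark that positive scaling of generators preserves cones is the same (unstated) geometric observation the paper relies on.
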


\cref{rotate to nonpos} implies a description of $\Delta_c$ in terms of inequalities.  

\begin{proposition}\label{Delta ineq}
The cone $\Delta_c$ is the set of points $v$ in $\eigenspace{c}$ satisfying the inequality $\br{s_{i_1}\cdots s_{i_k}\rho_{i_k},v}\ge0$
for all sequences $s_{i_1},\ldots,s_{i_k}$ of simple reflections with $k\le M_\RS$ such that $s_{i_j}$ is initial in the Coxeter element $s_{i_{j-1}}\cdots s_{i_1}cs_{i_1}\cdots s_{i_{j-1}}$ for all $i=1\ldots k$.
\end{proposition}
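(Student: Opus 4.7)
The plan is to prove both containments after translating each inequality via the identity
\[
\langle s_{i_1}\cdots s_{i_k}\rho_{i_k},v\rangle=\langle\rho_{i_k},s_{i_k}\cdots s_{i_1}v\rangle=d_{i_k}\bigl[s_{i_k}\cdots s_{i_1}v:\alpha_{i_k}\bigr],
\]
so the condition in the proposition becomes: for every admissible sequence, the $\alpha_{i_k}$-coefficient of $s_{i_k}\cdots s_{i_1}v$ is nonnegative.

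The inclusion $\Delta_c\subseteq\{v:\text{all inequalities hold}\}$ is straightforward. Admissibility of the sequence $s_{i_1},\ldots,s_{i_k}$ allows iterated application of \cref{scs Delta}, yielding $s_{i_k}\cdots s_{i_1}\Delta_c=\Delta_{c_k}$, where $c_k=s_{i_k}\cdots s_{i_1}cs_{i_1}\cdots s_{i_k}$. Since $\Delta_{c_k}$ is the nonnegative span of positive roots, every simple-root coefficient of a vector in $\Delta_{c_k}$ is nonnegative, in particular the $\alpha_{i_k}$-coefficient.

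For the reverse inclusion I would argue by contrapositive. Given $v\in U^c\setminus\Delta_c$, we have $v\notin\mathring\Delta_c$, so \cref{rotate to nonpos} applies. Inspection of its proof in the $v\in U^c$ case expresses $v+x\delta$ (for the minimal $x\ge0$ making $v+x\delta\in\Delta_c$) as a cluster expansion $\sum x_i\beta_i$ on $\Xi^c$, and then invokes \cref{lemma in the tubes} to produce an admissible initial sequence $a_1,\ldots,a_k$ with $k\le M_\RS$ and a simple root $\alpha_j$ absent from $\Supp(a_k\cdots a_1\sum_i x_i\beta_i)$. Because $a_k\cdots a_1\delta=\delta$, one computes $[a_k\cdots a_1v:\alpha_j]=-x\,[\delta:\alpha_j]$. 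Since $v\notin\Delta_c$ we have $x>0$, and $[\delta:\alpha_j]>0$ in every affine type, so this coefficient is strictly negative.

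The main obstacle is to convert the violation into an inequality of the exact form stated, namely one in which the negative simple-root index equals the last letter $a_k$ of the sequence. I expect this matching to be arranged by carefully selecting the sequence from within the case-by-case argument of \cref{lemma in the tubes}, or by appending additional initial conjugations (possible by \cref{aff conj}) that transport $s_j$ to the final position of an admissible word while preserving the strict sign of the $\alpha_j$-coefficient; the slack built into the definition of $M_\RS$ is enough to absorb these extra steps within the required length bound.
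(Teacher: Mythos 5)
Your forward inclusion is exactly the paper's argument (iterate \cref{scs Delta} and use that $\Delta_{c_k}$ consists of vectors with nonnegative simple-root coordinates), so that half is fine. The gap is in the reverse inclusion, at precisely the step you flag and then leave open: \cref{rotate to nonpos} produces a nonpositive $\alpha_j$-coordinate of $a_k\cdots a_1(v)$ for \emph{some} $j$ that a priori has nothing to do with the last letter of the sequence, whereas the proposition only lists the single inequality indexed by $i_k$ for each admissible word. Neither of your proposed repairs is carried out, and both are doubtful: re-engineering the case analysis inside \cref{lemma in the tubes} would have to be redone type by type, and ``appending additional initial conjugations that transport $s_j$ to the final position'' is not something \cref{aff conj} supplies (it concerns conjugacy of Coxeter elements, not placement of a prescribed letter at the end of an admissible word), and any lengthening of the sequence threatens the bound $k\le M_\RS$ that appears verbatim in the statement.

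The paper closes this gap with a short minimality argument that needs no extra letters: among all admissible sequences of length $\le M_\RS$ witnessing a nonpositive coordinate, take one with $k$ minimal. Its truncation $a_1,\ldots,a_{k-1}$ is again admissible, so by minimality $a_{k-1}\cdots a_1(v)$ has strictly positive simple-root coordinates; since $s_{i_k}$ alters only the $\alpha_{i_k}$-coordinate of a vector, the offending coordinate of $a_k\cdots a_1(v)$ is forced to sit at index $i_k$. This gives $\br{s_{i_1}\cdots s_{i_k}\rho_{i_k},v}\le 0$ for an inequality actually on the list, and the proof concludes by separating $\mathring\Delta_c$ from the rest of $\eigenspace{c}$. (Your explicit computation $[a_k\cdots a_1 v:\alpha_j]=-x[\delta:\alpha_j]<0$ is correct and would even sharpen the conclusion to a strict violation, but only once the index has been matched to $i_k$ by the minimality device; on its own it does not finish the proof.)
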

\begin{proof}
\cref{scs Delta} and the fact that $\Delta_c$ is contained in the cone of vectors with nonnegative simple-root coordinates for all $c$ imply that every $v\in\Delta_c$ satisfies the inequalities $\br{\rho_{i_k},s_{i_k}\cdots s_{i_1}v}\ge0$.
These are equivalent to ${\br{s_{i_1}\cdots s_{i_k}\rho_{i_k},v}\ge0}$.

If $v$ is a nonzero vector in $V\setminus\mathring\Delta_c$, then by \cref{rotate to nonpos} there is a sequence $s_{i_1},\ldots,s_{i_k}$, with $k\le M_\RS$, satisfying the ``initial or final'' condition of the proposition such that the inequality $\br{\rho_j,s_{i_k}\cdots s_{i_1}v}>0$ fails for some $j$.
Choosing $k$ as small as possible, because $\alpha_{i_k}$ is the unique positive root that becomes negative under the action of $s_{i_k}$, we can take $j=i_k$.
Thus the relative interior of $\Delta_c$ is separated from other points in $\eigenspace{c}$ by inequalities as described in the proposition.
We conclude that $\Delta_c$ is defined by the given list of inequalities for $k\le M_\RS$.
\end{proof}

The \newword{link} of a ray $r$ in a simpicial fan is the subfan consisting of cones $C$ such that $r\not\subseteq C$ but the nonnegative linear span of $r\cup C$ is a cone of the fan.
The \newword{star} of $r$ is the subfan consisting of cones containing~$r$.
The following proposition is a restatement of \cref{cyclo,delta c compat,Delta ineq}.
\begin{proposition}\label{delta star}
Suppose $\RS$ is of affine type and $c$ is a Coxeter element of $W$.
\begin{enumerate}
\item
The link of the ray spanned by $\delta$ is isomorphic, as a simplicial complex, to a join of boundary complexes of simplicial cyclohedra.
There is a $(k-1)$-dimensional cyclohedron (and thus a $(k-2)$-dimensional boundary complex) for each rank-$k$ component of~$\RST{c}$.
\item
The union of the cones in the star of the ray spanned by $\delta$ is the cone $\Delta_c$.  
It is a closed polyhedral cone whose extreme rays are spanned by the simple roots of $\RST{c}$.
Its defining inequalities are given by \cref{Delta ineq}.
\end{enumerate}
\end{proposition}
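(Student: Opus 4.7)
The plan is to assemble the proposition from three ingredients already on the page. Part~(1) combines \cref{delta c compat} with \cref{cyclo}, while part~(2) combines \cref{clus exp imaginary} and \cref{Delta ineq} with the structural description of $\Delta_c$ recorded just before \cref{clus exp imaginary}.

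For part~(1), I will first unpack the definition of the link: it consists of the cones $\Cone(C)$ where $C\subseteq\AP{c}\setminus\set\delta$ is a set of pairwise $c$-compatible roots and, in addition, every root in $C$ is $c$-compatible with $\delta$. By \cref{delta c compat}, the roots of $\APre{c}$ that are $c$-compatible with $\delta$ are precisely those in $\APTre{c}$. Hence the link of the ray spanned by $\delta$ is exactly the subcomplex of the $c$-cluster complex induced by the roots in $\APTre{c}$, and \cref{cyclo} identifies this subcomplex with the asserted join of boundary complexes of simplicial cyclohedra, with one $(k-1)$-dimensional cyclohedron for each rank-$k$ component of $\RST{c}$.

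For part~(2), any cone in the star of the ray spanned by $\delta$ has the form $\Cone(\set\delta\cup C)$ for some $C$ in the link, and therefore sits inside the nonnegative span of $\set\delta\cup\APTre{c}$, which is contained in $\Delta_c$. For the reverse containment, \cref{clus exp imaginary} expresses every $v\in\Delta_c$ as a nonnegative combination of roots in $\APT{c}$ whose support is pairwise $c$-compatible, placing $v$ in a cone of the star. The fact that $\Delta_c$ is a closed polyhedral cone with extreme rays $\SimplesT{c}$, whose cross section is a product of simplices (one for each component of $\RST{c}$), was recorded in the paragraph preceding \cref{clus exp imaginary}, and the defining inequalities are \cref{Delta ineq} verbatim.

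Since each piece is already established, there is no substantive obstacle; the work is purely bookkeeping. The one point where I will be careful is ruling out cluster expansions of a point in $\Delta_c$ that involve roots outside $\APT{c}$, so that the star really does cover $\Delta_c$ and nothing more. This is built into the uniqueness half of \cref{clus exp thm}, whose final case handles vectors in $\mathring\Delta_c$ by precisely this argument.
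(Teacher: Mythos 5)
Your proposal is correct and follows essentially the same route as the paper, which simply observes that the proposition is a restatement of \cref{cyclo}, \cref{delta c compat}, and \cref{Delta ineq} (with \cref{clus exp imaginary} supplying the containment $\Delta_c\subseteq{}$star that you rightly make explicit). Nothing further is needed.
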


\section{Exchangeable roots}\label{exch sec}
The definition of $\Fan_c(\RS)$ depends only on when the compatibility degree is zero.
In this section, we use more information about the compatibility degree (along with some extra information for cones contained in $\Delta_c$) to describe when roots in $c$-clusters can be exchanged, in the sense of the following definition.

\begin{definition}\label{def exchangeable}
Two distinct roots $\alpha$ and $\beta$ in $\AP{c}$ are said to be \newword{$c$-exchangeable} if there exist $c$-clusters $C$ and $C'$ with $\alpha\in C$, with $\beta\in C'$, and with $C\setminus\{\alpha\}=C'\setminus\{\beta\}$.
Two distinct real roots in $\AP{c}$ are said to be \newword{$c$-real-exchangeable} if there exist \emph{real} clusters $C$ and $C'$ with these properties.
\end{definition}

If $\alpha,\beta\in\AP{c}$ are $c$-exchangeable, then they are not $c$-compatible.
(If so, then $C\cupdot\set{\beta}$ is pairwise $c$-compatible for $C$ as in \cref{def exchangeable}.
But $C$ is a $c$-cluster.)

\begin{theorem}\label{exchangeable}   
Suppose $\RS$ is of affine type and $c$ is any Coxeter element.
\begin{enumerate}[\qquad \upshape (1)]
\item \label{exch 1}
Real roots $\alpha$ and $\beta$ in $\APre{c}$ are $c$-exchangeable if and only if   
\[{\cm\alpha\beta c=1=\cm\beta\alpha c}.\]
\item \label{exch Delta}
Real roots $\alpha$ and $\beta$ in $\APre{c}$ are $c$-real-exchangeable if and only if they are $c$-exchangeable and $\alpha+\beta\not\in\mathring\Delta_c$.
\item \label{exch component-full}  
  Real roots $\alpha,\beta\in\APre{c}$ that are $c$-exchangeable fail to be $c$-real-exchangeable if and only if $\alpha,\beta\in\APTre{c}$ and $\SuppT(\alpha,\beta)$ is component-full.
\item\label{exch delta}
The imaginary root $\delta$ is not $c$-exchangeable with any other root.  
\end{enumerate}
\end{theorem}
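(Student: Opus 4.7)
Part (4) follows from a cardinality argument. By \cref{cluster properties}, imaginary clusters contain $\delta$ and have $n - 1$ elements, while real clusters have $n$. If $\delta$ were $c$-exchangeable with some $\beta \ne \delta$ via $C \ni \delta$ and $C' \ni \beta$, then $|C\setminus\{\delta\}| = n - 2$, yet either $C'$ is real with $|C'\setminus\{\beta\}| = n - 1$ (a cardinality mismatch), or $C'$ is imaginary so $\delta \in C'\setminus\{\beta\}$ while $\delta \notin C\setminus\{\delta\}$ (a set-theoretic contradiction).

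For Part (1), my plan is to split on whether the exchanging clusters $C, C'$ are both real or both imaginary; by the cardinality observation of (4), the mixed case is impossible. In the real-cluster subcase, \cref{cluster properties} ensures the common face $F = C\setminus\{\alpha\}$ contains at least one root $\gamma$ in the $\tau_c$-orbit of a negative simple root. Choosing $m$ with $\tau_c^m\gamma = -\alpha_i$ brings $-\alpha_i$ into both $\tau_c^m C$ and $\tau_c^m C'$, and \cref{prop:inductive_cluster} identifies the exchange with one inside the finite parabolic subsystem $\RS'$ spanned by $\Simples\setminus\{\alpha_i\}$. The finite-type version of (1) (see \cref{compat usual} and the references therein) applies, and \cref{compat restrict} together with \eqref{compat tau} transport $\cm\alpha\beta c = 1 = \cm\beta\alpha c$ back to our setting. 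In the imaginary subcase, \cref{cluster properties} puts both $\alpha, \beta \in \APTre{c}$ and the exchange reduces to a swap of two tubes within a single cyclohedron component of $\RST{c}$ (the other components and $\delta$ remain fixed in $F$); a case check against \cref{compute in tubes} and the component-full clause of \cref{compat deg def} identifies the exchangeable tube pairs with exactly those satisfying $\cm = 1$ in both directions. The converse in each subcase runs the reduction in reverse: the finite-type converse produces parabolic clusters, which extend by reintroducing $-\alpha_i$ and undoing the $\tau_c$-shift, and the cyclohedron analysis supplies explicit imaginary clusters exhibiting the exchange.

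Parts (2) and (3) rely on the simplicial fan structure of $\Fan_c(\RS)$ (\cref{fan thm}) together with \cref{delta star}, which identifies $\Delta_c$ as the union of the imaginary cluster cones and places $\delta$ in its relative interior. Any $c$-exchange has a wall $\Cone(F)$ shared by exactly two maximal cones, both containing $\delta$ (if $\delta \in F$) or neither doing so; by (4) and \cref{cluster properties}, these alternatives coincide with ``both $C, C'$ imaginary'' and ``both real'' respectively. In the imaginary situation the wall $\Cone(F)$ lies in $\mathring\Delta_c$, and because $\alpha$ and $\beta$ straddle this wall one checks directly that $\alpha + \beta$ falls into $\mathring\Delta_c$. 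Conversely, uniqueness of cluster expansion (\cref{clus exp thm}) combined with \cref{delta star} forces any $\alpha + \beta \in \mathring\Delta_c$ to lie in some imaginary cluster cone, making the exchange imaginary. This proves (2). For (3), combining with \cref{cluster properties} gives $\alpha, \beta \in \APTre{c}$, and an explicit expansion exploiting $\delta = \sum_{\beta_i \in \text{component}} \beta_i$ in each component of $\RST{c}$ shows that $\alpha + \beta \in \mathring\Delta_c$ if and only if $\alpha, \beta$ lie in a common component of $\RST{c}$ with $\SuppT(\alpha, \beta)$ component-full.

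I expect the main obstacle to be Part (1)'s cyclohedron subcase: one must verify uniformly, across the cases of \cref{compute in tubes} and the component-full clause of \cref{compat deg def}, that the set of exchangeable tube pairs in the cyclohedron of a cycle graph coincides with the set of pairs with $\cm = 1$ in both directions, and that the $(\Leftarrow)$ direction can be realized by explicit imaginary clusters. A secondary technical point is ensuring, in Part (2), that $\alpha + \beta$ lies in the relative interior of $\Delta_c$ rather than slipping onto a lower-dimensional face of its boundary; this follows from $\alpha$ and $\beta$ sitting on opposite sides of $\Cone(F)$, so that $\alpha + \beta$ is a genuine positive combination relative to the enlarged cone $\Cone(C) \cup \Cone(C')$.
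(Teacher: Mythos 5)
Your part (4) and the overall architecture of part (1) in the ``both clusters real'' and ``both clusters imaginary'' subcases track the paper's proof closely, and your reduction of (3) to (2) plus the computation $\alpha+\beta-\delta\ge 0$ on a component is essentially what the paper does. But there are two genuine gaps.

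The serious one is in the converse of part (1). Given $\cm\alpha\beta c=1=\cm\beta\alpha c$, your two reduction routes cover only (i) the case where some root in the $\tau_c$-orbit of a negative simple root is $c$-compatible with both $\alpha$ and $\beta$ (reduce to a finite parabolic), and (ii) the case $\alpha,\beta\in\APTre{c}$ (the cyclohedron analysis). There is a third case you never address: one of the roots lies in an infinite $\tau_c$-orbit (so, after applying $\tau_c^m$, equals some $-\alpha_j$), the other has $[\beta:\alpha_j]=[\beta\ck:\alpha_j\ck]=1$, and $\Supp(\tau_c^m\alpha,\tau_c^m\beta)$ is full for every $m$, so no parabolic reduction is available. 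The paper must show this configuration simply cannot occur (\cref{can be completed}), and the only known argument is the type-by-type verification occupying all of \cref{type exch}, using \cref{phic sign} and explicit root computations in each affine family. Without this, the ``if'' direction of (1) is unproved, and nothing in your cyclohedron or parabolic analysis substitutes for it.

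The second gap is in part (2), in the direction ``$c$-real-exchangeable $\Rightarrow\alpha+\beta\not\in\mathring\Delta_c$.'' You propose to get this from uniqueness of cluster expansions: that $\alpha+\beta\in\mathring\Delta_c$ would force $\alpha+\beta$ into an imaginary cluster cone, ``making the exchange imaginary.'' But $\alpha$ and $\beta$ are $c$-exchangeable, hence not $c$-compatible, so $\alpha+\beta$ is \emph{not} a $c$-cluster expansion; the actual cluster expansion of the vector $\alpha+\beta$ is supported on some unrelated cluster and carries no information about which clusters contain $\alpha$ or $\beta$. (Relatedly, the union of two adjacent maximal cones of a simplicial fan need not be convex, so $\alpha+\beta$ need not even lie in $\Cone(C)\cup\Cone(C')$.) The paper instead uses \cref{cluster properties}\eqref{at least 2} to place a root of $C\setminus\set\alpha$ in the $\tau_c$-orbit of a negative simple root, transports everything by a word in the $\sigma_i$ until a negative simple root appears, observes via \eqref{compat base} that $\Sigma(\alpha)+\Sigma(\beta)$ then lies in a proper parabolic span and hence misses $\mathring\Delta_{\tilde c}$, and pulls back with \cref{scs Delta}. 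Some argument of this kind is needed; the fan-theoretic shortcut does not close.
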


\begin{remark}\label{exch rem}
In \cite{FoZe03a}, exchangeability is \emph{defined} by $\cm\alpha\beta c=1=\cm\beta\alpha c$ and then shown to be equivalent (in finite type) to the condition in \cref{def exchangeable}.  
We prefer the term ``exchangeable'' for the condition in \cref{def exchangeable}.  
\end{remark}

Despite \cref{exchangeable}\eqref{exch delta}, there are \emph{pairs} of real roots that are in some sense $c$-exchangeable with~$\delta$.  

\begin{definition}\label{def pair exchangeable}
The pair $\set{\alpha,\beta}$ is \newword{$c$-exchangeable with $\delta$} if there exists a (necessarily imaginary) $c$-cluster $C$ with $\delta\in C$ and a (necessarily real) $c$-cluster $C'$ with $\alpha,\beta\in C'$, such that $C\setminus\set{\delta}=C'\setminus\set{\alpha,\beta}$.
\end{definition}

\begin{theorem}\label{pair exchangeable}
Suppose $\RS$ is of affine type, but not of type $A^{(2)}_{2k}$ and suppose $c$ is any Coxeter element.
For each $\alpha\in\AP{c}$, the following are equivalent.
\begin{enumerate}
\item $\cm\alpha\delta c=1=\cm\delta\alpha c$.
\item There exists $\beta\in\AP{c}$ such that the pair $\set{\alpha,\beta}$ is $c$-exchangeable with $\delta$.
\end{enumerate}
When these equivalent conditions hold, $\alpha$ is in the $\tau_c$-orbit of a negative simple root $-\alpha_j$ such that there exists a diagram automorphism taking $\alpha_j$ to $\alpha_\aff$. 
\end{theorem}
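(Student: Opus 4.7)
The plan is to reduce to the case $\alpha=-\alpha_j$ for some simple root index $j$, and then separately characterize conditions~(1) and~(2) in terms of $\alpha_j$.

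First I would observe that both conditions are preserved under $\alpha\mapsto\tau_c(\alpha)$: condition~(1) by \eqref{compat tau} together with $\tau_c\delta=\delta$, and condition~(2) by \cref{prop:inductive_cluster}\eqref{cluster tau}. Moreover, both conditions force $\alpha$ into an infinite $\tau_c$-orbit: condition~(1) rules out $\alpha\in\APT{c}$ by \eqref{compat delta U}, while condition~(2) requires by \cref{cluster properties}\eqref{at least 2} that the real cluster $C'$ contain at least two roots in $\tau_c$-orbits of negative simples---these must be $\alpha$ and $\beta$, since $C_0\subseteq\APTre{c}$ contributes none. Thus we may assume $\alpha=-\alpha_j$ for some $j$. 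Under this reduction, condition~(1) becomes, via \eqref{compat base} and \eqref{compat cobase}, the pair of equations $[\delta:\alpha_j]=1$ and $[\delta\ck:\alpha_j\ck]=1$; expanding $\delta\ck$ using \eqref{delta eq} (valid since $\RS$ is not of type $A^{(2)}_{2k}$) rewrites the second as $K(\alpha_j,\alpha_j)[\delta:\alpha_j]=K(\alpha_\aff,\alpha_\aff)$, so~(1) is equivalent to $[\delta:\alpha_j]=1$ together with $\alpha_j$ having the same length as $\alpha_\aff$. Inspection of \cref{tab:type-by-type} then identifies this with $\alpha_j$ lying in the diagram-automorphism orbit of $\alpha_\aff$, yielding the closing sentence of the theorem.

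For (1)~$\Rightarrow$~(2), I would construct the clusters explicitly. Each rank-$k_i$ component of $\RST{c}$ is of affine type $A^{(1)}_{k_i-1}$ with simple roots summing to $\delta$, so $[\delta:\alpha_j]=1$ forces exactly one simple root per component to carry $\alpha_j$-coefficient~$1$ and the rest to carry~$0$. Removing this distinguished vertex from the $k_i$-cycle Dynkin diagram of the component yields a $(k_i-1)$-path, whose maximal tubings have size $k_i-1$. Taking the union of one such tubing per component gives a pairwise compatible set $C_0\subseteq\APTre{c}$ of size $\sum_i(k_i-1)=n-2$, all of whose elements have $\alpha_j$-coefficient zero and so are $c$-compatible with $-\alpha_j$ by \eqref{compat base}. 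Then $C:=\{\delta\}\cup C_0$ is an imaginary cluster (by \cref{clus def OK} and \cref{cluster properties}\eqref{Qc n-1}), while the pairwise compatible size-$(n-1)$ set $\{-\alpha_j\}\cup C_0$, which is not contained in any imaginary cluster by \cref{cluster properties}\eqref{none}, extends to a real cluster $C':=\{-\alpha_j,\beta\}\cup C_0$ using completeness of $\Fan_c(\RS)$ (\cref{fan thm}), witnessing~(2).

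For (2)~$\Rightarrow$~(1), I would argue by counting. Letting $m_i$ denote the number of simple roots of the $i$-th component of $\RST{c}$ with nonzero $\alpha_j$-coefficient, compatibility of $-\alpha_j$ with each $\gamma\in C_0$ forces the restriction of $C_0$ to the $i$-th component to be a tubing of a $k_i$-cycle with $m_i$ vertices removed, hence of size at most $k_i-m_i$. Summing, and using $\sum_i(k_i-1)=n-2$ and $\sum_i k_i=n-2+m$, the equality $|C_0|=n-2$ forces $m_i=1$ for every $i$. The hard step---and the main obstacle---is deducing from this counting condition both $[\delta:\alpha_j]=1$ and $K(\alpha_j,\alpha_j)=K(\alpha_\aff,\alpha_\aff)$. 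I expect this to require a direct type-by-type inspection of \cref{tab:type-by-type}: in each non-$A^{(2)}_{2k}$ affine type one verifies that the condition ``$m_i=1$ for all components'' singles out exactly the nodes $\alpha_j$ with $[\delta:\alpha_j]=1$, and that all such nodes have the same length as $\alpha_\aff$. The exclusion of type $A^{(2)}_{2k}$ is essential: there $[\delta:\alpha_\aff]=2$ with a single simple root of an $\RST{c}$-component carrying the full coefficient, so~(2) can hold without~(1).
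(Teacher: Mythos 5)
Your proposal is correct and follows essentially the same route as the paper: reduce via $\tau_c$ to $\alpha=-\alpha_j$, translate condition (1) into $[\delta:\alpha_j]=1=[\delta\ck:\alpha_j\ck]$, build the witnessing clusters from an $(n-2)$-element pairwise compatible subset of $\APTre{c}$ avoiding the support of $\alpha_j$ (the paper uses $\TravReg{c}$ after normalizing $\alpha_j=\alpha_\aff$ by a diagram automorphism, and gets ``all but one simple root per component'' from linear independence plus \cref{compat not full} rather than your tubing count), and finish the converse with exactly the type-by-type verification that the paper isolates as \cref{must be affine}. The one step to add is that before invoking \cref{tab:type-by-type} you must reduce to standard affine root systems and to one Coxeter element per conjugacy class, via \cref{exchangeable rescale}, \cref{prop:inductive_exch}, and \eqref{compat sigma}, as the paper does.
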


\begin{remark}\label{which pairs}
\cref{pair exchangeable} stops short of describing which pairs $\set{\alpha,\beta}$ are $c$-exchangeable with $\delta$.
The simplest characterization one might propose is that $\set{\alpha,\beta}$ are $c$-exchangeable with $\delta$ if and only if $\alpha$ and $\beta$ are $c$-compatible and both satisfy the conditions of \cref{pair exchangeable}.
However, this is already false in type $A_2^{(1)}$, as we now explain.
Recall that $A_2^{(1)}$ has rank $3$, take $c$ as in \cref{tab:type-by-type}, and recall that each $\alpha_i$ equals $\alpha_i\ck$ and that $\delta=\delta\ck=\alpha_1+\alpha_2+\alpha_3$.
Setting $\alpha=-\alpha_1$ and $\beta=-\alpha_2$, indeed $\alpha$ and $\beta$ are $c$-compatible, $\cm\alpha\delta c=1=\cm\delta\alpha c$, and $\cm\beta\delta c=1=\cm\delta\beta c$.
Diagram automorphisms act transitively on the simple roots.
However, there are only two real $c$-clusters that contain $\alpha$ and $\beta$, namely $\set{\alpha,\beta,-\alpha_3}$ and $\set{\alpha,\beta,\alpha_3}$, and we calculate $\cm{-\alpha_3}\delta c=1$ and $\cm{\alpha_3}\delta c=\cm{\tau_c\alpha_3}{\tau_c\delta}c=\cm{-\alpha_3}\delta c=1$.
\end{remark}

\begin{example}\label{fails A22k}
We give two simple examples to illustrate why \cref{pair exchangeable} must exclude type $A^{(2)}_{2k}$.
First, type $A^{(2)}_2$ has rank $2$, so any real $c$-cluster is a $c$-exchangeable pair with $\delta=\alpha_1+2\alpha_2$ (taking $\aff=2$).
Thus $\set{-\alpha_1,-\alpha_2}$ is an exchangeable pair, but $\cm{-\alpha_2}{\delta}c=2$ for either choice of $c$.
Second, consider $A^{(2)}_4$ with $\aff=3$, so that $\delta=\alpha_1+2\alpha_2+2\alpha_3$, and take $c=s_1s_2s_3$.
Then $\Fan_c(\RS)$ is the same as the fan shown in \cref{fig:fan}. 
The smallest triangle shown has vertices $-\alpha_1$ (red), $-\alpha_2$ (green), and $-\alpha_3$ (blue), and is adjacent to a triangle with $-\alpha_1$, $\alpha_2$ (cyan), and $-\alpha_3$.
Since $\alpha_2$ is compatible with $\delta$, the pair $\set{-\alpha_1,-\alpha_3}$ is $c$-exchangeable with $\delta$, but $\cm{-\alpha_3}\delta c=2$.
\end{example}

We now prepare to prove \cref{exchangeable,pair exchangeable}.
The following proposition is immediate by \cref{prop:inductive_cluster}.

\begin{proposition}\label{prop:inductive_exch}
Suppose $\RS$ is of affine type and let $\alpha,\beta\in\AP{c}$.
\begin{enumerate}[\quad \upshape (1)]
\item \label{exch sigma}
If $s$ is an initial or final reflection in $c$ then $\alpha$ and $\beta$ are $c$-exchangeable if and only if $\sigma(\alpha)$ and $\sigma(\beta)$ are $scs$-exchangeable.
\item \label{exch tau}
$\alpha$ and $\beta$ are $c$-exchangeable if and only if $\tau_c(\alpha)$ and $\tau_c(\beta)$ are $c$-exchangeable.
\end{enumerate}
The same assertions hold for real $c$-exchangeability.
Similarly, a pair $\set{\alpha,\beta}$ is $c$-exchangeable with $\delta$ if and only if $\set{\tau_c\alpha,\tau_c\beta}$ is $c$-exchangeable with $\delta$ and if and only if $\set{\sigma_s\alpha,\sigma_s\beta}$ is $scs$-exchangeable with $\delta$.
\end{proposition}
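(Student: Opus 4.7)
The plan is to deduce everything directly from \cref{prop:inductive_cluster}, which tells us that $\tau_c$ permutes the set of $c$-clusters and that $\sigma_s$ bijects $c$-clusters to $scs$-clusters when $s$ is initial or final in $c$. Exchangeability is defined purely in terms of existence of two clusters differing in exactly one element, so bijections of the clusters that are induced by bijections of $\AP{c}$ (or between $\AP{c}$ and $\AP{scs}$) automatically transport the exchangeability relation.

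For part \eqref{exch sigma}, given $c$-clusters $C \ni \alpha$ and $C' \ni \beta$ witnessing $c$-exchangeability with $C\setminus\{\alpha\} = C'\setminus\{\beta\}$, I would apply $\sigma_s$ elementwise. By \cref{prop:inductive_cluster}\eqref{cluster sigma}, $\sigma_s(C)$ and $\sigma_s(C')$ are $scs$-clusters, and since $\sigma_s$ is a bijection we get $\sigma_s(C)\setminus\{\sigma_s\alpha\} = \sigma_s(C')\setminus\{\sigma_s\beta\}$, proving one direction. The reverse direction follows because $\sigma_s$ is an involution (\cref{sigma prop}) and $s(scs)s = c$. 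Part \eqref{exch tau} is analogous using \cref{prop:inductive_cluster}\eqref{cluster tau}, or can be deduced from \eqref{exch sigma} by writing $\tau_c = \sigma_1\cdots\sigma_n$ and applying the sigma statement $n$ times (making the usual source-sink sequence of conjugations $c \to s_1 c s_1 \to \cdots \to c$).

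For the real-exchangeability version, the key additional observation is that $\sigma_s(\delta) = \delta$ (since $\delta$ is $W$-fixed and in particular $s$-fixed, and $\delta\not\in -\Simples$) and $\tau_c(\delta) = \delta$ by \cref{tau prop}\eqref{delta orbit}. Hence a $c$-cluster $C$ contains $\delta$ iff $\sigma_s(C)$ (resp.\ $\tau_c(C)$) contains $\delta$, so ``real cluster'' is preserved under both maps. Applying the same argument as above but restricted to real clusters gives the real-exchangeability statement. Likewise, for pairs $\{\alpha,\beta\}$ that are $c$-exchangeable with $\delta$, the witnessing imaginary cluster $C$ (containing $\delta$) and real cluster $C'$ (containing $\alpha,\beta$) with $C\setminus\{\delta\} = C'\setminus\{\alpha,\beta\}$ transport under $\sigma_s$ and $\tau_c$ exactly as before, with $\delta$ mapping to itself on the imaginary side.

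Since every ingredient is already in place, this is essentially a bookkeeping argument and I do not anticipate any real obstacle; the only point that requires even a moment's thought is verifying that the maps $\sigma_s$ and $\tau_c$ fix $\delta$, which guarantees that the distinction between imaginary and real clusters is preserved in the last clause.
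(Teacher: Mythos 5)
Your proof is correct and follows exactly the route the paper takes: the paper simply declares the proposition ``immediate by'' the cluster-transport statements of \cref{prop:inductive_cluster}, and your write-up is precisely the bookkeeping that remark leaves implicit (including the one genuinely necessary observation that $\sigma_s$ and $\tau_c$ fix $\delta$, so the real/imaginary distinction is preserved).
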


Parts of the proofs in this section are deferred to \cref{type exch}, where they are checked using the classifications of affine root systems.
One can reduce the number of types that must be checked by performing a rescaling.
\cref{Fanc scaling} says that, if $\RS'$ is a rescaling of $\RS$, then $\Fan_c(\RS)=\Fan_c(\RS')$.
In particular, the notions of $c$-exchangeability with respect to $\RS$ and $\RS'$ coincide.
The condition $\cm\alpha\beta c=1=\cm\beta\alpha c$ is also, in most cases, robust under rescaling.

\begin{proposition}\label{exchangeable rescale}
  Suppose $\RS$ and $\RS'$ are affine root systems related by a rescaling $\beta\mapsto\beta'=\lambda_\beta\beta$.
  Then for real roots $\alpha$ and $\beta$ in $\APre{c}$:
  \begin{enumerate}[\quad \upshape (1)]
    \item
      the conditions $\cm\alpha\beta c=1=\cm\beta\alpha c$ and $\cm{\alpha'}{\beta'}c=1=\cm{\beta'}{\alpha'}c$ are equivalent;
    \item
      the conditions $\cm\alpha\delta c=1=\cm\delta\alpha c$ and $\cm{\alpha'}{\delta'}c=1=\cm{\delta'}{\alpha'}c$ are equivalent except possibly when $\RS$ or $\RS'$ is of type $A^{(2)}_{2k}$.
  \end{enumerate}
\end{proposition}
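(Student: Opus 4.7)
My plan is to derive both equivalences from \cref{prop:scaling}, which controls how compatibility degrees transform under rescaling, combined with the symmetrizability relation in \cref{compatibility almost symmetrizable}.

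For part~(1), \cref{prop:scaling} gives $\cm{\alpha'}{\beta'}c=(\lambda_\beta/\lambda_\alpha)\cm\alpha\beta c$ and $\cm{\beta'}{\alpha'}c=(\lambda_\alpha/\lambda_\beta)\cm\beta\alpha c$.  When $\alpha,\beta\in\APTre{c}$ and $\SuppT(\alpha,\beta)$ is component-full, the two compatibility degrees reduce to $\adj\alpha\beta$ and $\adj\beta\alpha$, which depend only on the Dynkin diagram of $\RST{c}$ and are unchanged by rescaling, so the equivalence is immediate.  Otherwise, $\cm\alpha\beta c=1=\cm\beta\alpha c$ combined with \cref{compatibility almost symmetrizable} forces $K(\alpha,\alpha)=K(\beta,\beta)$.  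Since affine root systems are irreducible and their $W$-orbits of real roots are distinguished by length, $\alpha$ and $\beta$ lie in the same $W$-orbit and hence $\lambda_\alpha=\lambda_\beta$; both scaling factors collapse to~$1$.  The converse follows by running the same argument for the inverse rescaling $\RS'\to\RS$.

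For part~(2), I will first work out the explicit scalars.  \cref{prop:scaling} yields $\cm{\alpha'}{\delta'}c=(\lambda_\delta/\lambda_\alpha)\cm\alpha\delta c$ directly.  Outside type $A^{(2)}_{2k}$, \eqref{delta eq} reads $\delta\ck=\frac{2}{K(\alpha_\aff,\alpha_\aff)}\delta$, and a computation parallel to the one in the proof of \cref{prop:scaling} then gives $(\delta')\ck=(\lambda_\delta/\lambda_\aff^2)\delta\ck$, whence $\cm{\delta'}{\alpha'}c=(\lambda_\delta\lambda_\alpha/\lambda_\aff^2)\cm\delta\alpha c$.  Moreover, since $[\delta:\alpha_\aff]=[\delta':\alpha_\aff']=1$ outside type $A^{(2)}_{2k}$, the identity $[\delta':\alpha_\aff']=(\lambda_\delta/\lambda_\aff)[\delta:\alpha_\aff]$ forces $\lambda_\delta=\lambda_\aff$, and the two scalars become reciprocals: $\cm{\alpha'}{\delta'}c=(\lambda_\aff/\lambda_\alpha)\cm\alpha\delta c$ and $\cm{\delta'}{\alpha'}c=(\lambda_\alpha/\lambda_\aff)\cm\delta\alpha c$.

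To finish the forward direction of~(2), I will show that $\cm\alpha\delta c=1=\cm\delta\alpha c$ forces $\lambda_\alpha=\lambda_\aff$.  By \cref{tau prop}(\ref{tauc fin}) and~\eqref{compat delta U}, $\alpha\in\APTre{c}$ would give $\cm\alpha\delta c=0$, so $\alpha=\tau_c^k(-\alpha_i)$ for some~$i$.  The invariance $\tau_c\delta=\delta$ together with~\eqref{compat tau} reduces the hypothesis to $[\delta:\alpha_i]=1$ and $[\delta\ck:\alpha_i\ck]=1$.  Expanding $[\delta\ck:\alpha_i\ck]$ via~\eqref{delta eq} gives $[\delta\ck:\alpha_i\ck]=[\delta:\alpha_i]K(\alpha_i,\alpha_i)/K(\alpha_\aff,\alpha_\aff)$, so combined with $[\delta:\alpha_i]=1$ we obtain $K(\alpha_i,\alpha_i)=K(\alpha_\aff,\alpha_\aff)$.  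The $\tau_c$-orbit of $-\alpha_i$ consists of $-\alpha_i$ together with positive real roots of the form $c^k\psi^\proj_{c;i}$ or $c^k\psi^\inj_{c;i}$, all of which are $W$-images of $\pm\alpha_i$; so $\alpha$ shares a $K$-norm with $\alpha_i$ and hence with $\alpha_\aff$, giving $\lambda_\alpha=\lambda_{\alpha_i}=\lambda_\aff$.  Substituting into the scaling formulas produces $\cm{\alpha'}{\delta'}c=1=\cm{\delta'}{\alpha'}c$, and the converse again follows from the inverse rescaling $\RS'\to\RS$.

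The main obstacle will be this length analysis in part~(2): the two scaling factors are reciprocals, so their product is automatic but neither factor is individually controlled, and the whole argument rests on deducing $\lambda_\alpha=\lambda_\aff$ via~\eqref{delta eq}.  In type~$A^{(2)}_{2k}$, the coefficient in~\eqref{delta eq} becomes $\frac{1}{K(\alpha_\aff,\alpha_\aff)}$ rather than $\frac{2}{K(\alpha_\aff,\alpha_\aff)}$, so the parallel length identity reads $K(\alpha_i,\alpha_i)=2K(\alpha_\aff,\alpha_\aff)$ instead of $K(\alpha_i,\alpha_i)=K(\alpha_\aff,\alpha_\aff)$.  Thus $\alpha$ need not share a length with $\alpha_\aff$, and the identification $\lambda_\alpha=\lambda_\aff$ can fail under a non-trivial rescaling---precisely why the proposition allows that type as an exception.
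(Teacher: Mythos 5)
The central step in both parts of your argument is the inference ``$K(\alpha,\alpha)=K(\beta,\beta)$ implies $\lambda_\alpha=\lambda_\beta$,'' which you justify by asserting that $W$-orbits of real roots in an irreducible affine root system are distinguished by length. That assertion is false, and the inference fails with it. In type $A_1^{(1)}$ all real roots have the same length but form two $W$-orbits, $\set{\pm\alpha_1+2k\delta}$ and $\set{\pm\alpha_1+(2k+1)\delta}$, and the rescaling that takes $A_1^{(1)}$ to $A_2^{(2)}$ assigns these two orbits factors differing by a ratio of $2$; the long roots of $C_{n-1}^{(1)}$ exhibit the same splitting. Since rescaling factors are guaranteed constant only on $W$-orbits, equal $K$-norm does not control $\lambda$. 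This breaks part (1) outright --- that part must hold even when $\RS'$ is of type $A^{(2)}_{2k}$, so the exclusion cannot rescue you --- and it leaves the step $\lambda_{\alpha_i}=\lambda_\aff$ in part (2) unproved: there the exclusion of $A^{(2)}_{2k}$ plausibly does make the claim true, but only after a classification check that you do not supply.

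The missing idea is integrality. By \cref{prop:scaling}, $\cm{\alpha'}{\beta'}c=\frac{\lambda_\beta}{\lambda_\alpha}\cm\alpha\beta c$ and $\cm{\beta'}{\alpha'}c=\frac{\lambda_\alpha}{\lambda_\beta}\cm\beta\alpha c$; under the hypothesis $\cm\alpha\beta c=1=\cm\beta\alpha c$, these two quantities are reciprocal \emph{positive integers} (compatibility degrees of roots in the genuine root system $\RS'$ are integers), hence both equal $1$. This yields $\lambda_\alpha=\lambda_\beta$ and the conclusion of (1) in one stroke, with no orbit or length analysis. Your reductions in part (2) --- passing to $\alpha=-\alpha_i$ via \eqref{compat tau} and restating the hypothesis as $[\delta:\alpha_i]=1=[\delta\ck:\alpha_i\ck]$ --- match the paper's, and your scalar computations via \eqref{delta eq} are correct as far as they go; but the paper then finishes by simply inspecting Kac's Tables Aff 1--3 to see that the condition $[\delta:\alpha_i]=1=[\delta\ck:\alpha_i\ck]$ is preserved by rescaling outside type $A^{(2)}_{2k}$, rather than attempting to pin down $\lambda_\alpha$ abstractly. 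You should either adopt the integrality argument or replace your length-to-$\lambda$ step with an explicit check against the classification.
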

\begin{proof}
If $\cm\alpha\beta c=1=\cm\beta\alpha c$, then $\frac{\lambda_\alpha}{\lambda_\beta}\cm{\alpha'}{\beta'}c=1=\frac{\lambda_\beta}{\lambda_\alpha}\cm{\beta'}{\alpha'}c$  by \cref{prop:scaling}.
Thus $\frac{\lambda_\beta}{\lambda_\alpha}=\cm{\alpha'}{\beta'}c$ and $\frac{\lambda_\alpha}{\lambda_\beta}=\cm{\beta'}{\alpha'}c$.
In particular, both $\frac{\lambda_\beta}{\lambda_\alpha}$ and $\frac{\lambda_\alpha}{\lambda_\beta}$ are positive integers.
Therefore $\lambda_\alpha=\lambda_\beta$ and $\cm{\alpha'}{\beta'}c=1=\cm{\beta'}{\alpha'}c$.
The symmetric argument proves the converse.

For the second assertion, by \eqref{compat delta U}, we can assume that $\alpha\not\in\APT{c}$.
\cref{tau prop} says that $\alpha$ is in the $\tau_c$-orbit of a negative simple root.
Thus by \eqref{compat tau}, we may as well take $\alpha$ to be $-\alpha_i$ for some $i$.
In this case $\cm\alpha\delta c=[\delta:\alpha_i]$ by \eqref{compat base} and $\cm\delta\alpha c=[\delta\ck:\alpha_i\ck]$ by \eqref{compat cobase}.
If neither $\RS$ nor $\RS'$ is of type $A^{(2)}_{2k}$, then inspection of the classification of root system of affine type (see \cite[Tables Aff 1--3]{Kac90}), reveals that the property that $[\delta:\alpha_i]=1=[\delta\ck:\alpha_i\ck]$ is preserved by rescaling.  
\end{proof}

We now begin the proof of \cref{exchangeable}\eqref{exch 1}.
We prove all of one direction of the assertion and reduce the other direction to a proposition that we will prove type-by-type in \cref{type exch}.

\begin{proof}[Proof of \cref{exchangeable}\eqref{exch 1}]
As in the proof \cref{clus exp thm}, one could prove this for finite and affine type together by induction on rank, but instead, we use the finite case, which follows from \cite[Proposition~3.5]{FoZe03a} and \cite[Proposition~4.10]{MRZ}.

Suppose $\alpha,\beta$ in $\APre{c}$ are $c$-exchangeable.
If they are $c$-real-exchangeable, then let $C$ and $C'$ be real $c$-clusters with $\alpha\in C$, with $\beta\in C'$, and with $C\setminus\{\alpha\}=C'\setminus\{\beta\}$.
\cref{cluster properties}\eqref{at least 2} says that $C$ contains at least two roots in the $\tau_c$-orbits of negative simple roots.
Thus $C\setminus\set{\alpha}$ contains at least one root in the $\tau_c$-orbit of a negative simple root.
By \cref{prop:inductive_exch}\eqref{exch tau} and \eqref{compat tau}, we may as well assume that $C\setminus\set{\alpha}$ contains a negative simple root $-\alpha_i$.
Then \cref{prop:inductive_cluster} implies that $C\setminus\set{-\alpha_i}$ and $C'\setminus\set{-\alpha_i}$ are $c'$-clusters in a proper parabolic root subsystem $\RS'$ of $\RS$, where $c'$ is the restriction of $c$.
The finite case result then says that $\cm\alpha\beta{c'}=1=\cm\beta\alpha{c'}$.
\cref{compat restrict} implies that $\cm\alpha\beta c=1=\cm\beta\alpha c$.

If $\alpha$ and $\beta$ are \emph{not} $c$-real-exchangeable, then there exist $c$-clusters $C$ and $C'$, not both real, with $\alpha\in C$, with $\beta\in C'$, and with $C\setminus\{\alpha\}=C'\setminus\{\beta\}$.
But then $C$ and $C'$ have the same cardinality, so \cref{cluster properties} implies that both are imaginary and contained in $\APT{c}$.
In particular $\cm\alpha\beta c= \cmcirc\alpha\beta c$ and  $\cm\beta\alpha c=\cmcirc\beta\alpha c$.
Since $\alpha$ and $\beta$ are $c$-exchangeable, they are not $c$-compatible, so by symmetry, to show that $\cm\alpha\beta c=1=\cm\beta\alpha c$, it suffices to rule out $\cmcirc\alpha\beta c=2$.

Suppose $\cmcirc\alpha\beta c=2$.
Then $\alpha$ and $\beta$ are overlapping or adjacent on both sides and $\alpha$ has two distinct adjacent roots.
Let $\ell$ be the number of simple roots of $\RST{c}$ in the overlap of $\SuppT(\alpha)$ and $\SuppT(\beta)$ on one side and let $m$ be the number of simple roots in the overlap on the other side.
Let $p$ be the number of simple roots (in the component) not contained in or adjacent to $\SuppT(\alpha)$ and let $q$ be the number of simple roots in that are not contained in or adjacent to $\SuppT(\beta)$.
Using \cref{compatible in tubes}, we see that the maximum size of a set of pairwise $c$-compatible real roots in the component, $c$-compatible with both $\alpha$ and $\beta$, is $\ell+m+p+q$, which is $4$ less than the size of the component if $\beta$ has two adjacent roots, or $3$ less if $\beta$ has one adjacent root.
The maximum size of a set of pairwise $c$-compatible roots in each other component is $1$ less than the size of the component.
Since by \cref{Up details}, the rank of $\RST{c}$, minus the number of components, is $n-2$, the maximum size of a set of pairwise $c$-compatible real roots in $\RST{c}$, $c$-compatible with both $\alpha$ and $\beta$, is $n-4$.  
Allowing $\delta$ in the set, we conclude that $C\setminus\set{\alpha}$ can contain at most $n-3$ roots.
This contradicts \cref{cluster properties}\eqref{Qc n-1}, so $\cmcirc\alpha\beta c\neq2$.

We have proved one direction of \cref{exchangeable}\eqref{exch 1}.
To prove the remaining direction of \cref{exchangeable}\eqref{exch 1}, suppose ${\cm\alpha\beta c=1=\cm\beta\alpha c}$.
If there exists a root $\gamma$ in the $\tau_c$-orbit of some negative simple root $-\alpha_i$ that is $c$-compatible with both $\alpha$ and $\beta$, then by \cref{prop:inductive_exch}\eqref{exch tau} and \eqref{compat tau}, we may as well assume that some $-\alpha_i$ is $c$-compatible with both $\alpha$ and $\beta$.
By \eqref{compat base}, $\alpha$ and $\beta$ are in a proper standard parabolic root subsystem of $\RS$ and \cref{compat restrict} says that their compatibility degree in the root subsystem is $1$ in both directions.
By the result for finite type, $\alpha$ and $\beta$ are exchangeable in the root subsystem.
Adjoining~$-\alpha_i$ to the two relevant clusters, we see that $\alpha$ and $\beta$ are $c$-exchangeable in $\AP{c}$ as well.

Suppose that $\alpha$ and $\beta$ are both in $\APTre{c}$.
Then there are two possibilities allowed by the equality $\cm\alpha\beta c=1=\cm\beta\alpha c$.
One possibility is that $\alpha$ and $\beta$ are overlapping or adjacent on only one side.
But then $\SuppT(\alpha,\beta)$ is not component-full, so \cref{prop in the tubes} implies that there is a root in the $\tau_c$-orbit of a negative simple root that is $c$-compatible with both $\alpha$ and $\beta$, so we have already covered this possibility.
The other possibility, pictured in \cref{funny exch fig} with the same drawing conventions as in \cref{tubes table}, is that $\SuppT(\alpha)$ and $\SuppT(\beta)$ each omit a single simple root of the component of $\RST{c}$ that contains them both.
\begin{figure}
\includegraphics{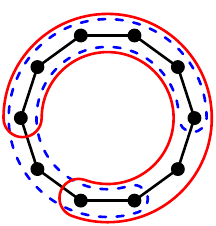}
\caption{One of the cases discussed in the proof of \cref{exchangeable}.}
\label{funny exch fig}
\end{figure}
In this case, let $p$ and $q$ be the numbers of simple roots in the two components of $\SuppT(\alpha)\cap\SuppT(\beta)$.
Using \cref{compatible in tubes}, we can construct a set of $p+q$ pairwise $c$-compatible real roots in the component, each $c$-compatible with both $\alpha$ and $\beta$.
This is $2$ less than the size of the component.
In each other component of size $k$, we can find a set $k-1$ of pairwise $c$-compatible roots.
Since the rank of $\RST{c}$ minus the number of components is $n-2$, we have constructed a set $C$ of $n-3$ pairwise $c$-compatible real roots in $\RST{c}$, each $c$-compatible with both $\alpha$ and $\beta$.  
By \cref{cluster properties,compat delta U}, the sets $C\cup\set{\alpha,\delta}$ and $C\cup\set{\beta,\delta}$ witness that $\alpha$ and $\beta$ are $c$-exchangeable.

The remaining case is where one or both of $\alpha$ and $\beta$ are not in $\APTre{c}$, or equivalently by \cref{tau prop}, one or both is in the $\tau_c$-orbit of a negative simple root.
By \cref{aff conj}, \cref{prop:inductive_exch}\eqref{exch sigma}, \cref{sigma pos real in tubes}, and \eqref{compat sigma}, we can choose one Coxeter element $c$ in each conjugacy class for which to check this remaining case.
Without loss of generality (by switching the names $\alpha$ and $\beta$ for the roots), $\alpha$ is in the $\tau_c$-orbit of a negative simple root, and by \cref{prop:inductive_exch}\eqref{exch tau} and \eqref{compat tau}, we may as well assume that $\alpha$ is a negative simple root $-\alpha_j$.
If $K(\gamma_c,\beta)>0$, then since $\eigenspace{c}$ is defined by the equation $K(\gamma_c,\,\cdot\,)=0$ and since $[\beta:\alpha_j]=1$, \cref{tau prop}\eqref{tauc fin} says that $\beta$ is in an infinite $\tau_c$-orbit, so \cref{tau prop}\eqref{tidily} says that $\tau_c^m(\beta)$ is a negative simple for some $m<0$.
\cref{tau prop}\eqref{tidily} also says that $K(\gamma_c,\tau_c^m(-\alpha_j))<0$.
Thus, up to applying $\tau_c$ several times and then switching $\alpha$ and $\beta$ and changing $j$, we can assume that $\alpha=-\alpha_j$ and $K(\gamma_c,\beta)\le0$.
Since we have already handled the case where some root in the $\tau_c$-orbit of some negative simple root is $c$-compatible with both $\alpha$ and $\beta$, by \eqref{compat base} and \eqref{compat tau} we may as well assume that $\Supp(\tau_c^m(\alpha),\tau_c^m(\beta))$ is full for every integer $m$.
In particular, $\beta$ has full support.
By \cref{compat commute}, we can rewrite the hypothesis $\cm\alpha\beta c=1=\cm\beta\alpha c$ as $\cm\alpha\beta c=1=\cm{\alpha\ck}{\beta\ck}c$, and since $\alpha=-\alpha_j$, this is $[\beta:\alpha_j]=[\beta\ck:\alpha\ck_j]=1$.
We complete the proof of \cref{exchangeable}\eqref{exch 1} by verifying that this final case cannot occur.

\cref{exchangeable rescale} allows us to restrict our attention to the standard affine root systems.
Thus the following proposition completes the proof.
\end{proof}

\begin{proposition}\label{can be completed}
Suppose $\RS$ is of standard affine type.
In each conjugacy class of Coxeter element, there exists $c$ such that the following assertion holds:
For each $j\in\set{1,\ldots,n}$, there does not exist $\beta\in\APre{c}$ with full support such that
\begin{enumerate}
\item $[\beta:\alpha_j]=[\beta\ck:\alpha\ck_j]=1$,
\item $\Supp(\tau_c^m(-\alpha_j),\tau_c^m(\beta))$ is full for all integers $m$.
\item $K(\gamma_c,\beta)\le0$.
\end{enumerate}
\end{proposition}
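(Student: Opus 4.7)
The plan is to verify the statement by a case analysis over the standard affine types of \cref{tab:type-by-type}, taking as representative $c=s_1\cdots s_n$ with the numbering pictured there. By \cref{aff conj} this exhausts every conjugacy class of Coxeter element except in type $A^{(1)}_{n-1}$, where one runs the same argument over the finitely many classes indexed by $k\in\set{1,\dots,n-1}$; the cyclic symmetry of that Dynkin diagram makes the verifications essentially uniform across these classes. Fix $j$ and suppose $\beta\in\APre{c}$ is a hypothetical counterexample.

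The first step is to locate $\beta$ in orbit space. Condition~(3), $K(\gamma_c,\beta)\le 0$, combined with \cref{useful}, gives two alternatives: either $K(\gamma_c,\beta)=0$, so that $\beta\in\APTre{c}$; or $K(\gamma_c,\beta)<0$, in which case by \cref{apSchur}\eqref{3cup} the root $\beta$ lies in $\set{c^{-m}\psi^\inj_{c;j'}:m\ge0,\,1\le j'\le n}$. The first alternative can be ruled out uniformly by inspection of \cref{tab:type-by-type}: a root in $\APTre{c}$ has non-component-full tube support by \cref{pos real in tubes}, and the simple-root expansions listed in the table show that such a root with full $\Simples$-support necessarily has some simple-root coefficient of at least $2$ on $\alpha_\aff$ or an adjacent finite simple, which is incompatible with the pair $[\beta:\alpha_j]=[\beta\ck:\alpha_j\ck]=1$ (in non-simply-laced types the co-root condition also forces $\alpha_j$ to share a length with $\beta$, further restricting the possibilities).

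For the second alternative I would iteratively compute the simple-root expansions of $c^{-m}\psi^\inj_{c;j'}=c^{-m}s_n\cdots s_{j'+1}\alpha_{j'}$ for $m\ge 0$ using \cref{Howlett's} and \cref{Ec def}, tabulating the initial segment of each orbit until condition~(1) fails or the full-support condition stabilizes. In each type a short and bounded computation eliminates all $\beta$ in these orbits that satisfy (1): after a controlled number of applications of $\tau_c$, one produces $m$ such that the combined support of $\tau_c^m(-\alpha_j)$ and $\tau_c^m(\beta)$ omits a simple root, so (2) fails. The failure typically arises either because $\tau_c^m(\beta)$ re-enters the tube $\APTre{c}$, whose tube supports are confined to proper subdiagrams of $\RST{c}$, or because $\tau_c^m(-\alpha_j)$ becomes another negative simple $-\alpha_k$ whose singleton support no longer covers the gap in $\Supp(\tau_c^m(\beta))$.

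The main obstacle is the case-by-case character of the verification: each of the ten rows of \cref{tab:type-by-type} requires its own finite check. The classical series ($A^{(1)}_1$, $A^{(1)}_{n-1}$, $B^{(1)}_{n-1}$, $C^{(1)}_{n-1}$, $D^{(1)}_{n-1}$, $G^{(1)}_2$) admit uniform combinatorial descriptions of the $\tau_c$-orbits of negative simple roots that make the calculations routine, whereas the exceptional types $E^{(1)}_6$, $E^{(1)}_7$, $E^{(1)}_8$, and $F^{(1)}_4$ demand explicit but bounded tabulations; the parallel orbit data already worked out in \cite{afforb} supplies the computations needed to dispatch them.
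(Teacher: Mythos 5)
Your overall strategy coincides with the paper's: fix the representative $c=s_1\cdots s_n$ of \cref{tab:type-by-type} in each conjugacy class, use condition (3) together with \cref{useful} and \cref{apSchur} to split candidates into $\beta\in\APTre{c}$ versus $\beta$ in the $c$-orbits of $\TravInj{c}$, and eliminate survivors by exhibiting an $m$ for which $\Supp(\tau_c^m(-\alpha_j),\tau_c^m(\beta))$ is not full. Your treatment of the infinite-orbit case is essentially the paper's (the paper parametrizes candidates as $\pm\beta'+k\delta$ rather than walking orbits, which also supplies the termination argument that your phrase ``until the full-support condition stabilizes'' leaves implicit), and the exceptional types are likewise dispatched by machine.

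The genuine gap is your claim that the case $\beta\in\APTre{c}$ can be ``ruled out uniformly by inspection.'' Both halves of that argument fail. First, a root of $\RST{c}$ with non-component-full tube support \emph{can} have full $\Simples$-support with a coefficient equal to $1$ in position $j$ and in the dual position: in type $C_2^{(1)}$ with $c=s_1s_2s_3$ the root $\beta=c\alpha_2=\delta-\alpha_2=\alpha_1+\alpha_2+\alpha_3$ lies in $\APTre{c}$, has all simple-root coordinates equal to $1$, is short (so $[\beta\ck:\alpha_2\ck]=1$), and satisfies $K(\gamma_c,\beta)=0$; hence conditions (1) (with $j=2$) and (3) both hold. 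The same phenomenon occurs for $\delta-\sum_{i=\ell}^m\alpha_i$ throughout the $C^{(1)}_{n-1}$ series and for the forms $-\beta'+\delta$ and $-\beta'+2\delta$ in types $B$, $D$, and $E$. Second, even when some coordinate of $\beta$ is at least $2$, that does not contradict condition (1), which constrains only the $j$-th root and co-root coordinates, not the others. These roots must therefore be eliminated through condition (2) by the same kind of explicit $\tau_c$-computation you reserve for the infinite orbits --- in the $C_2^{(1)}$ example, $\tau_c^{-1}(\beta)=\alpha_2$ and $\tau_c^{-1}(-\alpha_2)=\alpha_2+\alpha_3$, so the joint support omits $\alpha_1$. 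The paper's proof devotes a substantial portion of each classical case to exactly these roots; as written, your proposal skips them entirely.
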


\cref{can be completed} is proved in \cref{type exch}.
We proceed to prove the remainder of \cref{exchangeable}.
We will use \cref{exchangeable}\eqref{exch 1} to prove the remaining parts, but when we prove \cref{can be completed} (and thus complete the proof of \cref{exchangeable}\eqref{exch 1}), we do not use \cref{exchangeable}.

\begin{proof}[Proof of \cref{exchangeable}(\ref{exch Delta}--\ref{exch component-full})]
Suppose $\alpha$ and $\beta$ are $c$-exchangeable roots but are not $c$-real-exchangeable.
In the proof of \cref{exchangeable}\eqref{exch 1}, we showed that in this case $\alpha,\beta\in\APTre{c}$.
We want to show that $\SuppT(\alpha,\beta)$ is component-full and that $\alpha+\beta$ is in $\mathring\Delta_c$.
If $\SuppT(\alpha,\beta)$ is not component-full, then $R=\set{\alpha,\beta}$ satisfies the hypotheses of \cref{prop in the tubes}, so there exists a root $\tau_c^k(-\alpha_j)$ for some $j\in\set{1,\ldots,n}$ and $k\in\integers$ that is $c$-compatible with both $\alpha$ and $\beta$.
Thus by \eqref{compat tau}, $\tau_c^{-k}\alpha$ and $\tau_c^{-k}\beta$ are both $c$-compatible with $-\alpha_j$, and thus by \eqref{compat base}, both are in the proper parabolic root subsystem deleting $\alpha_j$.
Writing $c'$ for the restriction of $c$ to the parabolic subgroup, \cref{exchangeable}\eqref{exch 1} and \cref{compat restrict} imply that $\cm{\tau_c^{-k}\alpha}{\tau_c^{-k}\beta}{c'}=1=\cm{\tau_c^{-k}\beta}{\tau_c^{-k}\alpha}{c'}$.
The finite-type result implies that $\tau_c^{-k}\alpha$ and $\tau_c^{-k}\beta$ are $c'$-exchangeable in the parabolic subgroup.
Inserting $-\alpha_j$ into the $c'$-clusters that witness the $c'$-exchangeability, we see that $\tau_c^{-k}\alpha$ and $\tau_c^{-k}\beta$ are $c'$-real-exchangeable.
Now \cref{prop:inductive_exch}\eqref{exch tau} implies that $\alpha$ and $\beta$ are $c$-real-exchangeable.
This contradiction shows that $\SuppT(\alpha,\beta)$ is component-full.
Therefore $\alpha+\beta-\delta$ is a nonnegative linear combination of positive roots of $\RST{c}$.
Since $\delta$ is in $\mathring\Delta_c$ and the positive roots of $\RST{c}$ are in $\Delta_c$, we conclude that $\alpha+\beta$ is in $\mathring\Delta_c$.

We now continue to assume that $\alpha$ and $\beta$ are $c$-exchangeable but now assume that they \emph{are} $c$-real-exchangeable.
To complete the proof, we need to show that $\alpha+\beta\not\in\mathring\Delta_c$ and to show that if $\alpha,\beta\in\APTre{c}$, then $\SuppT(\alpha,\beta)$ is not component-full.
Let $C$ and $C'$ be real $c$-clusters with $\alpha\in C$, with $\beta\in C'$, and with $C\setminus\{\alpha\}=C'\setminus\{\beta\}$.
\cref{cluster properties}\eqref{at least 2} says that $C$ contains at least two roots in the $\tau_c$-orbits of negative simple roots.
Thus $C\setminus\set{\alpha}$ contains at least one root in the $\tau_c$-orbit of a negative simple root.
Therefore, Properties~\eqref{compat base} and~\eqref{compat tau} imply that $\Supp(\tau_c^k(\alpha),\tau_c^k(\beta))$ is not full for some integer $k$.

Suppose for the sake of contradiction that $\alpha,\beta\in\APTre{c}$ and that $\SuppT(\alpha,\beta)$ is component-full.
Then by \cref{c on tUp components}, $\SuppT(\tau_c^k(\alpha),\tau_c^k(\beta))$ is also component-full.
But then $\tau_c^k(\alpha)+\tau_c^k(\beta)$ is $\delta$ plus a nonnegative combination of positive roots, and since $\Supp(\delta)$ is full, we conclude that $\Supp(\tau_c^k(\alpha),\tau_c^k(\beta))$ is full, and this is our contradiction.

It remains to show that  $\alpha+\beta\not\in\mathring\Delta_c$.
The fact that $\tau_c^k(C\setminus\set{\alpha})$ contains a negative simple root for some $k$ means that, reading indices modulo $n$, the set $\sigma_1\cdots\sigma_{nk}(C\setminus\set{\alpha})$, if $k>0$, or $\sigma_{-nk+1}\cdots\sigma_0(C\setminus\set{\alpha})$, if $k<0$, contains a negative simple root.  
Choose $\ell$ with the smallest absolute value such that $\sigma_1\cdots\sigma_\ell(C\cup\set{\beta})$, if $k\ge0$, or $\sigma_{-\ell}\cdots\sigma_0(C\cup\set{\beta})$, if $k<0$, contains a negative simple root, and write $\Sigma$ for $\sigma_1\cdots\sigma_\ell$ or $\sigma_{-\ell}\cdots\sigma_0$.
Since we choose $\ell$ with smallest absolute value, since $\sigma_i$ acts as $s_i$ on positive roots, and since $\pm\alpha_i$ are the only roots whose sign changes under the action of $s_i$, we see that $(\Sigma(C\cup\set{\beta}))\cap(-\Simples)=\set{-\alpha_\ell}$.
If neither $\Sigma(\alpha)$ nor $\Sigma(\beta)$ is $-\alpha_\ell$, then $\Sigma(\alpha)$ and $\Sigma(\beta)$ are both in the standard parabolic subgroup of $\RS$ that omits the simple root $\alpha_\ell$.
If $\Sigma(\alpha)$ or $\Sigma(\beta)$ is $-\alpha_\ell$, then, since we are assuming that $\alpha$ and $\beta$ are $c$-exchangeable and have proved that $\cm\alpha\beta c=1=\cm\beta\alpha c$, the other of $\Sigma(\alpha)$ or $\Sigma(\beta)$ has $\alpha_\ell$-coordinate $1$ in its simple-root expansion by \eqref{compat base}.
In either case, $\Sigma(\alpha)+\Sigma(\beta)$ is in the span of $\Simples\setminus\set{\alpha_\ell}$.
Write $\tilde c$ for $s_1\cdots s_\ell cs_\ell\cdots s_1$, if $\ell\ge0$, or for $s_{-\ell}\cdots s_0cs_0\cdots s_\ell$, if $\ell<0$.
Since all vectors in $\mathring\Delta_{\tilde c}$ have strictly positive simple-root coordinates, we see that $\Sigma(\alpha)+\Sigma(\beta)$ is not in $\mathring\Delta_{\tilde c}$.
Since each $\sigma_i$ that makes up $\Sigma$ acts as $s_i$, \cref{scs Delta} implies that $\alpha+\beta$ is not in $\mathring\Delta_c$.
This completes the proof of \cref{exchangeable}(\ref{exch Delta}--\ref{exch component-full}).
\end{proof}

\begin{proof}[Proof of \cref{exchangeable}\eqref{exch delta}]
Suppose $C$ is a $c$-cluster with $\delta\in C$.
Then by definition $C$ is an imaginary cluster, so \cref{cluster properties} says that $|C|=n-1$.
If $\beta$ is any real root in $\APre{c}$, then $(C\setminus\set{\delta})\cup\set{\beta}$ has $\le n-1$ roots, and thus is not a real cluster by \cref{cluster properties}.
It is also not an imaginary cluster, since it does not contain~$\delta$.
We see that $\beta$ and $\delta$ are not exchageable.
\end{proof}

Finally, we reduce \cref{pair exchangeable} to a proposition that will be proved in \cref{type exch}.

\begin{proof}[Proof of \cref{pair exchangeable}]
Suppose $\cm\alpha\delta c=1=\cm\delta\alpha c$.
Since in particular $\alpha$ and $\delta$ are not $c$-compatible, by \cref{delta c compat}, $\alpha$ is in the $\tau_c$-orbit of a negative simple root.
As in the proof of \cref{exchangeable}\eqref{exch 1}, we can assume that $\alpha=-\alpha_j$ for some $j$.
As before, the assumption that $\cm\alpha\delta c=1=\cm\delta\alpha c$ means that $[\delta:\alpha_j]=1=[\delta\ck:\alpha_j\ck]$.
Inspection of the classification (for example in \cite[Tables Aff 1--3]{Kac90}) shows that $[\delta:\alpha_j]=1=[\delta\ck:\alpha_j\ck]$ happens if and only if $\alpha_j=\alpha_\aff$ or there is some diagram automorphism taking $\alpha_j$ to $\alpha_\aff$.  
Since we will be proving this direction of the theorem for all possible $c$, we may as well take $\alpha_j=\alpha_\aff$.

As before, let $\beta_1,\ldots,\beta_{n-2}$ be the simple roots of $\RSTfin{c}$.
Then the set $\TravReg{c}$ is a collection of pairwise $c$-compatible roots in $\APTre{c}$.
(Within each component they are all nested.)
In particular, each of these is $c$-compatible with $\alpha=-\alpha_\aff$.
By \cref{clus def OK} and \cref{cluster properties}\eqref{Q n},  $\TravReg{c}\cup\set\alpha$ is contained in some real cluster $C'=\TravReg{c}\cup\set{\alpha,\beta}$ for some $\beta$.
Writing $C$ for the cluster $\TravReg{c}\cup\set{\delta}$, we have $C\setminus\set{\delta}=C'\setminus\set{\alpha,\beta}$.
That is, $\set{\alpha,\beta}$ is $c$-exchangeable with~$\delta$.

Conversely, given $\alpha\in\AP{c}$, suppose there exists $\beta\in\AP{c}$ such that the pair $\set{\alpha,\beta}$ is $c$-exchangeable with $\delta$.
Specifically, suppose $C$ is an imaginary $c$-cluster and $C'$ is a $c$-cluster with $\alpha,\beta\in C'$, such that $C\setminus\set{\delta}=C'\setminus\set{\alpha,\beta}$.
We want to prove that $\cm\alpha\delta c=1=\cm\delta\alpha c$.
By \cref{prop:scaling,exchangeable rescale}, it is enough to check the theorem only for the standard affine root systems.
By \cref{prop:inductive_exch} and \eqref{compat sigma}, it is enough to check only one choice of $c$ in each conjugacy class.

Because $C\setminus\set{\delta}=C'\setminus\set{\alpha,\beta}$, in particular, $\alpha\not\in C$.
Since $C$ is a cluster (a \emph{maximal} set of pairwise $c$-compatible roots in $\AP{c}$), and $\alpha$ is compatible with any root in $C\setminus\set{\delta}$, we deduce that the root $\alpha$ is not $c$-compatible with $\delta$.
In particular \cref{delta c compat} says that $\alpha$ is in the $\tau_c$-orbit of a negative simple root $-\alpha_j$.
By \eqref{compat tau} and the fact that $\tau_c$ fixes $\delta$, we may as well assume that $\alpha=-\alpha_j$.
By \eqref{compat base} and \eqref{compat cobase}, the assertion that $\cm{-\alpha_j}\delta c=1=\cm\delta{-\alpha_j}c$ is equivalent to the assertion that $[\delta:\alpha_j]=1=[\delta\ck:\alpha_j\ck]$.  
As mentioned above, the latter assertion is equivalent to the assertion that $\alpha_j=\alpha_\aff$ or there is some diagram automorphism taking $\alpha_j$ to $\alpha_\aff$.
The roots $C\setminus\set\delta$ are all in $\APTre{c}$ by \cref{delta c compat}.
Also, \cref{cluster properties} implies that $C\setminus\set\delta$ contains exactly $n-2$ roots, and these roots are linearly independent.
In particular, $|\SuppT(C\setminus\set\delta)|\ge n-2$.
As mentioned above in the proof of most of \cref{exchangeable}\eqref{exch 1}, the rank of $\RST{c}$, minus the number of components, is $n-2$, so \cref{compat not full} says that $\SuppT(C\setminus\set\delta)$ consists of all but one simple root in each component of $\RST{c}$.
Each root in $C\setminus\set\delta$ is $c$-compatible with $-\alpha_j$, so $\alpha_j\not\in\Supp(C\setminus\set\delta)$, which further implies that $\alpha_j\not\in\Supp(\SuppT(C\setminus\set\delta))$.
In \cref{type exch}, we complete the proof by proving the following proposition.
\end{proof}

\begin{proposition}\label{must be affine}
Suppose $\RS$ is a standard affine root system.
Then there exists a Coxeter element $c$ such that the following assertion holds:
If $R$ is a set of simple roots of $\RST{c}$ consisting of all but one simple root in each component of $\RST{c}$, and if $-\alpha_j$ is a simple root of $\RS$ with $\alpha_j\not\in\Supp(R)$, then $\alpha_j=\alpha_\aff$ or there is some diagram automorphism taking $\alpha_j$ to $\alpha_\aff$.
\end{proposition}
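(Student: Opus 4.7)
The plan is to verify \cref{must be affine} one standard affine type at a time, using the data in \cref{tab:type-by-type}. Fix such a type and take $c=s_1\cdots s_n$ as in the table. Let $m$ denote the number of components of $\RST{c}$, and, given $R$ as in the statement, let $\gamma_i$ be the unique simple root of the $i$-th component of $\RST{c}$ that is omitted from $R$. The simple roots of the $i$-th component are the $\beta_j$ listed in the table that belong to that component, together with the affine simple root $\beta_\aff^i=\delta-\sum_{\beta}\beta$ (the sum taken over the simples of that component contained in $\RSfin$), using the notation from the proof of \cref{lemma in the tubes}. Thus a simple root $\alpha_j\in\Simples$ fails to lie in $\Supp(R)$ precisely when $\alpha_j$ appears in the $\Simples$-expansion of some $\gamma_i$ but in no other element of $\SimplesT{c}$.

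Type $A_{n-1}^{(1)}$ is immediate: its affine Dynkin diagram admits a dihedral group of automorphisms acting transitively on $\Simples$, so every simple root is equivalent to $\alpha_\aff$. For each remaining type, we enumerate the possible $m$-tuples $(\gamma_1,\ldots,\gamma_m)$ and compute the uncovered simple roots directly from the table together with the simple-root coefficients of $\delta$ (which are read off from Kac's Table Aff~1 in \cite[Ch.~4]{Kac90}). A key observation, already used in the proof of \cref{lemma in the tubes}, is that $\alpha_\aff$ appears in every $\beta_\aff^i$ but in none of the $\beta_j\in\RSfin$; consequently $\alpha_\aff\notin\Supp(R)$ precisely when $\gamma_i=\beta_\aff^i$ for all $i$.

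In types $B_{n-1}^{(1)},C_{n-1}^{(1)},D_{n-1}^{(1)},E_6^{(1)},E_7^{(1)}$ the resulting list of possibly uncovered simples across all choices of $(\gamma_i)$ coincides exactly with the orbit of $\alpha_\aff$ under the affine diagram automorphism group, which is nontrivial in each case. For example, in type $B_{n-1}^{(1)}$ the possibly missing simples are $\alpha_n=\alpha_\aff$ and $\alpha_{n-1}$, which are swapped by the unique nontrivial diagram automorphism. In the remaining types $E_8^{(1)},F_4^{(1)},G_2^{(1)}$, the affine Dynkin diagram has no nontrivial automorphism, so one must verify that the only possibly uncovered simple is $\alpha_\aff$ itself; in each of these three types a direct computation shows that each $\beta_\aff^i$ has full simple-root support in $\Simples$ (so that if any $\gamma_i$ is a non-affine simple, the retained $\beta_\aff^i$ already covers all of $\Simples$), while if $\gamma_i=\beta_\aff^i$ for every $i$ the union of supports of the $\beta_j\in\RSfin$ equals $\Simples\setminus\set{\alpha_\aff}$. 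The main obstacle is the bookkeeping, especially in types $E_7^{(1)}$ and $E_8^{(1)}$ where the $\beta_j$ have long simple-root expansions, but in each type the verification is a finite and straightforward computation from \cref{tab:type-by-type}.
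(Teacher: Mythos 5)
Your proposal is correct and takes essentially the same route as the paper: a type-by-type verification of the standard affine types against the data in \cref{tab:type-by-type}, using the roots $\beta_\aff^i$ and the simple-root coordinates of $\delta$ to determine which simples can fail to lie in $\Supp(R)$. Your framing via the full-support property of the $\beta_\aff^i$ and the orbit of $\alpha_\aff$ under diagram automorphisms is a slightly different bookkeeping of the same finite computation the paper carries out case by case.
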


\section{Type-by-type arguments for exchangeability}\label{type exch}
In this section, we prove \cref{can be completed,must be affine} by a type-by-type check in the classification of affine root systems.
Neither proof relies on results of \cref{exch sec}.  

\begin{proof}[Proof of \cref{can be completed}]  
We argue type-by-type, and we take $c=s_1\cdots s_n$ according to the labeling in \cref{tab:type-by-type}.
Exceptional types are checked by computer.

Recall that $\alpha_\aff=\alpha_n$ is every case.
Since $\RS$ is of standard affine type, every root $\beta$ of $\RS$ is of the form $\beta'+k\delta$ for some $\beta'\in\RSfin$.
In every case, we assume conditions (1) and (3).
To eliminate cases, we use the observation that no root in $\APre{c}$ is in $\eigenspace{c}$ and has $\SuppT(\beta)$ component-full.
Thus for each possible $\beta$, we either prove that $\beta\in \eigenspace{c}$ and $\SuppT(\beta)$ is component-full or we prove that (2) fails.
In the simply laced cases, we scale $\RS$ so that roots and co-roots coincide.
We rewrite condition~(3) as $\br{\phi_c,\beta}\le0$ for $\phi_c$ as in \cref{phic sign}.
We have $\beta\in \eigenspace{c}$ if and only if $\br{\phi_c,\beta}=0$.
Throughout, we use the fact that $\delta$ is fixed by $c$.
We also use the fact that if each of the roots $\beta,c(\beta),\ldots,c^p(\beta)$ is positive for some $p$, then $c^p(\beta)=\tau_c^p(\beta)$.

\noindent\textbf{Case $A_{n-1}^{(1)}$:}  
In this case, \cref{phic sign} implies that, up to positive scaling, $\phi_c$ is $\rho_n-\rho_1$.
Thus we are looking for $\beta$ with $[\beta:\alpha_1]\ge[\beta:\alpha_n]$.
A root $\beta$ with full support, at least one simple-root coordinate $=1$ and $[\beta:\alpha_1]\ge[\beta:\alpha_n]$ is of the form $\beta'+\delta$ for $\beta'\in\RSpos_\fin$ or $-\beta'+2\delta$ for $\beta'\in\RSpos_{\set{2,\ldots,k}}$ or $\beta'\in\RSpos_{\set{k+1,\ldots,n-1}}$.
But in the latter case, $\beta\in \eigenspace{c}$ and $\SuppT(\beta)$ is component-full.
Similarly, if $\beta=\beta'+\delta$ for $\beta'\in\RSpos_\fin$ such that $[\beta':\alpha_1]=0$, again $\beta\in \eigenspace{c}$ and $\SuppT(\beta)$ is component-full.
Thus we can assume that $\beta=\beta'+\delta$ for $\beta'\in\RSpos_\fin$ and $[\beta':\alpha_1]=1$.

We write $\beta'=\alpha_1+\cdots+\alpha_\ell+\alpha_{k+1}+\cdots+\alpha_{k+m}$, and without loss of generality, we can take $m<\ell$.
(Otherwise, rename the roots $\alpha_2,\ldots,\alpha_k$ as $\alpha_{k+1},\ldots,\alpha_{n-1}$ and vice versa, replacing $k$ by $n-k$.)
We compute $c^{-m-1}(\beta')=-\alpha_{\ell-m}-\cdots-\alpha_k-\alpha_n$.
Since $\delta$ is fixed by $c$, we see that $[c^{-m-1}(\beta):\alpha_{\ell-m}]=0$.
Each of the roots $c^{-i}(\beta)$ for $i=0,\ldots,m+1$ is positive, so $\tau_c^{-m-1}(\beta)=c^{-m-1}(\beta)$ and thus $[\tau_c^{-m-1}(\beta):\alpha_{\ell-m}]=0$.

Since $[\beta:\alpha_j]=1$, we have $j\in\set{\ell+1,\ldots,k}\cup\set{k+m+1,\ldots,n}$.
If $j$ is in $\set{\ell+1,\ldots,k}$, then $\tau_c^{-m-1}(-\alpha_j)=\alpha_{j-m}+\cdots+\alpha_k+\alpha_{n-m}+\cdots+\alpha_n$.
If $j$ is in $\set{k+m+1,\ldots,n}$, then $\tau_c^{-m-1}(-\alpha_j)=\alpha_{k-m+1}+\cdots+\alpha_k+\alpha_{j-m}+\cdots+\alpha_n$.
Since $\ell<j<k+1$, in either case we have $[\tau_c^{-m-1}(-\alpha_j):\alpha_{\ell-m}]=0$, contradicting condition (2).

\noindent\textbf{Case $B_{n-1}^{(1)}$:}  
\cref{phic sign} implies that, up to positive scaling, $\phi_c$ is $\rho_n+\rho_{n-1}-2\rho_1$.  
Recalling that $\rho_i$ is in the basis dual to the simple \emph{co-roots} and writing $\rho\ck_i$ for elements of the basis dual to the simple roots, we have $\rho\ck_i=d_i^{-1}\rho_i$, where the $d_i$ are the symmetrizing constants described in \cref{sec:root_systems}.
We can take $d_1=\frac12$ and $d_i=1$ for $i\neq1$, so up to positive scaling $\phi_c$ is $\rho\ck_n+\rho\ck_{n-1}-\rho\ck_1$.
Thus we require $[\beta:\alpha_1]\ge[\beta:\alpha_{n-1}]+[\beta:\alpha_n]$.
We have $\delta=2\sum_{i=1}^{n-2}\alpha_i+\alpha_{n-1}+\alpha_n$.
The positive roots in $\RSfin$ are exactly the roots $\sum_{i=\ell}^m\alpha_i$ for $2\le\ell\le m\le n-1$ or $2\sum_{i=1}^\ell\alpha_i+\sum_{i=\ell+1}^m\alpha_i$ for $0\le\ell<m\le n-1$.

A positive root $\beta$ with full support, at least one simple-root coordinate $=1$ and $[\beta:\alpha_1]\ge[\beta:\alpha_{n-1}]+[\beta:\alpha_n]$ is of one of the following forms:
\begin{enumerate}[\qquad(a)]
\item $\beta'+\delta$ for $\beta'\in\RSpos_\fin$ with $[\beta':\alpha_1]\ge[\beta':\alpha_{n-1}]$.
\item $-\beta'+\delta$ for $\beta'=\sum_{i=\ell}^m\alpha_i$ for $2\le\ell\le m\le n-2$.
\item $-\beta'+2\delta$ for $\beta'\in\RSpos_\fin$ with $[\beta':\alpha_{n-1}]=1$ and $[\beta':\alpha_1]\le 1$.
\end{enumerate}

If $\beta$ is of form (a) and $\beta'\in\RSpos_{\set{2,\ldots,n-2}}$, then $\beta$ is in $\eigenspace{c}$ and has component-full tube support, so we can assume that either $[\beta':\alpha_1]$ or $[\beta':\alpha_{n-1}]$ is positive.
But since $[\beta':\alpha_1]\ge[\beta':\alpha_{n-1}]$, we have $[\beta':\alpha_1]\ge1$.
Thus $\beta'$ is of the form $2\sum_{i=1}^\ell\alpha_i+\sum_{i=\ell+1}^m\alpha_i$ for $0\le\ell<m\le n-1$.
If $\beta'$ is $\sum_{i=1}^{n-1}\alpha_i$ (that is, if $\ell=0$ and $m=n-1$), then $\beta'=\beta_{n-2}$ in \cref{tab:type-by-type}, so again $\beta$ is in $\eigenspace{c}$ and has component-full tube support, and we rule out his case.
If $\ell=0$ and $m<n-1$, then $c^{-m}(\beta')=-\sum_{i=1}^n\alpha_i$, and thus  $c^{-m}(\beta)=c^{-m}(\beta')+\delta=\sum_{i=1}^{n-2}\alpha_i$,
Since all these roots are positive, we have $\tau_c^{-m}(\beta)=\sum_{i=1}^{n-2}\alpha_i$.
We have $j=n-1$ or $j=n$.
In either case, we calculate $\tau_c^{-m}(\alpha_j)$ and find that either its $\alpha_{n-1}$- or $\alpha_n$-coefficient is zero.
Similarly, if $\ell>0$, we compute $c^{-\ell}(\beta')=-\sum_{i=m-\ell+1}^n\alpha_i$ and see that $\tau_c^{-\ell}=c^{-\ell}(\beta)$ has both its $\alpha_{n-1}$-coefficient and its $\alpha_n$-coefficient zero, while $\tau_c^{-\ell}(\alpha_j)$ has either its $\alpha_{n-1}$- or $\alpha_n$-coefficient zero.
In either case, we have found a contradiction to (2).

If $\beta$ is of form (b), then $\beta$ is in $\eigenspace{c}$ and has component-full tube support.

Finally, if $\beta$ is of form (c), then $\beta'=\sum_{i=\ell}^{n-1}$ for some $1\le\ell\le n-1$.
If $\ell=1$, then $\beta$ is in $\eigenspace{c}$ and has component-full tube support, so we assume $\ell>1$.
We compute $c^{-\ell+1}(\beta')=\delta+\alpha_q$, where $q$ is $n$ if $\ell$ is even or $q$ is $n-1$ if $\ell$ is odd.
Therefore $c^{-\ell+1}(\beta)=\delta-\alpha_q$, and since all these roots are positive, we have $\tau_c^{-\ell+1}(\beta)=\delta-\alpha_q$, which has $\alpha_q$-coefficient zero.
In this case, $j=n-1$ and we calculate $\tau_c^{-\ell+1}(-\alpha_j)=\sum_{i=n-\ell+1}^{n_2}\alpha_i+\alpha_p$, where $p$ is $n-1$ if $\ell$ is even or $p$ is $n$ if $\ell$ is odd.
Both have $\alpha_q$-coefficient zero, so we have again contradicted (2).

\noindent\textbf{Case $C_{n-1}^{(1)}$:}  
\cref{phic sign} implies that, up to positive scaling, $\phi_c$ is $\rho_n-\rho_1$.
Since the symmetrizing constants $d_1$ and $d_n$ are the same, up to scaling this is $\rho\ck_n-\rho\ck_1$, so we require $[\beta:\alpha_1]\ge[\beta:\alpha_n]$.  
We have $\delta=\sum_{i=1}^n\alpha_i+\sum_{i=2}^{n-1}\alpha_i$.
The positive roots in $\RSfin$ are exactly the roots $\sum_{i=\ell}^m\alpha_i$ for $2\le\ell\le m\le n-1$ or $\sum_{i=1}^m\alpha_i+\sum_{i=2}^\ell\alpha_i$ for $1\le\ell\le m\le n-1$.

A positive root $\beta$ with full support, at least one simple-root coordinate $=1$ and $[\beta:\alpha_1]\ge[\beta:\alpha_n]$ is either $\beta'+\delta$ for $\beta'\in\RSpos_\fin$ or $-\beta'+\delta$ for $\beta'\in\RSpos_{\set{2,\ldots,n-1}}$.  
First, assume $\beta=\beta'+\delta$ for $\beta'\in\RSpos_\fin$.
If $\beta'\in\RS_{\set{2,\ldots,n-1}}$, then $\beta\in \eigenspace{c}$ and $\SuppT(\beta)$ is component-full, so we assume that $\beta'=\sum_{i=1}^m\alpha_i+\sum_{i=2}^\ell\alpha_i$ for $1\le\ell\le m\le n-1$.
We calculate $c^{-m}(\beta')=-\sum_{i=1}^n\alpha_i$, so that $c^{-m}(\beta)=\delta-\sum_{i=1}^n\alpha_i=\sum_{i=2}^{n-1}\alpha_i$.
These roots are all positive, so $\tau_c^{-m}(\beta)=c^{-m}(\beta)$, which has $\alpha_1$-coordinate zero.
In this case, $j=n$, so we compute $\tau_c^{-m}(-\alpha_j)=2\sum_{i=n-m+1}^{n-1}+\alpha_n$, which also has $\alpha_1$-coordinate zero since $m\le n-1$.
We have a contradiction to (2) in this case.

Next, assume $\beta=-\beta'+\delta$ for $\beta'\in\RSpos_{\set{2,\ldots,n-1}}$.
Specifically, $\beta'=\sum_{i=\ell}^m\alpha_i$ for $2\le\ell\le m\le n-1$.
We compute $c^{-\ell+1}(\beta')=\sum_{i=1}^n\alpha_i+\sum_{i=2}^{m-\ell+1}\alpha_i$, so $\tau_c^{-\ell+1}(\beta)=c^{-\ell+1}(\beta)=\sum_{i=m-\ell+2}^{n-1}\alpha_i$, which has $\alpha_1$- and $\alpha_n$-coordinates zero.
In this case, either $j\in\set{1,n}$ or $j\in\set{\ell,\ell+1,\ldots,m}$.
If $j=n$, then $\tau_c^{-\ell+1}(-\alpha_j)=2\sum_{i=n-\ell+2}^{n-1}+\alpha_n$, which also has $\alpha_1$-coordinate zero.
If $j\in\set{\ell,\ldots,m}$, then $\tau_c^{-1}(-\alpha_j)=\sum_{i=j}^n\alpha_i$, so $\tau_c^{-\ell+1}(-\alpha_j)=\sum_{i=j-\ell+2}^n\alpha_i+\sum_{i=n-\ell+2}^{n-1}\alpha_i$, which also has $\alpha_1$-coordinate zero.
We have contradicted (2) except when $j=1$.
In that final case, we compute $\tau_c^{n-\ell}(\beta)=\sum_{i=m-\ell+2}^{n-1}\alpha_i$ and $\tau_c^{n-\ell}(-\alpha_1)=\alpha_1+2\sum_{i=2}^{n-\ell}\alpha_i$, and we have contradicted (2) in this final case.

\noindent\textbf{Case $D_{n-1}^{(1)}$:}  
By \cref{phic sign}, up to positive scaling $\phi_c$ is $\rho_n+\rho_{n-1}-\rho_2-\rho_1$, so we want $\beta$ with $[\beta:\alpha_1]+[\beta:\alpha_2]\ge[\beta:\alpha_{n-1}]+[\beta:\alpha_n]$.
We have $\delta=\sum_{i=1}^n\alpha_i+\sum_{i=3}^{n-2}\alpha_i$.
Each positive root in $\RSfin$ is either a sum of adjacent roots, with coefficients $1$, along a path in the diagram for $\RSfin$ or is of the form $\sum_{i=1}^{m}\alpha_i+\sum_{i=3}^\ell\alpha_i$ for some $\ell$ and $m$ with $2\le\ell<m\le n-1$.  

A positive root $\beta$ with full support, at least one simple-root coordinate $=1$ and $[\beta:\alpha_1]+[\beta:\alpha_2]\ge[\beta:\alpha_{n-1}]+[\beta:\alpha_n]$ is of one of the following three forms:

\begin{enumerate}[\qquad(a)]
\item $\beta'+\delta$ for $\beta'\in\RSpos_\fin$ with $[\beta':\alpha_1]+[\beta':\alpha_2]\ge[\beta':\alpha_{n-1}]$.
\item $-\beta'+\delta$ for $\beta'\in\RSpos_{\set{3,\ldots,n-2}}$.
\item $-\beta'+2\delta$ for $\beta'\in\RSpos_\fin$ with $[\beta':\alpha_{n-1}]=1$ and $[\beta':\alpha_1]+[\beta':\alpha_2]\le 1$.
\end{enumerate}

If $\beta$ is of form (a) and $\beta'\in\RS_{\set{3,\ldots,n-2}}$, then $\beta\in \eigenspace{c}$ and $\SuppT(\beta)$ is component-full, so we can rule out this case.
Thus at least one of $[\beta':\alpha_1]$, $[\beta':\alpha_2]$ and $[\beta':\alpha_{n-1}]$ is $1$, but since $[\beta':\alpha_1]+[\beta':\alpha_2]\ge[\beta':\alpha_{n-1}]$, we see that at least one of $[\beta':\alpha_1]$ and $[\beta':\alpha_2]$ is $1$.
By symmetry, we can assume $[\beta':\alpha_1]=1$.
If also $[\beta':\alpha_{n-1}]=1$ but $[\beta':\alpha_2]=0$, then again $\beta\in \eigenspace{c}$ and $\SuppT(\beta)$ is component-full.
Thus we need to consider two cases:  
$\beta'=\alpha_1+\sum_{i=3}^\ell\alpha_i$ for $2\le\ell\le n-2$ or $[\beta':\alpha_1]=[\beta':\alpha_2]=1$.

First, take $\beta'=\alpha_1+\sum_{i=3}^\ell\alpha_i$ for $2\le\ell\le n-2$.
In this case, $j\in\set{2,n-1,n}$.
If $\ell=2$, so that $\beta'=\alpha_1$, then $c^{-1}(\beta')=-\alpha_1-\sum_{i=3}^n\alpha_i$, so $[c^{-1}(\beta):\alpha_1]=0$.
Since $\beta$ and $c^{-1}(\beta)$ are positive, $\tau_c^{-1}(\beta)=c(\beta)$, so $[\tau_c^{-1}(\beta):\alpha_1]=0$.
But also $[\tau_c(-\alpha_j):\alpha_1]=0$ for $j\in\set{2,n-1,n-2}$, so condition (2) fails.
Set $p=1$ if $\ell$ is even or $p=2$ if $\ell$ is odd and define $q$ such that $\set{p,q}=\set{1,2}$.
If $3\le\ell\le n-2$, then we calculate $\tau_c^{-\ell+1}(\beta)=\alpha_q+\sum_{i=3}^{n-2}\alpha_i$.
Meanwhile, for $j\in\set{n-1,n}$, $\tau_c^{-\ell+1}(-\alpha_j)$ has positive coordinates only at indices $\ge n-\ell+1\ge3$.
Thus $\Supp(\tau_c^{-\ell+1}(\alpha_j),\tau_c^{-\ell+1}(\beta))$ does not contain $\alpha_p$, so condition (2) fails for $j\in\set{n-1,n}$.
Also, $\tau_c^{-1}(-\alpha_2)=\sum_{i=2}^n\alpha_i$, so $\tau_c^{-\ell+1}(-\alpha_2)$ has $\alpha_p$-coordinate zero, and condition (2) fails for $j=2$ as well.

Next suppose $[\beta':\alpha_1]=[\beta':\alpha_2]=1$.
If $[\beta':\alpha_{n-1}]=1$, then $j=n$ and $\beta'=\sum_{i=1}^{n-1}\alpha_i+\sum_{i=3}^\ell\alpha_i$ for some $\ell$ with $2\le\ell\le n-2$.  
Now set $p=n-1$ if $\ell$ is even or $p=n$ if $\ell$ is odd and define $q$ such that $\set{p,q}=\set{n-1,n}$.
We compute $\tau_c^{-\ell+1}(\beta)=\delta-\alpha_p$ and $\tau_c^{-\ell+1}(-\alpha_j)=\sum_{n-\ell+1}^{n-2}\alpha_i+\alpha_q$, and we have again contradicted condition (2).
If $[\beta':\alpha_{n-1}]=0$, then $j\in\set{n-1,n}$.
We compute $\tau_c^{-\ell+1}(\beta)=c^{-\ell+1}(\beta)=\delta-c^{-\ell+1}(\beta')$, which has $\alpha_{n-1}$- and $\alpha_n$-coordinates zero.
We already saw that $[\tau_c^{-\ell+1}(-\alpha_n):\alpha_p]=0$, and by symmetry, we conclude that $[\tau_c^{-\ell+1}(-\alpha_{n-1}):\alpha_q]=0$.
Thus whether $j$ is $n$ or $n-1$, we have once gain contradicted condition (2).

If $\beta$ is of form (b), then either $j\in\set{1,2,n-1,n}$ or $j\in\Supp(\beta')$.
Write $\beta'=\sum_{i=\ell}^m\alpha_i$ with $3\le\ell\le j\le m\le n-2$.
We compute that $[\tau_c^{n-m-1}(\beta):\alpha_{n-1}]=[\tau_c^{n-m-1}(\beta):\alpha_n]=0$.
If $j\in\set{1,2,n-1,n}$, then up to the symmetry of the diagram and replacing $c$ by $c^{-1}$, we can take $j=1$.
(We have $[\beta:\alpha_1]+[\beta:\alpha_2]=[\beta:\alpha_{n-1}]+[\beta:\alpha_n]$ in this case, so symmetry is not broken by the inequality in condition (3).)
We compute that $\tau_c^{n-m-1}(-\alpha_j)$ is a sum of roots with indices $\le n-m\le n-3$.
If $j\in\Supp(\beta')$, we compute that $\tau_c^{n-m-1}(-\alpha_j)$ is a sum of roots with indices $\le j+n-m-2\le n-2$.
In either case, we have contradicted condition (2).

If $\beta$ is of form (c) and $[\beta':\alpha_1]+[\beta':\alpha_2]=1$, then $\beta'$ is either $\beta_{n-3}$ or $\beta_{n-2}$ in the notation of \cref{tab:type-by-type}.
In either case, $\beta$ is in $\eigenspace{c}$ and $\SuppT(\beta)$ is component-full.
Thus we can take $\beta'=\sum_{i=\ell}^{n-1}\alpha_i$ for $3\le\ell\le n-1$.
In this case, $j$ must be $n-1$.
We compute $\tau_c^{\ell+2}(\beta)=\delta-\alpha_q$, so that in particular $[\tau_c^{-\ell+2}(\beta):\alpha_q]=0$.
Recalling that $j=n-1$, we calculate $\tau_c^{-\ell+2}(-\alpha_j)=\sum_{i=n-\ell+2}^{n-2}\alpha_i+\alpha_p$, contradicting condition~(2).
\end{proof}

\begin{proof}[Proof of \cref{must be affine}]
We check the proposition type-by-type using the choice of $c$ and the determination of the simple roots of $\RSTfin{c}$ given in \cref{tab:type-by-type}.
We continue the notation from the proof of \cref{lemma in the tubes} and index the components of $\RST{c}$ by indices $i$, writing $\beta_\aff^i$ for the unique simple root in the $i\th$ component that is not in $\RSfin$.
Simple-root coordinates of the roots $\beta_\aff^i$ can be computed using simple-root coordinates of $\delta$ found, for example, in \cite[Table Aff 1]{Kac90}.

\noindent\textbf{Case $A_{n-1}^{(1)}$:}  
Diagram automorphisms act transitively on the simple roots.

\noindent\textbf{Case $B_{n-1}^{(1)}$:}  
We need to show that $j\in\set{n-1,n}$.  
If $\beta_{n-2}\in R$, then $j$ can't be in $\set{1,\ldots,n-1}$, so $j=n$.
Otherwise $\beta_\aff^2=\alpha_n+\sum_{i=1}^{n-2}\alpha_i$ is in $R$, so $j=n-1$.

\noindent\textbf{Case $C_{n-1}^{(1)}$:} 
We need $j\in\set{1,n}$.
If $j\in\set{2,\ldots,n-1}$, then $\beta_{j-1}\not\in R$, so $\beta_\aff^1=\sum_{i=1}^n\alpha_i\in R$, and this is a contradiction.

\noindent\textbf{Case $D_{n-1}^{(1)}$:}  
We need $j\in\set{1,2,n-1,n}$.
If $\beta_{n-3}\in R$, then $j\in\set{2,n}$.
Otherwise, $\beta_\aff^2=\delta-\beta_{n-3}=\alpha_n+\sum_{i=2}^{n-2}\in R$, so $j\in\set{1,n-1}$.

\noindent\textbf{Case $E_6^{(1)}$:}  
We need $j\in\set{1,3,7}$.
If $\beta_5\in R$, then $j\in\set{1,3,7}$.
Otherwise, $\beta_\aff^3=\delta-\beta_5\in R$, and there is no possible $j$.

\noindent\textbf{Case $E_7^{(1)}$:}  
We need $j\in\set{2,8}$.
If $\beta_6\in R$, then $j\in\set{2,8}$.
Otherwise, $\beta_\aff^3\in R$, and there is no possible $j$.

\noindent\textbf{Case $E_8^{(1)}$:}
We need $j=9$.
If $\beta_7\in R$, then $j=9$.
Otherwise $\beta_\aff^3\in R$, and there is no possible $j$.

\noindent\textbf{Case $F_4^{(1)}$:} 
We need $j=5$.
If $\beta_2\in R$, then $j=5$.
Otherwise, $\beta_\aff^1\in R$, and there is no possible $j$.

\noindent\textbf{Case $G_2^{(1)}$:}  
We need $j=3$, which is forced if $\beta_1\in R$.
Otherwise $\beta_\aff^1\in R$, and there is no possible $j$.  
\end{proof}

\section{Connections with Cluster algebras: \texorpdfstring{$\g$-Vectors and $\d$-vectors}{g-Vectors and d-Vectors}}\label{g d sec}
In this section, we connect the real $c$-cluster fan $\Fan_c^\re(\RS)$ to the $\g$-vector fan and the $\d$-vector fan of the corresponding cluster algebra by proving \cref{nu thm,denom thm}.
We also discuss evidence for \cref{c p conj}.

\subsection{Cluster algebras notation and conventions}  
We begin by reviewing notation and establishing conventions.
We assume the basic definitions (from \cite{Fomin07}) of exchange matrices, mutations, cluster variables, clusters of cluster variables, seeds and the \newword{$\d$-vector} or denominator vector and \newword{$\g$-vector} of a cluster variable. 

We are interested in acyclic exchange matrices whose underlying Cartan matrix~$A$ is of affine type.
In this case, the exchange matrix $B$ is of affine type in the sense discussed in the introduction.
Let $\RS$ be the root system defined by the Cartan matrix $A$, and continue the notation of the rest of the paper for root systems.
Since~$B$ is acyclic, we can associate to it the Coxeter element $c$ obtained as the product of the simple reflections $S$ ordered so that $s_i$ precedes $s_j$ whenever $b_{ij}>0$.
We assume that $B$ is indexed so that~$b_{ij}\ge0$ whenever $i<j$; with this convention $c$ can be written $s_1\cdots s_n$ as in the rest of the paper.  
The notation $\A_\Sigma$ \nomenclature[azzs]{$\A_\Sigma$}{cluster algebra associated to a seed $\Sigma$} stands for the cluster algebra determined by a seed $\Sigma$ and $\A_\bullet(B)$ \nomenclature[azzzz]{$\A_\bullet(B)$}{principal-coefficients cluster algebra determined by $B$} stands for the principal-coefficients cluster algebra determined by $B$.

We write $\d(x)$ or $\d_\Sigma(x)$ \nomenclature[dzzs]{$\d_\Sigma(x)$}{denominator vector of $x$ with respect to a seed $\Sigma$}
for the denominator vector of a cluster variable $x$ with respect to the seed $\Sigma$ and similarly $\g(x)$ or $\g_\Sigma(x)$ \nomenclature[gzzs]{$\g_\Sigma(x)$}{$\g$-vector of $x$ with respect to a seed $\Sigma$}
for $\g$-vectors.  
In \cref{denom thm}, $\d$\nobreakdash-vectors are written as vectors in the root lattice:
Specifically, if $x$ has denominator $x_1^{e_1}\cdots x_n^{e_n}$, then $\d(x)$ is the vector $\sum_{i=1}^ne_i\alpha_i$.
Similarly, in \cref{nu thm}, $\g$-vectors are written as vectors in the weight lattice, and the realization of the $\g$-vector as an integer vector is obtained by taking fundamental-weight coordinates.

For each cluster in $\A_\bullet(B)$, the nonnegative linear span of the $\g$-vectors in the cluster is a full-dimensional simplicial cone, and these cones, together, form the $\g$\nobreakdash-vector fan.
The map from cluster variables to $\g$-vectors (or to rays in the $\g$\nobreakdash-vector fan) is a bijection, and the map from clusters to maximal cones in the $\g$-vector fan is also a bijection.
Indeed, the simplicial complex underlying the $\g$\nobreakdash-vector fan is isomorphic to the cluster complex.
These facts about $\g$-vector fans have been proved in various special cases before being proved in general in \cite{GHKK}.
In particular, they were proved in \cite{camb_fan,framework,afframe} in finite and affine type using the combinatorics of root systems and Coxeter groups.  

We define a piecewise linear map $\nu_c:V\to V^*$.
A similar \emph{linear} map, also called $\nu_c$, was defined in \cite[Section~5.3]{framework}, but a piecewise-linear version is more useful here.
The two maps agree on the nonnegative span of the simple roots and were only applied to positive roots in \cite{framework}.
Suppose $\beta\in V$ and $I\subseteq\set{1,\ldots,n}$ is the set of indices $i$ such that $[\beta:\alpha_i]<0$.
We write $\beta_+$ for the vector $\beta-\sum_{i\in I}[\beta:\alpha_i]\alpha_i$ and define
\nomenclature[zznc]{$\nu_c$}{piecewise linear map taking $\Fan^\re_c(\RS)$ to the $\g$-vector fan}
\[\nu_c(\beta)=-\sum_{i\in I}[\beta:\alpha_i]\rho_i-\sum_{i\not\in I}E_c(\alpha_i\ck,\beta_+)\rho_i,\]
where $E_c$ is as defined in \cref{Ec def}.
The linear version of $\nu_c$ has an inverse defined in \cite[Section~3.3]{framework}.
Using the same construction on each orthant, an inverse to the piecewise-linear map $\nu_c$ is easily constructed, and we see that $\nu_c$ is a piecewise-linear homeomorphism from $V$ to $V^*$.

\subsection{Proofs of \cref{nu thm,denom thm}}
As a first step to proving \cref{nu thm,denom thm}, we recall from \cite{afframe} the construction of the $\g$-vector fan as the doubled Cambrian fan, defined in terms of sortable elements.
We will be as brief as possible, skipping much of the combinatorics and geometry of sortable elements and Cambrian fans.

Let $c$ be a Coxeter element of a Coxeter group $W$.
The \newword{$c$-sortable elements} of $W$ can be characterized by the following recursion, together with the base condition that the identity element is $c$-sortable in any Coxeter group and for any $c$.
Suppose $w\in W$ and $s$ is initial in $c$.
\begin{itemize}
\item If $s\not\le w$, then $w$ is $c$-sortable if and only if it is contained in the parabolic subgroup generated by $S\setminus\set{s}$ and is $sc$-sortable as an element of that subgroup.
\item If $s\le w$, then $w$ is $c$ sortable if and only if $sw$ is $scs$-sortable.
\end{itemize}
Here the relation $\le$ refers to the weak order on $W$, and the condition $s\le w$ is equivalent to the condition that $w$ admits a reduced word whose first letter is $s$.

We define recursively a map $\cl_c$ from $c$-sortable elements to roots.  \nomenclature[clc]{$\cl_c$}{map from $c$-sortable elements to $c$-clusters}
The recursion was originally \cite[Lemma~8.5]{sortable}, but can be taken as a definition of the map.
Let $w$ be $c$-sortable and suppose $s$ is initial in $c$.  
\begin{itemize}
\item If $w$ is the identity, then $\cl_c(w)=-\Simples$.
\item If $s\not\le w$ then $\cl_c(w)=\set{-\alpha_s}\cup\cl_{sc}(w)$.
\item If $s\le w$ then $\cl_c(w)=\sigma_s(\cl_{scs}(sw))$.
\end{itemize}

For each $c$-sortable element, we define $\Cone_c(v)$ to be the nonnegative linear span of $\nu_c(\cl_c(v))$.
This definition is equivalent to the definition in \cite[Section~5.2]{framework} in light of \cite[Theorem~5.35]{framework}.
The \newword{$c$-Cambrian fan} $\F_c$ \nomenclature[fc]{$\F_c$}{$c$-Cambrian fan}
is the collection of all cones $\Cone_c(v)$ for $c$-sortable elements $v$, and all faces of these cones.
The \newword{doubled $c$-Cambrian fan} $\DF_c$ \nomenclature[dfc]{$\DF_c$}{$\F_c\cup(-\F_{c^{-1}})$, the doubled $c$-Cambrian fan}
is the collection consisting of the cones in $\F_{c}$ and the negations of cones in the $c^{-1}$-Cambrian fan $\F_{c^{-1}}$.
That is, $\DF_c=\F_c\cup(-\F_{c^{-1}})$.

The following theorem is \cite[Corollary~1.3]{afframe}.

\begin{theorem}\label{DF g}
Suppose $B$ is an acyclic exchange matrix whose associated Cartan matrix is of affine type and whose associated Coxeter element is $c$.
Then the doubled $c$-Cambrian fan $\DF_c$ coincides with the $\g$-vector fan for the cluster algebra $\A_\bullet(B)$.
\end{theorem}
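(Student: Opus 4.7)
The plan is to deduce this by combining the known finite-type identification of the Cambrian fan with the $\g$-vector fan \cite[Theorem~8.1]{sortable} with a gluing argument across the boundary of (the image of) the Tits cone. In affine type, $\F_c$ alone covers only the $\nu_c$-image of the Tits cone of $W$, so the doubling $-\F_{c^{-1}}$ is designed precisely to fill the complementary half-space. Accordingly, the proof naturally splits into three parts: identification on each half separately, matching along the common boundary, and verifying that the resulting union is a well-defined complete simplicial fan that coincides with the $\g$-vector fan.

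For the ``$\F_c$ half'' I would proceed by induction on the length of a $c$-sortable element~$v$, using the recursive definition of $\cl_c$ recalled just above the theorem. The base case $v=e$ gives $\cl_c(e)=-\Simples$, and a direct calculation shows $\nu_c(-\alpha_i)=\rho_i$ for all $i$, which exhibits the fundamental weights and hence the $\g$-vectors of the initial cluster of $\A_\bullet(B)$. In the step $s\not\le v$, one passes to the rank-$(n-1)$ principal-coefficients subalgebra on $S\setminus\set{s}$ (effectively freezing $x_s$), where the inductive hypothesis applies directly. In the step $s\le v$, the source-sink move $c\mapsto scs$ relates $\cl_c$ to $\cl_{scs}$ via $\sigma_s$ by \cref{sigma prop}, and the known transformation of $\g$-vectors under simple-reflection conjugation of $B$ matches $\sigma_s$ after transport through $\nu_c$. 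The symmetric argument for $c^{-1}$, composed with negation, handles the $-\F_{c^{-1}}$ half.

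The main obstacle is gluing the two halves across the hyperplane $\nu_c(\eigenspace{c})$. By \cref{useful}, $\eigenspace{c}$ separates the infinite orbit transversals $\TravProj{c}$ and $\TravInj{c}$, so the only cones of $\F_c$ touching its $\nu_c$-image come from roots in $\APT{c}$. Since $c$ acts with finite order on $\eigenspace{c}$ by \cref{eigen}, and since $c$ and $c^{-1}$ act on this subspace by mutually inverse finite-order rotations on each component of $\RST{c}$, the cones produced on the boundary by $\F_c$ and by $-\F_{c^{-1}}$ are forced to agree; the matching can be made explicit by comparing with the cluster decomposition inside $\Delta_c$ supplied by \cref{clus exp imaginary}. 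Once this boundary matching is verified cone-for-cone (not merely as a matching of unions), completeness of $\DF_c$ follows from \cref{fan thm}, and then equality with the $\g$-vector fan is forced because both are complete simplicial fans whose maximal cones have been identified in bijection with clusters.
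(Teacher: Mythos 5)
First, a structural point: the paper does not prove this statement at all --- the sentence immediately preceding it reads ``The following theorem is \cite[Corollary~1.3]{afframe},'' so \cref{DF g} is imported as a black box from the doubled-Cambrian-fan paper. Your proposal is therefore an attempt to reprove an external result, and it contains a genuine gap.

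The step that fails is the last one. You claim that the two halves $\F_c$ and $-\F_{c^{-1}}$ glue along $\nu_c(\eigenspace{c})$ to fill $V^*$, and that ``completeness of $\DF_c$ follows from \cref{fan thm}, and then equality with the $\g$-vector fan is forced because both are complete simplicial fans.'' But $\DF_c$ is \emph{not} complete: by \cref{DF comp} (also quoted from \cite{afframe}), the complement $V^*\setminus|\DF_c|$ is a nonempty $(n-1)$-dimensional relatively open cone, and the affine $\g$-vector fan is likewise not complete. The two halves do not meet cleanly along a hyperplane; they leave a gap corresponding to the imaginary clusters and the $\delta$ direction, which is exactly why the paper works with $\Fan_c^\re(\RS)$ rather than the complete fan $\Fan_c(\RS)$ in \cref{nu thm}. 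Invoking \cref{fan thm} here is also backwards relative to the paper's logic: the paper uses \cref{DF g} and \cref{DF comp} as inputs to prove \cref{nu thm}, transporting the fan structure of $\Fan_c^\re(\RS)$ onto the $\g$-vector fan, so the completeness of $\Fan_c(\RS)$ cannot be used to certify $\DF_c$ without circularity. Separately, even on each half your induction on the length of a $c$-sortable element would at best show that each cone $\Cone_c(v)$ is spanned by the $\nu_c$-image of a real $c$-cluster (this is essentially what the paper's proof of \cref{nu thm} does); it does not establish that these cones exhaust the $\g$-vector fan, that every cluster of $\A_\bullet(B)$ is realized, or that adjacency of maximal cones matches mutation. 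That identification is the hard content of \cite{afframe}, carried out with the reflection-framework machinery of \cite{framework}, not by the induction you describe.
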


We also need the following weak version of \cite[Corollary~4.9]{afframe}, where $|\DF_c|$ denotes the union of the cones in $\DF_c$.

\begin{theorem}\label{DF comp}
  $V^*\setminus|\DF_c|$ is an $(n-1)$-dimensional relatively open cone.
\end{theorem}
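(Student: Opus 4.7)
My plan is to deduce \cref{DF comp} directly from \cite[Corollary~4.9]{afframe}, which is precisely what the framing ``weak version of'' signals. The cited corollary pins down $V^{*}\setminus|\DF_c|$ as the relative interior of an explicit closed polyhedral cone of dimension exactly $n-1$, described in terms of the Coxeter element $c$ and of fundamental weights in $V^{*}$. Both assertions of \cref{DF comp}---that the complement is a cone, and that it is $(n-1)$-dimensional and relatively open---are then immediate by reading off the cited description.

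To motivate the dimension count conceptually: in affine type the $c$-Cambrian fan $\F_c$ covers, up to its boundary, the Tits cone of $W$ acting on $V^{*}$, and $-\F_{c^{-1}}$ covers the antipodal half. The Tits cone in affine type is bounded by a single distinguished hyperplane (the one annihilated by $\delta$), so the two halves overlap only along this hyperplane and their union leaves uncovered an $(n-1)$-dimensional sliver lying inside it. Under the forthcoming identification of \cref{nu thm}\,(1) this sliver will also coincide with $\nu_c(\mathring\Delta_c)$, the image of the relative interior of the imaginary cone (of dimension $n-1$ by \cref{delta star}), but that identification is one of the goals of this section and so cannot be invoked here without circularity.

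The main obstacle, if one wanted to reprove the statement in place rather than simply cite it, would be to verify that the complement is connected and convex (one polyhedral cone, not a union of several). This relies on the detailed combinatorics of affine sortable elements and on the description of walls of $\F_c$ along the boundary of the Tits cone, both of which are developed from scratch in \cite{framework,afframe}; reproducing that material would duplicate substantial work from the cited reference, so invoking \cite[Corollary~4.9]{afframe} is the economical choice.
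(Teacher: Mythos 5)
Your proposal matches the paper's approach exactly: the paper states \cref{DF comp} only as a ``weak version'' of \cite[Corollary~4.9]{afframe} and gives no further argument, so deducing it by citing that corollary is precisely what is intended. Your added Tits-cone motivation is accurate but not required.
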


The final ingredient needed for the proof of \cref{nu thm} is the following lemma.
\begin{lemma}\label{nu nu tau}
The map $\beta\mapsto\nu_{c^{-1}}^{-1}(-\nu_c(\beta))$, applied to $\AP{c}$, coincides with $\tau_c$.
\end{lemma}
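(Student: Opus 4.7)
The identity to check, after applying $\nu_{c^{-1}}$ to both sides, is
\[
\nu_{c^{-1}}(\tau_c(\beta)) = -\nu_c(\beta) \qquad\text{for every }\beta\in\AP{c}.
\]
My plan is to split into three cases according to \cref{tau prop}\eqref{tauc def}: (i) $\beta=-\alpha_i$, with $\tau_c(\beta)=\psi^\proj_{c;i}$; (ii) $\beta=\psi^\inj_{c;i}$, with $\tau_c(\beta)=-\alpha_i$; and (iii) every remaining $\beta\in\AP{c}$, for which $\tau_c(\beta)=c\beta$.  In case (iii) the root $\beta$ is positive, and $c\beta$ is also positive because the positive roots sent to negatives by $c$ are precisely the inversions of $c^{-1}$, namely $\psi^\inj_{c;1},\ldots,\psi^\inj_{c;n}$.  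In particular $\beta=\delta$ sits in case (iii) since $c\delta=\delta$.  Throughout I will use the facts that, on the positive orthant, the piecewise definition of $\nu_c$ collapses to $\nu_c(\gamma)=-\sum_jE_c(\alpha_j\ck,\gamma)\rho_j$, and that $\nu_c(-\alpha_i)=\rho_i$ by the direct check $I=\{i\}$, $\beta_+=0$.

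Case (iii) will follow immediately from \cref{Ec Ecinv}, which gives $E_{c^{-1}}(\alpha_j\ck,c\beta)=-E_c(\alpha_j\ck,\beta)$ for every $j$; reading off the fundamental-weight coordinates yields $\nu_{c^{-1}}(c\beta)=-\nu_c(\beta)$ without further work.

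The content of cases (i) and (ii) lives in the inductive step of the proof of \cref{Howlett's}, which established
\[
E_{c^{-1}}\,s_1\cdots s_{i-1}=\begin{bsmallmatrix}-L_{i-1}&0\\ 0&U_{n-i+1}\end{bsmallmatrix}
\]
in the basis of simple co-roots on the left and simple roots on the right.  Its $i\th$ column is the first column of $U_{n-i+1}$, which by construction is the standard basis vector $e_i$.  Hence $E_{c^{-1}}(\alpha_j\ck,s_1\cdots s_{i-1}\alpha_i)=\delta_{ji}$, giving $\nu_{c^{-1}}(\psi^\proj_{c;i})=-\rho_i$; together with $\nu_c(-\alpha_i)=\rho_i$ this settles case (i).  Case (ii) is the mirror argument, applying \cref{Howlett's} to the reduced word $s_n\cdots s_1$ of $c^{-1}$: it produces $E_c(\alpha_j\ck,\psi^\inj_{c;i})=\delta_{ji}$, and hence $\nu_c(\psi^\inj_{c;i})=-\rho_i=-\nu_{c^{-1}}(-\alpha_i)$.

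No serious obstacle is expected.  The only subtlety is that $\tau_c$ disagrees with~$c$ exactly on the negative simple roots and on the roots $\psi^\inj_{c;i}$, and on these roots the piecewise-linear nature of $\nu_c$ prevents a uniform appeal to \cref{Ec Ecinv}; verifying the identity there requires the column computation above, which is already essentially contained in the matrix identity at the heart of Howlett's theorem.
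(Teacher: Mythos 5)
Your proof is correct and follows the same overall strategy as the paper's: the identical three-way case split according to \cref{tau prop}, the same observation that $\nu_c$ is linear with matrix $E_c$ on positive roots and sends $-\alpha_i$ to $\rho_i$, and the same appeal to \cref{Howlett's} (via \cref{Ec Ecinv}) to dispatch the generic case $\tau_c\beta=c\beta$. The one place you diverge is the treatment of $\beta=-\alpha_i$ and $\beta=\psi^\inj_{c;i}$: the paper evaluates $\nu_{c^{-1}}(s_1\cdots s_{j-1}\alpha_j)$ by expanding $s_1\cdots s_{j-1}\alpha_j$ as an explicit sum over chains $i_1<\cdots<i_k=j$ and exhibiting a telescoping cancellation that collapses the answer to $-\rho_j$, whereas you read the same identity $E_{c^{-1}}(\alpha_j\ck,s_1\cdots s_{i-1}\alpha_i)=\delta_{ji}$ directly off the $i\th$ column of the intermediate block matrix $E_{c^{-1}}s_1\cdots s_{i-1}=\begin{bsmallmatrix}-L_{i-1}&0\\0&U_{n-i+1}\end{bsmallmatrix}$ from the inductive step in the proof of \cref{Howlett's}. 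Your route is shorter and avoids the combinatorial cancellation, at the cost of citing the internal mechanics of that proof rather than just its statement; the paper's computation is self-contained given only the formula for $s_1\cdots s_{j-1}\alpha_j$. Both are valid, and your justification that $c\beta$ remains positive in the generic case (the positive roots negated by $c$ are exactly the $\psi^\inj_{c;j}$, which are excluded) is the right one.
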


\begin{proof}
We again think of $E_c$ as the $n\times n$ matrix whose $ij$-entry is $a_{ij}$ if $i>j$, is~$1$ if $i=j$, and is $0$ if $i<j$.
But this time, we think of that matrix as taking the simple root coordinates of a vector in $V$ to the fundamental weight coordinates of a vector in $V^*$.
In particular, $E_c$ is the matrix of $\nu_c$ when the latter is applied to positive roots.
If $\beta$ is neither of the form $-\alpha_j$ nor of the form $s_n\cdots s_{j+1}\alpha_j$, then \cref{tau prop}\eqref{tauc def} implies that $\tau_c$ acts as $c$ on $\beta$ and that both $\beta$ and $\tau_c\beta$ are positive roots.
Thus $\nu_c$ acts on $\beta$ by the matrix $E_c$ and $\nu_{c^{-1}}$ acts on $\tau_c\beta=c\beta$ by the matrix $E_{c^{-1}}$, and we see by \cref{Howlett's} that $-\nu_c\beta=\nu_{c^{-1}}\tau_c\beta$.

If $\beta=-\alpha_j$ for some $j$, then $-\nu_c(\beta)=-\rho_j$ and $\nu_{c^{-1}}(\tau_c\beta)=\nu_{c^{-1}}(s_1\cdots s_{j-1}\alpha_j)$.
A simple calculation (see for example \cite[Lemma~2.9]{cyclic}) shows that
\[s_1\cdots s_{j-1}\alpha_j=\sum_{1\le i_1<i_2<\cdots<i_k=j}(-a_{i_1i_2})(-a_{i_2i_3})\cdots(-a_{i_{k-1}i_k})\alpha_{i_1}, \]
with $k$ varying from $1$ to $j$.
Thus $\nu_{c^{-1}}(s_1\cdots s_{j-1}\alpha_j)$ is 
\[-\sum_{i=1}^nE_{c^{-1}}\biggl(\alpha_i\ck,\sum_{1\le i_1<i_2<\cdots<i_k=j}(-a_{i_1i_2})(-a_{i_2i_3})\cdots(-a_{i_{k-1}i_k})\alpha_{i_1}\biggr)\rho_i.\]
This sum has nonzero terms for $i\le i_1$.
Separating out the terms for $i=i_1$, the sum becomes 
\begin{multline*}
-\sum_{1\le i_1<i_2<\cdots<i_k=j}(-a_{i_1i_2})(-a_{i_2i_3})\cdots(-a_{i_{k-1}i_k})\rho_{i_1}\\
-\sum_{1\le i<i_1<i_2<\cdots<i_k=j}a_{ii_1}(-a_{i_1i_2})(-a_{i_2i_3})\cdots(-a_{i_{k-1}i_k})\rho_i.
\end{multline*}
The terms in the two sums cancel each other out, except for the $k=1$ term in the first sum, so the expression for $\nu_{c^{-1}}(s_1\cdots s_{j-1}\alpha_j)$ collapses to $-\rho_j$ as desired.

If $\beta=s_n\cdots s_{j+1}\alpha_j$ for some $j$, then $\nu_{c^{-1}}(\tau_c\beta)=\nu_{c^{-1}}(-\alpha_j)=\rho_j$.
Replacing $c^{-1}$ by~$c$ in the argument above, one sees that $-\nu_c(\beta)$ is $\rho_j$ as well.
\end{proof}

\begin{proof}[Proof of \cref{nu thm}]
In light of \cref{DF g}, to prove the first assertion of \cref{nu thm}, we show that $\nu_c$ induces an isomorphism from $\Fan^\re_c(\RS)$ to $\DF_c$.

We first show that for every $c$-sortable element $v$, the set $\cl_c(v)$ is a real $c$-cluster.
The simple argument is follows the first paragraph of the proof of the finite-type result \cite[Theorem~8.1]{sortable}, except that instead appealing to induction on rank, we appeal to the finite-type result.
We do, however, argue by induction on the length of~$v$.
Writing $c=s_1\cdots s_n$, if $s_1\not\le v$, then $v$ is $s_1c$-sortable in the parabolic subgroup generated by $\set{s_2,\ldots,s_n}$.
By the finite-type result, $\cl_{s_1c}(v)$ is an $s_1c$-cluster, so \cref{prop:inductive_cluster}\eqref{cluster induct} says that $\cl_c(v)$ is a $c$-cluster.
If $s_1\le v$, then the length of $s_1v$ is shorter than the length of $v$, so by induction $\cl_{s_1cs_1}(s_1v)$ is an $s_1cs_1$-cluster, and therefore $\cl_c(v)$ is a $c$-cluster by \cref{prop:inductive_cluster}\eqref{cluster sigma}.

Since $\Cone_c(v)$ is the nonnegative linear span of $\nu_c(\cl_c(v))$ for any $c$-sortable element $v$, we conclude that every cone in the $c$-Cambrian fan $\F_c$ is the image, under $\nu_c$, of a cone in $\Fan_c^\re(\RS)$.

We similarly want to show that every cone in $-\F_{c^{-1}}$ is the image, under $\nu_c$, of a cone in $\Fan_c^\re(\RS)$.
The cones in $-\F_{c^{-1}}$ are the nonnegative linear spans of $-\nu_{c^{-1}}(\cl_{c^{-1}}(v))$ for $c^{-1}$-sortable elements $v$.
Thus we want to show that the set $\nu_c^{-1}(-\nu_{c^{-1}}(\cl_{c^{-1}}(v)))$ is a $c$-cluster for all $c^{-1}$-sortable elements $v$.
By the argument above, with $c^{-1}$ replacing $c$, we know that $\cl_{c^{-1}}(v)$ is a $c^{-1}$-cluster.
Since $\nu_c^{-1}(-\nu_{c^{-1}}(\cl_{c^{-1}}(v)))=\tau_{c^{-1}}\cl_{c^{-1}}(v)$ by \cref{nu nu tau}, \cref{prop:inductive_cluster}\eqref{cluster tau} implies that $\nu_c^{-1}(-\nu_{c^{-1}}(\cl_{c^{-1}}(v)))$ is a $c^{-1}$-cluster as well, and thus a $c$-cluster by \cref{c cinv prop}.

We have showed that $\nu_c^{-1}$ maps every cone in $\DF_c$ to a cone in $\Fan_c^\re(\RS)$.
Since $\nu_c^{-1}:V^*\to V$ is a homeomorphism and since by \cref{DF comp} the complement of $\DF_c$ in $V^*$ is an $(n-1)$-dimensional cone, the complement of $\nu_c^{-1}\left(\DF_c\right)$ is $(n-1)$-dimensional. 
In particular, since the maximal cones of $\Fan_c^\re(\RS)$ are $n$-dimensional by \cref{cluster properties}\eqref{Q basis}, every maximal cone of $\Fan_c^\re(\RS)$ is in the image of $\DF_c$ under $\nu_c^{-1}$.
Thus because $\nu_c^{-1}$ is a homeomorphism and because both $\DF_c$ and $\Fan_c^\re(\RS)$ are fans, we see that $\nu_c^{-1}$ induces an isomorphism from $\DF_c$ to $\Fan_c^\re(\RS)$.
Equivalently, $\nu_c$ induces an isomorphism from $\Fan_c^\re(\RS)$ to $\DF_c$.

The second assertion of \cref{nu thm} now follows because, as mentioned above, the map $x\mapsto\g(x)$ is an isomorphism from the cluster complex to the simplicial complex underlying the $\g$-vector fan.
The latter isomorphism is proved in affine type using the combinatorics of root systems and Coxeter groups \cite{camb_fan,framework,afframe}, so this entire proof occurs in that combinatorial setting.
\end{proof}

To establish \cref{denom thm}, we need the following theorem, which was conjectured as \cite[Conjecture~3.21]{framework} and proved as \cite[Proposition~9]{Rupel}.
(In fact, \cite[Proposition~9]{Rupel} only establishes \cref{nu d g} for non-initial cluster variables, but extending the theorem to include initial cluster variables is easy.)

\begin{theorem}\label{nu d g}  
Suppose $B$ is an acyclic exchange matrix with associated Coxeter element $c$ and $x$ is a cluster variable in the principal-coefficients cluster algebra associated to $B$.
Then $\g(x)=\nu_c(\d(x))$.
\end{theorem}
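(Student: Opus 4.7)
The plan is to follow the Caldero--Keller / Rupel route through representations of the hereditary algebra associated with $B$. First I would verify the identity directly on initial cluster variables: one has $\d(x_i)=-\alpha_i$ and $\g(x_i)=\rho_i$, and the definition of $\nu_c$ immediately gives $\nu_c(-\alpha_i)=\rho_i$, because $-\alpha_i$ lies in the $i$-th negative orthant, so $I=\{i\}$ and $\beta_+=0$, making the second sum in the formula vanish.

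For non-initial cluster variables, I would invoke the cluster character construction of Caldero--Chapoton, extended to the skew-symmetrizable setting by Rupel. The acyclic orientation of the Dynkin diagram encoded by~$B$, together with the symmetrizers~$d_i$, determines a hereditary $k$-species $\mathcal Q$; indecomposable rigid representations $M$ of $\mathcal Q$ are in bijection with the non-initial cluster variables $x_M$ of $\A_\bullet(B)$. Two input facts are needed beyond this bijection: (i) $\d(x_M)=\dim M$, interpreted as a positive root (the skew-symmetrizable analog of \cite[Theorem~3]{Caldero05}); and (ii) $\g(x_M)=[P_0]-[P_1]$ in the basis of fundamental weights, where $0\to P_1\to P_0\to M\to 0$ is a minimal projective presentation.

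The core computation is to rewrite (ii) in terms of $\nu_c$. Since $\dim M$ is a positive root, $\nu_c(\dim M)=-\sum_i E_c(\alpha_i\ck,\dim M)\rho_i$. A standard Euler-characteristic computation for the minimal projective presentation gives
\[
-E_c(\alpha_i\ck,\dim M)=[P_0:P_i]-[P_1:P_i],
\]
so the $i$-th coordinate of $\nu_c(\dim M)$ is exactly the $i$-th coordinate of $\g(x_M)$. The key algebraic input is that the Euler form of $\mathcal Q$ is represented, in the bases of simple roots and simple co-roots fixed here, by the matrix $E_c$ of \cref{Ec def}; this is essentially the content of \cref{Howlett's}, and the supplementary identities \cref{Ec 1}--\cref{Ec inv T} supply the correct normalizations.

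The main obstacle will be the careful bookkeeping surrounding the symmetrizers $d_i$ in the skew-symmetrizable setting: the Euler form of the species is most naturally expressed in terms of the symmetrized bilinear form and dimension vectors, whereas $E_c$ as defined here mixes rows indexed by co-roots with columns indexed by roots. Once the dictionary between these two presentations of the Euler form is established (as done in \cite{Rupel}), the identity $\g(x)=\nu_c(\d(x))$ follows by combining (i), (ii), and the Euler-characteristic computation above; initial variables are handled by the direct base case.
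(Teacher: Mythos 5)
The paper does not prove this statement itself: it is quoted verbatim from \cite[Proposition~9]{Rupel} (with the easy extension to initial cluster variables), and your sketch is essentially a reconstruction of that same representation-theoretic argument via cluster characters, dimension vectors, and the Euler form identified with $E_c$. Your base case for initial variables is correct, and your inputs (i) and (ii) together with the Euler-form bookkeeping are exactly the content of Rupel's proof, so this matches the route the paper relies on.
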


Combining \cref{nu thm,nu d g} (and observing that every cone in $\Fan_c^\re(\RS)$ is contained in a domain of linearity of $\nu_c$), we obtain the special case of \cref{denom thm} where $\Sigma$ has principal coefficients.
The full statement of \cref{denom thm} then holds by the following well-known observation, which is an easy consequence of \cite[Theorem~3.7]{Fomin07}.
(In the lemma, for a (labeled) seed $\Sigma$, the notation $\Sigma_i$ denotes the $i\th$ entry in the cluster in $\Sigma$.)
\begin{lemma}\label{denom coeff}
Suppose $\Sigma$ is a seed (with no conditions on coefficients) and suppose $m$ is some sequence of mutations.
If $\Sigma'$ is any seed (in any cluster algebra) with the same exchange matrix as $\Sigma$, then $\d_{\Sigma}([m(\Sigma)]_i)=\d_{\Sigma'}([m(\Sigma')]_i))$.
\end{lemma}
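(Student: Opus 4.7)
The plan is to invoke the Fomin--Zelevinsky separation formula \cite[Theorem~3.7]{Fomin07}. Writing $(x_1,\ldots,x_n)$ and $(y_1,\ldots,y_n)$ for the cluster and coefficients of $\Sigma$ and $B=(b_{ij})$ for the exchange matrix, that formula expresses
\[
[m(\Sigma)]_i \;=\; \frac{F_{B,m,i}(\hat y_1,\ldots,\hat y_n)}{F_{B,m,i}|_{{\mathbb P}}(y_1,\ldots,y_n)}\cdot x_1^{g_1}\cdots x_n^{g_n},\qquad \hat y_j=y_j\prod_k x_k^{b_{kj}},
\]
where the F-polynomial $F_{B,m,i}\in\integers_{\ge 0}[y_1,\ldots,y_n]$ and the g-vector $(g_1,\ldots,g_n)\in\integers^n$ are determined by $B$, $m$, and $i$ alone. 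The same formula, with the identical $F_{B,m,i}$ and g-vector, expresses $[m(\Sigma')]_i$ in terms of the cluster and coefficients of $\Sigma'$.

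The next step is to read the $\d$-vector directly off this formula. Expanding $F_{B,m,i}(y)=\sum_\mu c_\mu y^\mu$ with $c_\mu\in\integers_{\ge 0}$ and substituting $y\mapsto\hat y$ gives
\[
x^g\,F_{B,m,i}(\hat y)\;=\;\sum_\mu c_\mu\,y^\mu\,x^{g+B\mu}.
\]
Since the monomials $y^\mu$ are linearly independent in the group ring of ${\mathbb P}$ and the $c_\mu$ are non-negative integers, no $x$-monomial is cancelled. The remaining factor $F_{B,m,i}|_{{\mathbb P}}(y_1,\ldots,y_n)$ lies in ${\mathbb P}$ and contains no $x_k$, so dividing by it preserves $x$-denominators. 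Hence the $k$-th coordinate of $\d_\Sigma([m(\Sigma)]_i)$ equals
\[
\max\Bigl(0,\;-\min_{\mu\in\operatorname{supp}(F_{B,m,i})}\bigl(g_k+(B\mu)_k\bigr)\Bigr),
\]
an expression depending only on $B$, $m$, and $i$. The identical computation applied to $\Sigma'$ yields the same vector, proving the lemma.

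The one step that will require care is the non-cancellation claim in the second paragraph. This is automatic in the principal-coefficients setting, where the $y_j$ are formal independent variables, and it transfers to a general semifield ${\mathbb P}$ because the $c_\mu$ are uniformly non-negative (so distinct $\mu$ contribute genuinely distinct summands even after specialization of the $y_j$). I do not anticipate a genuine obstacle; the lemma is in essence a repackaging of the universality of F-polynomials and g-vectors furnished by \cite[Theorem~3.7]{Fomin07}.
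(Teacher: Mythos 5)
Your proposal is correct and takes essentially the same route as the paper, which offers no written proof at all and simply asserts the lemma as an easy consequence of the separation formula \cite[Theorem~3.7]{Fomin07}; your argument supplies exactly the details that assertion leaves implicit, with the non-cancellation step correctly resting on the positivity of the coefficients $c_\mu$ rather than on the (false in general) linear independence of the specialized monomials $y^\mu$ in $\integers\mathbb{P}$. Two harmless nitpicks: under the Fomin--Zelevinsky convention the $k$-th denominator coordinate is $-\min_\mu\bigl(g_k+(B\mu)_k\bigr)$ without the outer $\max(0,\cdot)$ (your formula would misreport the initial variables, whose $\d$-vectors are $-\alpha_i$), but since either expression depends only on $B$, $m$, and $i$, the conclusion of the lemma is unaffected.
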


We have completed the proofs of \cref{nu thm,denom thm}.  

\subsection{Evidence for \cref{c p conj}}
As preparation for discussing evidence, we prove and quote some preliminary results.

By analogy with source-sink moves on Coxeter elements, we define a \newword{source-sink mutation} of an acyclic seed to be a mutation in an index $k$ such that the entries in the $k$-th colum of the exchange matrix are either all nonnegative or all nonpositive.
Source-sink mutations do not change the Cartan matrix underlying~$B$.
The following result, which lets us apply source-sink mutations to $\Sigma$ in \cref{c p conj}, is \cite[Corollary~10]{Rupel}.
The theorem holds in general, not just in finite or affine type.

\begin{theorem}\label{init d}
Suppose $\Sigma$ is an acyclic seed and $x$ is a cluster variable in $\A_\Sigma$.
If $\Sigma'$ is obtained from $\Sigma$ by a source-sink mutation corresponding to the source-sink move $c\to scs$, then $\d_{\Sigma'}(x)=\sigma_s\d_\Sigma(x)$.
\end{theorem}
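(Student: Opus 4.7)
The plan is to verify the identity $\d_{\Sigma'}(x)=\sigma_s\d_\Sigma(x)$ first on initial cluster variables and then extend by a direct Laurent-expansion argument, exploiting the fact that source-sink mutations are particularly simple.  Specifically, because $s$ is initial or final in $c$, the exchange relation at $s$ reduces to $x_s\,x'_s=M+1$ (up to coefficients, which do not affect $\d$-vectors), where $M$ is a \emph{single} monomial in $\{x_i:i\neq s\}$.  The base cases are then immediate: for $i\neq s$ the variable $x_i$ is initial in both seeds so both $\d$-vectors equal $-\alpha_i=\sigma_s(-\alpha_i)$, and the relation $x_s=(M+1)/x'_s$ forces $\d_{\Sigma'}(x_s)=\alpha_s=\sigma_s(-\alpha_s)$, together with the symmetric identity $\d_{\Sigma'}(x'_s)=-\alpha_s=\sigma_s(\alpha_s)$.

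For a non-initial cluster variable $x$ of $\A_\Sigma$, I would start from its reduced expansion $x=N/\prod_i x_i^{d_i}$ with $d_i=[\d_\Sigma(x):\alpha_i]$, substitute $x_s=(M+1)/x'_s$ throughout, and clear fractions to obtain
\[x=\frac{\tilde N}{(M+1)^{d_s}\,(x'_s)^{D-d_s}\prod_{i\neq s}x_i^{d_i}},\]
where $D$ is the $x_s$-degree of $N$ and $\tilde N=\sum_{k=0}^{D}c_k(M+1)^k(x'_s)^{D-k}$ for the coefficients $c_k=c_k((x_i)_{i\neq s})$ of $N$ viewed as a polynomial in $x_s$.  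By the Laurent phenomenon, the non-monomial factor $(M+1)^{d_s}$ must divide $\tilde N$, so after cancellation the denominator reduces to the monomial $(x'_s)^{D-d_s}\prod_{i\neq s}x_i^{d_i}$.

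The main obstacle is the final step: showing that this denominator is in reduced form and that $D-d_s$ equals $-d_s-\sum_{i\neq s}d_i\,a_{si}$, which is exactly the $\alpha_s$-coefficient of $s(\d_\Sigma(x))=\sigma_s(\d_\Sigma(x))$.  This is a structural identity about the $x_s$-degree of the numerator of a cluster expansion, and it is where the Weyl-group reflection and the cluster-algebraic mutation actually meet.  The cleanest route is through the representation-theoretic description of acyclic cluster algebras: by the Caldero--Chapoton--Keller construction and its skew-symmetrizable extension due to Rupel (the same circle of ideas underlying \cref{nu d g}), the cluster variable $x$ corresponds to an indecomposable rigid object $M$ with $\d_\Sigma(x)=\underline{\dim}\,M$, its numerator is a cluster character, and source-sink mutations correspond to Bernstein--Gelfand--Ponomarev reflection functors.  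Such a functor transforms the dimension vector of a non-simple indecomposable precisely by the simple reflection $s$, yielding both the degree identity and the reducedness claim at once.
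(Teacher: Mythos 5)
The paper does not actually prove \cref{init d}: it quotes the statement as \cite[Corollary~10]{Rupel}, so there is no internal argument to compare against, and your proposal should be judged as an attempted reconstruction of that external result. The elementary part of your argument is sound and in fact goes a little further than you claim: the base cases are correct, the divisibility of $\tilde N$ by $(M+1)^{d_s}$ does follow from the Laurent phenomenon because monomials are the only units (hence the only possible monomial factors of $M+1$) in the Laurent polynomial ring, and even the reducedness claim can be finished by hand --- setting $x'_s=0$ in $\tilde N$ leaves $c_D(M+1)^D\neq0$ since $c_D$ is the leading coefficient of $N$ in $x_s$, and setting $x_i=0$ cannot annihilate $\tilde N$ because $x_i\nmid N$ and $x_i\nmid M+1$. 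What genuinely remains is the degree identity $\deg_{x_s}N=\sum_{i\neq s}|a_{si}|\,d_i$, and you should recognize that this identity is \emph{equivalent} to the theorem (run your substitution backwards), so deferring it to representation theory means the Laurent manipulation does no real work beyond a change of packaging. Moreover, the representation-theoretic input you invoke is slightly understated: it is not enough that Bernstein--Gelfand--Ponomarev reflection functors send the dimension vector of a non-simple indecomposable to its image under $s$; you also need that the \emph{same} cluster variable $x$ corresponds to $M$ under the cluster character for $\Sigma$ and to the reflected object under the cluster character for $\Sigma'$ --- that is, that cluster characters are compatible with reflection functors, together with the identification of $\d$-vectors with dimension vectors in the skew-symmetrizable setting. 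That compatibility is precisely the content of Rupel's Corollary~10, so in the end your argument cites essentially the same external result the paper does; this is legitimate, but it does not yield an independent proof.
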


As a consequence of \cref{init d}, we have the following proposition.

\begin{proposition}\label{conj acyc}
\cref{c p conj} holds when $\Sigma'$ can be obtained from $\Sigma$ by a sequence of source-sink mutations.
\end{proposition}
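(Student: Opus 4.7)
The plan is to iterate \cref{init d} and \cref{compat sigma} along the source-sink sequence. Write the sequence as $\Sigma=\Sigma^{(0)},\ldots,\Sigma^{(m)}=\Sigma'$, with step $j$ a source-sink mutation at position $k_j$ transforming the Coxeter element as $c^{(j-1)}\to c^{(j)}=s_{k_j}c^{(j-1)}s_{k_j}$, so that each $\Sigma^{(j)}$ is acyclic and each $s_{k_j}$ is initial or final in $c^{(j-1)}$. The base case $m=0$ reduces to \cref{denom thm} (which gives $\d_\Sigma(x(\beta))=\beta$ and $\beta_i=-\alpha_i$) combined with \cref{compat base} (which gives $\cm{-\alpha_i}\beta c=[\beta:\alpha_i]$).

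For general $m$, iterating \cref{init d} yields
\[\d_{\Sigma'}(x)=\sigma_{k_m}\cdots\sigma_{k_1}\,\d_\Sigma(x)\]
for every cluster variable $x$ of $\A_\Sigma$. Specializing to the $i$-th cluster variable in $\Sigma'$, whose $\d_{\Sigma'}$-vector equals $-\alpha_i$ and whose $\d_\Sigma$-vector equals $\beta_i$ by the indexing convention, and using involutivity of each $\sigma_{k_j}$, gives $\beta_i=\sigma_{k_1}\cdots\sigma_{k_m}(-\alpha_i)$. Note that \cref{sigma prop} guarantees inductively that $\sigma_{k_j}\cdots\sigma_{k_1}$ restricts to a bijection $\AP{c}\to\AP{c^{(j)}}$, so all intermediate roots lie in the appropriate $\AP{c^{(j)}}$.

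To conclude, fix $\beta\in\AP{c}$. The $i$-th entry of $\d_{\Sigma'}(x(\beta))=\sigma_{k_m}\cdots\sigma_{k_1}(\beta)\in\AP{c^{(m)}}$ is, by \cref{compat base} applied in the root system indexed by $c^{(m)}$, equal to $\cm{-\alpha_i}{\sigma_{k_m}\cdots\sigma_{k_1}(\beta)}{c^{(m)}}$. Iterating \cref{compat sigma} backwards from $c^{(m)}$ to $c^{(0)}=c$ converts this expression into $\cm{\sigma_{k_1}\cdots\sigma_{k_m}(-\alpha_i)}\beta c=\cm{\beta_i}\beta c$, where the $\sigma$'s cancel pairwise in the second slot by involutivity. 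The main point to be careful about is bookkeeping of composition orders and the verification of hypotheses for each application of \cref{init d} and \cref{compat sigma}; once the setup is correctly arranged, the argument is essentially automatic cancellation using that each $\sigma_{k_j}$ is an involution.
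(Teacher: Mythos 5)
Your proof is correct and follows exactly the paper's route: the paper's own proof is the two-line observation that the case $\Sigma'=\Sigma$ is \cref{denom thm} and that \cref{init d} together with \eqref{compat sigma} then gives the general case. You have simply carried out the composition-order and involutivity bookkeeping that the paper leaves implicit.
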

\begin{proof}
When $\Sigma'=\Sigma$, this is just \cref{denom thm}.
Thus \cref{init d} and \cref{compat sigma} combine to prove the proposition.
\end{proof}

Each acyclic seed $\Sigma$ defines a Cartan matrix $A$ and a Coxeter element $c$.
By \cref{denom coeff}, \cref{c p conj} holds or fails simultaneously for all $\Sigma$ corresponding to the same $A$ and $c$.
In fact, \cref{c p conj} is preserved under conjugation of $c$:
\begin{proposition}\label{c p simp}
Fix a Cartan matrix $A$ of affine type and a Coxeter element~$c$.
Suppose \cref{c p conj} holds for a seed $\Sigma$ corresponding to $A$ and $c$.
Then \cref{c p conj} holds when $\Sigma$ corresponds to $A$ and a Coxeter element conjugate to~$c$.
\end{proposition}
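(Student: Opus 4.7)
The plan is to reduce to a single source-sink move and then combine \cref{init d} with the identity $\cm{\beta_i}\beta c=\cm{\sigma_s\beta_i}{\sigma_s\beta}{scs}$ from \cref{compat sigma}. By \cref{aff conj}, any Coxeter element conjugate to $c$ is reached from $c$ by a sequence of source-sink moves, so by induction it suffices to prove the proposition when $c'=scs$ for some $s$ initial or final in $c$. Moreover, \cref{denom coeff} shows that \cref{c p conj} depends only on the exchange matrix of the initial seed, so it is enough to exhibit one seed whose associated Coxeter element is $scs$ for which the conjecture holds.

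The natural choice is to take $\Sigma'$ to be the seed obtained from $\Sigma$ by the source-sink mutation corresponding to the move $c\to scs$. This $\Sigma'$ is acyclic, has the same underlying Cartan matrix $A$, and its associated Coxeter element is $scs$. Invoking \cref{denom thm} for both $\Sigma$ and $\Sigma'$, I index cluster variables of $\A_\Sigma=\A_{\Sigma'}$ by $\AP{c}$ via $\Sigma$ (writing $x(\beta)$) and by $\AP{scs}$ via $\Sigma'$ (writing $\widetilde x(\gamma)$). By \cref{init d}, $\d_{\Sigma'}(x(\beta))=\sigma_s\d_\Sigma(x(\beta))=\sigma_s\beta$ for every $\beta\in\AP{c}$, so $\widetilde x(\sigma_s\beta)=x(\beta)$. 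This is consistent with $\sigma_s$ being a bijection $\AP{c}\to\AP{scs}$, as recorded in \cref{sigma prop}.

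To verify \cref{c p conj} for $\Sigma'$, let $\Sigma''$ be any seed mutation-equivalent to $\Sigma$ with cluster indexed (via $\Sigma$) as $x(\beta_1),\ldots,x(\beta_n)$; the same cluster, indexed via $\Sigma'$, is $\widetilde x(\sigma_s\beta_1),\ldots,\widetilde x(\sigma_s\beta_n)$. Given any $\gamma\in\AP{scs}$, set $\beta=\sigma_s\gamma$, so that $\widetilde x(\gamma)=x(\beta)$. Using the conjecture for $\Sigma$ followed by \cref{compat sigma},
\[
\d_{\Sigma''}(\widetilde x(\gamma))=\d_{\Sigma''}(x(\beta))=\bigl(\cm{\beta_1}\beta c,\ldots,\cm{\beta_n}\beta c\bigr)=\bigl(\cm{\sigma_s\beta_1}\gamma{scs},\ldots,\cm{\sigma_s\beta_n}\gamma{scs}\bigr),
\]
which is precisely the formula required by \cref{c p conj} for the seed $\Sigma'$.

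There is no real obstacle here; the proof is essentially bookkeeping. The only subtle point is making sure the two bijections between cluster variables and roots (given respectively by $\Sigma$ and $\Sigma'$) are intertwined by the involution $\sigma_s$, and this is exactly what the combination of \cref{init d} and \cref{sigma prop} delivers. After checking this, invariance under conjugation follows by iterating the single-move argument along the source-sink sequence supplied by \cref{aff conj}.
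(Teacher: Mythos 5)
Your proof is correct and follows essentially the same route as the paper's (much terser) argument: reduce to a single source-sink move via \cref{aff conj}, then combine \cref{init d} with \cref{compat sigma}, using \cref{denom coeff} to justify working with one representative seed per Coxeter element. The bookkeeping you supply — checking that the two indexing bijections from \cref{denom thm} are intertwined by $\sigma_s$ — is exactly the content the paper leaves implicit.
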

\begin{proof}
If $\tilde c$ is a Coxeter element conjugate to $c$, then \cref{aff conj} implies that there is a sequence of source-sink mutations taking $\Sigma$ to a seed $\tilde\Sigma$ corresponding to $A$ and $\tilde c$.
\cref{init d} and \cref{compat sigma} show that \cref{c p conj} for $A$ and $c$ implies \cref{c p conj} for $A$ and $\tilde c$.
\end{proof}

Using the surfaces model, one can prove \cref{c p conj} in affine types A and~D.
\begin{theorem}\label{conj surf}
\cref{c p conj} holds when $B$ is acyclic of type $A_{n-1}^{(1)}$ or $D_{n-1}^{(1)}$.
\end{theorem}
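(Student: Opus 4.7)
The plan is to transfer the conjecture through the surfaces model, for which denominator vectors are already known to compute intersection numbers of tagged arcs. Type $A_{n-1}^{(1)}$ is realized by an annulus with marked points on the two boundary components, and type $D_{n-1}^{(1)}$ is realized by a twice-punctured disk (or the once-punctured disk with punctured boundary, depending on conventions). In both cases, an acyclic seed $\Sigma$ corresponds to a triangulation $T$ whose quiver is acyclic, and the cluster variables of $\A_\Sigma$ are in bijection with the tagged arcs of the surface. By \cite[Theorem~8.6]{Fomin08}, for any tagged triangulation $T'$ and tagged arc $\gamma$, the denominator vector of the cluster variable $x(\gamma)$ with respect to the seed $\Sigma'$ associated to $T'$ is the vector of intersection numbers $(\,\mathrm{Int}(\gamma'_1,\gamma),\ldots,\mathrm{Int}(\gamma'_n,\gamma)\,)$, where $\gamma'_1,\ldots,\gamma'_n$ are the arcs of $T'$.

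First I would fix an acyclic initial triangulation $T$ corresponding to $\Sigma$ and use \cref{denom thm} to identify $\AP c\setminus\{\delta\}$ with the tagged arcs via the map $\beta\mapsto\gamma(\beta)$ (this is the bijection sending cluster variables to their $\Sigma$-denominator vectors, composed with the surfaces identification of cluster variables with arcs). Since $\delta$ is the $\d$-vector of no cluster variable but is the ``dimension'' of a closed curve around the annulus core (type $A$) or a specific peripheral loop (type $D$), I would further extend the identification by sending $\delta$ to that closed curve, which is not an arc but has a well-defined intersection number with every arc. Next, for any mutation-equivalent seed $\Sigma'$ with cluster indexed by $x(\beta_1),\ldots,x(\beta_n)$, let $T'$ be the corresponding tagged triangulation, so $\gamma'_i=\gamma(\beta_i)$. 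By \cite[Theorem~8.6]{Fomin08}, the $i$-th coordinate of $\d_{\Sigma'}(x(\beta))$ equals $\mathrm{Int}(\gamma(\beta_i),\gamma(\beta))$. Thus \cref{c p conj} reduces to the identity
\[
\cm{\beta_i}{\beta}{c}=\mathrm{Int}\bigl(\gamma(\beta_i),\gamma(\beta)\bigr)
\quad\text{for all }\beta_i,\beta\in\AP c.
\]

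To prove this identity I would argue by induction on the distance from an acyclic seed, using \cref{conj acyc} as the base: when $\Sigma'$ is reached from $\Sigma$ by source-sink mutations the identity holds by \cref{init d} (on the $\d$-vector side) and \cref{compat sigma} (on the compatibility degree side). For the inductive step, both sides transform under mutation in a prescribed way: the intersection numbers transform via the well-known tropical-style recurrence governed by the flip combinatorics of tagged arcs (essentially, $\mathrm{Int}(\gamma'_k,\gamma)$ after flipping $\gamma'_k$ is given by the tropical exchange relation for $\d$-vectors $\max$-equation), and the compatibility degree transforms consistently because both sides describe $\d$-vectors of the same cluster variable in the same cluster algebra. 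An alternative and cleaner route would be to verify the identity directly for each $(\beta_i,\beta)$ by computing $\cm{\beta_i}{\beta}{c}$ from the definition (as formulas in simple and coroot coordinates, plus the $\APTre c$ case) on representative tagged arcs; the bookkeeping reduces, using \cref{compat tau,compat sigma,AP restrict,compat restrict,cyclo}, to a finite number of configurations: arcs in a common triangle, arcs crossing transversally once, and pairs involving $\delta$ (the closed curve).

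The main obstacle will be the case where $\beta$ or $\beta_i$ lies in $\APTre c$ or equals $\delta$, since these correspond to arcs that wrap around the core of the annulus or around punctures, and the $\cmcirc{}{}{c}$ formula and the special component-full clause of \cref{compat deg def} must be matched carefully against the intersection-number count for such curves. In particular, one must verify that $\mathrm{Int}(\gamma(\delta),\gamma(\beta))$ equals $\cm\delta\beta c$ (which is zero for $\beta\in\APTre c$ by \eqref{compat delta U} and matches the fact that the core curve does not cross arcs lying inside a fixed fundamental domain), and that for two arcs both in $\APTre c$ the intersection number reduces to counting nested/spaced/overlapping configurations exactly as in \cref{tubes table} and \cref{compatible in tubes}. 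Once these specific configurations are checked (this is the core combinatorial content, but it is finite and type-by-type manageable using \cref{tab:type-by-type}), the identity $\cm{\beta_i}{\beta}{c}=\mathrm{Int}(\gamma(\beta_i),\gamma(\beta))$ follows, and \cref{c p conj} in types $A_{n-1}^{(1)}$ and $D_{n-1}^{(1)}$ is established.
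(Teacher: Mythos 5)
Your proposal is correct and takes essentially the same approach as the paper: the paper's (omitted) proof likewise works in the surfaces model, analyzing intersection numbers of tagged arcs on the annulus (type $A_{n-1}^{(1)}$) and the twice-punctured disk (type $D_{n-1}^{(1)}$), and uses \cref{c p simp} to reduce to a single triangulation of each surface. The case analysis you flag as the main obstacle --- matching the compatibility degree against intersection numbers for the curves corresponding to $\APTre{c}$ and $\delta$ --- is exactly the bookkeeping the paper omits and defers to its early arXiv versions.
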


The proof of Theorem~\ref{conj surf} proceeds by analyzing the intersection numbers \cite[Definition~8.4]{Fomin08} of tagged arcs on the annulus and the twice-punctured disk.
By \cref{c p simp}, we need consider only one triangulation of each surface.
We omit the details, but they can be found in early arXiv versions of this paper.

Finally, we offer additional computational evidence.
The following proposition simplifies the process.
Given $\Sigma'$ and $\beta$ as in \cref{c p conj}, write $\dist(\Sigma',\beta)$ for the smallest number of mutations needed to take $\Sigma'$ to a seed containing $x(\beta)$.

\begin{proposition}\label{conj check}
Fix a Cartan matrix $A$ of affine type, a Coxeter element~$c$, and $d\ge0$.
Suppose \cref{c p conj} has been verified when $\Sigma$ corresponds to $A$ and $c$, the cluster of $\Sigma'$ has nonempty intersection with the cluster of $\Sigma$, and ${\dist(\Sigma',\beta)\le d}$.
Then \cref{c p conj} holds when $\Sigma$ corresponds to $A$ and a Coxeter element conjugate to $c$, for arbitrary $\Sigma'$ with $\dist(\Sigma',\beta)\le d$.
\end{proposition}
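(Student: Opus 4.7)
The plan is to use source-sink mutations on the initial seed to realign $\Sigma$ so that some cluster variable of $\Sigma'$ becomes an initial cluster variable, thereby bringing $\Sigma'$ within reach of the hypothesis. Fix $\tilde c$ conjugate to $c$, a seed $\Sigma$ corresponding to $A$ and $\tilde c$, and any $\Sigma', \beta$ with $\dist(\Sigma', \beta) \le d$. Throughout, I would lean on \cref{init d} (source-sink mutations transform $\d$-vectors by $\sigma_s$) and \cref{compat sigma} (which transforms compatibility degrees correspondingly). The main task is to construct a new initial seed $\hat\Sigma$, obtained from $\Sigma$ by a sequence of source-sink mutations, satisfying two properties: $\hat\Sigma$ has Coxeter element exactly $c$ (not merely a conjugate), and $\hat\Sigma$ shares at least one cluster variable with $\Sigma'$. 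The hypothesis then applies directly to $\hat\Sigma$ in place of $\Sigma$.

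To construct $\hat\Sigma$, I would first invoke \cref{aff conj} to realize a source-sink conjugation from $\tilde c$ to $c$; this lifts to a sequence of source-sink mutations taking $\Sigma$ to an intermediate seed $\bar\Sigma$ with Coxeter element $c$. Combining \cref{denom thm} and \cref{cluster properties}\eqref{Q basis}, the $\d$-vectors of $\Sigma'$'s cluster (with respect to $\bar\Sigma$) form a $\integers$-basis of $Q$, while $\APTre{c} \subset \eigenspace{c}$ has codimension one, so at least one cluster variable $y \in \Sigma'$ satisfies $\d_{\bar\Sigma}(y) \in \APre{c} \setminus \APTre{c}$. By \cref{tau prop}, this $\d$-vector is $\tau_c^k(-\alpha_j)$ for a unique $k \in \integers$ and $j \in \set{1,\ldots,n}$. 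A full cycle of $n$ source-sink mutations $\mu_{s_1}, \ldots, \mu_{s_n}$—each valid because the cyclic shift of the current reduced word for the Coxeter element remains reduced and begins with its initial letter—returns the Coxeter element to $c$ and, by iterating \cref{init d}, transforms $\d$-vectors by $\sigma_n \cdots \sigma_1 = \tau_c^{-1}$. Reversing the cycle (mutating at final reflections in reverse order) instead transforms by $\tau_c$. Applying an appropriate number of cycles in the appropriate direction reaches $\hat\Sigma$ with $\d_{\hat\Sigma}(y) = -\alpha_j$, so $y$ is the $j$th initial cluster variable of $\hat\Sigma$, and in particular $\hat\Sigma \cap \Sigma' \ne \emptyset$.

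With $\hat\Sigma$ in hand, the hypothesis applies to $(\hat\Sigma, \Sigma', \beta^*)$, where $\beta^*$ is the index of the cluster variable $x_\Sigma(\beta)$ in $\hat\Sigma$'s bijection. The bound $\dist(\Sigma', \beta^*) \le d$ is automatic because $\dist$ depends only on the underlying cluster variable, which is unchanged by re-indexing. The resulting identity $\d_{\Sigma'}(x(\beta)) = (\cm{\beta_i^*}{\beta^*}{c})_i$, combined with iterated \cref{compat sigma} along the source-sink path from $\Sigma$ to $\hat\Sigma$ to convert each $\cm{\beta_i^*}{\beta^*}{c}$ into $\cm{\beta_i}{\beta}{\tilde c}$, yields exactly the formula of \cref{c p conj} for $(\Sigma, \Sigma', \beta)$. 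The principal obstacle is purely bookkeeping: keeping the bijections relative to $\Sigma$, $\bar\Sigma$, and $\hat\Sigma$ synchronized and verifying that the iterated $\sigma$-products correctly compose to $\tau_c^{\pm 1}$ per cycle, so that \cref{compat sigma} transports compatibility degrees consistently from $\tilde c$ to $c$.
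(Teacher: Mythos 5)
Your argument is correct, and it uses exactly the tools the paper indicates (the paper omits its own proof of this proposition, saying only that it ``uses methods similar to other proofs in this section,'' namely \cref{init d}, \cref{compat sigma}, \cref{aff conj}, and \cref{denom coeff}): realign the initial seed by source-sink cycles, each full cycle acting on $\d$-vectors by $\tau_c^{\mp1}$, until some cluster variable of $\Sigma'$ becomes initial, then invoke the restricted hypothesis and transport back. The only simplification available is that the existence of $y$ with $\d_{\bar\Sigma}(y)$ in an infinite $\tau_c$-orbit follows directly from \cref{cluster properties}\eqref{at least 2}, though your basis-versus-hyperplane argument via \cref{cluster properties}\eqref{Q basis} is equally valid.
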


The proof of \cref{conj check} uses methods similar to other proofs in this section, and we omit the details.
Using this proposition, we have checked \cref{c p conj} for all rank-3 and rank-4 affine types not covered by \cref{conj acyc} or \cref{conj surf}, whenever $\dist(\Sigma',\beta)\le 100$.

{\small  \label{index}

\printnomenclature[45 pt]
}

\subsection*{Acknowledgments}
We thank the anonymous referees for many helpful comments.  
We are also grateful to Giovanni Cerulli-Irelli and Sibylle Schroll for insights on quiver representations.

\bibliographystyle{plain}
\bibliography{bibliography}

\begin{thebibliography}{10}

\bibitem{BGP}
I.~N. Bernstein, I.~M. Gelfand, and V.~A. Ponomarev.
\newblock Coxeter functors, and {G}abriel's theorem.
\newblock {\em Uspehi Mat. Nauk}, 28(2(170)):19--33, 1973.

\bibitem{Bj-Br}
Anders Bj{\"o}rner and Francesco Brenti.
\newblock {\em Combinatorics of {C}oxeter groups}, volume 231 of {\em Graduate
  Texts in Mathematics}.
\newblock Springer, New York, 2005.

\bibitem{Buan06}
Aslak~Bakke Buan, Robert Marsh, Markus Reineke, Idun Reiten, and Gordana
  Todorov.
\newblock Tilting theory and cluster combinatorics.
\newblock {\em Adv. Math.}, 204(2):572--618, 2006.

\bibitem{Buan08b}
Aslak~Bakke Buan and Robert~J. Marsh.
\newblock Denominators in cluster algebras of affine type.
\newblock {\em J. Algebra}, 323(8):2083--2102, 2010.

\bibitem{Buan05c}
Aslak~Bakke Buan, Robert~J. Marsh, Idun Reiten, and Gordana Todorov.
\newblock Clusters and seeds in acyclic cluster algebras.
\newblock {\em Proc. Amer. Math. Soc.}, 135(10):3049--3060 (electronic), 2007.
\newblock With an appendix coauthored in addition by P. Caldero and B. Keller.

\bibitem{CalderoChapoton}
Philippe Caldero and Fr\'{e}d\'{e}ric Chapoton.
\newblock Cluster algebras as {H}all algebras of quiver representations.
\newblock {\em Comment. Math. Helv.}, 81(3):595--616, 2006.

\bibitem{Caldero05}
Philippe Caldero and Bernhard Keller.
\newblock From triangulated categories to cluster algebras. {II}.
\newblock {\em Ann. Sci. \'Ecole Norm. Sup. (4)}, 39(6):983--1009, 2006.

\bibitem{CalZel}
Philippe Caldero and Andrei Zelevinsky.
\newblock Laurent expansions in cluster algebras via quiver representations.
\newblock {\em Mosc. Math. J.}, 6(3):411--429, 587, 2006.

\bibitem{CarrDevadoss}
Michael~P. Carr and Satyan~L. Devadoss.
\newblock Coxeter complexes and graph-associahedra.
\newblock {\em Topology Appl.}, 153(12):2155--2168, 2006.

\bibitem{CP}
Cesar Ceballos and Vincent Pilaud.
\newblock Denominator vectors and compatibility degrees in cluster algebras of
  finite type.
\newblock {\em Trans. Amer. Math. Soc.}, 367(2):1421--1439, 2015.

\bibitem{associahedra}
Fr{\'e}d{\'e}ric Chapoton, Sergey Fomin, and Andrei Zelevinsky.
\newblock Polytopal realizations of generalized associahedra.
\newblock {\em Canad. Math. Bull.}, 45(4):537--566, 2002.
\newblock Dedicated to Robert V. Moody.

\bibitem{Deodhar}
Vinay~V. Deodhar.
\newblock A note on subgroups generated by reflections in {C}oxeter groups.
\newblock {\em Arch. Math. (Basel)}, 53(6):543--546, 1989.

\bibitem{Dlab76}
Vlastimil Dlab and Claus~Michael Ringel.
\newblock Indecomposable representations of graphs and algebras.
\newblock {\em Mem. Amer. Math. Soc.}, 6(173):v+57, 1976.

\bibitem{Dyer}
Matthew Dyer.
\newblock Reflection subgroups of {C}oxeter systems.
\newblock {\em J. Algebra}, 135(1):57--73, 1990.

\bibitem{FeShTu12}
Anna Felikson, Michael Shapiro, and Pavel Tumarkin.
\newblock Skew-symmetric cluster algebras of finite mutation type.
\newblock {\em J. Eur. Math. Soc. (JEMS)}, 14(4):1135--1180, 2012.

\bibitem{Fomin08}
Sergey Fomin, Michael Shapiro, and Dylan Thurston.
\newblock Cluster algebras and triangulated surfaces. {I}. {C}luster complexes.
\newblock {\em Acta Math.}, 201(1):83--146, 2008.

\bibitem{FoZe03a}
Sergey Fomin and Andrei Zelevinsky.
\newblock Cluster algebras. {II}. {F}inite type classification.
\newblock {\em Invent. Math.}, 154(1):63--121, 2003.

\bibitem{FoZe03}
Sergey Fomin and Andrei Zelevinsky.
\newblock {$Y$}-systems and generalized associahedra.
\newblock {\em Ann. of Math. (2)}, 158(3):977--1018, 2003.

\bibitem{Fomin07}
Sergey Fomin and Andrei Zelevinsky.
\newblock Cluster algebras. {IV}. {C}oefficients.
\newblock {\em Compos. Math.}, 143(1):112--164, 2007.

\bibitem{GHKK}
Mark Gross, Paul Hacking, Sean Keel, and Maxim Kontsevich.
\newblock Canonical bases for cluster algebras.
\newblock {\em J. Amer. Math. Soc.}, 31(2):497--608, 2018.

\bibitem{Howlett}
Robert~B. Howlett.
\newblock Coxeter groups and {$M$}-matrices.
\newblock {\em Bull. London Math. Soc.}, 14(2):137--141, 1982.

\bibitem{IPT}
Colin Ingalls, Charles Paquette, and Hugh Thomas.
\newblock Semi-stable subcategories for {E}uclidean quivers.
\newblock {\em Proc. Lond. Math. Soc. (3)}, 110(4):805--840, 2015.

\bibitem{Kac82}
V.~G. Kac.
\newblock Infinite root systems, representations of graphs and invariant
  theory. {II}.
\newblock {\em J. Algebra}, 78(1):141--162, 1982.

\bibitem{Kac90}
Victor~G. Kac.
\newblock {\em Infinite-dimensional {L}ie algebras}.
\newblock Cambridge University Press, Cambridge, third edition, 1990.

\bibitem{Macdonald}
I.~G. Macdonald.
\newblock Affine root systems and {D}edekind's {$\eta $}-function.
\newblock {\em Invent. Math.}, 15:91--143, 1972.

\bibitem{MRZ}
Robert Marsh, Markus Reineke, and Andrei Zelevinsky.
\newblock Generalized associahedra via quiver representations.
\newblock {\em Trans. Amer. Math. Soc.}, 355(10):4171--4186, 2003.

\bibitem{McSul}
Jon McCammond and Robert Sulway.
\newblock Artin groups of {E}uclidean type.
\newblock {\em Invent. Math.}, 210(1):231--282, 2017.

\bibitem{MusPro}
Gregg Musiker and James Propp.
\newblock Combinatorial interpretations for rank-two cluster algebras of affine
  type.
\newblock {\em Electron. J. Combin.}, 14(1):Research Paper 15, 23, 2007.

\bibitem{sortable}
Nathan Reading.
\newblock Clusters, {C}oxeter-sortable elements and noncrossing partitions.
\newblock {\em Trans. Amer. Math. Soc.}, 359(12):5931--5958, 2007.

\bibitem{universal}
Nathan Reading.
\newblock Universal geometric cluster algebras.
\newblock {\em Math. Z.}, 277(1-2):499--547, 2014.

\bibitem{camb_fan}
Nathan Reading and David~E. Speyer.
\newblock Cambrian fans.
\newblock {\em J. Eur. Math. Soc. (JEMS)}, 11(2):407--447, 2009.

\bibitem{cyclic}
Nathan Reading and David~E. Speyer.
\newblock Sortable elements for quivers with cycles.
\newblock {\em Electron. J. Combin.}, 17(1):Research Paper 90, 19, 2010.

\bibitem{typefree}
Nathan Reading and David~E. Speyer.
\newblock Sortable elements in infinite {C}oxeter groups.
\newblock {\em Trans. Amer. Math. Soc.}, 363(2):699--761, 2011.

\bibitem{framework}
Nathan Reading and David~E. Speyer.
\newblock Combinatorial frameworks for cluster algebras.
\newblock {\em Int. Math. Res. Not. IMRN}, (1):109--173, 2016.

\bibitem{afframe}
Nathan Reading and David~E. Speyer.
\newblock Cambrian frameworks for cluster algebras of affine type.
\newblock {\em Trans. Amer. Math. Soc.}, 370(2):1429--1468, 2018.

\bibitem{afforb}
Nathan {Reading} and Salvatore {Stella}.
\newblock The action of a coxeter element on an affine root system.
\newblock Preprint (arXiv:1808.05090), 2017.

\bibitem{RupSteWil}
D.~{Rupel}, S.~{Stella}, and H.~{Williams}.
\newblock {Affine cluster monomials are generalized minors}.
\newblock Preprint (arXiv:1712.09143), 2017.

\bibitem{Rupel}
Dylan Rupel and Salvatore Stella.
\newblock {Some consequences of categorification}.
\newblock Preprint (arXiv:1712.08478), 2017.

\bibitem{Scherotzke}
Sarah Scherotzke.
\newblock Component clusters for acyclic quivers.
\newblock {\em Colloq. Math.}, 144(2):245--264, 2016.

\bibitem{Seven}
Ahmet~I. Seven.
\newblock Cluster algebras and semipositive symmetrizable matrices.
\newblock {\em Trans. Amer. Math. Soc.}, 363(5):2733--2762, 2011.

\bibitem{SZ}
Paul Sherman and Andrei Zelevinsky.
\newblock Positivity and canonical bases in rank 2 cluster algebras of finite
  and affine types.
\newblock {\em Mosc. Math. J.}, 4(4):947--974, 982, 2004.

\bibitem{Ste13}
Salvatore Stella.
\newblock Polyhedral models for generalized associahedra via {C}oxeter
  elements.
\newblock {\em J. Algebraic Combin.}, 38(1):121--158, 2013.

\bibitem{shih}
Shih-Wei Yang and Andrei Zelevinsky.
\newblock Cluster algebras of finite type via {C}oxeter elements and principal
  minors.
\newblock {\em Transform. Groups}, 13(3-4):855--895, 2008.

\bibitem{Zel}
Andrei Zelevinsky.
\newblock Semicanonical basis generators of the cluster algebra of type
  {$A^{(1)}_1$}.
\newblock {\em Electron. J. Combin.}, 14(1):Note 4, 5, 2007.

\end{thebibliography}

\end{document}